\setlist{itemsep=1ex, topsep=0ex}
    \definecolor{carrotorange}{rgb}{0.93, 0.57, 0.13}
    \definecolor{dodger}{rgb}{0.0,0.5,1.0}
\definecolor{ceruleanblue}{rgb}{0.16, 0.32, 0.75}
\definecolor{brown}{rgb}{0.43, 0.21, 0.1}
\newenvironment{to consider!}[1][Miro]  
    {\color{dodger}\leavevmode Alternative:\marginpar{#1 says:}}
    {}
\def\ml{l\kern-0.035cm\char39\kern-0.03cm}
\newcounter{enuAlph}
\theoremstyle{plain}
\newtheorem{theorem}{Theorem}[section]
\newtheorem{lemma}[theorem]{Lemma}
\newtheorem{corollary}[theorem]{Corollary}
\newtheorem{prop}[theorem]{Proposition}
\newtheorem{fact}[theorem]{Fact}
\newtheorem{fct}[theorem]{Fact}
\newtheorem{teorema}[enuAlph]{Theorem}
\newtheorem{question}[theorem]{Question}
\theoremstyle{definition}
\newtheorem{definition}[theorem]{Definition}
\newtheorem{example}[theorem]{Example}
\newtheorem{notation}[theorem]{Notation}
\newtheorem{remark}[theorem]{Remark}
\theoremstyle{remark}
\numberwithin{equation}{theorem}
\DeclareMathOperator{\cf}{cf}
\newcommand{\baire}[1]{{}^{\omega}#1}
\newcommand{\Cor}{\mathbb{C}}
\newcommand{\oneone}{\textrm{\upshape1-1}}
\newcommand{\cohen}[1]{V^{\mathbb{C}_{#1}}}
\newcommand{\fin}{\text{\rm Fin}}
\newcommand{\covh}[1]{\mathtt{cov}^*(#1)}
\newcommand{\I}{{ I}}
\newcommand{\J}{{ J}}
\newcommand{\calS}{{\mathcal{S}}}
\newcommand{\U}{{\mathcal{U}}}
\newcommand{\V}{{\mathcal{V}}}
\newcommand{\A}{{\mathcal{A}}}
\newcommand{\F}{{\mathcal{F}}}
\newcommand{\OO}{{\mathcal{O}}}
\newcommand{\PP}{{\mathcal{P}}}
\newcommand{\bb}{\mathfrak{b}}
\newcommand{\dd}{\mathfrak{d}}
\newcommand{\pp}{\mathfrak{p}}
\newcommand{\cc}{\mathfrak{c}}
\newcommand{\ppk}[1]{\pp_\mathrm{K}(#1)}
\newcommand{\add}[1]{\mathtt{add}(#1)}
\newcommand{\non}[1]{\mathtt{non}(#1)}
\newcommand{\cov}[1]{\mathtt{cov}(#1)}
\newcommand{\cof}[1]{\mathtt{cof}(#1)}
\newcommand{\nonst}{\mathtt{non}^*}
\newcommand{\covst}{\mathtt{cov}^*}
\newcommand{\covm}{\cov{\mathcal{M}}}
\newcommand{\covn}{\cov{\mathcal{N}}}
\newcommand{\nonm}{\non{\mathcal{M}}}
\newcommand{\nonn}{\non{\mathcal{N}}}
\newcommand{\addm}{\add{\mathcal{M}}}
\newcommand{\addn}{\add{\mathcal{N}}}
\newcommand{\cofm}{\cof{\mathcal{M}}}
\newcommand{\cofn}{\cof{\mathcal{N}}}
\newcommand{\sla}[1]{\mathfrak{sl_e}(#1)}
\newcommand{\slamg}[1]{\sla{\star,#1}}
\newcommand{\dsla}[1]{\mathfrak{sl_t}(#1)}
\newcommand{\dslaml}[1]{\dsla{\star,#1}}
\newcommand{\dslago}[1]{\dsla{#1,\fin}}
\newcommand{\sone}[2]{{\rm S}_1(#1,#2)}
\newcommand{\soneoo}{\sone{\OO}{\OO}}
\newcommand{\sonef}[3]{{\rm S}_1^{#1}(#2,#3)}
\newcommand{\isonegg}[2]{\sone{#1\text{-}\Gamma}{#2\text{-}\Gamma}}
\newcommand{\isoneggfi}[1]{\sone{\Gamma}{#1\text{-}\Gamma}}
\newcommand{\isonemg}[1]{\sone{\Omega}{#1\text{-}\Gamma}}
\newcommand{\isoneglfi}[1]{\sone{\Gamma}{#1\text{-}\Lambda}}
\newcommand{\isonefgg}[3]{\sonef{#1}{#2\text{-}\Gamma}{#3\text{-}\Gamma}}
\newcommand{\isonefgl}[3]{\sonef{#1}{#2\text{-}\Gamma}{#3\text{-}\Lambda}}
\newcommand{\isonefmg}[2]{\sonef{#1}{\Omega}{#2\text{-}\Gamma}}
\newcommand{\isonefgsg}[3]{\sonef{#1}{\Gamma_{#2}}{#3\text{-}\Gamma}}
\newcommand{\GammaB}[1]{#1\text{-}\Gamma}
\newcommand{\Gammac}[1]{\Gamma^{#1}}
\newcommand{\Gammah}[1]{\Gamma_{#1}}
\newcommand{\LambdaB}[1]{{#1}\text{-}\Lambda}
\newcommand{\cLambda}[2]{\mathfrak{sl_e}(#1,#2)}
\newcommand{\tsl}[2]{\mathfrak{sl_t}(#1,#2)}
\newcommand{\schema}[2]{\mathrm{S}_1(#1,#2)}
\newcommand{\la}{\langle}
\newcommand{\ra}{\rangle}
\newcommand{\be}{\mathfrak{b}}
\newcommand{\de}{\mathfrak{d}}
\newcommand{\pfrak}{\mathfrak{p}}
\newcommand{\Por}{\mathbb{P}}
\newcommand{\Qnm}{\dot{\mathbb{Q}}}
\newcommand{\Dor}{\mathbb{D}}
\newcommand{\Eor}{\mathbb{E}}
\definecolor{cadmiumorange}{rgb}{0.93, 0.53, 0.18}
    \DeclareMathOperator{\dom}{dom}
    \DeclareMathOperator{\ran}{ran}
    \newcommand{\thzfc}{\mathrm{ZFC}}
    \newcommand{\Ewf}{\mathcal{E}}
    \newcommand{\Iwf}{\mathcal{I}}
    \newcommand{\Mwf}{\mathcal{M}}
    \newcommand{\Nwf}{\mathcal{N}}
    \newcommand{\Pwf}{\mathcal{P}}
    \newcommand{\Pcal}{\mathcal{P}}
    \newcommand{\Scal}{\mathcal{S}}
    \newcommand{\Swf}{\mathcal{S}}
    \newcommand{\bfrak}{\mathfrak{b}}
    \newcommand{\cfrak}{\mathfrak{c}}
    \newcommand{\dfrak}{\mathfrak{d}}
    \newcommand{\kfrak}{\mathfrak{k}}
    \newcommand{\lfrak}{\mathfrak{l}}
    \newcommand{\pK}{\mathfrak{p}_{\rm K}}
    \newcommand{\sfrak}{\mathfrak{s}}
    \newcommand{\Cbf}{\mathbf{C}}
    \newcommand{\Lbf}{\mathbf{L}}
    \newcommand{\Mbf}{\mathbf{M}}
    \newcommand{\Ibb}{\mathbb{I}}
    \newcommand{\Qor}{\mathbb{Q}}
    \newcommand{\menos}{\smallsetminus}
    \DeclareMathOperator{\pts}{\mathcal{P}}
    \newcommand{\Prop}{\mathcal{P}}
    \newcommand{\Fin}{\mathrm{Fin}}
    \newcommand{\sbf}{\mathbf{s}}
    \newcommand{\match}{\sqsubset^{\rm m}}
    \newcommand{\imp}{\Rightarrow}
    \DeclareMathOperator{\rk}{\rk}
    \newcommand{\frestr}{{\upharpoonright}}
    \DeclareMathOperator{\Fn}{Fn}
    \newcommand{\leqT}{\leq_{\mathrm{T}}}
    \newcommand{\eqT}{\cong_{\mathrm{T}}}
    \newcommand{\leqK}{\leq_{\mathrm{K}}}
    \newcommand\subsetdot{\mathrel{\ooalign{$\subset$\cr
  \hidewidth\hbox{$\cdot\mkern3mu$}\cr}}} 
    \newcommand{\Ed}{\mathbf{Ed}}
    \newcommand{\Rbf}{\mathbf{R}}
    \newcommand{\Srm}{\mathrm{S}}
    \newcommand{\rr}{\text{-}}
    \newcommand{\seqn}[2]{\langle#1:\allowbreak\,#2\rangle}
    \newcommand{\bigset}[2]{\left\{#1:\,#2\right\}}
    \newcommand{\set}[2]{\{#1:\allowbreak\,#2\}}
    \newcommand{\vfa}{\mathfrak{v}^\forall}
    \newcommand{\cfa}{\mathfrak{c}^\forall}
    \newcommand{\vxt}{\mathfrak{v}^\exists}
    \newcommand{\cxt}{\mathfrak{c}^\exists}
    \newcommand{\Lc}{\mathbf{Lc}}
    \newcommand{\aLc}{\mathbf{aLc}}
    \newcommand{\pLc}{\mathbf{pLc}}
    \newcommand{\pL}{\mathbf{pL}}
    \newcommand{\blc}{\mathfrak{b}^{\mathrm{Lc}}}
    \newcommand{\dlc}{\mathfrak{d}^{\mathrm{Lc}}}
    \newcommand{\balc}{\mathfrak{b}^{\mathrm{aLc}}}
    \newcommand{\dalc}{\mathfrak{d}^{\mathrm{aLc}}}
    \newcommand{\id}{\mathrm{id}}
    \newcommand{\dual}{\mathrm{d}}
    \newcommand{\dc}{\mathrm{dc}}
    \newcommand{\dfil}[1]{#1^\dual}
    \newcommand{\Baire}{{}^\omega\omega}
    \DeclareMathOperator{\cnt}{ct}
\newcommand{\Kat}{\mathrm{K}}
\newcommand{\KB}{\mathrm{KB}}
\newcommand{\mKat}{\mathrm{\overline{K}}}
\newcommand{\mKB}{\mathrm{\overline{KB}}}
\newcommand{\RB}{\mathrm{RB}}
\newcommand{\RK}{\mathrm{RK}}
\newcommand{\slalome}{\mathfrak{sl}_\mathrm{e}}
\newcommand{\slalomt}{\mathfrak{sl}_\mathrm{t}}
\newcommand{\ccup}{{\scriptscriptstyle\sqcup}}
\newcommand{\ccap}{{\scriptscriptstyle\sqcap}}
\newcommand{\startlist}{\ \@beginparpenalty=10000}
\journal{Dissertationes Mathematicae}
\begin{document}

\begin{frontmatter}
\title{Slalom numbers} 
\author{Miguel A.~Cardona\fnref{fn1,fn2}}
\address{Einstein Institute of Mathematics,
The Hebrew University of Jerusalem, Givat Ram, Jerusalem, 9190401, Israel}
\address{Faculty of Engineering, 
Institución Universitaria Pascual Bravo, 
Calle 73 No. 73A - 226, Medellín, Colombia}

\ead{miguel.cardona@mail.huji.ac.il}
\author{Viera Gavalov\' a\fnref{fn1}}
\address{Department of Applied Mathematics and Bussiness Informatics, Faculty of Economics of the Technical University of Ko\v{s}ice, N\v emcovej 32, 040 01 Ko\v{s}ice, Slovakia}
\ead{viera.gavalova@tuke.sk}
\author{Diego A.~Mej\'ia\fnref{fn3}}
\address{Graduate School of System Informatics, Kobe University, 1-1 Rokkodai-cho, Nada-ku, Kobe, Hyogo 657-8501, Japan}
\ead{damejiag@people.kobe-u.ac.jp}
\author{Miroslav Repick\' y\fnref{fn1,fn4}}
\address{Mathematical Institute, Slovak Academy of Sciences, Gre\v s\' akova 6, 040 01 Ko\v sice, Slovak Republic.}
\ead{repicky@saske.sk}
\author{Jaroslav \v Supina\fnref{fn1,fn5}}
 \address{Institute of Mathematics, P.J. \v{S}af\'arik University in Ko\v sice, Jesenn\'a 5, 040 01 Ko\v{s}ice, Slovakia}
\ead{jaroslav.supina@upjs.sk}
\fntext[fn1]{Supported by the Slovak Research and Development Agency under the Contract no. APVV-20-0045.}
\fntext[fn2]{Supported by Pavol Jozef \v{S}af\'arik University in Ko\v{s}ice at a postdoctoral position, and by Israel Science Foundation for partial support of this research by grant 2320/23 (2023-2027).}
\fntext[fn3]{Supported by the Grant-in-Aid for Scientific Research (C) 23K03198, Japan Society for the Promotion of Science.}
\fntext[fn4]{Supported by the grant VEGA 2/0104/24 of the Slovak Grant Agency VEGA.}
\fntext[fn5]{Supported by the grant VEGA 1/0657/22 of the Slovak Grant Agency VEGA.}
\begin{abstract}

{\footnotesize The paper is an extensive and systematic study of cardinal invariants we call slalom numbers, describing the combinatorics of sequences of sets of natural numbers. Our general approach, based on relational systems, covers many such cardinal characteristics, including localization and anti-localization cardinals. We show that most of the slalom numbers are connected to topological selection principles, in particular, we obtain the representation of the uniformity of meager and the cofinality of measure. Considering instances of slalom numbers parametrized by ideals on natural numbers, we focus on monotonicity properties with respect to ideal orderings and computational formulas for the disjoint sum of ideals. Hence, we get such formulas for several pseudo-intersection numbers as well as for the bounding and dominating numbers parametrized with ideals. Based on the effect of adding a Cohen real, we get many consistent constellations of different values of slalom numbers.}   

\end{abstract}
\begin{keyword}
ideal \sep slalom \sep localization \sep selection principle \sep cardinal invariant \sep Cohen real
\MSC[2020] 03E17 \sep 03E35 \sep 54G15 \sep 54D20 \sep 03E40 
\end{keyword}
\end{frontmatter}




\section{Introduction}\label{Sintro}

The notion of slalom, as a function $\omega\to[\omega]^{<\aleph_0}$, appeared implicitly in~\cite{Ba1984} to prove that the additivity of measure is below the additivity of category. Later, Bartoszy\'nski~\cite{Ba1987} introduced the notion explicitly. It has been proven to be highly important in studying combinatorial properties of measure and category, namely, to characterize and approximate classical cardinal invariants of the continuum, like those in Cicho\'n's diagram. In recent literature, the slalom-based cardinal invariants are usually called \emph{localization and anti-localization cardinals}. The papers \cite{CM19,CM23} are deep surveys on these invariants with a~long list of research sources dedicated to their studies, starting in the 80's and continuing to present-day results and modern treatment \cite{Mi1982,pawli,GS93,KS12,KO14,BrM,CKM}. We list some sources in \Cref{remnotation} to compare all known notations. The localization and anti-localization cardinals are instances of what we denominate \emph{slalom numbers} or \emph{slalom invariants}. 

Many classical cardinal invariants of the continuum have been studied, in a~more general form, parametrized by an ideal $J$ on the natural numbers, see e.g.~\cite{BM,FaSo10,Hr,BorFar,Bakke,FI_KW_22,RaSt,Su22} (throughout this text, we convey that an ideal contains all the finite sets). Very recently, the second and third authors~\cite{GaMe} developed a~version of the Lebesgue measure zero ideal $\Nwf$ and the $\sigma$-ideal $\Ewf$ generated by $F_\sigma$-measure zero sets modulo ideals on the natural numbers, and studied their associated cardinal invariants. These are denoted by $\Nwf_J$ and $\Nwf^*_J$, respectively, for any ideal $J$ on the natural numbers.

In this paper, we propose a~general framework to define slalom numbers parametrized with ideals. Within this framework, we prove general theorems about their connections and show several applications to particular cases that have already appeared in previous research, as well as consistency results. We also study \emph{selection principles} under this framework.

\subsubsection*{Instances of slalom numbers} 

Considering a~function $h\in\baire\omega$ and an ideal $J$ on natural numbers, the paper focuses on slalom invariants of the following form~\cite{SoDiz}:
\begin{align*}
    \slalomt(h,J) & =\min \bigset{|\calS|}{\mathcal{S}\subseteq\prod_{n\in \omega}[\omega]^{\leq h(n)}\wedge(\forall x\in \baire\omega)(\exists s\in\mathcal{S})\ \set{n\in \omega}{x(n)\notin s(n)}\in J},\\[0.1cm]
    \slalome(h,J) & =\min \bigset{|\calS|}{\mathcal{S}\subseteq\prod_{n\in \omega}[\omega]^{\leq h(n)}\wedge(\forall x\in \baire\omega)(\exists s\in\mathcal{S})\ \set{n\in \omega}{x(n)\in s(n)}\notin J}. 
\end{align*}
The classical instances of these numbers are obtained with $J=\Fin$, the ideal of finite sets of natural numbers. 
Well-known results on the latter slalom numbers by Bartoszy\'nski \cite{Ba1987,Ba1984} and Miller~\cite{Mi1982} state that $\cofn=\slalomt(g,\Fin)$ when $\lim_{n\to \infty}g(n)=\infty$, and $\nonm=\slalome(h,\Fin)$ when $h(n)\ge 1$ for all but finitely many $n\in\omega$. The dual forms of these slalom numbers characterize $\addn$ and $\covm$, as well.

Considering $\mathcal{S}\subseteq\baire\I$ for an ideal $I$ on $\omega$ instead of $\mathcal{S}\subseteq\prod_{n\in \omega}[\omega]^{\leq h(n)}$ in the definitions of $\slalomt(h,J)$ and $\slalome(h,J)$ above, we obtain $\slalomt(I,J)$ and $\slalome(I,J)$, see \Cref{sec:LpL} for details. Moreover, we study two more cardinals $\slalomt(\star,J)$, $\slalome(\star,J)$, allowing $\mathcal{S}$ to be more general \cite{SJ,SoSu,SV19,Su22}. Basic relations among the invariants are depicted in \Cref{simplediag}.

\begin{figure}[ht]
\[
\begin{array}{*{5}{@{}c}@{}}
\slalomt(\star,J)&{}\rightarrow{}
&\slalomt(I,J)&{}\rightarrow{}
&\slalomt(h,J)\\[2pt]
{\uparrow}&&{\uparrow}&&{\uparrow}\\[2pt]
\slalome(\star,J)&{}\rightarrow{}
&\slalome(I,J)&{}\rightarrow{}
&\slalome(h,J)
\end{array}
\]
\caption{Diagram of inequalities between slalom numbers. An arrow denotes that ZFC proves $\leq$.}\label{simplediag}
\end{figure}

These generalize more classical cardinal invariants, like the \emph{dominating number} $\dfrak = \slalomt(\Fin,\Fin)$, the \emph{bounding number} $\bfrak = \slalome(\Fin,\Fin)$, and the \emph{pseudo-intersection number} $\pfrak = \slalome(\star,\Fin)$ \cite{SJ,Su22}. We even obtain $\covm = \slalomt(\star,\Fin)$ (see \cite{Su22} and \Cref{sltstarcovM}). The ideal versions of the dominating and unbounding numbers are $\dfrak_J = \slalome(\Fin,J)$ and $\bfrak_J = \slalomt(\Fin,J)$, i.e., those with respect to the relation $x\leq^{J^\dual} y$ iff $\set{n\in\omega}{x(n)>y(n)}\in J$ on $\Baire$. The study of $\dfrak_J$ and $\bfrak_J$ dates back to at least the 1980s, when dominating numbers modulo ultrafilters (i.e., maximal ideals) were used by R.~Canjar \cite{Canjar2} in the context of nonstandard arithmetic (to study the cofinality of ultrapowers of the natural numbers). Further research appears, for instance, in \cite{BM,TsZd08,FaSo10}.

The connections between the slalom numbers introduced so far are illustrated in \Cref{BasicDiaIntro} (see \cite{SoDiz} and~\cite{Su22} for the diagram without the top row).

\begin{figure}[ht]
\begin{center}
\begin{tikzpicture}[scale=0.8]
\node (ale) at (-7, -3.5) {$\aleph_1$};
\node (a) at (-5, -3.5) {$\pp$};
\node (as) at (-5, -1) {$\sla{\I,\fin}$};
\node (b) at (-5, 1.5) {$\bb$};
\node (ba) at (-5, 4) {$\nonm$};
\node (aa) at (-2, -3.5) {$\slamg{\J}$};
\node (aas) at (-2, -1) {$\sla{\I,\J}$};
\node (bb) at (-2, 1.5) {$\bb_\J$};
\node (bba) at (-2, 4) {$\sla{h,\J}$};
\node (c) at (1, -3.5) {$\dslaml{\J}$};
\node (cs) at (1, -1) {$\dsla{\I,\J}$};
\node (f) at (1, 1.5) {$\dd_\J$};
\node (csa) at (1, 4) {$\dsla{h,\J}$};
\node (xpf) at (4, 1.5) {$\dd$};
\node (xpc) at (4, -3.5) {$\covm$};
\node (xpcs) at (4, -1) {$\dslago{\I}$};
\node (xpfa) at (4, 4) {$\cofn$};
\node (cont) at (6, 4) {$\cc$};
\foreach \from/\to in {aas/bb,aas/cs,cs/f,aas/cs} \draw [line width=.15cm,
white] (\from) -- (\to);
\foreach \from/\to in {ale/a,a/as, aa/aas, c/cs, b/bb, a/aa, aa/c, bb/f, as/b, aas/bb, cs/f, as/aas, aas/cs,f/xpf,cs/xpcs,c/xpc,xpcs/xpf,xpc/xpcs, b/ba,bb/bba,f/csa,xpf/xpfa,ba/bba,bba/csa,csa/xpfa,xpfa/cont} \draw [->] (\from) -- (\to);

\end{tikzpicture}
\end{center}
\caption{Relations among particular cases of slalom numbers.}
\label{BasicDiaIntro}
\end{figure}

We also look at combinatorial notions related with pseudo-intersection modulo ideals~\cite{BorFar,REP21a,REP21b,Su22}, and their counterparts, $\lfrak_\Kat(\star,J)$ and $\lfrak_\Kat(I,J)$, which are original in this paper. Here, $\leqK$~denotes the \emph{Kat\v{e}tov order.}
\begin{align*}
    \pfrak_\Kat(\star,J) & :=\min\set{|A|}{A\subseteq\PP(\omega)\text{ generates an ideal and } A\not\leq_\Kat\J},\\
    \lfrak_\Kat(\star,J) & :=\min\set{|A|}{A\subseteq\PP(\omega)\text{ generates an ideal and } A\not\leq_\Kat J^{\rm dc}},\\
    \pfrak_\Kat(I,J) & :=\min\set{|A|}{A\subseteq I\text{ and } A\not\leq_\Kat\J},\\
    \lfrak_\Kat(I,J) & :=\min\set{|A|}{A\subseteq I\text{ and } A\not\leq_\Kat J^{\rm dc}}.
\end{align*}

These invariants are upper bounds of the slalom numbers $\slalomt(I,J)$ and $\slalome(I,J)$, as illustrated in \Cref{fig:slpk}.

\begin{figure}[ht]
\[
\begin{array}{*{3}{@{}c}@{}}
\slalomt(I,J)&{}\rightarrow{}&\lfrak_\Kat(I,J)\\[2pt]
{\uparrow}&&{\uparrow}\\[2pt]
\slalome(I,J)&{}\rightarrow{}&\pfrak_\Kat(I,J)
\end{array}
\]
\caption{Further connections between slalom numbers}\label{fig:slpk}
\end{figure}

It is known that $\pfrak_\Kat(\star,\Fin) = \pfrak$ and $\pfrak_\Kat(I,\Fin) = \covst(I)$, where $\covst(I)$ is a~well-known idealized pseudo-{\hskip0pt}intersection number, introduced for maximal ideals (dually for ultrafilters) in~\cite{BreShe99} under the notation~$\pi\pp(\U)$, whose current notation comes from \cite{HeHru07}.

\subsection*{General framework} 

We develop a~general framework to define slalom numbers, 
using a~single general definition that describes all the slalom numbers presented before (and much more), see \Cref{def:Lc}. 
This framework allows general theorems that imply connections between slalom numbers, developed mainly in~\Cref{sec:LpL}. For instance, in \Cref{sec:partcases}, we derive monotonicity properties of slalom numbers with respect to several orders of ideals, like the \emph{Kat\v{e}tov order} and the \emph{Kat\v{e}tov-Blass order}. As a~consequence, $\star$-slalom numbers get characterized:

\begin{teorema}[{\Cref{G2}}]
Let $J$ be an ideal on $\omega$. Then
\begin{enumerate}[label=\rm(\alph*)]
\item $\slalomt(\star,J)=\min\set{\slalomt(I,J)}{I \text{ is an ideal on }\omega}$,

\item $\slalome(\star,J)=\min\set{\slalome(I,J)}{I \text{ is an ideal on }\omega}$.
\end{enumerate}
\end{teorema}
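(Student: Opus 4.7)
The inequality $\slalomt(\star,J)\le\min\{\slalomt(I,J):I\text{ is an ideal on }\omega\}$, and its $\slalome$ analogue, is immediate from \Cref{simplediag}: it provides the pointwise bound $\slalomt(\star,J)\le\slalomt(I,J)$ for every ideal $I$, so taking the infimum on the right preserves the inequality. Only the reverse inequality is non-trivial, and I plan to handle (a) in detail; part (b) will then follow by the same template.

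For the converse, fix a witnessing family $\mathcal{S}$ of minimum cardinality $\kappa=\slalomt(\star,J)$. The plan is to read off from $\mathcal{S}$ a concrete ideal $I$ on $\omega$ for which $\mathcal{S}$ itself serves as a witness of $\slalomt(I,J)\le\kappa$. The natural choice is the ideal generated on $\omega$ by $\{s(n):s\in\mathcal{S},\,n\in\omega\}\cup\Fin$: by construction $s(n)\in I$ for every $s\in\mathcal{S}$ and every $n\in\omega$, hence $\mathcal{S}\subseteq\baire{I}$; and the localization clause $\{n:x(n)\notin s(n)\}\in J$ required for the $\star$-version is exactly the one required for the $I$-version, so it transfers verbatim, giving $\slalomt(I,J)\le\kappa$ and hence the desired inequality.

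The main obstacle is to ensure that $I$ is actually a \emph{proper} ideal, i.e., $\omega\notin I$. This should be built into the definition of $\slalomt(\star,J)$, whose point is precisely to allow families $\mathcal{S}$ that correspond to some (proper) ideal structure. If the definition does not directly provide properness, a short preprocessing step does: any coordinate with $s(n)$ cofinite can be replaced by $\emptyset$ without affecting the $\star$-localizing condition, and if finitely many $s_1(n_1),\ldots,s_k(n_k)$ happen to cover $\omega$, we pass to the complement of one of them and pull the construction back along the resulting projection, using the monotonicity of slalom numbers under the Kat\v{e}tov (or Kat\v{e}tov--Blass) order established in \Cref{sec:partcases}. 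In either case the resulting proper ideal $I$ still admits $\mathcal{S}$ (or its preprocessed copy) as an $I$-valued witness of size at most $\kappa$. Part (b) is handled identically, replacing the localization clause by the anti-localization clause $\{n:x(n)\in s(n)\}\notin J$, which is preserved by the same preprocessing and monotonicity arguments.
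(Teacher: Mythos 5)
Your easy direction is fine; the gap is in the converse. The $\star$-property (see \Cref{def:Lc}) only asserts that \emph{for each fixed $n$} the collection $\set{s(n)}{s\in\mathcal{S}}$ has the FUP; it hands you one proper ideal $I_n$ per coordinate, not a single proper ideal containing every $s(n)$ as $n$ varies. Consequently, the ideal you propose to generate on $\omega$ from $\set{s(n)}{s\in\mathcal{S},\ n\in\omega}\cup\Fin$ will in general be improper: nothing in the $\star$-condition prevents, say, every $s(0)$ from being a subset of the evens and every $s(1)$ a subset of the odds, in which case some $s(0)\cup s'(1)$ can be $\omega$ and the generated ``ideal'' is $\pts(\omega)$, so ``$\mathcal{S}\subseteq\baire{I}$'' is vacuous. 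Your preprocessing does not close this hole. Replacing a cofinite $s(n)$ by $\emptyset$ is irrelevant here (a cofinite value at any coordinate already violates that coordinate's FUP, so none occur), and if it were needed it would shrink $\set{n}{x(n)\in s(n)}$ and could destroy the required membership in $J^\dual$ (resp.\ $J^+$); meanwhile ``pass to the complement of one of them and pull back along the projection'' is not a construction — there may be continuum-many offending finite subfamilies, and you do not specify which complement, which projection, or why the result remains a witness of the same size.

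The missing idea is exactly what carries the paper's \Cref{cor:min*}. From a $\star$-witness $\mathcal{S}$ you read off a \emph{sequence} of ideals $\bar I=\seqn{I_n}{n<\omega}$ with $\mathcal{S}\subseteq\prod\bar I$, giving $\dfrak_A(\bar I)\leq\dfrak_A(\star)$ at once. One then invokes the nontrivial (cited) fact that every countable family of ideals on $\omega$ has a common Kat\v{e}tov upper bound $I$, i.e.\ $I_n\leqK I$ for all $n$, and applies the coordinatewise Kat\v{e}tov monotonicity of \Cref{thm:IdecrK} to conclude $\dfrak_A(I)\leq\dfrak_A(\bar I)$. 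That Kat\v{e}tov upper-bound step is what replaces your ``take the generated ideal'' move; without it a $\star$-witness never yields a single ideal on $\omega$.
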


Concerning meager ideals, 
B.~Tsaban and L.~Zdomskyy \cite{TsZd08} have shown that $\bb_\J=\bb$ for any meager ideal~$\J$ on $\omega$. Later on, B.~Farkas and L.~Soukup \cite{FaSo10} have essentially complemented that by establishing $\dd_\J=\dd$ as well. Similarly, the second and third authors~\cite{GaMe} show that $\Nwf_J=\Nwf$ and $\Nwf^*_J = \Ewf$ when $J$ is meager. 
We prove similar results for slalom numbers. 
The following result summarizes \Cref{sltstarcovM}, \Cref{C2.7}, and \Cref{L3.4}. 

\begin{teorema}
Let $I$ and $J$ be ideals on $\omega$, $J$ with the Baire property, and $h\in{}^\omega\omega$. Then
\begin{enumerate}[label=\normalfont(\arabic*)]
   \item If $h\ge1$, then\/ $\slalome(h,J)=\slalome(h,\Fin)=\nonm$.

    \item If\/ $\lim_{n\in\omega}h(n)=\infty$, then\/ $\slalomt(h,J)=\slalomt(h,\Fin)=\cofn$.

    \item $\slalomt(I,J) = \slalomt(I,\Fin)$.

    \item $\slalomt(\star,J)= \slalomt(\star,\Fin)=\covm$.

    \item $\lfrak_\Kat(I,J)=\lfrak_\Kat(\star,J)=\infty$ (i.e., undefined).
\end{enumerate}
\end{teorema}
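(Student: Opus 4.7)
The plan is to exploit Talagrand's classical characterization of meager ideals: an ideal $J$ on $\omega$ has the Baire property if and only if there exists a partition $\{I_n : n \in \omega\}$ of $\omega$ into finite intervals such that every $A \in J$ satisfies $I_n \cap A = \emptyset$ for infinitely many $n$. Fix such a partition once and for all. All five items are then obtained by reducing the $J$-parametrized invariant to its $\Fin$-version via this partition.

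For parts (1)--(3), the easy inequalities are immediate from $\Fin \subseteq J$: a $J$-positive catching set is infinite, and a finite miss-set lies in $J$. This gives $\slalome(h,\Fin) \leq \slalome(h,J)$, $\slalomt(h,J) \leq \slalomt(h,\Fin)$, and $\slalomt(I,J) \leq \slalomt(I,\Fin)$ for free. For the reverse inequality in (1), starting from a family of $\leq h$-slaloms of size $\nonm$ that catches every real infinitely often, I would rearrange blockwise to produce a family of the same cardinality whose catching set meets all but finitely many blocks $I_n$; by Talagrand such a catch-set is $J$-positive, and the condition $h \geq 1$ gives the headroom required to realize the rearrangement. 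For the reverse inequalities in (2) and (3), I would invert the direction: from a $J$-witness $\mathcal{S}$ construct a $\Fin$-witness $\mathcal{S}'$ by enlarging each $s \in \mathcal{S}$ blockwise, setting $s'(k) \supseteq \bigcup_{j \in I_n} s(j)$ for a designated $k \in I_n$. Since the original miss-set lies in $J$ and hence omits infinitely many blocks, the new miss-set becomes cofinite; in (2) the hypothesis $\lim_n h(n) = \infty$ is precisely what allows the union to stay within $h(k)$ eventually, and in (3) no width constraint is present.

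Part (4) is a direct corollary: \Cref{G2} together with (3) yields
\[
\slalomt(\star,J) \;=\; \min_{I}\slalomt(I,J) \;=\; \min_{I}\slalomt(I,\Fin) \;=\; \slalomt(\star,\Fin) \;=\; \covm.
\]
For part (5), I would show that every $A \subseteq I$ satisfies $A \leq_\Kat J^{\rm dc}$, so the defining set for $\lfrak_\Kat(I,J)$ is empty and the minimum is undefined; a suitable Kat\v{e}tov reduction is built from the Talagrand partition by a block-to-singleton map, exploiting the fact that $J^{\rm dc}$ tolerates the resulting preimages.

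The main obstacle is the blockwise slalom construction in parts (1)--(3): one must produce the new family of slaloms without inflating cardinality and while respecting the width bound $\leq h(k)$. The asymptotic hypotheses on $h$ (namely $h \geq 1$ in (1) and $\lim_n h(n) = \infty$ in (2)) are tight, and a careful combinatorial argument on the finite blocks $I_n$ is required to arrange that the passage between the $\Fin$ and $J$ versions is faithful and size-preserving. A secondary delicate point in (5) is to verify that the reduction actually lands in the dual $J^{\rm dc}$, whose precise definition governs which block-to-singleton maps qualify.
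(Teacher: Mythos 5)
Your overall plan---reduce each $J$-parametrized slalom number to its $\Fin$-version by a blockwise transformation along a finite-interval partition, then invoke the Bartoszy\'nski--Miller and related characterizations---is the same route the paper takes, where the blockwise transformations are packaged as Tukey connections built from a finite-to-one function $f$ witnessing $\Fin\leq_\RB J$ (\Cref{thm:RB1}, \Cref{thm:RB2}, \Cref{monbh}, \Cref{L3.2}, culminating in \Cref{L3.4} and \Cref{C2.7}~\ref{C2.7e}). However, the version of Talagrand's theorem you fix at the outset is strictly too weak to run the argument, and in part (1) the catch-set property you aim for is actually not achievable at cardinality $\nonm$.

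The characterization you state---``every $A\in J$ is disjoint from infinitely many blocks $I_n$''---is not, for a fixed partition, the same as the Rudin--Blass condition $\Fin\leq_\RB J$, which requires $\bigcup_{n\in y}I_n\notin J$ for every infinite $y$ (equivalently, $f^{-1}\llbracket y\rrbracket\in J\Rightarrow y\in\Fin$ for $f$ collapsing $I_n$ to $n$). These differ: take $J$ the density-zero ideal and $I_n=\{n\}$; your condition holds, but there are infinite density-zero sets, so the Rudin--Blass condition fails for this partition. All the blockwise steps you describe secretly use the stronger form. In (1), compressing $x$ to super-coordinates $x\restriction I_n$ and catching infinitely often in $\Fin^+$ yields a catch set that contains \emph{full blocks $I_n$ for infinitely many $n$}, not one that ``meets all but finitely many blocks.'' The latter is a $\forall^\infty$-condition: for any fixed slalom, the set of $x$ caught in all but finitely many blocks is an $F_\sigma$ meager set, so covering all of $\Baire$ that way requires $\covm$-many slaloms, not $\nonm$-many (a real loss when $\nonm<\covm$). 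The property you do obtain---full-block catching infinitely often---suffices, but only because $f^{-1}\llbracket y\rrbracket\notin J$ for infinite $y$, which is precisely $\Fin=f^\to(J)$ and does not follow from your stated form of Talagrand. The same mismatch recurs in the reverse direction for (2)--(3) (there you need the new miss-set $\set{n}{I_n\subseteq A}$ to be finite for $A\in J$, which again is the Rudin--Blass implication, not ``$A$ misses infinitely many blocks'') and in the Kat\v etov reduction you sketch for (5). The fix is to quote the Jalali-Naini/Talagrand theorem in its Rudin--Blass form, which is what the paper does; your blockwise constructions then go through essentially as you intend.
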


To prove this theorem, we use Mathias', Jalali-Naini's, and Talagrand's characterization of the Baire property with the Rudin-Blass order,\footnote{See, e.g.,~\cite{farah}. Note that an~ideal~$J$ on $\omega$ has the~Baire property if and only if $J$ is meager.} to which we apply our monotonicity results. The latter is deeply investigated in \Cref{sec:LpL} and \Cref{sec:partcases}.

\subsection*{Disjoint sum of ideals}

In \Cref{sec:sumI}, we prove characterizations of slalom numbers modulo \emph{disjoint sum of ideals}. Similar results appear in \cite{FI_KW_22,GaMe}.

\begin{teorema}[\Cref{L6.2}, \Cref{L6.3}, and \Cref{L6.4}]\label{thm:i1}
Let $I_0$, $I_1$, $J_0$ and $J_1$ be ideals on $\omega$. Then:
\begin{enumerate}[label=\rm(\alph*)]

\item $\slalomt(I_0\oplus I_1,J_0) = \slalomt(I_0\cap I_1,J_0) = \max\{\slalomt(I_0,J_0),\slalomt(I_1,J_0)\}$.

\item $\slalome(I_0\oplus I_1,\Fin) = \slalome(I_0\cap I_1,\Fin) = \max\{\slalome(I_0,\Fin),\slalomt(I_1,\Fin)\}$.

\item $\lfrak_\Kat(I_0\oplus I_1,J_0) = \lfrak_\Kat(I_0\cap I_1,J_0) = \max\{\lfrak_\Kat(I_0,J_0),\lfrak_\Kat(I_1,J_0)\}$.

\item $\covst(I_0\oplus I_1)= \covst(I_0\cap I_1) = 
\max\{\covst(I_0),\covst(I_1)\}$.

\item $\slalomt(I_0,J_0\cap J_1) = \slalomt(I_0,J_0\oplus J_1) = \max\{\slalomt(I_0,J_0),\slalomt(I_0,J_1)\}$.

\item $\slalome(I_0,J_0\cap J_1)\leq \slalome(I_0,J_0\oplus J_1) = \min\{\slalome(I_0,J_0),\slalome(I_0,J_1)\}$.

\item $\lfrak_\Kat(I_0,J_0\cap J_1) = \lfrak_\Kat(I_0,J_0\oplus J_1) = \max\{\lfrak_\Kat(I_0,J_0),\lfrak_\Kat(I_0,J_1)\}$.

\item $\pfrak_\Kat(I_0,J_0\cap J_1)\leq \pfrak_\Kat(I_0,J_0\oplus J_1) = \min\{\pfrak_\Kat(I_0,J_0),\pfrak_\Kat(I_0,J_1)\}$.

\item $\slalomt(\star,J_0\oplus J_1) = \slalomt(\star,J_0\cap J_1)  = \max\{\slalomt(\star,J_0),\slalomt(\star,J_1)\}$.

\item $\slalome(\star,J_0\oplus J_1) = \min\{\slalome(\star,J_0),\slalome(\star,J_1)\}$.

\item $\lfrak_\Kat(\star,J_0\oplus J_1) = \lfrak_\Kat(\star,J_0\cap J_1) = \max\{\lfrak_\Kat(\star,J_0),\lfrak_\Kat(\star,J_1)\}$.

\item $\pfrak_\Kat(\star,J_0\oplus J_1) = \min\{\pfrak_\Kat(\star,J_0),\pfrak_\Kat(\star,J_1)\}$.

\item $\dfrak_{J_0\oplus J_1}=
\max\{\dfrak_{J_0},\dfrak_{J_1}\}$.

\item $\bfrak_{J_0\oplus J_1}=
\min\{\bfrak_{J_0},\bfrak_{J_1}\}$.
\end{enumerate}
\end{teorema}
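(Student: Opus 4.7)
The fourteen items fall into four natural groups, and my overall strategy is to combine, in each group, a monotonicity inequality (provided by \Cref{sec:LpL} and \Cref{sec:partcases}) with a short combining or selection construction on slaloms. A~preliminary reduction I~would use throughout is that, after identifying $\omega$ with $\omega\oplus\omega$ by the even/odd bijection, the ideals $I_0\oplus I_1$ and $I_0\cap I_1$ are mutually Kat\v etov--Blass reducible, so they share the same value of every KB-monotone invariant appearing in the theorem. It therefore suffices to prove, say, the ``$\cap$''-forms of (a)--(d) and the ``$\oplus$''-forms of (e)--(h).

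For the first-coordinate items (a)--(d) I~fix $J_0$ and compute $\slalomt(I_0\cap I_1,J_0)$. The inequality $\geq$ is just monotonicity in the first coordinate, since $I_0\cap I_1\subseteq I_i$. For $\leq$, I~take minimal witnesses $\calS_i$ for $\slalomt(I_i,J_0)$ and form
\[
\calS:=\bigset{s_0\cap s_1}{s_0\in\calS_0,\ s_1\in\calS_1}\subseteq {}^\omega(I_0\cap I_1),
\]
of cardinality $\max\{|\calS_0|,|\calS_1|\}$. For any $x\in\baire\omega$, picking $s_i\in\calS_i$ with $\{n:x(n)\notin s_i(n)\}\in J_0$ yields $s:=s_0\cap s_1$ whose error set is the union of the two individual error sets and hence lies in $J_0$. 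The same coordinatewise-intersection construction (applied to generators of an ideal rather than to slaloms) handles $\lfrak_{\Kat}$ and $\covst$, giving (c) and (d).

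For the second-coordinate items (e)--(h) I~fix $I_0$ and work with $J_0\oplus J_1$. Here the relevant chain is $J_0\cap J_1\subseteq J_i\subseteq J_0\oplus J_1$; combined with monotonicity in the second coordinate (nondecreasing for $\slalomt$ and $\lfrak_{\Kat}$, nonincreasing for $\slalome$ and $\pfrak_{\Kat}$) this gives one half of each equality. For the other half, the $\max$-formulas follow from a union-of-witnesses construction, assembling a single slalom whose $J_0$-error on one half of $\omega$ and $J_1$-error on the other half together lie in $J_0\oplus J_1$; the $\min$-formulas follow from pure selection, since any slalom escaping $J_i$ also escapes the larger ideal $J_0\oplus J_1$. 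The $\star$-items (i)--(l) are then deduced from (a)--(h) by applying \Cref{G2}, which presents $\slalomt(\star,J)$ and $\slalome(\star,J)$ as minima over ideals; the exchange of the outer $\min_I$ with the $\max$ or $\min$ from the inner formulas is checked by noting that the minimizer ideal for $J_0\oplus J_1$ can be chosen as the intersection of individual minimizers for $J_0$ and $J_1$, which is exactly the intersection-of-slaloms trick from the first group. Finally, (m) and (n) are the $I_0=\Fin$ specializations of (e) and (f), using $\bfrak_J=\slalomt(\Fin,J)$ and $\dfrak_J=\slalome(\Fin,J)$.

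The step I~expect to be most delicate is the $\min_I/\max$ interchange in the $\star$-case: one must verify that intersecting two separately optimal ideals does not strictly increase the slalom cardinal in either coordinate, which requires simultaneously applying the combining constructions of the first two groups (intersecting slaloms coordinate by coordinate on each half of the domain). All other steps reduce to bookkeeping with ideals and slaloms using the monotonicity toolkit already established in \Cref{sec:LpL} and \Cref{sec:partcases}.
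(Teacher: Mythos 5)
Your decomposition into groups (a)--(d), (e)--(h), (i)--(l), (m)--(n) and the high-level strategy (monotonicity one way, combining construction the other, then \Cref{G2} for the $\star$-items, then specialize to $\Fin$) matches the paper's architecture. However, several of the concrete claims you make along the way are false or reversed, and they are not merely imprecisions: some of them would, if taken literally, produce inequalities in the wrong direction.

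\textbf{(1) The KB-equivalence you open with is false.} You claim $I_0\oplus I_1$ and $I_0\cap I_1$ are mutually $\leq_\KB$-reducible. One direction holds ($I_0\cap I_1\leq_\RB I_0\oplus I_1$ via $(n,i)\mapsto n$, hence $\leq_\KB$), but the other fails in general. If $f\colon\omega\to\omega\oplus\omega$ were a finite-to-one witness for $I_0\oplus I_1\leq_\KB I_0\cap I_1$, then restricting $f$ to $A_e=f^{-1}\llbracket\omega\times\{e\}\rrbracket$ would give $I_e\leq_\KB (I_0\cap I_1)\restriction A_e$ (or $A_e$ finite). Take $I_0,I_1$ generated by two almost-disjoint MAD families that are still jointly AD; then both $I_e$ are tall, yet $I_0\cap I_1=\Fin$, and no tall ideal is $\leq_\KB\Fin$. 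The paper only proves $I_0\oplus I_1\leq_\Kat^\ccap I_0\cap I_1$ (\Cref{L_op_ord}~(c), using \emph{two} functions), which is why \Cref{L2.4}~(b) establishes $\leq_\Kat^\ccap$-monotonicity rather than $\leq_\Kat$-monotonicity for $\slalomt$ and $\lfrak_\Kat$. Your ``preliminary reduction'' is therefore not available, and in particular does not justify equating the $\oplus$ and $\cap$ forms of (b) and (d), which is where the paper has to work hardest (\Cref{slint}~\ref{slint3} for $J=\Fin$).

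\textbf{(2) The second-coordinate monotonicity directions are reversed.} By \Cref{lem:monJ}~\ref{mon:J}, if $J\subseteq J'$ then $\slalomt(\cdot,J')\leq\slalomt(\cdot,J)$ and $\slalome(\cdot,J)\leq\slalome(\cdot,J')$: $\slalomt$ and $\lfrak_\Kat$ are \emph{nonincreasing} in $J$, and $\slalome$ and $\pfrak_\Kat$ are \emph{nondecreasing}. You state the opposite. Moreover, the chain ``$J_0\cap J_1\subseteq J_e\subseteq J_0\oplus J_1$'' is not literally meaningful, since $J_e$ lives on $\omega$ while $J_0\oplus J_1$ lives on $\omega\oplus\omega$. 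The relation the paper actually uses is $J_0\oplus J_1\leq_\mKB J_0\cap J_1\subseteq J_e$ together with the $\leq_\mKB$-monotonicity of \Cref{L2.6}~\ref{L2.6b}, plus the Tukey equivalence $\Lc_{J_0\oplus J_1}(I)\eqT\Lc_{J_0}(I)\otimes\Lc_{J_1}(I)$ coming from \Cref{lem:prod}.

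\textbf{(3) For the $\star$-items, ``the intersection of individual minimizers'' goes the wrong way.} Since $I_0\cap I_1\leq_\Kat I_e$ (identity) and $\slalomt(\cdot,J)$ is $\leq_\Kat^\ccap$-decreasing in the first coordinate, one gets $\slalomt(I_0\cap I_1,J)\geq\slalomt(I_e,J)$, not $\leq$. So intersecting minimizers does \emph{not} yield an ideal at which the minimum is attained; if anything, it increases the value. The paper instead picks a common Kat\v etov \emph{upper} bound of $I_0$ and $I_1$ (this is what makes the proof of \Cref{cor:min*} work), or equivalently passes through the sequence version of \Cref{G2} with the slalom family ${}^\omega I_0\oplus{}^\omega I_1$.

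\textbf{(4) Item (b) is not a straight intersection-of-slaloms argument.} For $\slalome$, the ``error'' set is $\|x\in s\|\in J^+$, and the intersection of two infinite sets can be finite, so $\|x\in s_0\|,\|x\in s_1\|\notin\Fin$ does not give $\|x\in s_0\cap s_1\|\notin\Fin$. The paper has to specialize to $J=\Fin$ and arrange one family of slaloms to be coordinatewise increasing (\Cref{slint}~\ref{slint3}) before the intersection trick works, which is why (b) is stated only for $\Fin$. Your write-up skips (b) entirely and the intersection construction you describe for the $\slalomt$-items would not carry over.

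In short: the skeleton of your argument is the right one and matches the paper, but the KB-equivalence claim, the monotonicity directions, the domain confusion in the second-coordinate chain, and the choice of ``intersection of minimizers'' are all concrete errors that a correct write-up must fix. The fixes are exactly the tools the paper develops: $\leq_\Kat^\ccap$- and $\leq_\mKB$-monotonicity, the tensor-product Tukey equivalence for $\oplus$, \Cref{slint}~\ref{slint3} for (b) and (d), and a Kat\v etov upper bound (not intersection) for the $\star$-items.
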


These results are relevant to obtain examples of ideals with different slalom numbers as part of our consistency results.

\subsection*{Selection principles}

Systematic studies of selection principles were initiated in \cite{Comb1,Comb2}. The latter presents the list of uniformity numbers (or \emph{critial cardinalities}) of studied selection principles in a~form of standard cardinal invariants. For more recent sources, see \cite{BStr,Osip18,Buky18}. In \cite{SJ,SoSu,Su22}, it was pointed out that slalom numbers tend to be uniformity numbers of some selection principles. In \Cref{sec:selection}, we propose a~very general selection principle and derive its critical cardinality using the general definition of slalom number (\Cref{G}). Many well-known and so far-unknown critical cardinalities are derived as direct consequences of this result. Below we present the new results. To show the flavor of the result, let us define selection principle $\mathrm{S}_1(\Gamma_h,\OO)$ for a~function $h\in\Baire$, introduced first in \cite{SoDiz}. A~topological space~$X$ \emph{satisfies the selection principle} $\mathrm{S}_1(\Gamma_h,\OO)$ if for each $\seqn{\seqn{V_{n,m}}{m\in\omega}}{n\in\omega}$ with $V_{n,m}$ being open subsets of~$X$ such that $|\set{m}{x\not\in V_{n,m}}|\leq h(n)$ for each $x\in X$, there is a $d\in\Baire$ with $\set{V_{n,d(n)}}{n\in\omega}$ being an open cover of~$X$.

The following particular cases of \Cref{G} are formulated in \Cref{critical}.

\begin{teorema}
If $h\in\Baire$ then\/ 
$\non{\mathrm{S}_1(\Gamma_{b,h},\GammaB{{J}})}=\cLambda{b,h}{{J}}$ and\/ $\non{\mathrm{S}_1(\Gamma_{b,h},\LambdaB{J})}=\tsl{b,h}{{J}}$. As a~consequence,
\begin{alignat*}{2}
    \non{\mathrm{S}_1(\Gamma_h,\Gamma)} & = \nonm &\quad&\text{when $h\geq^*1$,\quad and}\\
    \non{\mathrm{S}_1(\Gamma_h,\OO)} & =\cofn & & \text{when $h\to\infty$.}
\end{alignat*}
\end{teorema}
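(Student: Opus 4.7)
The plan is to derive both equalities as direct specializations of \Cref{G} (which computes the critical cardinality of $\mathrm{S}_1$-type selection principles in the relational framework of \Cref{sec:selection}), and then to deduce the two ``as a~consequence'' statements by reducing them to the classical Bartoszy\'nski--Miller identifications already recorded in the introduction.

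First I would apply \Cref{G} with source family $\Gamma_{b,h}$ and target family $\GammaB{J}$, respectively $\LambdaB{J}$. The key correspondence is the following: given a space $X$ and a sequence $\langle\langle V_{n,m}\rangle_{m\in\omega}\rangle_{n\in\omega}$ of open sets satisfying the $\Gamma_{b,h}$-condition $|\set{m}{x\notin V_{n,m}}|\leq h(n)$ for every $n\in\omega$ and every $x\in X$, the assignment $x\mapsto s_x$ with $s_x(n):=\set{m\in\omega}{x\notin V_{n,m}}$ sends points of $X$ to slaloms in $\prod_{n\in\omega}[\omega]^{\leq h(n)}$. A selector $d\in\Baire$ then produces a $\GammaB{J}$-cover of $X$ iff for every $x\in X$ the set $\set{n}{x\in V_{n,d(n)}}$ is $J$-positive (equivalently, $\set{n}{d(n)\notin s_x(n)}\notin J$), and produces a $\LambdaB{J}$-cover iff the matching condition for $\tsl{b,h}{J}$ holds. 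These are precisely the escape relations defining $\cLambda{b,h}{J}$ and $\tsl{b,h}{J}$, so the two critical cardinalities coincide with the stated slalom numbers.

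For the corollaries, specialize to $J=\Fin$, so that $\GammaB{\Fin}=\Gamma$ and $\LambdaB{\Fin}=\OO$. When $h\geq^*1$, the first equality reads $\non{\mathrm{S}_1(\Gamma_h,\Gamma)}=\slalome(h,\Fin)=\nonm$ by the Bartoszy\'nski characterization recalled in the introduction; when $h\to\infty$, the second reads $\non{\mathrm{S}_1(\Gamma_h,\OO)}=\slalomt(h,\Fin)=\cofn$ by the Bartoszy\'nski--Miller characterization also recalled there.

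The main obstacle will be the verification of the two-way dictionary that makes \Cref{G} applicable. One direction---every $\Gamma_{b,h}$ cover sequence produces slaloms of the required shape---is immediate from the bound on $|\set{m}{x\notin V_{n,m}}|$. The converse direction requires realizing an arbitrary family of slaloms as a sequence of open sets on a suitable $X$, typically chosen as the natural subspace of the relevant slalom product with the clopen sets $V_{n,m}:=\set{s}{m\notin s(n)}$, and checking that a selector $d$ yielding a $\GammaB{J}$- or $\LambdaB{J}$-cover exists precisely when $d$ escapes the slalom family in the sense defining $\cLambda{b,h}{J}$ or $\tsl{b,h}{J}$, respectively. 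Once this correspondence is in place, the equality of critical cardinalities follows from \Cref{G}, and the two corollaries drop out of the classical slalom identifications available in \ZFC.
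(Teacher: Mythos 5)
Your route is essentially the paper's: specialise \Cref{G} to $\Pcal=\pts(\Scal(b,h))$ with $A=J^+$ (so $A^c=J$, giving $\GammaB{J}$) and with $A=J^\dual$ (so $A^c=J^{\rm dc}$, giving $\LambdaB{J}$), obtaining the two main identities, and then plug in the Bartoszy\'nski--Miller characterisations (\Cref{F3.3}) at $J=\Fin$ for the two corollaries. This is exactly how the paper proceeds via \Cref{critical}.

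Two details in your write-up are off. First, the cover dictionary: you say $d$ produces a $\GammaB{J}$-cover iff for every $x$ the set $\set{n}{x\in V_{n,d(n)}}$ is $J$-positive. That is in fact the condition for a $\LambdaB{J}$-cover. A $\GammaB{J}$-cover requires the stronger condition $\set{n}{x\notin V_{n,d(n)}}\in J$, equivalently $\set{n}{x\in V_{n,d(n)}}\in J^\dual$ (co-$J$, not merely $J$-positive). Chasing the failure of the principle at a selector $d$ with the correct conditions gives: for $\GammaB{J}$, failure means $\exists x\ \|d\in s_x\|\in J^+$, hence $\non{\Srm_1(\Gamma_{b,h},\GammaB{J})}=\slalome(b,h,J)=\cLambda{b,h}{J}$; for $\LambdaB{J}$, failure means $\exists x\ \|d\in s_x\|\in J^\dual$, hence $\non{\Srm_1(\Gamma_{b,h},\LambdaB{J})}=\slalomt(b,h,J)=\tsl{b,h}{J}$. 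Your concluding identifications are the right ones, but the intermediate characterisation you give does not match them. Second, $\LambdaB{\Fin}\neq\OO$ as families of covers: a $\Fin\rr\Lambda$-cover covers every point infinitely often, whereas an $\OO$-cover merely covers every point once. What you actually need is that the two selection principles $\Srm_1(\Gamma_h,\Fin\rr\Lambda)$ and $\Srm_1(\Gamma_h,\OO)$ have the same critical cardinality; this is \Cref{LOO}, and it requires a short block-splitting argument on $\omega$ rather than being a definitional collapse. Treating it as an equality of cover families leaves a gap.
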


The latter two equalities as well as \Cref{SjednaABcard1} were obtained in the frame of \cite{SoDiz}. The same applies to the second part of \Cref{sec:selection} that treats properties of topological spaces possessing the investigated selection principles.

\begin{figure}[ht]
\centering
\begin{tikzpicture}[scale=0.8]
\small{
\node (a1) at (-4.5, -3) {$\schema{\Omega}{\Gamma}$};
\node (a2) at (0, -3) {$\schema{\Omega}{\GammaB{{J}}}$};
\node (a3) at (4.5, -3) {$\schema{\Omega}{\LambdaB{{J}}}$};
\node (a4) at (9, -3) {$\schema{\mathcal{O}}{\mathcal{O}}$};
\node (aa1) at (-4.5, -3.7) {\footnotesize$\mathfrak{p}$};
\node (aa2) at (0, -3.7) {\footnotesize$\cLambda{\star}{{J}}$};
\node (aa3) at (4.5, -3.7) {\footnotesize$\tsl{\star}{{J}}$};
\node (aa4) at (9, -3.7) {\footnotesize$\covm$};

\node (b1) at (-4.5, 0) {$\schema{\GammaB{{I}}}{\Gamma}$};
\node (b2) at (0, 0) {$\schema{\GammaB{{I}}}{\GammaB{{J}}}$};
\node (b3) at (4.5, 0) {$\schema{\GammaB{{I}}}{\LambdaB{{J}}}$};
\node (b4) at (9, 0) {$\schema{\GammaB{{I}}}{\mathcal{O}}$};
\node (bb1) at (-4.5, -0.7) {\footnotesize$\min \{\covh{{I}}, \mathfrak{b}\}$};
\node (bb2) at (0, -0.7) {\footnotesize$\cLambda{{I}}{{J}}$};
\node (bb3) at (4.5, -0.7) {\footnotesize$\tsl{{I}}{{J}}$};
\node (bb4) at (9, -0.7) {\footnotesize$\tsl{{I}}{\fin}$};

\node (c1) at (-4.5, 3) {$\schema{\Gamma}{\Gamma}$};
\node (c2) at (0, 3) {$\schema{\Gamma}{\GammaB{{J}}}$};
\node (c3) at (4.5, 3) {$\schema{\Gamma}{\LambdaB{{J}}}$};
\node (c4) at (9, 3) {$\schema{\Gamma}{\mathcal{O}}$};
\node (cc1) at (-4.5, 2.3) {\footnotesize$\mathfrak{b}$};
\node (cc2) at (0, 2.3) {\footnotesize$\be_{{J}}$};
\node (cc3) at (4.5, 2.3) {\footnotesize$\de_{{J}}$};
\node (cc4) at (9, 2.3) {\footnotesize$\mathfrak{d}$};

\node (d1) at (-4.5, 6) {$\schema{\Gammah{h}}{\Gamma}$};
\node (d2) at (0, 6) {$\schema{\Gammah{h}}{\GammaB{{J}}}$};
\node (d3) at (4.5, 6) {$\schema{\Gammah{h}}{\LambdaB{{J}}}$};
\node (d4) at (9, 6) {$\schema{\Gammah{h}}{\mathcal{O}}$};
\node (dd1) at (-4.5, 5.3) {\footnotesize$\nonm$};
\node (dd2) at (0, 5.3) {\footnotesize$\cLambda{h}{{J}}$};
\node (dd3) at (4.5, 5.3) {\footnotesize$\tsl{h}{{J}}$};
\node (dd4) at (9, 5.3) {\footnotesize$\cofn$};
}
\foreach \from/\to in {a1/a2, a2/a3, a3/a4, b1/b2, b2/b3, b3/b4, c1/c2, c2/c3, c3/c4, d1/d2,d2/d3, d3/d4, a1/bb1,b1/cc1, c1/dd1, a2/bb2, b2/cc2, c2/dd2, a3/bb3, b3/cc3, c3/dd3, a4/bb4, b4/cc4, c4/dd4}
\draw [->] (\from) -- (\to);
\end{tikzpicture}
\caption{Critical cardinality of some selection principles.}
\label{SjednaABcard1}
\end{figure}

\subsection*{Consistency results} 

In \Cref{sec:forcing}, we construct forcing models to prove consistency constellations of our slalom numbers. These models are motivated by Canjar's result~\cite{Canjar2}, which states that after adding $\lambda$ many Cohen reals, there exists a (maximal) ideal $J_\kappa$ for any uncountable regular cardinal $\kappa\leq\lambda$ such that $\bfrak_{J_\kappa}=\dfrak_{J_\kappa}=\kappa$. 
We expand this result to show the effect of Cohen reals on the slalom numbers parametrized by ideals. This allows us to present a~strong iteration theorem to control slalom numbers using Cohen reals added at intermediate steps (\Cref{seq:sn}).
One consequence is that, in Cohen model, we have many different slalom numbers.

\begin{teorema}[\Cref{aplcohen}]\label{mCohen}
    Let $\lambda = \lambda^{\aleph_0}$ be an infinite cardinal. Then, after adding $\lambda$-many Cohen reals:
    \begin{enumerate}[label=\normalfont(\alph*)]
        \item\label{mCohena} Any uncountable regular cardinal $\kappa$ satisfying $\lambda^{<\kappa} = \lambda$ is a~slalom number of the form\/ $\slalome(\star,J)= \slalomt^\perp(h,J)=\slalomt(h,J)$ (for all $J$-unbounded $h$) for some maximal ideal $J$ on $\omega$. (This corresponds to the two central columns of \Cref{Cohen_var_val:b})

        \item\label{mCohenb} For any regular $\aleph_1\leq\kappa_1\leq\kappa_2$, if $\lambda^{<\kappa_2} = \lambda$ then there is some ideal $J$ on $\omega$ such that\/ $\slalome(\star,J) = \slalomt^\perp(h,J) = \slalome(h,J) = \kappa_1$ and\/ $\slalomt(\star,J) = \slalome^\perp(h,J) = \slalomt(h,J) =\kappa_2$ for all $h\in\Baire$ satisfying $\lim^J h = \infty$. In particular, the four columns of \Cref{Cohen_var_val:b} can be pairwise different.
    \end{enumerate}

\begin{figure}[ht]
\centering
\begin{tikzpicture}[scale=0.6,every node/.style={scale=0.6}]
\node (ale) at (-7, -3.5) {$\aleph_1$};
\node (a) at (-5, -3.5) {$\pp$};
\node (as) at (-5, -1) {$\sla{\I,\fin}$};
\node (b) at (-5, 1.5) {$\bb$};
\node (ba) at (-5, 4) {$\nonm$};
\node (aa) at (-2, -3.5) {$\slamg{\J}$};
\node (aas) at (-2, -1) {$\sla{\I,\J}$};
\node (bb) at (-2, 1.5) {$\bb_\J$};
\node (bba) at (-2, 4) {$\sla{h,\J}$};
\node (c) at (1, -3.5) {$\dslaml{\J}$};
\node (cs) at (1, -1) {$\dsla{\I,\J}$};
\node (f) at (1, 1.5) {$\dd_\J$};
\node (csa) at (1, 4) {$\dsla{h,\J}$};
\node (xpf) at (4, 1.5) {$\dd$};
\node (xpc) at (4, -3.5) {$\covm$};
\node (xpcs) at (4, -1) {$\dslago{\I}$};
\node (xpfa) at (4, 4) {$\cofn$};
\node (cont) at (6, 4) {$\cc$};
\foreach \from/\to in {aas/bb,aas/cs,cs/f,aas/cs} \draw [line width=.15cm,
white] (\from) -- (\to);
\foreach \from/\to in {ale/a,a/as, aa/aas, c/cs, b/bb, a/aa, aa/c, bb/f, as/b, aas/bb, cs/f, as/aas, aas/cs,f/xpf,cs/xpcs,c/xpc,xpcs/xpf,xpc/xpcs, b/ba,bb/bba,f/csa,xpf/xpfa,ba/bba,bba/csa,csa/xpfa,xpfa/cont} \draw [->] (\from) -- (\to);

\draw[color=blue,line width=.05cm] (-3.5,-4)--(-3.5,5);
\draw[color=blue,line width=.05cm] (2.5,-4)--(2.5,5);
\draw[circle, fill=cadmiumorange,color=cadmiumorange] (-5.8,0) circle (0.4);
\draw[circle, fill=cadmiumorange,color=cadmiumorange] (4.8,0) circle (0.4);
\draw[circle, fill=cadmiumorange,color=cadmiumorange] (-0.5,0) circle (0.4);
\node at (-0.5,0) {$\kappa$};
\node at (4.8,0) {$\lambda$};
\node at (-5.8,0) {$\aleph_1$};
\end{tikzpicture}
\begin{tikzpicture}[scale=0.6,every node/.style={scale=0.6}]
\node (ale) at (-7, -3.5) {$\aleph_1$};
\node (a) at (-5, -3.5) {$\pp$};
\node (as) at (-5, -1) {$\sla{\I,\fin}$};
\node (b) at (-5, 1.5) {$\bb$};
\node (ba) at (-5, 4) {$\nonm$};
\node (aa) at (-2, -3.5) {$\slamg{\J}$};
\node (aas) at (-2, -1) {$\sla{\I,\J}$};
\node (bb) at (-2, 1.5) {$\bb_\J$};
\node (bba) at (-2, 4) {$\sla{h,\J}$};
\node (c) at (1, -3.5) {$\dslaml{\J}$};
\node (cs) at (1, -1) {$\dsla{\I,\J}$};
\node (f) at (1, 1.5) {$\dd_\J$};
\node (csa) at (1, 4) {$\dsla{h,\J}$};
\node (xpf) at (4, 1.5) {$\dd$};
\node (xpc) at (4, -3.5) {$\covm$};
\node (xpcs) at (4, -1) {$\dslago{\I}$};
\node (xpfa) at (4, 4) {$\cofn$};
\node (cont) at (6, 4) {$\cc$};
\foreach \from/\to in {aas/bb,aas/cs,cs/f,aas/cs} \draw [line width=.15cm,
white] (\from) -- (\to);
\foreach \from/\to in {ale/a,a/as, aa/aas, c/cs, b/bb, a/aa, aa/c, bb/f, as/b, aas/bb, cs/f, as/aas, aas/cs,f/xpf,cs/xpcs,c/xpc,xpcs/xpf,xpc/xpcs, b/ba,bb/bba,f/csa,xpf/xpfa,ba/bba,bba/csa,csa/xpfa,xpfa/cont} \draw [->] (\from) -- (\to);

\draw[color=blue,line width=.05cm] (-3.5,-4)--(-3.5,5);
\draw[color=blue,line width=.05cm] (-0.5,-4)--(-0.5,5);
\draw[color=blue,line width=.05cm] (2.5,-4)--(2.5,5);
\draw[circle, fill=cadmiumorange,color=cadmiumorange] (-5.8,0) circle (0.4);
\draw[circle, fill=cadmiumorange,color=cadmiumorange] (4.8,0) circle (0.4);
\draw[circle, fill=cadmiumorange,color=cadmiumorange] (-2.65,0) circle (0.4);
\draw[circle, fill=cadmiumorange,color=cadmiumorange] (0.35,0) circle (0.4);
\node at (-2.65,0) {$\kappa_1$};
\node at (0.35,0) {$\kappa_2$};
\node at (4.8,0) {$\lambda$};
\node at (-5.8,0) {$\aleph_1$};
\end{tikzpicture}
\caption{Effect of adding $\lambda$~many Cohen reals.}
\label{Cohen_var_val:b}
\end{figure}
\end{teorema}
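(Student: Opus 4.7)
The strategy is to apply the iteration theorem \Cref{seq:sn} to produce, inside the Cohen extension, suitably designed ideals with prescribed slalom numbers, extending Canjar's~\cite{Canjar2} construction of maximal ideals with controlled $\bfrak_J$ and $\dfrak_J$.

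For part~(a), I would recursively build, in $V[\Cbb_\lambda]$, a maximal ideal $J$ on $\omega$ as an increasing union $J = \bigcup_{\alpha<\kappa} J_\alpha$ of countably generated ideals, with a careful bookkeeping of $\Cbb_\lambda$-names for families of size ${<}\kappa$ of slaloms. The assumption $\lambda^{<\kappa} = \lambda$ guarantees that every such name actually lives in an intermediate Cohen subextension of cardinality~${<}\kappa$, so at stage~$\alpha$ I can diagonalize against the $\alpha$-th candidate family using a Cohen real fresh over the partial ideal $J_\alpha$. Diagonalizing against $\slalome$-type families forces $\slalome(\star,J)\geq\kappa$, while the Canjar-style cofinal construction simultaneously yields $\slalomt(h,J)\leq\kappa$ for every $J$-unbounded~$h$. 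The ZFC chain $\slalome(\star,J)\leq\slalomt^\perp(h,J)\leq\slalomt(h,J)$ provided by the general framework (cf.~\Cref{simplediag} and the monotonicity results of \Cref{sec:LpL}) then collapses all values in between to~$\kappa$.

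For part~(b), I would run the above construction simultaneously for the two cardinals $\kappa_1$ and $\kappa_2$ (the assumption $\lambda^{<\kappa_2}=\lambda$ covers both bookkeepings) to produce, on disjoint copies of~$\omega$, maximal ideals $J_1$ and $J_2$ whose $\star$-slalom numbers equal $\kappa_1$ and $\kappa_2$, respectively. Setting $J := J_1 \oplus J_2$ and choosing~$h$ so that $\lim^{J_i} h = \infty$ for $i=1,2$ (hence $\lim^J h = \infty$), the disjoint-sum calculus from \Cref{thm:i1}---specifically items~(i)--(j) for the $\star$-versions and the analogous formulas for fixed~$h$---gives $\slalomt(\star,J)=\slalomt(h,J) = \max\{\kappa_1,\kappa_2\} = \kappa_2$ and $\slalome(\star,J)=\slalome(h,J)=\min\{\kappa_1,\kappa_2\}=\kappa_1$. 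The dual $\perp$-values are pinned by the same argument together with the ZFC inequalities.

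The main obstacle is the recursive construction in part~(a): the Cohen-generic ideal $J$ must be simultaneously flexible enough to enforce the lower bound $\slalome(\star,J)\geq\kappa$ via Cohen-generic diagonalization at each stage, and rigid enough that its generators are still covered by only $\kappa$-many slaloms, which furnishes the upper bound $\slalomt(h,J)\leq\kappa$ for every $J$-unbounded~$h$. Balancing these two requirements is precisely what \Cref{seq:sn} is designed to automate. Once this simultaneous control is achieved, part~(b) reduces to a routine application of the disjoint-sum formulas in \Cref{thm:i1}.
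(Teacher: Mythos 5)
Your overall strategy matches the paper's: cite \Cref{seq:sn} for part~(a) and reduce part~(b) to disjoint-sum arithmetic via $J = J_1 \oplus J_2$ and the formulas of \Cref{L3.5} and \Cref{L6.3}. Part~(b) as you describe it is essentially the paper's proof verbatim.

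The gap is in how you bridge ``$\Cbb_\lambda$ adds $\lambda$-many Cohen reals'' to ``\Cref{seq:sn} applies with $\cf(\pi)=\kappa$.'' The hypothesis of \Cref{seq:sn} asks for a $\subsetdot$-increasing chain $\seqn{\Por_\alpha}{\alpha\le\pi}$ with $\Por_\pi = \bigcup_{\alpha<\pi}\Por_\alpha$, each $\Por_{\alpha+1}$ adding a Cohen real over $V_\alpha$, and $\cf(\pi)=\kappa$. The paper obtains this by the reindexing observation $\Cbb_\lambda \cong \Cbb_{\lambda\kappa}$ (valid since $\lambda^{<\kappa}=\lambda$ forces $\kappa\le\lambda$, so $|\lambda\kappa|=\lambda$), after which $\Cbb_{\lambda\kappa}$ is a FS iteration of $\Cbb$ of length $\lambda\kappa$ and one invokes \Cref{FS:sn}. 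Your description replaces this with a $\kappa$-stage recursion $J=\bigcup_{\alpha<\kappa}J_\alpha$ carried out inside $V[\Cbb_\lambda]$, which conflates two lengths that the proof keeps separate: the length $\lambda\kappa$ over which the bookkeeping enumerates all $\lambda$-many names of small families (needed for $\slalome(\star,J)\ge\kappa$), and the cofinality $\kappa$ giving the cofinal subfamily of Cohen slaloms witnessing $\slalomt(h,J)\le\kappa$. A recursion of literal length $\kappa$ cannot enumerate $\lambda$-many candidate families when $\lambda>\kappa$, so as written it does not produce the lower bound $\slalome(\star,J)\ge\kappa$. Once you insert the reindexing $\Cbb_\lambda\cong\Cbb_{\lambda\kappa}$ and point out $\cf(\lambda\kappa)=\kappa$, the appeal to \Cref{seq:sn}(c) (or \Cref{FS:sn}) becomes legitimate and the rest of the argument goes through as you sketch it.
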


The ideals satisfying~\ref{mCohenb} are obtained as sums of ideals from~\ref{mCohena}, where we use the characterization of the slalom numbers for sum of ideals (see~\Cref{thm:i1}). 

The general result \Cref{seq:sn} can be applied to any iteration adding Cohen reals. For more applications, we consider models obtained by FS (finite support) iterations and more sophisticated techniques like matrix iterations and coherent systems of finite support iterations~\cite{FFMM,mejvert}. We bring forcing constructions from~\cite{mejiamatrix,mejvert,BCM,GKMSsplit} and use our powerful theorem to prove the behavior of slalom numbers in these models. 

In the final section, we present some open problems and discussions.

\section{Preliminaries}

We introduce basic notation.
\begin{enumerate}[label=\rm(T\arabic*)]
    \item For $A\subseteq \PP(M)$, denote $A^{\rm c}:= \pts(M) \menos A$ and $A^\dual := \set{M\smallsetminus a}{a\in A}$.
    
    \item An \emph{ideal on $M$} is a~family $I\subseteq \PP(M)$ that is closed under taking subsets, closed under finite unions, containing all finite subsets of~$M$ but with $M\notin\I$. A~$\sigma$-ideal on~$M$, usually considered on a~Polish space~$M$, is an ideal on~$M$ which is closed under countable unions. 
    
    \item We say that $A$ has the \emph{finite union property (FUP)} whenever there is an ideal $I$ on~$M$ such that $A\subseteq I$.
    
    \item For an ideal $J$ on $M$, denote $J^+:= J^{\rm c} =\Pcal(M)\smallsetminus J$ (the collection of \emph{$J$-positive sets}),
    $J^\dual$ is the dual filter of $I$ and
    $J^{\rm dc}=\Pcal(M)\smallsetminus J^\dual = \set{M\smallsetminus a}{a\in J^+}$. We often extend this notation to arbitrary collections $J\subseteq \pts(M)$ that are not ideals.

    \item For a~function $\varphi\colon M\to N$ and $A\subseteq\pts(M)$, denote
    $\varphi^\rightarrow(A):=\set{y\subseteq N}{\varphi^{-1}\llbracket y\rrbracket\in A}$.

    \item Let $\sqsubset$ be a~relation. If $x$ and $y$ are two functions with the same domain $w$, denote 
    $\|x \sqsubset y\|:=\set{i\in w}{x(i) \sqsubset y(i)}.$
\end{enumerate}

We say that $\Rbf=\la X, Y, {\sqsubset}\ra$ is a~\textit{relational system} if it consists of two non-empty sets $X$ and $Y$ and a~relation~$\sqsubset$.
\begin{enumerate}[label=(\arabic*)]
    \item A~set $F\subseteq X$ is \emph{$\Rbf$-bounded} if $(\exists y\in Y) (\forall x\in F)\ x \sqsubset y$. 
    \item A~set $D\subseteq Y$ is \emph{$\Rbf$-dominating} if $(\forall x\in X) (\exists y\in D)\ x \sqsubset y$. 
\end{enumerate}

We associate two cardinal characteristics with this relational system $\Rbf$: 
\begin{align*}
\bfrak(\Rbf)&:=\min\set{|F|}{\text{$F\subseteq X$ is $\Rbf$-unbounded}},
&&\text{the \emph{unbounding number of\/ $\Rbf$}, and}\\
\dfrak(\Rbf)&:=\min\set{|D|}{\text{$D\subseteq Y$ is $\Rbf$-dominating}},
&&\text{the \emph{dominating number of\/ $\Rbf$}.}
\end{align*}
The dual of $\Rbf$ is defined by $\Rbf^\perp:=\la Y, X, {\sqsubset^\perp}\ra$ where $y\sqsubset^\perp x$ iff $x\not\sqsubset y$. Note that $\bfrak(\Rbf^\perp)=\dfrak(\Rbf)$ and $\dfrak(\Rbf^\perp)=\bfrak(\Rbf)$.

The cardinal $\bfrak(\Rbf)$ may be undefined, in which case we write $\bfrak(\Rbf) = \infty$, likewise for $\dfrak(\Rbf)$. Concretely, $\bfrak(\Rbf) = \infty$ iff $\dfrak(\Rbf) =1$; and $\dfrak(\Rbf)= \infty$ iff $\bfrak(\Rbf) =1$.

The cardinal characteristics associated with an ideal can be characterized by relational systems.

\begin{example}\label{exm:Iwf}
For $\Iwf\subseteq\pts(X)$, define the relational systems:
\begin{enumerate}[label=\rm(\arabic*)]
    \item $\Iwf:=\la\Iwf,\Iwf,{\subseteq}\ra$, which is a~directed partial order when $\Iwf$ is closed under unions (e.g.\ an ideal).

    \item $\Cbf_\Iwf:=\la X,\Iwf,{\in}\ra$.
\end{enumerate}
Whenever $\Iwf$ is an ideal on $X$,
\begin{multicols}{2}
\begin{enumerate}[label=\rm(\alph*)]
    \item $\bfrak(\Iwf)=\add\Iwf$, \emph{the additivity of $\Iwf$}.

    \item $\dfrak(\Iwf)=\cof\Iwf$, \emph{the cofinality of~$\Iwf$}.
    
    \item $\dfrak(\Cbf_\Iwf)=\cov\Iwf$, \emph{the covering of~$\Iwf$}. 

    \item $\bfrak(\Cbf_\Iwf)=\non\Iwf$, \emph{the uniformity of $\Iwf$}.
\end{enumerate}
\end{multicols}
\end{example}

The Tukey connection is a~practical tool to determine relations between cardinal characteristics. 
Let $\Rbf=\la X,Y,{\sqsubset}\ra$ and $\Rbf'=\la X',Y',{\sqsubset'}\ra$ be two relational systems. We say that 
$(\Psi_-,\Psi_+)\colon\Rbf\to\Rbf'$ 
is a~\emph{Tukey connection from $\Rbf$ into $\Rbf'$} if 
 $\Psi_-\colon X\to X'$ and $\Psi_+\colon Y'\to Y$ are functions such that
 \[
 (\forall x\in X)(\forall y'\in Y')\ 
 \Psi_-(x) \sqsubset' y' \Rightarrow x \sqsubset \Psi_+(y').
 \]
The \emph{Tukey order} between relational systems is defined by
$\Rbf\leqT\Rbf'$ iff there is a~Tukey connection from $\Rbf$ into $\Rbf'$. \emph{Tukey equivalence} is defined by $\Rbf\eqT\Rbf'$ iff $\Rbf\leqT\Rbf'$ and $\Rbf'\leqT\Rbf$.

\begin{fct}\label{fct:Tukey}
Assume that\/ $\Rbf=\la X,Y,{\sqsubset}\ra$ and\/ $\Rbf'=\la X',Y',{\sqsubset'}\ra$ are relational systems and that\/ $(\Psi_-,\Psi_+)\colon\allowbreak \Rbf\to\Rbf'$ is a~Tukey connection.
\begin{enumerate}[label=\rm(\alph*)]
    \item $(\Psi_+,\Psi_-)\colon(\Rbf')^\perp\to\Rbf^\perp$ is a~Tukey connection.
    \item If $E\subseteq X$ is\/ $\Rbf$-unbounded then\/ $\Psi_-[E]$ is\/ $\Rbf'$-unbounded.
    \item If $D'\subseteq Y'$ is\/ $\Rbf'$-dominating, then\/ $\Psi_+[D']$ is\/ $\Rbf$-dominating.
    \qed
\end{enumerate}
\end{fct}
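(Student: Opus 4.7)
The plan is to observe that all three items follow by routine unwinding of the definition of Tukey connection, so the whole proof is essentially contrapositive/direct chasing; there is no real obstacle, only bookkeeping to keep the duals straight.

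First I would prove item \textrm{(a)}. By definition, $(\Psi_+,\Psi_-)\colon(\Rbf')^\perp\to\Rbf^\perp$ is a Tukey connection iff for every $y'\in Y'$ and every $x\in X$, $\Psi_+(y')\sqsubset^\perp x$ implies $y'\sqsubset'^\perp\Psi_-(x)$, which unwinds to: $x\not\sqsubset\Psi_+(y')$ implies $\Psi_-(x)\not\sqsubset' y'$. This is exactly the contrapositive of the defining implication of the Tukey connection $(\Psi_-,\Psi_+)\colon\Rbf\to\Rbf'$, so the claim is immediate.

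For item \textrm{(b)}, I would argue by contraposition. Suppose $\Psi_-[E]$ is $\Rbf'$-bounded; then there exists $y'\in Y'$ with $\Psi_-(x)\sqsubset' y'$ for all $x\in E$. By the Tukey connection, this yields $x\sqsubset\Psi_+(y')$ for all $x\in E$, witnessing that $E$ is $\Rbf$-bounded. For item \textrm{(c)}, I would argue directly: given any $x\in X$, apply $\Rbf'$-dominance of $D'$ to $\Psi_-(x)\in X'$ to obtain $y'\in D'$ with $\Psi_-(x)\sqsubset' y'$, and then the Tukey connection gives $x\sqsubset\Psi_+(y')$, so $\Psi_+[D']$ is $\Rbf$-dominating.

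The only point that requires any care is item \textrm{(a)}, where one has to be careful to flip both the relation and the roles of $\Psi_-,\Psi_+$; but this is simply the contrapositive of the defining implication, and items \textrm{(b)} and \textrm{(c)} can alternatively be deduced formally from \textrm{(a)} by noting that an $\Rbf$-unbounded set in $X$ is the same as an $\Rbf^\perp$-dominating set in $X$ (for the right side of $\Rbf^\perp$), and vice versa.
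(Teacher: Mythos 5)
Your proof is correct and is exactly the routine unwinding of definitions the paper has in mind (the paper omits the proof with a $\qed$). Parts (a)--(c) are all fine: (a) is the contrapositive of the defining implication of the original Tukey connection, and (b), (c) are the standard contrapositive/direct chases. The only slight imprecision is your closing remark that (b) and (c) "can alternatively be deduced formally from (a)": item (a) alone does not give you both for free; rather, once you prove (a) and one of (b), (c), the other follows (since $\Rbf$-unbounded in $X$ means $\Rbf^\perp$-dominating, and $\Rbf$-dominating in $Y$ means $\Rbf^\perp$-unbounded, as you note). This doesn't affect the correctness of your argument since you proved both (b) and (c) directly anyway.
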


\begin{corollary}\label{cor:Tukeyval}
Let\/ $\Rbf=\la X, Y,{\sqsubset}\ra$ and\/ $\Rbf'=\la X', Y',{\sqsubset'}\ra$ be relational systems. Then
\begin{enumerate}[label=\rm(\alph*)]
    \item $\Rbf\leqT\Rbf'$ implies\/ $(\Rbf')^\perp\leqT\Rbf^\perp$.
    \item\label{Tukeyval:b}
    $\Rbf\leqT\Rbf'$ implies\/ $\bfrak(\Rbf')\leq\bfrak(\Rbf)$ and\/ $\dfrak(\Rbf)\leq\dfrak(\Rbf')$.
    \item $\Rbf\eqT\Rbf'$ implies\/ $\bfrak(\Rbf')=\bfrak(\Rbf)$ and\/ $\dfrak(\Rbf)=\dfrak(\Rbf')$.
    \qed
\end{enumerate}
\end{corollary}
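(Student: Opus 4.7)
The plan is to observe that all three statements follow immediately from Fact~\ref{fct:Tukey}, essentially by taking cardinalities (and in case (c), applying (b) in both directions).

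First I would handle (a). Given a Tukey connection $(\Psi_-,\Psi_+)\colon \Rbf\to\Rbf'$, Fact~\ref{fct:Tukey}(a) directly yields the Tukey connection $(\Psi_+,\Psi_-)\colon (\Rbf')^\perp\to\Rbf^\perp$, so $(\Rbf')^\perp \leqT \Rbf^\perp$. This is a one-line verification from the defining implication of a Tukey connection, since swapping the roles of $\sqsubset$ and $\sqsubset^\perp$ exactly reverses the direction of the arrow.

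For (b), fix a Tukey connection $(\Psi_-,\Psi_+)\colon \Rbf\to\Rbf'$. To see $\bfrak(\Rbf')\leq \bfrak(\Rbf)$, take any $\Rbf$-unbounded $E\subseteq X$ with $|E|=\bfrak(\Rbf)$; Fact~\ref{fct:Tukey}(b) says $\Psi_-[E]$ is $\Rbf'$-unbounded, and clearly $|\Psi_-[E]|\leq |E|$, so $\bfrak(\Rbf')\leq |E|=\bfrak(\Rbf)$. Symmetrically, for $\dfrak(\Rbf)\leq\dfrak(\Rbf')$, take $\Rbf'$-dominating $D'\subseteq Y'$ with $|D'|=\dfrak(\Rbf')$; by Fact~\ref{fct:Tukey}(c) the image $\Psi_+[D']$ is $\Rbf$-dominating of size at most $|D'|$, yielding $\dfrak(\Rbf)\leq \dfrak(\Rbf')$. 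A small caveat to handle is the case where $\bfrak(\Rbf) = \infty$ or $\dfrak(\Rbf') = \infty$ (i.e.\ undefined), but in those cases the inequality is vacuous, and the case when one side is $1$ is also unproblematic since the corresponding trivial witness transports under the Tukey connection.

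Finally, (c) is immediate: $\Rbf \eqT \Rbf'$ means $\Rbf\leqT\Rbf'$ and $\Rbf'\leqT\Rbf$, so applying (b) in both directions yields $\bfrak(\Rbf')=\bfrak(\Rbf)$ and $\dfrak(\Rbf)=\dfrak(\Rbf')$. There is no real obstacle here; the whole corollary is purely bookkeeping, and the only thing to be mindful of is the convention $\bfrak(\Rbf)=\infty$, $\dfrak(\Rbf)=\infty$ for undefined cases, which is handled uniformly by interpreting the inequalities in the extended cardinal ordering.
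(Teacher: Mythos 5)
Your proof is correct and is exactly the argument the paper has in mind: the corollary is stated with an immediate \qed, being a direct consequence of \Cref{fct:Tukey} via the image-of-witness argument you spell out, with (c) obtained by applying (b) in both directions. Your handling of the $\infty$ convention is a reasonable extra caveat and does not change the substance.
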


We use a~couple of types of products of relational systems for our main results. 

\begin{definition}\label{def:prodrel}
Let $\overline{\Rbf}=\seqn{\Rbf_i}{i\in K}$ be a~sequence of relational systems $\Rbf_i = \la X_i,Y_i,{\sqsubset_i}\ra$. Define:
\begin{enumerate}[label=\rm(P\arabic*)]
    \item $\bigotimes\overline{\Rbf} = \bigotimes_{i\in K} \Rbf_i:=\left\la\prod_{i\in K}X_i, \prod_{i\in K}Y_i, {\sqsubset^\otimes}\right\ra$ where $x \sqsubset^\otimes y$ iff $x_i\sqsubset_i y_i$ for all $i\in K$.

    \item $\bigboxtimes\overline{\Rbf} = \bigboxtimes_{i\in K} \Rbf_i:=\left\la\prod_{i\in K}X_i, \prod_{i\in K}Y_i, {\sqsubset^\boxtimes}\right\ra$ where $x \sqsubset^\boxtimes y$ iff $x_i\sqsubset_i y_i$ for some $i\in K$.
\end{enumerate}
For two relational systems $\Rbf$ and $\Rbf'$, write $\Rbf\otimes \Rbf'$ and $\Rbf\boxtimes \Rbf'$. 

When $\Rbf_i = \Rbf$ for all $i\in K$, we write ${}^K\Rbf := \bigotimes \overline{\Rbf}$.

Notice that $\bigboxtimes\overline{\Rbf} =\left(\bigotimes_{i\in K}\Rbf_i^\perp\right){}^\perp$.
\end{definition}

\begin{fact}[{\cite{CarMej23}}]\label{products}
Let\/ $\overline{\Rbf}$ be as in \Cref{def:prodrel}. Then
\begin{enumerate}[label=\rm(\alph*)]
    \item $\sup_{i\in K}\dfrak(\Rbf_i)\leq\dfrak(\bigotimes \overline{\Rbf})\leq\prod_{i\in K}\dfrak(\Rbf_i)$ and\/ $\bfrak(\bigotimes \overline{\Rbf})=\min_{i\in K}\bfrak(\Rbf_i)$.
    \item $\dfrak(\bigboxtimes \overline{\Rbf})=\min_{i\in K}\dfrak(\Rbf_i)$ and\/ $\sup_{i\in K}\bfrak(\Rbf_i)\leq \bfrak(\bigboxtimes \overline{\Rbf})\leq \prod_{i\in K}\bfrak(\Rbf_i)$.
    \qed
\end{enumerate} 
\end{fact}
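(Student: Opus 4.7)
The plan is to prove part~(a) directly from the definitions, then derive part~(b) by duality, using the identity $\bigboxtimes\overline{\Rbf}=\bigl(\bigotimes_{i\in K}\Rbf_i^\perp\bigr)^\perp$ already noted at the end of \Cref{def:prodrel} together with the standard $\bfrak(\Rbf^\perp)=\dfrak(\Rbf)$ and $\dfrak(\Rbf^\perp)=\bfrak(\Rbf)$.

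The workhorse for (a) is the elementary observation that $F\subseteq\prod_{i\in K}X_i$ is $\bigotimes\overline{\Rbf}$-bounded by some $y\in\prod_{i\in K}Y_i$ if and only if $\pi_i[F]$ is $\Rbf_i$-bounded by $y_i$ for every $i\in K$. From this, the upper bound $\dfrak(\bigotimes\overline{\Rbf})\leq\prod_{i\in K}\dfrak(\Rbf_i)$ is immediate: if $D_i\subseteq Y_i$ is $\Rbf_i$-dominating of size $\dfrak(\Rbf_i)$, then $\prod_{i\in K}D_i$ is $\bigotimes\overline{\Rbf}$-dominating. For the lower bounds $\dfrak(\Rbf_i)\leq\dfrak(\bigotimes\overline{\Rbf})$ and $\bfrak(\bigotimes\overline{\Rbf})\leq\bfrak(\Rbf_i)$, I would exhibit, for each $i\in K$, a Tukey connection $\Rbf_i\to\bigotimes\overline{\Rbf}$ given by $\Psi_+:=\pi_i$ on the $Y$-side and $\Psi_-\colon X_i\to\prod_{j\in K}X_j$ sending $x$ to the tuple equal to $x$ at coordinate $i$ and to some fixed $x^*_j\in X_j$ elsewhere (nonemptiness of the $X_j$'s is built into the definition of a relational system); the verification that $\Psi_-(x)\sqsubset^\otimes y$ implies $x\sqsubset_i\pi_i(y)$ is immediate from the key observation. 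Invoking \Cref{cor:Tukeyval}\ref{Tukeyval:b} then yields both inequalities at once.

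The remaining inequality $\min_{i\in K}\bfrak(\Rbf_i)\leq\bfrak(\bigotimes\overline{\Rbf})$ is contrapositive: if $|F|<\min_{i\in K}\bfrak(\Rbf_i)$ then $|\pi_i[F]|<\bfrak(\Rbf_i)$ for every $i$, so each $\pi_i[F]$ is $\Rbf_i$-bounded by some $y_i\in Y_i$, and assembling the coordinate bounds yields a tuple $y\in\prod_{i\in K}Y_i$ that $\bigotimes\overline{\Rbf}$-bounds $F$. This closes part~(a).

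Part~(b) then becomes a bookkeeping exercise. Applying part~(a) to the sequence $\seqn{\Rbf_i^\perp}{i\in K}$ gives both $\bfrak\bigl(\bigotimes_{i\in K}\Rbf_i^\perp\bigr)=\min_{i\in K}\bfrak(\Rbf_i^\perp)$ and the sandwich $\sup_{i\in K}\dfrak(\Rbf_i^\perp)\leq\dfrak\bigl(\bigotimes_{i\in K}\Rbf_i^\perp\bigr)\leq\prod_{i\in K}\dfrak(\Rbf_i^\perp)$; taking perps via $\bigboxtimes\overline{\Rbf}=\bigl(\bigotimes_{i\in K}\Rbf_i^\perp\bigr)^\perp$ and translating $\bfrak\leftrightarrow\dfrak$ across the duality delivers the stated equalities and inequalities verbatim. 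I do not expect a genuine obstacle; the mildest point of care is simply keeping the perp-dualization straight when transferring between $\bigotimes$ and $\bigboxtimes$, and making sure the fixed-coordinate embeddings underpinning the Tukey connections are well defined, both of which the relational-system framework already provides.
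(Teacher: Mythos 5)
Your proof is correct. The paper itself does not give a proof of this fact---it is stated with a \verb|\qed| and a citation to~\cite{CarMej23}, so there is no in-paper argument to compare against. Your argument is the natural one: the one-coordinate Tukey embeddings $\Rbf_i\to\bigotimes\overline{\Rbf}$ (pad with fixed $x^*_j$, project on the $Y$-side) deliver the lower bound on $\dfrak$ and the upper bound on $\bfrak$ simultaneously via \Cref{cor:Tukeyval}~\ref{Tukeyval:b}, the product of dominating families gives the upper bound on $\dfrak$, a direct ``bound each projection separately'' argument gives the missing $\min_i\bfrak(\Rbf_i)\leq\bfrak(\bigotimes\overline{\Rbf})$, and part (b) then falls out from $\bigboxtimes\overline{\Rbf}=\bigl(\bigotimes_{i\in K}\Rbf_i^\perp\bigr)^\perp$ together with $\bfrak(\Rbf^\perp)=\dfrak(\Rbf)$ and $\dfrak(\Rbf^\perp)=\bfrak(\Rbf)$. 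All the steps check, and the one point of care you flag (nonemptiness of the $X_j$ for the padding) is indeed covered by the paper's definition of a relational system.
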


\section{Slalom numbers}\label{sec:LpL}

In this section, we present a general framework for slalom numbers and prove general theorems for connections between them.

\begin{definition}\label{def:Lc}
Let $A\subseteq \pts(a)$ and let $\sqsubset$ be a~relation. For two functions $x$ and $s$ with domain $a$, define the relation
\[
x \sqsubset^A s
\text{ iff }
\|x \sqsubset s\|=\set{i\in a}{x(i)\sqsubset s(i)}\in A.
\]
Most of the time we use this notation when $a=\omega$.

For non-empty sets $D$ and $E$ of functions with domain $\omega$ (or some other set $a$ in general), we consider the relational system $\la D,E,{\in^A}\ra$ and denote its associated cardinal characteristics by 
\[
\dfrak_A(D,E):=\dfrak(D,E,{\in^A})\text{ and }\bfrak_A(D,E):=\bfrak(D,E,{\in^A}).
\]
We will refer to any cardinal characteristic of this form as a~\emph{slalom number}. Any function $s\in E$ is called an \emph{slalom} since $s(i)$ is a set typically contained in some domain where $x(i)$ lives for all $x\in D$. The expresion ``$x \in^A s$" indicates that $x$ passes through the slalom $s$ precisely at coordinates on a set in $A$. 
We concentrate mostly on the $\dfrak$-numbers and the following situation:
\begin{enumerate}[label=\rm(\arabic*)]
    \item\label{not:b} For some sequence $b=\seqn{b(i)}{i<\omega}$ of non-empty sets, we consider $D\subseteq \prod b:=\prod_{n<\omega}b(n)$. Most of the time $D = \prod b$ but there are some exceptions, e.g.\ when $D$ is the set of $\oneone$ functions in $\Baire$. We typically assume $E\subseteq\prod_{i<\omega}\pts(b(i))$ for simplicity, but this is not required because $\la D, E, {\in^A}\ra \eqT \la D, E', {\in^A}\ra$ where $E'$ is the collection of all sequences of the form $\seqn{s(n)\cap b(n)}{n<\omega}$ for some $s\in E$, see \Cref{restE}.

    In the case $D = \prod b$, we replace $D$ by $b$ in the notation for the relational system and its cardinal characteristics, i.e., $\la b, E,{\in^A}\ra := \la \prod b, E,{\in^A}\ra$, $\dfrak_A(b,E):= \dfrak_A(\prod b,E)$, and likewise for the $\bfrak$-number. When $b$ is the constant sequence of a~set~$a$, we replace $b$ by $a$ in the previous notation, e.g.\ we write 
    $\dfrak_A(a,E)$ and $\bfrak_A(a,E)$; when $a=\omega$, we omit $\omega$ in the cardinal characteristics, i.e., we just write $\dfrak_A(E)$ and~$\bfrak_A(E)$.

   We are interested in the case when $b(i)$ is countable for all $i<\omega$, but we do not need to assume this all the time.

    \item\label{not:ideal} The set $A$ is associated to an ideal $J$ on the natural numbers. In fact, we are only interested in the case when $A = \dfil{J}$ (the dual filter of $J$), or $A = J^+$ (the collection of positive sets). Considering each case, we denote:
    \begin{align}\label{eq:Lc}
       \Lc_J(D,E) & := \la D,E,{\in^{\dfil{J}}}\ra,  &
       \pLc_J(D,E) & := \la D,E,{\in^{J^+}}\ra, \nonumber\\
       \slalomt(D,E,J) & := \dfrak_{\dfil{J}}(D,E), &
       \slalome(D,E,J) & := \dfrak_{J^+}(D,E),\\
       \slalomt^\perp(D,E,J) & := \bfrak_{\dfil{J}}(D,E), &
       \slalome^\perp(D,E,J) & := \bfrak_{J^+}(D,E). \nonumber
    \end{align}
    We call these relational systems \emph{localization} and \emph{pseudo-localization}, respectively. 
    Like in~\ref{not:b}, we replace $D$ by $b$ when $D=\prod b$ and, in addition, we omit $b$ when it is the constant sequence $\omega$, i.e., we write $\slalome(E,J)$, $\slalomt(E,J)$. We allow this notation when $J$ is not an ideal.

    \item 
    For the set $E$, we consider the following when $D\subseteq\prod b$:
    \begin{enumerate}[label=(\arabic{enumi}\alph*)]
        \item $E= \prod\bar{I} = \prod_{n<\omega}I_n$ when $\bar{I}=\seqn{I_n}{n<\omega}$ is a~sequence of ideals on $\omega$, or more generally, each~$I_n\subseteq\pts(b(n))$ for some set $b(n)$ (not necessarily an ideal). In this case, we replace~$E$ by~$\bar{I}$ in~\eqref{eq:Lc}, and by~$I$ in case $\bar{I}$~is the constant sequence of $I$, i.e., we write $\slalome(D,I,J)$ and $\slalomt(D,I,J)$. The most common particular case is the one with $D=\Baire$, i.e., $\slalome(I,J)$ and $\slalomt(I,J)$.

        \item For some $h\in\Baire$, let $E=\Scal(b,h):=\prod_{n\in \omega} [b(n)]^{\leq h(n)}$. We replace $E$ by~$h$ in~\eqref{eq:Lc}, i.e., we write $\slalome(D,h,J)$ and $\slalomt(D,h,J)$. We often consider $\slalome(h,J)$ and $\slalomt(h,J)$ with $D=\Baire$. Another relevant particular case is 
        \begin{equation*}
        \slalomt(b,h,J) = \slalomt\left(\prod b,\Scal(b,h),J\right)\text{ and }\slalome(b,h,J) = \slalome\left(\prod b,\Scal(b,h),J\right).    
        \end{equation*}
         When $b$~is the constant sequence of a~set~$a$, we write $\Scal(a,h)$; when $b$~is the constant sequence of~$\omega$, we write~$\Scal(h)$. Notice that $\la D,\Swf(b,h), {\in^A}\ra \eqT \la D, \Swf(a,h),{\in^A}\ra$ when $a$ contains $\bigcup_{i<\omega}b(i)$, see \Cref{restE}.

        \item\label{it:constI} For some ideal $I$ on $\omega$, or on some set $a$, consider the collection $\cnt(I)$ of constant functions in ${}^\omega I$ (or in ${}^a I$). Denote:
        \begin{align}\label{eq:LK}
        \Lbf_{I,J}(D) & := \Lc_J(D,\cnt(I)),  &
        \pL_{I,J}(D) & := \pLc_J(D,\cnt(I)),\\
        \lfrak_D(I,J) & := \slalomt(D,\cnt(I),J), &
        \pfrak_D(I,J) & := \slalome(D,\cnt(I),J).
        \nonumber
        \end{align}
    Like in~\ref{not:b}, for the relational systems we omit $D$ when it is~$\Baire$, but for the cardinal characteristics $\lfrak_D(I,J)$ and $\pfrak_D(I,J)$, 
    we use a~different notation, usually associated with some property:
    in the case $D=\Baire$, we denote these numbers by $\lfrak_\Kat(I,J)$ and $\pK(I,J)$, respectively (in connection with the Kat\v{e}tov ordering); in the case that $D$ is the set of all finite-to-one functions from $\omega$ into~$\omega$, we write $\lfrak_{\rm KB}(I,J)$ and $\pfrak_{\rm KB}(I,J)$ (in connection with the Kat\v{e}tov-Blass ordering); and when $D$ is the set of all one-to-one functions from $\omega$ into $\omega$, we write $\lfrak_\oneone(I,J)$ and $\pfrak_\oneone(I,J)$, respectively. This notation appears in \cite{BorFar,Su22,BaSuZd}.
    \end{enumerate}
\end{enumerate}
We sometimes extend the notation presented above for arbitrary families $I$ and $J$ instead of ideals and consider similar definitions where the domain of the slaloms is some other set instead of $\omega$. Likewise for the slalom number defined below.

A more general approach to define slalom numbers like $\dfrak_A(D,E)$ is the following: for some collection (or property) $\Prop$ of sets of functions with domain $\omega$, define
\[
\dfrak_A(D,\Prop) := \min\set{|S|}{S\text{ satisfies }\Prop \text{ and } (\forall x\in D) (\exists s\in S)\  x\in^A s}.
\]
When $\Prop$ is ``$S \subseteq E$'', the cardinal above is just $\dfrak_A(D,E)$. Note that, in general, we do not have a~relational system for this more general setting.
In relation with~\ref{not:ideal}, we denote
    \begin{align*}
       \slalomt(D,\Prop,J) & := \dfrak_{\dfil{J}}(D,\Prop), &
       \slalome(D,\Prop,J) & := \dfrak_{J^+}(D,\Prop)
    \end{align*}
and, like in~\ref{not:b}, we use $b$ or omit $D$ when it is $\Baire$.
Two relevant properties are $\star$ and $\Prop_{\cnt}$:
\begin{itemize}
    \item
    A set $S$ satisfies~$\star$ if $S\subseteq \prod b$ and, for any $i<\omega$, the collection $\set{s(i)}{s\in S}$ has the FUP in $b(i)$.
    
    \item
    A set $S$ satisfies $\Prop_{\cnt}$ if  $S\subseteq{}^\omega\pts(a)$ is a~set of constant $\omega$-sequences and $S$ satisfies $\star$ in $a$.
\end{itemize}

In particular, we get definitions for $\slalomt(D,\star,J)$, $\slalome(D,\star,J)$, $\slalomt(b,\star,J)$, $\slalome(b,\star,J)$, $\slalomt(\star,J)$, $\slalome(\star,J)$, and, in relation with~\ref{it:constI}, we denote
\begin{align*}
\lfrak_D(\star,J)&:=\slalomt(D,\Prop_{\cnt},J),&
\pfrak_D(\star,J)&:=\slalome(D,\Prop_{\cnt},J).
\end{align*}
\end{definition}

The latter slalom numbers are easy to characterize.

\begin{lemma}\label{minpklk}
    Let $a$ be an infinite set and $D\subseteq {}^\omega a$.  
    Then\/ 
    \[\dfrak_A(D,\Prop_{\cnt}) = \min\set{\dfrak_A(D,\cnt(I))}{\text{$I$ is an ideal on $a$}}.\] 
    In particular, whenever $J\subseteq\pts(\omega)$,
    \begin{align*}
        \pfrak_D(\star,J) = & \min\set{\pfrak_D(I,J)}{\text{$I$ is an ideal on $a$}},\\
        \lfrak_D(\star,J) = & \min\set{\lfrak_D(I,J)}{\text{$I$ is an ideal on $a$}}.
    \end{align*}
\end{lemma}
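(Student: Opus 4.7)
The plan is to prove the equality by two inequalities. For the $\leq$ direction, I would fix an arbitrary ideal $I$ on $a$ and observe that every $S\subseteq\cnt(I)$ automatically satisfies $\Prop_{\cnt}$: its elements are constant $\omega$-sequences by the definition of $\cnt(I)$, and their common values form a subcollection of $I$, which in particular has the FUP in $a$, with the property witnessed by $I$ itself. Hence any $\in^A$-dominating witness for $\dfrak_A(D,\cnt(I))$ is also a witness for $\dfrak_A(D,\Prop_{\cnt})$, giving
\[
\dfrak_A(D,\Prop_{\cnt})\leq\dfrak_A(D,\cnt(I))
\]
for every ideal $I$ on $a$, and therefore also for the minimum on the right-hand side.

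For the reverse inequality, I would pick a witness $S$ for $\dfrak_A(D,\Prop_{\cnt})$ of minimum size. By the definition of $\Prop_{\cnt}$, each $s\in S$ is a constant $\omega$-sequence, so it is fully determined by its common value $\tilde s\in\pts(a)$, and the collection $\set{\tilde s}{s\in S}$ has the FUP in $a$. Unfolding the definition of FUP, this produces an ideal $I$ on $a$ with $\tilde s\in I$ for every $s\in S$, i.e., $S\subseteq\cnt(I)$. The $\in^A$-dominating property of $S$ over $D$ is preserved verbatim, so $S$ also witnesses $\dfrak_A(D,\cnt(I))\leq|S|$, and hence
\[
\min\set{\dfrak_A(D,\cnt(I))}{I\text{ is an ideal on }a}\leq\dfrak_A(D,\Prop_{\cnt}).
\]

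The two ``in particular'' clauses then follow by specialising $A$: taking $A=J^+$ turns the main formula into the equality for $\pfrak_D(\star,J)$ via the definitions in~\eqref{eq:Lc} and~\eqref{eq:LK}, and taking $A=\dfil{J}$ yields the corresponding statement for $\lfrak_D(\star,J)$. There is no real obstacle here; the argument is essentially a direct unfolding of definitions, exploiting the fact that ``$\set{\tilde s}{s\in S}$ has the FUP in $a$'' and ``there is an ideal $I$ on $a$ containing every $\tilde s$'' are literally the same statement by the definition of FUP.
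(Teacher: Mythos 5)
Your proof is correct and takes essentially the same approach as the paper: both hinge on the observation that a set $S$ of constant $\omega$-sequences satisfies $\Prop_{\cnt}$ precisely when $S\subseteq\cnt(I)$ for some ideal $I$ on $a$, which is just an unfolding of the definition of the finite union property. The specialisations $A=J^+$ and $A=\dfil{J}$ for the two ``in particular'' clauses are also the intended ones.
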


\begin{proof}
    Notice that, for any set $S$ of constant $\omega$-sequences with value in $\pts(a)$, $S$  satisfies property $\star$ iff there is some ideal $I$ on $a$ such that $S\subseteq \cnt(I)$. This fact allows us to easily prove the result.
\end{proof}

In the following section, we prove similar results for $\slalomt(\star,J)$ and $\slalome(\star,J)$.

\begin{remark}\label{remnotation}
    We list some notation used in other references.
    \begin{enumerate}[label=\rm\arabic*.]
        \item \emph{Austria--Israel notation,} see \cite{GS93,K08,KS09,KS12,KM,CKM}:
        \begin{align*}
            \cfa_{b,h} & = \slalomt(b,h,\Fin), & \cxt_{b,h} & = \slalome(b,h,\Fin),\\ 
            \vfa_{b,h} & = \slalomt^\perp(b,h,\Fin), &
            \vxt_{b,h} & = \slalome^\perp(b,h,\Fin).
        \end{align*}

        \item \emph{Higher cardinal characteristics notation,} see \cite{BBSM,BS22,TvdV25}:
        \begin{align*}
            \dfrak_h(\in^*) & = \slalomt(h,\Fin), & \bfrak_h(\in^*) & = \slalomt^\perp(h,\Fin)\\
            \dfrak_\omega^{b,h}(\in^*) & = \slalomt(b,h,\Fin), & \bfrak_\omega^{b,h}(\in^*) & = \slalomt^\perp(b,h,\Fin)\\
            \dfrak_\omega^{b,h}(\cancel{\ni^\infty}) & = \slalome(b,h,\Fin), & \bfrak_\omega^{b,h}(\cancel{\ni^\infty}) & = \slalome^\perp(b,h,\Fin).
        \end{align*}

        \item \emph{Colombian-expats notation,} see \cite{CM19,CM23,Car4E}:
        \begin{align*}
            \dlc_{b,h} & = \slalomt(b,h,\Fin), & \balc_{b,h} & = \slalome(b,h,\Fin),\\ 
            \blc_{b,h} & = \slalomt^\perp(b,h,\Fin), &
            \dalc_{b,h} & = \slalome^\perp(b,h,\Fin).
        \end{align*}

        \item \emph{Earlier Slovak notation,}\footnote{The \emph{current} Slovak notation is the one we use in this paper.} see \cite{SJ,SoSu,REP21a,REP21b}:
        \begin{align*}
            \kfrak_{I,J} & = \pfrak_\Kat(I,J), & \lambda(I,J) & = \slalome(I,J),\\ 
            \kfrak^*_{I,J} & = \pfrak_\KB(I,J). &
            &
        \end{align*}           
    \end{enumerate}
\end{remark}

We obtain general connections and inequalities as follows.

\begin{lemma}\label{lem:gen}
Let $D$, $D'$, $E$, $E'$ be sets of functions with domain $\omega$, let $\Prop$ and $\Prop'$ be properties of sets of functions with domain $\omega$, and let $A$ and $A'$ be subsets of $\pts(\omega)$.
\begin{enumerate}[label=\rm(\alph*)]
    \item\label{it:monD} If $D \subseteq D'$ then\/ $\la D,E,{\in^A}\ra\leqT \la D',E,{\in^A}\ra$. 

    In particular, $\dfrak_A(D,E)\leq\dfrak_A(D',E)$ and\/ $\bfrak_A(D',E)\leq\bfrak_A(D,E)$, even more\/ $\dfrak_A(D,\Prop)\leq \dfrak_A(D',\Prop)$.

    \item\label{it:monE} If $E \subseteq E'$ then\/ $\la D,E',{\in^A}\ra\leqT\la D,E,{\in^A}\ra$.

    In particular, $\dfrak_A(D,E')\leq\dfrak_A(D,E)$ and\/ $\bfrak_A(D,E)\leq\bfrak_A(D,E')$.

    \item\label{it:monP} If $\Prop \subseteq \Prop'$ then\/ $\dfrak_A(D,\Prop')\leq\dfrak_A(D,\Prop)$.

    \item\label{it:monA} If $A\subseteq A'$ then\/ $\la D,E,{\in^{A'}}\ra\leqT \la D,E,{\in^A}\ra$. Even more, $\dfrak_{A'}(D,\Prop)\leq \dfrak_A(D,\Prop)$. 
\end{enumerate}
\end{lemma}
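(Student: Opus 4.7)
The plan is to exhibit very simple Tukey connections built from identity and inclusion maps; all four parts reduce to tautologies once the definitions are unwound. The numerical consequences on $\dfrak_A$ and $\bfrak_A$ will then follow from \Cref{cor:Tukeyval}\ref{Tukeyval:b}, while the statements involving a property $\Prop$ must be verified directly, since the relational-system formalism does not cover them.

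For \ref{it:monD}, I would take $\Psi_-\colon D\to D'$ to be the inclusion and $\Psi_+\colon E\to E$ the identity; the defining Tukey implication $\Psi_-(x)\in^A s \Rightarrow x\in^A\Psi_+(s)$ collapses to $x\in^A s\Rightarrow x\in^A s$. The $\Prop$-variant is handled by the trivial observation that any family $S$ satisfying $\Prop$ which $\in^A$-dominates $D'$ also $\in^A$-dominates the smaller set $D$. Part \ref{it:monE} is symmetric: I would take $\Psi_-$ the identity on $D$ and $\Psi_+\colon E\to E'$ the inclusion, and again the Tukey condition is tautological.

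Part \ref{it:monP} is immediate once one reads ``$\Prop\subseteq\Prop'$'' as the statement that every set satisfying $\Prop$ also satisfies $\Prop'$: any witnessing family for $\dfrak_A(D,\Prop)$ is automatically a witnessing family for $\dfrak_A(D,\Prop')$. For \ref{it:monA}, I would take both $\Psi_-$ and $\Psi_+$ to be the identity; the implication to verify becomes $\|x\in s\|\in A \Rightarrow \|x\in s\|\in A'$, which holds since $A\subseteq A'$. The $\Prop$-variant follows by the same observation: a family that $\in^A$-dominates $D$ automatically $\in^{A'}$-dominates it.

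I do not anticipate any real obstacle; the whole lemma consists of monotonicity statements that fall out of the definitions of $\in^A$, of relational systems, and of Tukey connections. The only points requiring a bit of care are the correct reversal of direction for $\bfrak$ (already encoded in \Cref{cor:Tukeyval}) and the interpretation of ``$\Prop\subseteq\Prop'$'' as inclusion of classes of witnessing families rather than of any syntactic object.
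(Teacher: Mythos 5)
Your proof is correct and matches the paper's approach exactly: inclusion and identity maps give the Tukey connections for (a), (b), (d), with the observation for (d) that $A\subseteq A'$ turns $x\in^A s$ into $x\in^{A'}s$, while (c) and the $\Prop$-variants follow directly from the definitions.
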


\begin{proof}
The inclusion maps $\id_D\colon D\to D'$ and $\id_E\colon E\to E'$, as well as identity maps, can be used to construct the Tukey connections for \ref{it:monD}, \ref{it:monE} and~\ref{it:monA}. For the latter, note that $A\subseteq A'$ implies that $x\in^A s\imp x\in^{A'} s$. The inequalities using $\Prop$ and~\ref{it:monP} are easy to check.
\end{proof}

\begin{corollary}\label{cor:gen}
Let $J\subseteq\pts(\omega)$.
\begin{enumerate}[label=\normalfont(\alph*)]
    \item $\Lc_J(D,I)\leqT \Lbf_{I,J}(D)$, in particular, $\slalomt(D,I,J)\leq \lfrak_D(I,J)$.

    \item $\pLc_J(D,I)\leqT \pL_{I,J}(D)\leqT \Lbf_{I,J}(D)$, in particular, $\slalome(D,I,J)\leq \pfrak_D(I,J) \leq \lfrak_D(I,J)$.

    \item $\pLc_J(D,E)\leqT \Lc_J(D,E)$, in particular, $\slalome(D,E,J)\leq \slalomt(D,E,J)$. Even more, $\slalome(D,\Prop,J)\leq\slalomt(D,\Prop,J)$ for any property $\Prop$.
    \qed
\end{enumerate}
\end{corollary}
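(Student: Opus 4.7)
This corollary is a~direct application of \Cref{lem:gen}, unpacking the three notational conventions introduced in \Cref{def:Lc}. The plan is to verify two simple set-theoretic inclusions and then invoke the appropriate parts of \Cref{lem:gen} together with \Cref{cor:Tukeyval}\ref{Tukeyval:b}.

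For part~\ref{it:monD}, I would first note that $\cnt(I)\subseteq{}^\omega I=\prod_{n<\omega}I$, since constant sequences with values in $I$ form a~subcollection of all $\omega$-sequences of members of $I$. Applying \Cref{lem:gen}\ref{it:monE} with $E=\cnt(I)$, $E'={}^\omega I$, and $A=\dfil{J}$ yields $\Lc_J(D,I)\leqT\Lc_J(D,\cnt(I))=\Lbf_{I,J}(D)$, and the cardinal inequality $\slalomt(D,I,J)\leq\lfrak_D(I,J)$ follows from \Cref{cor:Tukeyval}\ref{Tukeyval:b}. For part~\ref{it:monE}, the first Tukey connection $\pLc_J(D,I)\leqT\pL_{I,J}(D)$ is the same argument with $A=J^+$ in place of $\dfil{J}$.

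The remaining Tukey connection $\pL_{I,J}(D)\leqT\Lbf_{I,J}(D)$, as well as all of part~\ref{it:monP}, reduces to the observation that $\dfil{J}\subseteq J^+$ whenever $J$ is an ideal on $\omega$: if $a\in\dfil{J}$ then $\omega\smallsetminus a\in J$, and since $\omega\notin J$ we cannot have $a\in J$, hence $a\in J^+$. Applying \Cref{lem:gen}\ref{it:monA} with $A=\dfil{J}$ and $A'=J^+$ gives $\pLc_J(D,E)\leqT\Lc_J(D,E)$, which specializes to $\pL_{I,J}(D)\leqT\Lbf_{I,J}(D)$ when $E=\cnt(I)$. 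The cardinal inequalities $\slalome(D,E,J)\leq\slalomt(D,E,J)$ and $\pfrak_D(I,J)\leq\lfrak_D(I,J)$ again follow by \Cref{cor:Tukeyval}\ref{Tukeyval:b}, and the ``even more'' statement for an arbitrary property $\Prop$ is exactly the second assertion of \Cref{lem:gen}\ref{it:monA}.

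There is no real obstacle here; the only point requiring a~moment of care is the verification that $\dfil{J}\subseteq J^+$, which uses that $\omega\notin J$ (part of the convention that ideals are proper). Everything else is bookkeeping with \Cref{def:Lc} and the monotonicity lemma.
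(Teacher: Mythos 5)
Your proof is correct and is essentially the argument the paper intends (the paper supplies no explicit proof, marking the corollary with~\qed\ as immediate from \Cref{lem:gen}). Parts (a) and the first Tukey connection of (b) are, as you say, just \Cref{lem:gen}\ref{it:monE} applied to $\cnt(I)\subseteq{}^\omega I$; the second connection of (b), all of (c), and the ``even more'' clause are \Cref{lem:gen}\ref{it:monA} applied to $\dfil{J}\subseteq J^+$. You are also right to flag $\dfil{J}\subseteq J^+$ as the one substantive check: it uses closure of $J$ under finite unions and $\omega\notin J$, so the corollary's hypothesis ``$J\subseteq\pts(\omega)$'' is really being read as ``$J$ an ideal'' (or at least a family containing no complementary pair) for (b)'s second connection and for (c), which matches the paper's tacit convention.
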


\begin{notation}
Let $J\subseteq\pts(\omega)$ (typically an ideal).
\begin{enumerate}[label=(\arabic*)]
    \item For two sets $D$ and $D'$ of functions with domain $\omega$, write
    \[
    D\sqsubseteq^J D' \text{ iff } (\forall x\in D) (\exists x'\in D')\ x'=^{\dfil{J}} x.
    \]
    \item For two sets $E$ and $E'$ of functions with domain $\omega$, write
    \[
    E\Subset^J E' \text{ iff } (\forall s\in E)(\exists s'\in E') s\subseteq^{\dfil{J}} s'.
    \]
    \item Let $\Prop$ and $\Prop'$ be two properties for sets of functions with domain $\omega$. We write $\Prop \imp^J \Prop'$ if, for any $S$ satisfying $\Prop$, there is some $S'$ satisfying $\Prop'$ such that $S\Subset^J S'$.
\end{enumerate}
\end{notation}

We use the previous notation to improve \Cref{lem:gen} when using ideals.

\begin{lemma}\label{lem:monJ}
Let $D$, $D'$, $E$, $E'$ be sets of functions with domain $\omega$, let $\Prop$ and $\Prop'$ be properties for sets of functions with domain $\omega$, and let $J$ be an ideal on $\omega$.
\begin{enumerate}[label=\rm(\alph*)]
    \item\label{mon:JD} If $D \sqsubseteq^J D'$ then 
    \begin{align*}
    \Lc_J(D,E) &\leqT \Lc_J(D',E), & \pLc_J(D,E) &\leqT \pLc_J(D',E),\\ 
    \slalomt(D,\Prop,J) &\leq \slalomt(D',\Prop,J), & \slalome(D,\Prop,J) &\leq \slalome(D',\Prop,J).    
    \end{align*}

    \item\label{mon:JE} If $E \Subset^J E'$ then 
    \begin{align*}
    \Lc_J(D,E') &\leqT \Lc_J(D,E), & \pLc_J(D,E') &\leqT \pLc_J(D,E),\\ 
    \slalomt(D,E',J) &\leq \slalomt(D,E,J), & \slalome(D,E',J) &\leq \slalome(D,E,J).    
    \end{align*}

    \item\label{mon:JP} If $\Prop \imp^J \Prop'$ and $\Prop'$~is $\subseteq$-downward closed then\/ $\slalomt(D,\Prop',J)\leq \slalomt(D,\Prop, J)$ and\/ $\slalome(D,\Prop',J)\leq \slalome(D,\Prop, J)$.

    \item\label{mon:J} If $J\subseteq J'\subseteq\pts(\omega)$ then\footnote{Here, there is no need to assume that $J$ and $J'$ are ideals.}
    \begin{align*}
    \Lc_{J'}(D,E) &\leqT \Lc_J(D,E), & \pLc_J(D,E) &\leqT \pLc_{J'}(D,E),\\ 
    \slalomt(D,\Prop,J') &\leq \slalomt(D,\Prop,J), & \slalome(D,\Prop,J) &\leq \slalome(D,\Prop,J').  
    \end{align*}
\end{enumerate}
\end{lemma}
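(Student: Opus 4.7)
The plan is to prove \ref{mon:JD} and \ref{mon:JE} by constructing explicit Tukey connections (the cardinal inequalities then follow from \Cref{cor:Tukeyval}\ref{Tukeyval:b}), to handle \ref{mon:JP} by a witness-picking argument that exploits the downward closure of $\Prop'$, and to deduce \ref{mon:J} directly from \Cref{lem:gen}\ref{it:monA}.

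For \ref{mon:JD}, I would use the hypothesis $D\sqsubseteq^J D'$ to pick, for each $x\in D$, some $\Psi_-(x)\in D'$ with $\|x=\Psi_-(x)\|\in\dfil{J}$, and set $\Psi_+:=\id_E$. The Tukey condition for both $\Lc_J$ and $\pLc_J$ reduces to: if $\|\Psi_-(x)\in s\|\in\dfil{J}$ (respectively, $\in J^+$), then $\|x\in s\|\in\dfil{J}$ (resp.\ $\in J^+$). This follows from the set inclusion
\[
\|\Psi_-(x)\in s\|\cap\|x=\Psi_-(x)\|\subseteq\|x\in s\|
\]
combined with two standard facts about ideals: first, $\dfil{J}$ is closed under finite intersections; and second, the intersection of a set $F\in\dfil{J}$ with a $J$-positive set $P$ is again $J$-positive (otherwise $P=(P\cap F)\cup(P\setminus F)$ would express $P$ as the union of two sets in $J$, a contradiction, since $P\setminus F\subseteq\omega\setminus F\in J$). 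Part \ref{mon:JE} is proved symmetrically: set $\Psi_-:=\id_D$ and choose $\Psi_+(s)\in E'$ with $\|s\subseteq\Psi_+(s)\|\in\dfil{J}$, and then use the inclusion $\|x\in s\|\cap\|s\subseteq\Psi_+(s)\|\subseteq\|x\in\Psi_+(s)\|$ together with the same two facts.

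For \ref{mon:JP}, let $S$ realize the slalom number on the right-hand side; applying the hypothesis $\Prop\imp^J\Prop'$ yields some $\tilde S'$ satisfying $\Prop'$ with $S\Subset^J\tilde S'$. For each $s\in S$ choose a witness $\phi(s)\in\tilde S'$ with $\|s\subseteq\phi(s)\|\in\dfil{J}$, and set $S':=\phi[S]\subseteq\tilde S'$. Because $\Prop'$ is $\subseteq$-downward closed, $S'$ still satisfies $\Prop'$; since $|S'|\leq|S|$, repeating the argument from \ref{mon:JE} shows that $S'$ remains $\dfil{J}$- (resp.\ $J^+$-) dominating over $D$. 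Finally, for \ref{mon:J}, observe that $J\subseteq J'$ yields $\dfil{J}\subseteq\dfil{J'}$ (by complementation) and $(J')^+\subseteq J^+$ (immediate), so \Cref{lem:gen}\ref{it:monA} delivers all the stated Tukey reductions and the corresponding cardinal inequalities. The only point that requires genuine care is the second ideal fact in the previous paragraph, which is the unique place where the ideal structure of $J$ (closure under finite unions, not just intersection) is essential; everything else is a routine bookkeeping of the Tukey machinery.
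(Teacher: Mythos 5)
Your plan and the paper's proof coincide on the construction of the Tukey connections, the witness-picking argument for~\ref{mon:JP}, and the appeal to \Cref{lem:gen}~\ref{it:monA} for~\ref{mon:J}. The two facts about ideals that you isolate (closure of $J^\dual$ under finite intersections, and that a filter set meets every $J$-positive set in a $J$-positive set) are exactly what the Tukey verifications use, and your derivation of the second fact from closure of $J$ under finite unions is correct.

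There is, however, one step in~\ref{mon:JD} that does not go through as you state it. You claim both rows of cardinal inequalities ``follow from \Cref{cor:Tukeyval}~\ref{Tukeyval:b}''; that corollary applies to the $\bfrak$- and $\dfrak$-numbers of relational systems, so it does deliver the bottom row of~\ref{mon:JE}, where the cardinals $\slalomt(D,E,J)$, etc.\ are literally $\dfrak(\Lc_J(D,E))$, etc. But in~\ref{mon:JD} the bottom row involves $\slalomt(D,\Prop,J)$ and $\slalome(D,\Prop,J)$, and for a general property $\Prop$ these are \emph{not} dominating numbers of any relational system; \Cref{def:Lc} explicitly warns that ``in general, we do not have a~relational system for this more general setting.'' So citing \Cref{cor:Tukeyval} is not a valid deduction here. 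The paper accordingly gives a separate direct argument for the $\Prop$-inequalities in~\ref{mon:JD}. Your construction already contains the repair: since you take $\Psi_+=\id_E$, any set $S$ satisfying $\Prop$ that is $\dfil{J}$- (resp.\ $J^+$-) dominating for $D'$ is, by the very same inclusion you write down applied to $s\in S$, also dominating for $D$, and of course it still satisfies~$\Prop$; this is just the argument you already use successfully in~\ref{mon:JP}. You should state that explicitly rather than invoke the Tukey corollary.
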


\begin{proof}
When $D\sqsubseteq^J D'$ and $E\Subset^J E'$, define the maps $f\colon D\to D'$ and $g\colon E\to E'$ such that, for $x\in D$ and $s\in E$, $f(x) =^{\dfil{J}} x$ and $s\subseteq^{\dfil{J}} g(s)$. These maps, along with identity maps, yield the Tukey connections for~\ref{mon:JD} and~\ref{mon:JE}. 
The inequalities at the bottom of~\ref{mon:JE} follow by \Cref{cor:Tukeyval}~\ref{Tukeyval:b}.

We show the inequalities at the bottom of~\ref{mon:JD}. Let $S$ be a~witness of $\slalomt(D',\Prop,J)$, so $S$ satisfies property~$\Prop$. If $x\in D$ then $f(x)\in D'$, so $f(x)\in^{\dfil{J}} s$ for some $s\in S$. Since $x=^{\dfil{J}} f(x)$, we get that $x\in^{\dfil{J}}s$. Hence, $S$~satisfies the properties of the definition of $\slalomt(D,\Prop,J)$, so $\slalomt(D,\Prop,J)\leq |S|=\slalomt(D',\Prop,J)$. A~similar argument guarantees $\slalome(D,\Prop,J)\leq \slalome(D',\Prop,J)$, just note that $f(x)\in^{J^+}s$ implies $x\in^{J^+} s$.


\ref{mon:JP}: Assume $\Prop\imp^J\Prop'$ and $\Prop'$~is $\subseteq$-downward closed. Let $S$ be a~witness of $\slalomt(D,\Prop,J)$. Since $S$ has property~$\Prop$, there is some~$S'$ with property~$\Prop'$ and some map $g'\colon S\to S'$ such that $s\subseteq^{\dfil{J}} g'(s)$ for all $s\in S$. Since $\Prop'$~is $\subseteq$-downward closed, we can assume that $S'=g'[S]$ and hence $|S'|\leq|S|$. If $x\in D$ then $x\in^{\dfil{J}} s$ for some $s\in S$, which implies that $x\in^{\dfil{J}} g'(s)$. Therefore, $S'$ satisfies the properties of the definition of $\slalomt(D,\Prop',J)$, so $\slalomt(D,\Prop',J)\leq |S'|\leq |S| = \slalomt(D,\Prop,J)$. The case for $\slalome$ is similar, just note that $x\in^{J^+}s$ implies $x\in^{J^+} g'(s)$.

\ref{mon:J}: Note that $J\subseteq J'$ implies $\dfil{J}\subseteq \dfil{J'}$ and ${J'}^{\rm c}\subseteq J^{\rm c}$, so the result follows by \Cref{lem:gen}~\ref{it:monA}.
\end{proof}

In connection with \Cref{lem:monJ}~\ref{mon:JE}, we obtain:

\begin{lemma}\label{restE}
If $D\subseteq\prod_{n<\omega}b(n)$, $E$~is a~set of functions with domain~$\omega$, and $A\subseteq \pts(\omega)$, then\/ $\la D,E,{\in^A}\ra\eqT\la D,E',{\in^A}\ra$ where $E'$ is the collection of all slaloms of the form $\seqn{s(n)\cap b(n)}{n<\omega}$ for some $s\in E$.
\end{lemma}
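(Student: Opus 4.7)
The plan is to exhibit two Tukey connections, one in each direction, based on the trivial observation that for any $x\in D$ and any slalom $s$ with domain~$\omega$, we have $x(n)\in s(n)$ if and only if $x(n)\in s(n)\cap b(n)$, because $x(n)\in b(n)$ by the assumption $D\subseteq\prod_{n<\omega}b(n)$. Consequently $\|x\in s\|=\|x\in s'\|$ whenever $s'(n)=s(n)\cap b(n)$ for all $n<\omega$, so the very same subset of $\omega$ decides membership in~$A$ in both relational systems.

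For the Tukey connection $\la D,E',{\in^A}\ra\leqT\la D,E,{\in^A}\ra$, I would set $\Psi_-=\id_D$ and, for $s\in E$, let $\Psi_+(s):=\seqn{s(n)\cap b(n)}{n<\omega}$, which belongs to $E'$ by definition. If $\Psi_-(x)=x\in^A s$, then $\|x\in s\|\in A$, and by the observation above $\|x\in \Psi_+(s)\|=\|x\in s\|\in A$, so $x\in^A\Psi_+(s)$.

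For the reverse direction $\la D,E,{\in^A}\ra\leqT\la D,E',{\in^A}\ra$, again take $\Psi_-=\id_D$. For $\Psi_+$, use the axiom of choice to pick, for each $s'\in E'$, some $s\in E$ witnessing the defining property $s'=\seqn{s(n)\cap b(n)}{n<\omega}$, and set $\Psi_+(s'):=s$. If $x\in^A s'$, i.e.\ $\|x\in s'\|\in A$, then by the same observation $\|x\in\Psi_+(s')\|=\|x\in s'\|\in A$, giving $x\in^A\Psi_+(s')$.

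There is no real obstacle here; the entire content is the pointwise equality $\{n:x(n)\in s(n)\}=\{n:x(n)\in s(n)\cap b(n)\}$ for $x\in\prod b$. The only mild subtlety is that in the second direction one must choose a preimage~$s$ for each $s'\in E'$, but this is a straightforward application of choice and does not affect the argument.
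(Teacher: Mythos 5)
Your proof is correct and takes essentially the same route as the paper: both rely on the pointwise identity $\|x\in s\|=\|x\in s'\|$ for $x\in\prod b$ when $s'(n)=s(n)\cap b(n)$, and the paper likewise uses the identity map on $D$ paired with $s\mapsto\seqn{s(n)\cap b(n)}{n<\omega}$ for one Tukey connection and a preimage-choice argument (phrased via $E'\Subset^{\{\emptyset\}}E$ and the proof of \Cref{lem:monJ}~\ref{mon:JE}) for the other. The only difference is that you spell out both directions explicitly, whereas the paper defers one direction to earlier machinery.
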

\begin{proof}
    Abusing notation, notice that $E'\Subset^{\{\emptyset\}}E$, so the proof of \Cref{lem:monJ}~\ref{mon:JE} can be used to show that $\la D,E,{\in^A}\ra \leqT \la D,E',{\in^A}\ra$. The converse Tukey connection is obtained by using the identity map of $D$ and $s\mapsto \seqn{s(n)\cap b(n)}{n<\omega}$.
\end{proof}

In the following section, we are going to review some order of ideals like the \emph{Katetov order} and the \emph{Katetov-Blass order}, with some variations, and study its effect on the slalom numbers. 
The following results are useful there.

\begin{lemma}\label{thm:RB1}
Let $f\colon\omega\to\omega$, denote $a_n:=f^{-1}\llbracket\{n\}\rrbracket$ for $n\in\omega$, and let $J$ and $J'$ be ideals on $\omega$.
Assume:
\begin{enumerate}[label=\rm(\roman*)]
    \item\label{it:RB1i} $D$ and $D'$ are two sets of functions with domain $\omega$ such that, whenever $x'\in D'$, $x'\circ f\in D$.
    \item\label{it:RB1ii} $E$ and $E'$ are two sets of functions with domain $\omega$ such that, for any $s\in E$, there is some function $s'\in E'$ such that\/ $\bigset{n<\omega}{\bigcup_{k\in a_n}s(k)\subseteq s'(n)}\in\dfil{J'}$.
\end{enumerate}
Then:
\begin{enumerate}[label=\rm(\alph*)]
    \item\label{it:RBincr} Whenever $J'\subseteq f^\to(J)$, $\pLc_{J'}(D',E')\leqT \pLc_J(D,E)$, so\/ $\slalome(D',E',J')\leq \slalome(D,E,J)$.
    \item\label{it:RBbincr} Whenever $f^\to(J)\subseteq J'$, $\Lc_{J'}(D',E')\leqT \Lc_J(D,E)$,  so\/ $\slalomt(D',E',J')\leq \slalomt(D,E,J)$.\qed
\end{enumerate}
\end{lemma}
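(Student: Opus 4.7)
The plan is to exhibit, in both cases, a single pair of maps $(\Psi_-,\Psi_+)$ that witnesses the desired Tukey connection; the two cases differ only in which direction of containment between $J'$ and $f^\to(J)$ is invoked at the very end. On the ``function'' side, I define $\Psi_-\colon D'\to D$ by $\Psi_-(x'):= x'\circ f$, which lands in $D$ by hypothesis~(i). On the ``slalom'' side, for each $s\in E$ I pick a witness $s'\in E'$ provided by (ii) and set $\Psi_+(s):=s'$.

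The heart of the argument is a uniform set-theoretic inclusion. Fix $x'\in D'$ and $s\in E$, and set
\[
C:=\|x'\circ f\in s\|=\{i<\omega:\,x'(f(i))\in s(i)\},\quad B:=\{n<\omega:\,\textstyle\bigcup_{k\in a_n}s(k)\subseteq s'(n)\}.
\]
For any $i\in C\cap f^{-1}\llbracket B\rrbracket$, putting $n:=f(i)\in B$ gives $x'(n)=x'(f(i))\in s(i)\subseteq s'(n)$, so
\[
f\llbracket C\cap f^{-1}\llbracket B\rrbracket\rrbracket\;\subseteq\;\|x'\in s'\|.
\]
Observe that (ii) always guarantees $B\in\dfil{J'}$, i.e., $\omega\setminus B\in J'$.

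For part (a) I would assume $J'\subseteq f^\to(J)$. Then $\omega\setminus B\in J'$ forces $f^{-1}\llbracket\omega\setminus B\rrbracket\in J$, so $f^{-1}\llbracket B\rrbracket\in\dfil{J}$, and if $C\in J^+$ then also $C\cap f^{-1}\llbracket B\rrbracket\in J^+$. Were $\|x'\in s'\|$ in $J'$, then by $J'\subseteq f^\to(J)$ its $f$-preimage would be in $J$, contradicting the inclusion above; hence $\|x'\in s'\|\in(J')^+$, as required for the $\pLc$-Tukey condition. For part (b) I would assume $f^\to(J)\subseteq J'$ and $C\in\dfil{J}$, and must show $\omega\setminus\|x'\in s'\|\in J'$. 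By the inclusion above, this complement lies in $(\omega\setminus B)\cup Y$ with $Y:=B\setminus f\llbracket C\cap f^{-1}\llbracket B\rrbracket\rrbracket$; for every $n\in Y$ the fibre $a_n$ is disjoint from $C$, so $f^{-1}\llbracket Y\rrbracket\subseteq\omega\setminus C\in J$, whence $Y\in f^\to(J)\subseteq J'$, and combined with $\omega\setminus B\in J'$ this gives what we need.

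I do not anticipate a genuine obstacle: the only delicate point is to keep straight whether ``positive set'' or ``member of the dual filter'' is being tested and to invoke the correct containment between $J'$ and $f^\to(J)$ accordingly — and this duality between parts (a) and (b) simply mirrors the duality between $\pLc$ and $\Lc$. Once the Tukey connections are in place, the stated cardinal inequalities follow from \Cref{cor:Tukeyval}(b).
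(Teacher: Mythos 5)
Your proof is correct. The paper obtains this lemma as the $F=\{f\}$ specialization of the more general \Cref{thm:RBuni}, and your direct, self-contained argument is essentially that specialization written out explicitly: the Tukey pair $(x'\mapsto x'\circ f,\ s\mapsto s')$ is the same, and your key inclusion $f\llbracket C\cap f^{-1}\llbracket B\rrbracket\rrbracket\subseteq\|x'\in s'\|$ is the one-function form of the paper's inclusion $\|x\circ f\in s_f\|\subseteq f^{-1}\llbracket\|x\in t_{\bar s}\|\rrbracket$ combined with $\|t_{\bar s}\subseteq u_{\bar s}\|\cap\|x\in t_{\bar s}\|\subseteq\|x\in u_{\bar s}\|$, with only cosmetic differences (you argue part (a) by contradiction rather than verifying the containment $A_F\subseteq A$ directly, and you work with images under $f$ rather than the intermediate slalom $t_{\bar s}$).
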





\Cref{thm:RB1} is a~particular case of the following result when $F=\{f\}$.

\begin{lemma}\label{thm:RBuni}
Let $F\subseteq\Baire$,
let $A$ and~$A_f$ for $f\in F$ be $\subseteq$-upwards closed subsets
of~$\pts(\omega)$,
let $J$ and $J_f$ for $f\in F$ be ideals on~$\omega$, and let
$D$, $D_f$, $E$, $E_f$ for $f\in F$ be sets of functions with domain~$\omega$.
Denote
\begin{align*}
&A_F=\bigset{a\cap b}{\textstyle a\in J^\dual\text{ and }b\in\bigcap_{f\in F}f^\to(A_f)}\quad\text{and}\\
&E_F=\bigset{t_{\bar s}}{\textstyle\bar s\in\prod_{f\in F}E_f},
\quad\text{where}\quad
t_{\bar s}(n)=
\bigcup_{f\in F}\bigcup_{k\in f^{-1}\llbracket\{n\}\rrbracket}s_f(k),
\end{align*}
and assume that $E_F\Subset^J E$ and, for all $f\in F$, $\set{x\circ f}{x\in D}\sqsubseteq^{J_f} D_f$ and $a\cap b\in A_f$ for all $a\in \dfil{J_f}$ and $b\in A_f$.\footnote{The latter assumption ``for all $f\in F$, ..." can be replaced by $D\sqsubseteq^J \set{x\in\Baire}{(\forall f\in F)\ x\circ f\in D_f}$ and the proof is similar.}
\begin{enumerate}[label=\rm(\alph*)]
\item\label{RBuni-0}
If $A_F\subseteq A$, then\/
$\la D,E,{\in^A}\ra\leqT\bigotimes_{f\in F}\la D_f,E_f,{\in}^{A_f}\ra$.
\item\label{RBuni-1}
If\/ $\bigcap_{f\in F}f^\to(J_f)\subseteq J$, then\/ $\Lc_{J}(D,E)\leqT\bigotimes_{f\in F}\Lc_{J_f}(D_f,E_f)$;
in particular, \[\slalomt(D,E,J)\leq\prod_{f\in F}\slalomt(D_f,E_f,J_f).\]
\item\label{RBuni-2}
If $J\subseteq\bigcup_{f\in F}f^\to(J_f)$, then\/ $\pLc_{J}(D,E)\leqT \bigotimes_{f\in F}\pLc_{J_f}(D_f,E_f)$;
in particular, $\slalome(D,E,J)\leq\prod_{f\in F}\slalome(D_f,E_f,J_f)$.
\end{enumerate}
\end{lemma}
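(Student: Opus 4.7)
The plan is to prove part~\ref{RBuni-0} directly by constructing an explicit Tukey connection, and then derive parts~\ref{RBuni-1} and~\ref{RBuni-2} as corollaries by verifying the inclusion $A_F\subseteq A$ under their respective hypotheses.

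For~\ref{RBuni-0}, I would define $\Psi_-\colon D\to \prod_{f\in F}D_f$ by selecting, for each $x\in D$ and each $f\in F$, some $x_f\in D_f$ with $x_f =^{\dfil{J_f}} x\circ f$ (using $\set{x\circ f}{x\in D}\sqsubseteq^{J_f} D_f$), and setting $\Psi_-(x):=\la x_f\ra_{f\in F}$. Dually, define $\Psi_+\colon \prod_{f\in F}E_f\to E$ by sending $\bar s$ to some $s\in E$ with $t_{\bar s}\subseteq^{\dfil J} s$ (using $E_F\Subset^J E$).

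The verification of the Tukey condition is where the combinatorial content sits. Assume $\Psi_-(x)\in^\otimes\bar s$, i.e., for every $f\in F$ the set $B_f:=\|x_f\in s_f\|$ belongs to $A_f$. Let $C_f:=\set{k\in\omega}{x_f(k)=x(f(k))}\in\dfil{J_f}$. By the closure hypothesis, $B_f\cap C_f\in A_f$. For $k\in B_f\cap C_f$, $x(f(k))=x_f(k)\in s_f(k)\subseteq t_{\bar s}(f(k))$, so $B_f\cap C_f\subseteq f^{-1}\llbracket b\rrbracket$, where $b:=\set{n\in\omega}{x(n)\in t_{\bar s}(n)}$. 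Upward closure of $A_f$ yields $f^{-1}\llbracket b\rrbracket\in A_f$, hence $b\in f^\to(A_f)$; since this holds for every $f\in F$, $b\in\bigcap_{f\in F} f^\to(A_f)$. Setting $a:=\set{n\in\omega}{t_{\bar s}(n)\subseteq s(n)}\in\dfil{J}$, we have $a\cap b\in A_F\subseteq A$ and $a\cap b\subseteq\|x\in s\|$, so upward closure of $A$ gives $\|x\in s\|\in A$, i.e., $x\in^A s$. The cardinal inequality $\dfrak(\la D,E,{\in^A}\ra)\leq\prod_{f\in F}\dfrak_{A_f}(D_f,E_f)$ then follows from \Cref{cor:Tukeyval}\ref{Tukeyval:b} combined with \Cref{products}(a).

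For~\ref{RBuni-1}, apply~\ref{RBuni-0} with $A:=\dfil J$ and $A_f:=\dfil{J_f}$; the intersection hypothesis holds since each $\dfil{J_f}$ is a filter. If $b\in \bigcap_{f\in F}f^\to(\dfil{J_f})$, then $f^{-1}\llbracket\omega\smallsetminus b\rrbracket\in J_f$ for every $f$, i.e., $\omega\smallsetminus b\in\bigcap_{f\in F}f^\to(J_f)\subseteq J$, so $b\in\dfil J$; combined with $a\in\dfil J$, this gives $A_F\subseteq \dfil J$. For~\ref{RBuni-2}, apply~\ref{RBuni-0} with $A:=J^+$ and $A_f:=J_f^+$; the intersection hypothesis holds since each $J_f$ is an ideal. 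If $b\in\bigcap_{f\in F}f^\to(J_f^+)$, then $b\notin f^\to(J_f)$ for every $f$, hence $b\notin \bigcup_{f\in F}f^\to(J_f)\supseteq J$, so $b\in J^+$; together with $a\in\dfil J$ this forces $a\cap b\in J^+$, giving $A_F\subseteq J^+$. The main obstacle is the verification in~\ref{RBuni-0}: correctly transporting the information from the $B_f$ (indexed by the domain of $f$) through $f^{-1}$ into a single set $b\subseteq\omega$ lying in every $f^\to(A_f)$, while using the $\dfil{J_f}$-closure of $A_f$ to absorb the discrepancy between $x_f$ and $x\circ f$.
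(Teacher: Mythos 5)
Your proof is correct and takes essentially the same approach as the paper's: the same Tukey connection (choose $x_f =^{\dfil{J_f}} x\circ f$ and $t_{\bar s}\subseteq^{\dfil J} s$), the same chain of inclusions to land $\|x\in t_{\bar s}\|$ inside every $f^\to(A_f)$, and parts (b), (c) deduced by specializing $A_f$ to $\dfil{J_f}$ and $J_f^+$. Your write-up is slightly more explicit than the paper's (which dispatches the $A_F\subseteq A$ and filter/ideal closure checks in (b)--(c) with a single line), but the content and structure are the same.
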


\begin{proof}
\ref{RBuni-0}:
We define $G_-\colon D\to\prod_{f\in F}D_f$ and
$G_+\colon\prod_{f\in F}E_f\to E$ as follows:
For every $x\in D$, $f\in F$ and $\bar s\in\prod_{f\in F}E_f$ fix $x_f\in D_f$
and $u_{\bar s}\in E$ such that $x_f=^{\dfil{J_f}}x\circ f$ and
$t_{\bar s}\subseteq^{J^\dual}u_{\bar s}$.
Define $G_-(x):=\seqn{x_f}{f\in F}$ and $G_+(\bar s):=u_{\bar s}$.
We prove that for every $x\in D$ and $\bar s\in\prod_{f\in F}E_f$,
$G_-(x)\sqsubset^\otimes\bar s$ implies $x\in^A G_+(\bar s)$.

Let $x\in D$ and $\bar s\in\prod_{f\in F}E_f$ and assume that
$x_f\in^{A_f}s_f$ for all $f\in F$, i.e., $\|x_f\in s_f\|\in A_f$. Since $\|x_f = x\circ f\|\in\dfil{J_f}$, we get that $\|x\circ f\in s_f\|\in A_f$. 
We claim that
$\|x\circ f\in s_f\|\subseteq f^{-1}\llbracket\|x\in t_{\bar s}\|\rrbracket$.
Indeed,
if $x(f(k))\in s_f(k)$ then, for $n=f(k)$,
$x(n)\in\bigcup_{i\in f^{-1}\llbracket\{n\}\rrbracket)}s_f(i)\subseteq t_{\bar s}(n)$ because
$k\in f^{-1}\llbracket\{n\}\rrbracket$.
Then $f^{-1}\llbracket\|x\in t_{\bar s}\|\rrbracket\in A_f$ because $A_f$~is
$\subseteq$-upwards closed, so $\|x\in t_{\bar s}\|\in f^\to(A_f)$.
We have seen that $\|x\in t_{\bar s}\|\in\bigcap_{f\in F}f^\to(A_f)$.
Since 
$\|t_{\bar s}\subseteq u_{\bar s}\|\cap\|x\in t_{\bar s}|\subseteq
\|x\in u_{\bar s}\|$, 
$\|t_{\bar s}\subseteq u_{\bar s}\|\in J^\dual$ and $A$~is
$\subseteq$-upwards closed, we get $\|x\in u_{\bar s}\|\in A$, i.e.,
$x\in^AG_+(\bar s)$.

\ref{RBuni-1}--\ref{RBuni-2}:
Apply~\ref{RBuni-0}
to $A_f=J_f^\dual$ and $A=J^\dual$ in case~\ref{RBuni-1}
and
to $A_f=J_f^+$ and $A=J^+$ in case~\ref{RBuni-2}. 
\end{proof}

\begin{lemma}\label{thm:RB2}
Let $f\in\Baire$, let $J$ and $J'$ be ideals on $\omega$, let
$A,A'\subseteq\pts(\omega)$, and let $D$, $D'$, $E$, $E'$ be sets of functions with domain~$\omega$.
Denote
\begin{align*}
&D_f=\set{x_f}{x\in D}
&&\text{where\quad $x_f(n)=x\frestr f^{-1}\llbracket\{n\}\rrbracket$, $n\in\omega$,}
\quad\text{and}\\
&E'^f=\set{s^f}{s\in E'}
&&\text{where\quad}s^f(k)=
\set{t(k)}
{\text{$t\in s(f(k))$ is a~function and $k\in\dom(t)$}},\
k\in\omega,
\end{align*}
and assume that $D_f\subseteq D'$ and $E'^f\subseteq E$.
\begin{enumerate}[label=\rm(\alph*)]
\item\label{it:RBgen}
If $A'\subseteq f^{\to}(A)$ and $A$ is\/ $\subseteq$-upwards closed, then\/
$\la D,E,{\in^A}\ra\leqT\la D',E',{\in^{A'}}\ra$;
in particular, $\dfrak_A(D,E)\leq \dfrak_{A'}(D',E')$.
\item\label{it:RBdcr} 
If $J'\subseteq f^\to(J)$ then\/ $\Lc_{J}(D,E)\leqT \Lc_{J'}(D',E')$ and\/
$\slalomt(D,E,J)\leq \slalomt(D',E',J')$.
\item\label{it:RBbdcr} 
If $f^\to(J)\subseteq J'$ then\/ $\pLc_{J}(D,E)\leqT \pLc_{J'}(D',E')$ and\/
$\slalome(D,E,J)\leq \slalome(D',E',J')$.
\end{enumerate}
\end{lemma}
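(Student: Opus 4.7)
The plan is to first establish part~\ref{it:RBgen}, from which parts~\ref{it:RBdcr} and~\ref{it:RBbdcr} follow by specializing $A,A'$ to dual filters or positive sets. For~\ref{it:RBgen}, I would construct an explicit Tukey connection $(G_-,G_+)\colon \la D,E,{\in^A}\ra\to\la D',E',{\in^{A'}}\ra$ using the natural candidates $G_-(x):=x_f$ for $x\in D$ and $G_+(s):=s^f$ for $s\in E'$; the standing hypotheses $D_f\subseteq D'$ and $E'^f\subseteq E$ guarantee these maps land in the correct spaces. The one nontrivial verification is then: whenever $x_f\in^{A'}s$, we have $x\in^A s^f$.

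The heart of the argument is the set-theoretic inclusion
\[
f^{-1}\llbracket\|x_f\in s\|\rrbracket\subseteq \|x\in s^f\|.
\]
Indeed, suppose $k\in\omega$ with $n:=f(k)\in\|x_f\in s\|$. Then $t:=x_f(n)=x\frestr f^{-1}\llbracket\{n\}\rrbracket$ lies in $s(n)=s(f(k))$, is a function, and has $k\in\dom(t)$ with $t(k)=x(k)$. By definition of $s^f(k)$, this places $x(k)\in s^f(k)$, so $k\in\|x\in s^f\|$. Granting the inclusion, if $\|x_f\in s\|\in A'\subseteq f^\to(A)$, then $f^{-1}\llbracket\|x_f\in s\|\rrbracket\in A$, and the $\subseteq$-upwards closure of $A$ yields $\|x\in s^f\|\in A$, i.e.\ $x\in^A s^f$, as desired.

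For~\ref{it:RBdcr}, I apply~\ref{it:RBgen} with $A:=\dfil{J}$ and $A':=\dfil{J'}$, both $\subseteq$-upwards closed. The hypothesis $J'\subseteq f^\to(J)$ transfers to the duals: using $\omega\smallsetminus f^{-1}\llbracket y\rrbracket=f^{-1}\llbracket\omega\smallsetminus y\rrbracket$, one checks that $y\in\dfil{J'}$ iff $\omega\smallsetminus y\in J'\subseteq f^\to(J)$, which gives $f^{-1}\llbracket y\rrbracket\in\dfil{J}$, i.e.\ $y\in f^\to(\dfil{J})$. Hence $\dfil{J'}\subseteq f^\to(\dfil{J})$ and~\ref{it:RBgen} applies. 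For~\ref{it:RBbdcr}, apply~\ref{it:RBgen} with $A:=J^+$ and $A':={J'}^+$ (upward closed since $J,J'$ are ideals); the required ${J'}^+\subseteq f^\to(J^+)$ is a direct contrapositive: if $f^{-1}\llbracket y\rrbracket\in J$ then $y\in f^\to(J)\subseteq J'$, so $y\in{J'}^+$ forces $f^{-1}\llbracket y\rrbracket\in J^+$.

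I do not expect a serious obstacle; the real care lies in the bookkeeping of the definitions of $x_f$ (a sequence of partial functions indexed by fibers of $f$) and $s^f$ (which unpacks values of the function-valued slalom $s$ along $f$), together with the fact that these are precisely dual operations under pullback by $f$. The proof mirrors the structure of \Cref{thm:RB1}, with the roles reversed: there $f$ aggregates slalom values via unions, while here $f$ decomposes the domain of $x$ into fibers.
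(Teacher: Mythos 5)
Your proof is correct and follows essentially the same route as the paper: the Tukey connection $(G_-,G_+)$ with $G_-(x)=x_f$ and $G_+(s)=s^f$, and the pivotal inclusion $f^{-1}\llbracket\|x_f\in s\|\rrbracket\subseteq\|x\in s^f\|$ combined with upward closure of $A$. The paper applies part~(a) to $A=\dfil{J}$, $A'=\dfil{J'}$ and to $A=J^+$, $A'={J'}^+$ without spelling out the dual-filter and positive-set transfer calculations; you include those, which is a harmless and correct elaboration.
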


\begin{proof}
\ref{it:RBgen}: 
Define $G_-\colon D\to D'$ and $G_+\colon E'\to E$ by $G_-(x):= x_f$ and
$G_+(s):= s^f$.
We show that $(G_-,G_+)$ is the desired Tukey connection.
Let $x\in D$, $s\in E'$, and assume that $G_-(x)\in^{A'}s$, i.e.,
$\|x_f\in s\|\in A'$. 
Then $f^{-1}\llbracket\|x_f\in s\|\rrbracket\in A$.
Note that $x_f(f(k))\in s(f(k))$ implies $x(k)\in s^f(k)$ because
$k\in f^{-1}\llbracket\{f(k)\}\rrbracket=\dom(x_f(f(k)))$.
Therefore $f^{-1}\llbracket\|x_f\in s\|\rrbracket\subseteq\|x\in s^f\|$.
Then $\|x\in s^f\|\in A$ (because $A$ is $\subseteq$-upwards closed), i.e.,
$x\in^{A}G_+(s)$.

\ref{it:RBdcr} follows directly by~\ref{it:RBgen} applied to $A=\dfil{J}$ and
$A'=\dfil{J'}$, and \ref{it:RBbdcr} follows by~\ref{it:RBgen} applied to $A=J^+$ and $A'= {J'}^+$.
\end{proof}

For the following results, we fix the following notation.

\begin{notation}\label{n:ast}
    When $\bar f = \seqn{f_n}{n<\omega}$ is a~sequence of functions and $x$ is a~function with domain $\omega$, let $\bar f\ast x$ be the function whose domain are those $n<\omega$ such that $x(n)\in \dom f_n$, and $(\bar f\ast x)(n):= f_n(x(n))$. Also let $\bar f\circledast x$ be the function with domain $\omega$ such that $(\bar f \circledast x)(n):= f^{-1}_n\llbracket x(n) \rrbracket$.

    Let $F$ be a~set of sequences $\bar f$ as above, and let $\bar s = \seqn{s_{\bar f}}{\bar f\in F}$ be a~sequence of functions with domain $\omega$. Then define the function $F \circledast\bar s$ with domain $\omega$, such that
    \[(F \circledast\bar s)(n) := \bigcap_{\bar h\in F}\bigcup_{\bar f\in F} h^{-1}_n\llbracket s_{\bar f}(n)\rrbracket.\]
    We also allow the notation $\bar f\ast D := \set{\bar f \ast x}{x\in D}$ and $F\circledast E := \set{F\circledast \bar s}{\bar s\in E}$ when $D$ is a~set of functions with domain $\omega$ and $E$ is a~set of sequences of the form $\bar s = \seqn{s_{\bar f}}{\bar f\in F}$ as above. Likewise for $\bar f\circledast D$.

    When $f$ is a~function, denote $f\ast D:= \set{f\circ x}{x\in D}$.
\end{notation}

\begin{lemma}\label{gen:decrK}
    Let $D$, $D'$, $E$, $E'$ be sets of functions with domain $\omega$, and $\bar f = \seqn{f_n}{n<\omega}$ a~sequence of functions. Assume that $\bar f\ast D\subseteq D'$ and $\bar f\circledast E' \subseteq E$. 
    Then\/ $\la D, E,{\in^A}\ra\leqT \la D', E',{\in^A}\ra$, in particular, $\dfrak_A(D,E)\leq \dfrak_A(D',E')$.
\end{lemma}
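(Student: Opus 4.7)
The plan is to exhibit an explicit Tukey connection $(\Psi_-,\Psi_+)\colon\la D,E,{\in^A}\ra\to\la D',E',{\in^A}\ra$ using the two operations already provided by the hypotheses. Specifically, set $\Psi_-(x):=\bar f\ast x$ for $x\in D$ and $\Psi_+(s'):=\bar f\circledast s'$ for $s'\in E'$. These have the correct codomains by the assumptions $\bar f\ast D\subseteq D'$ and $\bar f\circledast E'\subseteq E$. Observe that the inclusion $\bar f\ast D\subseteq D'$ also implicitly forces $x(n)\in\dom f_n$ for every $n<\omega$ and every $x\in D$ (since elements of $D'$ have domain $\omega$), so that $\Psi_-(x)$ is genuinely a function with domain $\omega$.

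The core step is the pointwise equivalence $f_n(x(n))\in s'(n)\iff x(n)\in f_n^{-1}\llbracket s'(n)\rrbracket$, which yields the set-level identity
\[
\|\Psi_-(x)\in s'\|=\{n<\omega:f_n(x(n))\in s'(n)\}=\{n<\omega:x(n)\in f_n^{-1}\llbracket s'(n)\rrbracket\}=\|x\in\Psi_+(s')\|
\]
for every $x\in D$ and $s'\in E'$. Since both sides coincide as subsets of $\omega$, the assertions $\Psi_-(x)\in^A s'$ and $x\in^A\Psi_+(s')$ are literally the same condition; in particular, the former implies the latter, so $(\Psi_-,\Psi_+)$ is the desired Tukey connection. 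The cardinal inequality $\dfrak_A(D,E)\leq\dfrak_A(D',E')$ then follows from \Cref{cor:Tukeyval}\ref{Tukeyval:b}.

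There is essentially no obstacle here: the result is a bookkeeping lemma that packages the tautology ``applying $f_n$ on the left is the same as taking preimages under $f_n$ on the right.'' The only point that requires a moment's care is the alignment of domains, handled as indicated above, and one can then read off the statement by direct substitution into the definition of a Tukey connection.
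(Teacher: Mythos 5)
Your proof is correct and uses exactly the same Tukey connection $(\bar f\ast{\cdot},\bar f\circledast{\cdot})$ as the paper, which states the verification is "clear"; you simply spell out the pointwise equivalence $f_n(x(n))\in s'(n)\iff x(n)\in f_n^{-1}\llbracket s'(n)\rrbracket$ and the domain-alignment remark that the paper leaves implicit.
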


\begin{proof}
    Define $F\colon D\to D'$ and $G\colon E'\to E$ such that $F(x):= \bar f\ast x$ and $G(s'):=\bar f\circledast s'$. It is clear that $(F,G)$ is the desired Tukey connection.
\end{proof}

\begin{lemma}\label{Kinterc}
Let $F$ be a~finite set of sequences $\bar f=\seqn{f_n}{n<\omega}$, where each~$f_n$ is a~function, let
$D$, $D_{\bar f}$, $E$, $E_{\bar f}$ ($\bar f\in F$) be sets of functions with domain~$\omega$, and let $J$ be an ideal on~$\omega$. 
If for every $\bar f\in F$, $\bar f* D \sqsubseteq^JD_{\bar f}$,\footnote{So $\bar f \ast x$ has domain $\omega$ for all $\bar f\in F$ and $x\in D$.} 
and\/
$F\circledast\prod_{\bar f\in F}E_{\bar f}\Subset^JE$,
then\/ $\Lc_J(D,E)\leqT \bigotimes_{\bar f\in F}\Lc_J(D_{\bar f},E_{\bar f})$;
in particular,
$\slalomt(D,E,J)\leq \prod_{\bar f\in F}\slalomt(D_{\bar f},E_{\bar f},J)$.

Moreover, when $F=\{\bar f\}$, we also obtain $\pLc_J(D,E) \leqT \pLc_J(D_{\bar f},E_{\bar f})$ and $\slalome(D,E,J) \leq \slalome(D_{\bar f},E_{\bar f},J)$.
\end{lemma}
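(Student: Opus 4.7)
The plan is to construct an explicit Tukey connection $(\Psi_-,\Psi_+)\colon \Lc_J(D,E)\to \bigotimes_{\bar f\in F}\Lc_J(D_{\bar f},E_{\bar f})$ by packaging the two hypotheses into functions. Using $\bar f\ast D\sqsubseteq^J D_{\bar f}$, for each $x\in D$ and $\bar f\in F$ fix some $x_{\bar f}\in D_{\bar f}$ with $x_{\bar f}=^{\dfil{J}}\bar f\ast x$, and set $\Psi_-(x):=\seqn{x_{\bar f}}{\bar f\in F}$. Using $F\circledast\prod_{\bar f\in F}E_{\bar f}\Subset^J E$, for each $\bar s=\seqn{s_{\bar f}}{\bar f\in F}\in\prod_{\bar f\in F}E_{\bar f}$ fix some $t_{\bar s}\in E$ with $F\circledast\bar s\subseteq^{\dfil{J}} t_{\bar s}$, and set $\Psi_+(\bar s):=t_{\bar s}$.

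To verify the Tukey condition, suppose $\Psi_-(x)\in^{\dfil{J},\otimes}\bar s$, i.e., $A_{\bar f}:=\|x_{\bar f}\in s_{\bar f}\|\in\dfil{J}$ for every $\bar f\in F$. From $x_{\bar f}=^{\dfil{J}}\bar f\ast x$ one gets $B_{\bar f}:=\|\bar f\ast x\in s_{\bar f}\|\in\dfil{J}$ as well. The pointwise key claim is that $\bigcap_{\bar f\in F}B_{\bar f}\subseteq\|x\in F\circledast\bar s\|$: if $n$ lies in every $B_{\bar f}$, then $f_n(x(n))\in s_{\bar f}(n)$ for all $\bar f\in F$, so $x(n)\in f_n^{-1}\llbracket s_{\bar f}(n)\rrbracket$. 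Hence for any outer index $\bar h\in F$, taking $\bar f=\bar h$ inside the inner union, $x(n)\in h_n^{-1}\llbracket s_{\bar h}(n)\rrbracket\subseteq\bigcup_{\bar f\in F}h_n^{-1}\llbracket s_{\bar f}(n)\rrbracket$; intersecting over $\bar h\in F$ places $x(n)$ in $(F\circledast\bar s)(n)$. Since $F$ is finite, $\dfil{J}$ is closed under finite intersections, so $\|x\in F\circledast\bar s\|\in\dfil{J}$; combining with $\|F\circledast\bar s\subseteq t_{\bar s}\|\in\dfil{J}$ yields $\|x\in t_{\bar s}\|\in\dfil{J}$, i.e., $x\in^{\dfil{J}}\Psi_+(\bar s)$. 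The cardinal inequality then follows from \Cref{fct:Tukey} and \Cref{products}.

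For the ``moreover'' clause with $F=\{\bar f\}$, the same maps $\Psi_-$, $\Psi_+$ serve; the pointwise claim simplifies to the equality $\|\bar f\ast x\in s_{\bar f}\|=\|x\in F\circledast\bar s\|$ (no intersection over $\bar h$). If $\|x_{\bar f}\in s_{\bar f}\|\in J^+$, then $\|\bar f\ast x\in s_{\bar f}\|\in J^+$ (the two differ by a set in $J$), and intersecting with the $\dfil{J}$-set $\|F\circledast\bar s\subseteq t_{\bar s}\|$ preserves membership in $J^+$, giving $x\in^{J^+}\Psi_+(s_{\bar f})$.

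The main subtlety is the interplay between the nested $\bigcap_{\bar h}\bigcup_{\bar f}$ in the definition of $F\circledast\bar s$ and the closure properties of $\dfil{J}$ versus $J^+$. The ``diagonal'' choice $\bar f=\bar h$ in the inner union is precisely what makes the pointwise containment work despite the intersection. This same feature also explains why the $\pLc$-version is only claimed for $|F|=1$: the step $\bigcap_{\bar f\in F}B_{\bar f}\in\dfil{J}$ relies on finite intersection closure of the filter, whereas $J^+$ is not closed under finite intersections, so for $|F|>1$ one cannot conclude $\|x\in F\circledast\bar s\|\in J^+$ from having each individual $B_{\bar f}$ in $J^+$.
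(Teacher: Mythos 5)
Your proof is correct and follows essentially the same approach as the paper's: both choose $\Psi_-(x)$ and $\Psi_+(\bar s)$ as selections realizing the hypotheses $\bar f\ast D\sqsubseteq^J D_{\bar f}$ and $F\circledast\prod E_{\bar f}\Subset^J E$, and both hinge on the same diagonal observation (taking $\bar f=\bar h$ inside the inner union of $F\circledast\bar s$) to pass from $\bigcap_{\bar f}\|\bar f\ast x\in s_{\bar f}\|$ to $\|x\in F\circledast\bar s\|$, then use finite intersection closure of $\dfil{J}$. The only cosmetic difference is that you work with $B_{\bar f}=\|\bar f\ast x\in s_{\bar f}\|$, whereas the paper works with the slightly smaller set $a_{\bar f}=\|\bar f\ast x=x_{\bar f}\|\cap\|x_{\bar f}\in s_{\bar f}\|$; both choices lie in $\dfil{J}$ (resp.\ $J^+$ for the one-element case) and support the same argument, and your closing remark correctly explains why the $\pLc$-version is restricted to $|F|=1$.
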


\begin{proof}
Define
$\Phi_-\colon D\to\prod_{\bar f\in F}D_{\bar f}$ and
$\Phi_+\colon\prod_{\bar f\in F}E_{\bar f}\to E$ as follows:
For $x\in D$ choose
$\Phi_-(x)=\seqn{x_{\bar f}}{\bar f\in F}\in\prod_{\bar f\in F}D_{\bar f}$
so that $\bar f*x=^{J^\dual}x_{\bar f}$ for every $\bar f\in F$.
For $\bar s\in\prod_{\bar f\in F}E_{\bar f}$ choose
$\Phi_+(\bar s)=t_{\bar s}\in E$ such that
$F\circledast\bar s\subseteq^{J^\dual}t_{\bar s}$.
Assume that $\Phi_-(x)\sqsubset^{\otimes}\bar s$, i.e., for every $\bar h\in F$,
$\|x_{\bar h}\in s_{\bar h}\|\in J^\dual$.
Then
$a_{\bar h}:= \|\bar h\ast x = x_{\bar h}\|\cap\allowbreak\|x_{\bar h}\in s_{\bar h}\|
\in J^\dual$
and
$a_{\bar h}\subseteq\bigset{n<\omega}
{x(n)\in\bigcup_{\bar f\in F}h_n^{-1}\llbracket s_{\bar f}(n)\rrbracket}$.
Since $F$~is finite, $\bigcap_{\bar h\in F}a_{\bar h}\in J^\dual$ and so
$x\in^{J^\dual}F\circledast\bar s$.
Then $x\in^{J^\dual}\Phi_+(\bar s)$ because
$F\circledast\bar s\subseteq^{J^\dual}t_{\bar s}$.

When $F=\{\bar f\}$, we also obtain $\pLc_J(D,E) \leqT \pLc_J(D_{\bar f},E_{\bar f})$ with the same proof (just noting that $a_{\bar f} \in J^+$).
\end{proof}

As a~consequence of the previous, we can infer:

\begin{corollary}\label{Kinterc1}
    Let $F$ and $J$ be as before, and let $\bar I^{\bar f}=\seqn{I^{\bar f}_n}{n<\omega}$ ($\bar f\in F$) and $\bar I'=\seqn{I'_n}{n<\omega}$ be sequences of families of sets.\footnote{In most cases, sequences of ideals.} If\/ $\bar f\ast D \sqsubseteq^J D_{\bar f}$ for all $\bar f\in F$ and, for all $n<\omega$ and $\bar a\in\prod_{\bar f\in F}I^{\bar f}_n$, $\bigcap_{\bar h\in F}h^{-1}_n\llbracket \bigcup_{\bar f\in F}a_{\bar f}\rrbracket\in I'_n$, then\/ 
    $\Lc_J(D',\bar I')\leqT\bigotimes_{\bar f\in F}\Lc_J(D_{\bar f},\bar I^{\bar f})$. 
    In particular, $\slalomt(D',\bar I',J)\leq \prod_{\bar f\in F}\slalomt(D_{\bar f},\bar I^{\bar f},J)$.

    In addition, when $F=\{\bar f\}$, $\pLc_J(D',\bar I') \leqT \pLc_J(D_{\bar f}, \bar I^{\bar f})$ and $\slalome(D',\bar I',J) \leq \slalome(D_{\bar f}, \bar I^{\bar f}, J)$.
    \qed
\end{corollary}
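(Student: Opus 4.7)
The plan is to apply \Cref{Kinterc} directly, with $E:=\prod\bar I'=\prod_n I'_n$ and, for each $\bar f\in F$, $E_{\bar f}:=\prod\bar I^{\bar f}=\prod_n I^{\bar f}_n$. The first hypothesis of \Cref{Kinterc}, namely $\bar f\ast D\sqsubseteq^J D_{\bar f}$ for every $\bar f\in F$, is assumed outright. So everything reduces to verifying the second hypothesis, $F\circledast\prod_{\bar f\in F}E_{\bar f}\Subset^J E$.

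For this, I would unpack the definition of $F\circledast\bar s$ from \Cref{n:ast}: given $\bar s=\seqn{s_{\bar f}}{\bar f\in F}\in\prod_{\bar f\in F}E_{\bar f}$, we have $s_{\bar f}(n)\in I^{\bar f}_n$ for every $n<\omega$ and every $\bar f\in F$, so $\bar a:=\seqn{s_{\bar f}(n)}{\bar f\in F}\in\prod_{\bar f\in F}I^{\bar f}_n$. The hypothesis then yields
\[
(F\circledast\bar s)(n)=\bigcap_{\bar h\in F}\bigcup_{\bar f\in F}h^{-1}_n\llbracket s_{\bar f}(n)\rrbracket\in I'_n
\]
for every $n$, i.e., $F\circledast\bar s\in\prod_n I'_n=E$. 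Hence $F\circledast\prod_{\bar f\in F}E_{\bar f}\subseteq E$, which in particular gives $\Subset^J E$ (by picking $s'=s$).

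With both hypotheses of \Cref{Kinterc} in place, the Tukey connection $\Lc_J(D,\bar I')\leqT\bigotimes_{\bar f\in F}\Lc_J(D_{\bar f},\bar I^{\bar f})$ follows, and the cardinal inequality $\slalomt(D,\bar I',J)\leq\prod_{\bar f\in F}\slalomt(D_{\bar f},\bar I^{\bar f},J)$ is then immediate from \Cref{cor:Tukeyval}\,\ref{Tukeyval:b}. For the ``moreover'' part, when $F=\{\bar f\}$ is a singleton, the exact same verification works and the final clause of \Cref{Kinterc} additionally delivers $\pLc_J(D,\bar I')\leqT\pLc_J(D_{\bar f},\bar I^{\bar f})$ and the corresponding inequality for $\slalome$.

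The proof is essentially a notational unpacking; there is no substantial obstacle. The only point requiring care is reading the definition of $F\circledast\bar s$ correctly and recognizing that the seemingly more technical hypothesis ``$\bigcap_{\bar h\in F}h^{-1}_n\llbracket\bigcup_{\bar f\in F}a_{\bar f}\rrbracket\in I'_n$'' is precisely what turns pointwise membership in $I^{\bar f}_n$ into pointwise membership in $I'_n$, so that the containment $F\circledast\prod_{\bar f\in F}E_{\bar f}\subseteq E$ drops out for free rather than needing a modulo-$J$ correction.
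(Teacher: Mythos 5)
Your proof is correct and is exactly the intended derivation: the paper marks this corollary with \qed, leaving it as an immediate application of \Cref{Kinterc}, and your verification that $F\circledast\prod_{\bar f\in F}E_{\bar f}\subseteq\prod\bar I'$ (hence $\Subset^J\prod\bar I'$) is precisely the one-line check that makes the hypotheses of that lemma hold. One small remark: the corollary's statement writes $D'$ in the conclusion $\Lc_J(D',\bar I')$ even though the hypotheses only constrain $D$ (the same slip appears in \Cref{Kinterc3}, whose second conclusion correctly reverts to $D$); your silent replacement of $D'$ by $D$ is the right reading.
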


\begin{corollary}\label{Kinterc2}
    Let $F$ and $J$ be as before, and $\bar I=\seqn{I_n}{n<\omega}$ and $\bar I'=\seqn{I'_n}{n<\omega}$ sequences of families of sets where each $I_n$ is an ideal on some $b(n)$. If\/ $\bar f\ast D \sqsubseteq^J D_{\bar f}$ for all $\bar f\in F$ and $I_n\subseteq\set{a\subseteq b(n)}{\bigcap_{\bar h\in F}h^{-1}_n\llbracket a\rrbracket\in I'_n}$ for all $n<\omega$, then\/ 
    $\Lc_J(D',\bar I')\leqT \bigotimes_{\bar f\in F}\Lc_J(D_{\bar f},\bar I)$. 
    In particular, $\slalomt(D',\bar I',J)\leq \prod_{\bar f\in F}\slalomt(D_{\bar f},\bar I,J)$.

    In addition, when $F=\{\bar f\}$, $\pLc_J(D',\bar I') \leqT \pLc_J(D_{\bar f}, \bar I)$ and $\slalome(D',\bar I',J) \leq \slalome(D_{\bar f}, \bar I, J)$.
    \qed
\end{corollary}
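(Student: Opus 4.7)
The plan is to derive \Cref{Kinterc2} as a direct corollary of \Cref{Kinterc1} by specializing the sequences $\bar I^{\bar f}$ to the common sequence $\bar I$. More precisely, I will set $\bar I^{\bar f} := \bar I$ for every $\bar f\in F$ and then verify the combinatorial hypothesis of \Cref{Kinterc1}, namely that for every $n<\omega$ and every $\bar a\in\prod_{\bar f\in F}I_n$,
\[
\bigcap_{\bar h\in F}h_n^{-1}\bigl\llbracket \textstyle\bigcup_{\bar f\in F}a_{\bar f}\bigr\rrbracket\in I'_n.
\]

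The verification uses the two standing assumptions in tandem. Since $F$ is \emph{finite} and each $I_n$ is an ideal on $b(n)$ (in particular, closed under finite unions), the set $c := \bigcup_{\bar f\in F}a_{\bar f}$ belongs to $I_n$. Then the hypothesis $I_n\subseteq\set{a\subseteq b(n)}{\bigcap_{\bar h\in F}h^{-1}_n\llbracket a\rrbracket\in I'_n}$ applied to $c$ yields $\bigcap_{\bar h\in F}h_n^{-1}\llbracket c\rrbracket\in I'_n$, which is exactly what is needed. The other hypothesis, $\bar f\ast D\sqsubseteq^J D_{\bar f}$ for all $\bar f\in F$, is passed directly to \Cref{Kinterc1}.

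With the hypotheses of \Cref{Kinterc1} satisfied, we immediately obtain $\Lc_J(D',\bar I')\leqT\bigotimes_{\bar f\in F}\Lc_J(D_{\bar f},\bar I)$, and consequently $\slalomt(D',\bar I',J)\le\prod_{\bar f\in F}\slalomt(D_{\bar f},\bar I,J)$ via \Cref{fct:Tukey} and \Cref{products}. The moreover clause for $F=\{\bar f\}$ (a singleton) falls out from the corresponding moreover clause in \Cref{Kinterc1}, since the finite-union step is trivial in that case and the pseudo-localization version of the Tukey connection in \Cref{Kinterc} is available.

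The argument is essentially a book-keeping reduction, so there is no real obstacle; the only point worth watching is the use of finiteness of~$F$ to ensure the closure of $I_n$ under $\bigcup_{\bar f\in F}$. Had $F$ been infinite, or the $I_n$ merely $\subseteq$-downward closed without finite-union closure, this step would fail and one would need to replace $I_n$ by the ideal it generates.
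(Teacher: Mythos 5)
Your proof is correct and coincides with the derivation the paper intends by its bare \verb|\qed|: specialize \Cref{Kinterc1} to the constant family $\bar I^{\bar f}:=\bar I$, use finiteness of $F$ together with closure of each ideal $I_n$ under finite unions to place $\bigcup_{\bar f\in F}a_{\bar f}$ in $I_n$, and then invoke the inclusion hypothesis to land in $I'_n$; the Tukey bound on the $\slalomt$-numbers then follows from \Cref{cor:Tukeyval} and \Cref{products}. Your closing remark correctly isolates the one place where the extra hypothesis (``$I_n$ is an ideal'') is actually used relative to \Cref{Kinterc1}.
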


\begin{corollary}\label{Kinterc3}
    Let $F$ be a~finite set of functions with domain $\omega$, $J$ an ideal on $\omega$, and let $I'$ and $I_f$ ($f\in F$) be families of sets. If\/$f\ast D \sqsubseteq^J D_{f}$ for all $ f\in F$ and, for all $\bar a\in \prod_{f\in F}I_f$, $\bigcap_{h\in F}h^{-1}\llbracket \bigcup_{f\in F}a_{f}\rrbracket\in I'$, then\/ $\Lc_J(D',I')\leqT \bigotimes_{f\in F}\Lc_J(D_{f}, I_{f})$ and\/ $\Lbf_{I',J}(D)\leqT \bigotimes_{f\in F}\Lbf_{I_f,J}(D_f)$. In particular, $\slalomt(D',I',J)\leq \prod_{f\in F}\slalomt(D_f,I_f,J)$ and\/ $\lfrak_{D'}(I',J)\leq \prod_{d\in F}\lfrak_{D_f}(I_f,J)$.

    In addition, when $F=\{f\}$, $\pLc_J(D',I') \leqT \pLc_J(D_{f}, I_f)$, $\pL_{I',J}(D') \leqT \pL_{I_f,J}(D_{f})$, $\slalome(D', I',J) \leq \slalome(D_{f}, I, J)$ and $\pfrak_{D'}(I',J)\leq \pfrak_{D_f}(I_f,J)$.
    \qed
\end{corollary}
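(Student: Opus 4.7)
The plan is to derive everything from \Cref{Kinterc} (or its corollary \Cref{Kinterc1}) by specializing to constant sequences of functions and constant sequences of ideal-families. For each $f\in F$ I would set $\bar f:=\seqn{f}{n<\omega}$, $F^\ast:=\set{\bar f}{f\in F}$, and take the constant sequences $\bar I^{\bar f}:=\seqn{I_f}{n<\omega}$ and $\bar I':=\seqn{I'}{n<\omega}$. Note that, with this substitution, $\bar f \ast x = f \ast x$ for every $x$, so the assumption $f\ast D\sqsubseteq^J D_f$ of \Cref{Kinterc3} is literally the assumption $\bar f\ast D\sqsubseteq^J D_{\bar f}$ of \Cref{Kinterc1}. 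Moreover, the combinatorial hypothesis of \Cref{Kinterc1} reduces, level by level, to exactly the standing hypothesis $\bigcap_{h\in F}h^{-1}\llbracket\bigcup_{f\in F}a_f\rrbracket\in I'$ for every $\bar a\in\prod_{f\in F}I_f$. An application of \Cref{Kinterc1} then yields $\Lc_J(D,I')\leqT\bigotimes_{f\in F}\Lc_J(D_f,I_f)$, and the cardinal inequality for $\slalomt$ follows from \Cref{cor:Tukeyval}~\ref{Tukeyval:b}.

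For the $\Lbf$-statement I would directly apply \Cref{Kinterc} with $E:=\cnt(I')$ and $E_{\bar f}:=\cnt(I_f)$ for each $\bar f\in F^\ast$. The key observation is that, under the constant-sequence specialization, the definition in \Cref{n:ast} gives
\[
(F\circledast\bar s)(n)=\bigcap_{h\in F}h^{-1}\Bigl\llbracket\bigcup_{f\in F}a_f\Bigr\rrbracket
\]
independently of $n$ whenever each $s_{\bar f}$ is the constant slalom of value $a_f\in I_f$; hence, by the standing hypothesis, $F\circledast\bar s\in\cnt(I')$. This ensures $F\circledast\prod_{\bar f\in F^\ast}\cnt(I_f)\subseteq\cnt(I')$, so the hypothesis $E_F\Subset^J E$ of \Cref{Kinterc} is trivially satisfied, and the conclusion $\Lbf_{I',J}(D)\leqT\bigotimes_{f\in F}\Lbf_{I_f,J}(D_f)$ follows, giving $\lfrak_D(I',J)\leq\prod_{f\in F}\lfrak_{D_f}(I_f,J)$.

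The moreover clause for $F=\{f\}$ is obtained by invoking the ``moreover'' part already contained in \Cref{Kinterc}, which strengthens the Tukey reduction from $\Lc_J$ to $\pLc_J$ in the singleton case. Combined with the same constant-sequence specialization and the $E=\cnt(I')$ reading described above, this yields $\pLc_J(D,I')\leqT\pLc_J(D_f,I_f)$ and $\pL_{I',J}(D)\leqT\pL_{I_f,J}(D_f)$; the corresponding inequalities for $\slalome$ and $\pfrak_D$ then follow from \Cref{cor:Tukeyval}~\ref{Tukeyval:b}.

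I do not foresee any serious obstacle. The only delicate point is the elementary verification, from the definition in \Cref{n:ast}, that $(F\circledast\bar s)(n)$ is independent of $n$ once the $\bar f$'s and $s_{\bar f}$'s are all constant; once this is noted, the rest is bookkeeping to check that the notational conventions of \Cref{def:Lc} translate the conclusion of \Cref{Kinterc1} (with constant ideal-sequences) into the form $\Lc_J(D,I')\leqT\bigotimes_{f\in F}\Lc_J(D_f,I_f)$ stated here.
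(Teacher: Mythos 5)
Your proposal is correct and matches the paper's intended derivation: the corollary is marked $\qed$ precisely because it follows by specializing \Cref{Kinterc} (equivalently \Cref{Kinterc1}) to constant sequences $\bar f=\seqn{f}{n<\omega}$ with constant ideal sequences $\bar I^{\bar f}=\seqn{I_f}{n}$, $\bar I'=\seqn{I'}{n}$ for the $\Lc_J$ part and $E=\cnt(I')$, $E_{\bar f}=\cnt(I_f)$ for the $\Lbf$ part, and the only thing to check is your observation that under this specialization $(F\circledast\bar s)(n)=\bigcap_{h\in F}h^{-1}\llbracket\bigcup_{f\in F}a_f\rrbracket$ is independent of $n$, so that $F\circledast\prod_{\bar f}\cnt(I_f)\subseteq\cnt(I')$. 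One minor remark: the paper's statement of the corollary has a few typographical slips ($D'$ for $D$ in some occurrences and $I$ for $I_f$ in the final inequality); your consistent reading with $D$ throughout is the intended one, and your mislabeling of the hypothesis of \Cref{Kinterc} as ``$E_F\Subset^J E$'' (that notation belongs to \Cref{thm:RBuni}) is cosmetic and does not affect the argument.
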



\begin{corollary}\label{Kinterc4}
    Let $F$, $J$, and $I'$ be as before, and let $I$ be an ideal on some set $a_*$. If\/ $f\ast D \sqsubseteq^J D_{f}$ for all $f\in F$ and $I\subseteq\set{a\subseteq a_*}{\bigcap_{h\in F}h^{-1}\llbracket a\rrbracket\in I'}$, then\/ $\Lc_J(D',I')\leqT \bigotimes_{f\in F}\Lc_J(D_{f}, I)$ and\/ $\Lbf_{I',J}(D)\leqT \bigotimes_{f\in F}\Lbf_{I,J}(D_f)$. In particular, $\slalomt(D',I',J)\leq \prod_{f\in F}\slalomt(D_f,I,J)$ and\/ $\lfrak_{D'}(I',J)\leq \prod_{f\in F}\lfrak_{D_f}(I,J)$.

    In addition, when $F=\{f\}$, $\pLc_J(D',I') \leqT \pLc_J(D_{f}, I)$, $\pL_{I',J}(D') \leqT \pL_{I,J}(D_{f})$, $\slalome(D', I',J) \leq \slalome(D_{f}, I, J)$ and $\pfrak_{D'}(I',J)\leq \pfrak_{D_f}(I,J)$.\qed
\end{corollary}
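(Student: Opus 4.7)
The plan is to derive \Cref{Kinterc4} as an immediate specialization of \Cref{Kinterc3}, by instantiating every $I_f$ with the single ideal $I$ and verifying that the hypothesis on intersections of preimages carries over. Fix an arbitrary $\bar a=\seqn{a_f}{f\in F}\in\prod_{f\in F}I$. Since $F$ is finite and $I$ is an ideal, it is closed under finite unions, so $\bigcup_{f\in F}a_f\in I$. Applying the standing hypothesis $I\subseteq\bigset{a\subseteq a_*}{\bigcap_{h\in F}h^{-1}\llbracket a\rrbracket\in I'}$ to this union yields $\bigcap_{h\in F}h^{-1}\llbracket\bigcup_{f\in F}a_f\rrbracket\in I'$, which is precisely the hypothesis required by \Cref{Kinterc3} with the choice $I_f:=I$ for every $f\in F$.

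Given that the remaining hypothesis $f\ast D\sqsubseteq^J D_f$ for each $f\in F$ is shared between the two corollaries, \Cref{Kinterc3} applies directly and produces the two Tukey connections $\Lc_J(D',I')\leqT \bigotimes_{f\in F}\Lc_J(D_f,I)$ and $\Lbf_{I',J}(D)\leqT \bigotimes_{f\in F}\Lbf_{I,J}(D_f)$, together with the corresponding cardinal inequalities $\slalomt(D',I',J)\leq \prod_{f\in F}\slalomt(D_f,I,J)$ and $\lfrak_{D'}(I',J)\leq \prod_{f\in F}\lfrak_{D_f}(I,J)$. For the singleton case $F=\{f\}$, invoking the pseudo-localization clause of \Cref{Kinterc3} with the same substitution delivers $\pLc_J(D',I')\leqT \pLc_J(D_f,I)$ and $\pL_{I',J}(D')\leqT \pL_{I,J}(D_f)$, as well as the inequalities for $\slalome$ and $\pfrak_{D'}$.

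No real obstacle arises: the argument is a one-line reduction, and the sole place where the ideal structure of $I$ is used is in the step $\bigcup_{f\in F}a_f\in I$, which relies only on the finiteness of $F$ and the closure of ideals under finite unions.
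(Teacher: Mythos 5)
Your proposal is correct and matches the intended (but unstated) argument in the paper. The paper marks \Cref{Kinterc4} with $\qed$ and no explicit proof, because it is meant as an immediate specialization of \Cref{Kinterc3} with $I_f:=I$ for all $f\in F$; you supply precisely the missing verification, namely that for $\bar a\in\prod_{f\in F}I$ the union $\bigcup_{f\in F}a_f$ lies in $I$ (finiteness of $F$ plus closure of the ideal $I$ under finite unions), so the displayed hypothesis of \Cref{Kinterc3} holds, and the rest (including the singleton-$F$ pseudo-localization clause) transfers verbatim.
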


We close this section with a~general result that uses point-wise intersections of slaloms. This will be used in \Cref{L6.2}.

\begin{lemma}\label{slint}
    Let $E_0$, $E_1$ and $E$ be sets of slaloms and assume that, for any $s\in E_0$ and $s'\in E_1$, $s\wedge s' := \seqn{s(n)\cap s'(n)}{n<\omega}\in E$. Let $D$ be a~set of functions with domain $\omega$ and $J$ an ideal on $\omega$. Then:
    \begin{enumerate}[label = \normalfont (\alph*)]
        \item\label{slint1} $\Lc_J(D,E) \leqT \Lc_J(D,E_0)\otimes \Lc_J(D,E_1)$.
        \item\label{slint2} $\pLc_J(D,E) \leqT \Lc_J(D,E_0)\otimes \pLc_J(D,E_1)$.
        \item\label{slint3} Assume that $D\subseteq \baire{a}$ and, for any $s'\in E_1$, there is some $s''\in E_1$ such that $s' \subseteq^{\Fin^\dual} s''$ and $s''(n)\subseteq s''(n+1)$ for all $n<\omega$. Then 
        $\slalome(D,E,\Fin)\leq \slalome(D,E_0,\Fin)\cdot \slalome(\baire{a},E_1,\Fin)$. 
    \end{enumerate}
\end{lemma}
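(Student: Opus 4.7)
My plan is to treat \ref{slint1} and \ref{slint2} together with a single diagonal Tukey connection, and then to prove \ref{slint3} by a direct witness-family construction where the increasing-closure hypothesis plays a crucial role.

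For \ref{slint1} and \ref{slint2}, I would define $\Psi_-\colon D\to D\times D$ by $\Psi_-(x):=(x,x)$ and $\Psi_+\colon E_0\times E_1\to E$ by $\Psi_+(s_0,s_1):=s_0\wedge s_1$ (which lies in $E$ by hypothesis). The key identity is $\|x\in s_0\wedge s_1\|=\|x\in s_0\|\cap\|x\in s_1\|$. For \ref{slint1}, if both $\|x\in s_i\|$ lie in $\dfil{J}$, their intersection lies in $\dfil{J}$ because $\dfil{J}$ is a filter. For \ref{slint2}, with $\|x\in s_0\|\in \dfil{J}$ and $\|x\in s_1\|\in J^+$, I note that $\|x\in s_1\|\setminus(\|x\in s_0\|\cap\|x\in s_1\|)\subseteq \omega\setminus\|x\in s_0\|\in J$, so the intersection cannot lie in $J$ (else $\|x\in s_1\|$ would, by closure of $J$ under finite unions and subsets).

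For \ref{slint3} I would avoid a direct Tukey connection, since the increasing-closure enlargement is only modulo finite. Fix witnesses $S_0\subseteq E_0$ of $\slalome(D,E_0,\Fin)$ and $S_1\subseteq E_1$ of $\slalome(\baire{a},E_1,\Fin)$. For each $s_1\in S_1$ pick by hypothesis an increasing $\tilde s_1\in E_1$ with $s_1\subseteq^{\Fin^\dual}\tilde s_1$; since $\|y\in s_1\|\subseteq^*\|y\in \tilde s_1\|$, the family $\tilde S_1:=\set{\tilde s_1}{s_1\in S_1}$ still witnesses $\slalome(\baire{a},E_1,\Fin)$. Let $S:=\set{s_0\wedge \tilde s_1}{s_0\in S_0,\,\tilde s_1\in \tilde S_1}\subseteq E$, of cardinality at most $\slalome(D,E_0,\Fin)\cdot\slalome(\baire{a},E_1,\Fin)$.

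The heart of the proof is verifying that $S$ witnesses $\slalome(D,E,\Fin)$, and this step is exactly where the increasing assumption is needed. Given $x\in D$, I would pick $s_0\in S_0$ with $A_0:=\|x\in s_0\|$ infinite and let $a_0<a_1<\cdots$ enumerate $A_0$. Define $y_x\in \baire{a}$ by $y_x(n):=x(a_n)$ and pick $\tilde s_1\in \tilde S_1$ with $\|y_x\in \tilde s_1\|$ infinite. Since $a_n\geq n$ and $\tilde s_1$ is $\subseteq$-increasing, $\tilde s_1(n)\subseteq \tilde s_1(a_n)$; hence any $n\in \|y_x\in \tilde s_1\|$ gives $x(a_n)=y_x(n)\in \tilde s_1(n)\subseteq \tilde s_1(a_n)$, so $a_n\in \|x\in \tilde s_1\|\cap A_0=\|x\in s_0\wedge \tilde s_1\|$. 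Injectivity of $n\mapsto a_n$ makes this set infinite. The main obstacle the hypothesis is designed to overcome is precisely this pull-back from ``infinitely often $y_x(n)\in \tilde s_1(n)$'' to ``infinitely often $x(k)\in \tilde s_1(k)$ along $k\in A_0$''; without $\tilde s_1$ being increasing, no such transfer is available because the $a_n$ may grow arbitrarily faster than $n$.
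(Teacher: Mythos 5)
Your proposal is correct and follows essentially the same route as the paper's proof: the diagonal Tukey connection $(x\mapsto(x,x),\ (s_0,s_1)\mapsto s_0\wedge s_1)$ for parts~(a) and~(b) (with the same filter/positive-set verification, which the paper leaves implicit), and the same witness-family argument for part~(c), where your enumeration $a_0<a_1<\cdots$ of $\|x\in s_0\|$ is exactly the paper's increasing function $g$, and the pull-back step via $\tilde s_1(n)\subseteq\tilde s_1(a_n)$ is the paper's use of $s'(k)\subseteq s'(g(k))$.
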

\begin{proof}
    The maps $x\mapsto (x,x)$ and $(s,s')\mapsto s\wedge s'$ gives the Tukey connections for~\ref{slint1} and~\ref{slint2}. 

    \ref{slint3}: Assume that $S_0\subseteq E_0$ and $S_1\subseteq E_1$ are witnesses of $\slalome(D,E_0,\Fin)$ and $\slalome(\baire{a},E_1,\Fin)$, respectively. By the assumption, we can also assume that, for every $s'\in S_1$, $s'(n)\subseteq s'(n+1)$ for all $n\in\omega$. Then $\set{s\wedge s'}{s\in S_0,\ s'\in S_1}$ is dominating in $\pLc_\Fin(D,E)$. 
    Indeed, for every $x\in D$ there is an $s\in S_0$ such that $x\in^{\fin^+}s$. Pick an increasing function $g\in\baire\omega$ such that $x(g(k))\in s(g(k))$ for all $k\in\omega$. Then, there is an $s'\in S_1$ such that $x\circ g\in^{\fin^+}s'$. However, $s'(k)\subseteq s'(g(k))$ for each $k\in\omega$. Hence, $x\circ g\in^{\fin^+}s'\circ g$, and so $x\in^{\fin^+}s\wedge s'$ as well.

    A dual argument shows that $\min\{\slalome^\perp(D,E_0,\Fin),\slalome^\perp(\baire{a},E_1,\Fin)\} \leq \slalome^\perp(D,E,\Fin)$.
\end{proof}

\begin{remark}\label{rem:gengen}
   Except for \Cref{slint}~\ref{slint3}, $\omega$ is immaterial for the development of the theory and results of this section. In general, the sets of functions and slaloms we have dealt with can have domain some infinite (possibly uncountable) set $w$, and $J$ can be assumed to be an ideal on $w$.
\end{remark}

\section{Partial orderings of ideals}\label{sec:partcases}

We shall be interested mainly in the particular cases of slalom invariants $\slalome(I,J)$, $\slalomt(I,J)$, $\slalome(\star,J)$, $\slalomt(\star,J)$, $\slalome(b,h,J)$, and $\slalomt(b,h,J)$ (with special attention to $\prod b = \Baire$) where $I$ and $J$ are ideals on $\omega$ and $h\in\Baire$. Let us recall that 
\begin{align*}
    \covst(I) & = \min\set{|A|}{A\subseteq I\text{ and }(\forall a\in[\omega]^\omega)(\exists b\in A)\ |a\cap b|=\omega} = \dfrak([\omega]^{\aleph_0}, I,{\not\perp^\fin}),\\
    \nonst(I)  & = \min\set{|E|}{E\subseteq [\omega]^{\aleph_0}\text{ and }(\forall b\in I)(\exists a\in E)\ |a\cap b|<\aleph_0} = \bfrak([\omega]^{\aleph_0}, I,{\not\perp^\fin}),\\
    \bfrak_J & =\bfrak(\baire\omega,\baire\omega,{\leq^{J^\dual}}), \text{ and}\\
    \dfrak_J & =\dfrak(\baire\omega,\baire\omega,{\leq^{J^\dual}}),
\end{align*}
where $a\perp^\Fin b$ iff $a\cap b$ is finite. Observe that 
$\covst(I)=\pfrak_\Kat(I,\Fin)$, 
$\bfrak_J=\slalome(\Fin,J)$, and
$\dfrak_J=\slalomt(\Fin,J)$, even more, $\la [\omega]^{\aleph_0}, I,{\not\perp^\fin}\ra \eqT \pL(I,\Fin)$ and $\la \Baire, \Baire,{\in^{J^\dual}}\ra \eqT \Lc_J(\Fin)\eqT \pLc_J(\Fin)^\perp$. 
The obvious inequalities (which follow from \Cref{lem:gen} and \Cref{cor:gen}) between the studied slalom numbers are summarized in the following
diagram in which $\rightarrow$ denotes~$\le$ and $h\geq^* 1$ in $\Baire$:
\begin{align*}
\begin{array}{*{11}{@{}c}@{}}
\slalomt(\star,J)&{}\rightarrow{}
&\slalomt(I,J)&{}\rightarrow{}&\dfrak_J&{}\rightarrow{}
&\slalomt(h,J)&\hskip4em
&\slalomt(I,J)&{}\rightarrow{}&\lfrak_\Kat(I,J)\\[2pt]
{\uparrow}&&{\uparrow}&&{\uparrow}&&{\uparrow}&&{\uparrow}&&{\uparrow}\\[2pt]
\slalome(\star,J)&{}\rightarrow{}
&\slalome(I,J)&{}\rightarrow{}&\bfrak_J&{}\rightarrow{}
&\slalome(h,J)&
&\slalome(I,J)&{}\rightarrow{}&\pfrak_\Kat(I,J)
\end{array}
\end{align*}

All these cardinals are uncountable as a~consequence of \Cref{S4_eq}~\ref{S4_eq:03}, even above the \emph{pseudo-intersection number $\pfrak$} (see also \Cref{G2}, \Cref{L2.6}~\ref{L2.6b} and \Cref{BasicDia}), and below $\cfrak$ when well-defined. The only cardinals that may not be well-defined are $\pfrak_\Kat(I,J)$ and $\lfrak_\Kat(I,J)$, see \Cref{undef:pl} and~\ref{L2.1}.

\begin{definition}\label{def:leqK}
Let $a$ and $a'$ be sets, $I\subseteq \pts(a)$ and $J\subseteq \pts(a')$ (usually ideals). 
The family of all $J$-to-one functions in ${}^{a'} a$ is denoted by $({}^{a'} a)^J$, i.e., $f\in ({}^{a'} a)^J$ iff $f\colon a'\to a$ and $f^{-1}\llbracket \{n\}\rrbracket\in J$ for all $n\in a$. We consider the following partial quasi-orderings (usually between ideals):\footnote{See \cite{GaMe} for applications of the dual Kat\v{e}tov-Blass ordering~$\le_\mKB$.}
\begin{align*}
I\le_\RK J &\text{ iff }(\exists f\in{}^{a'} a)\
I=f^\rightarrow(J)&&\text{(Rudin-Keisler)},\\
I\le_\Kat J &\text{ iff }(\exists f\in{}^{a'} a)\
I\subseteq f^\rightarrow(J)&&\text{(Kat\v{e}tov)},\\
I\le_\mKat J&\text{ iff }(\exists f\in{}^{a} a')\ f^\rightarrow(I)\subseteq J
&&\text{(dual Kat\v{e}tov)},\\
I\le_\RB J &\text{ iff }(\exists f\in({}^{a'} a)^\Fin)\
I=f^\rightarrow(J)&&\text{(Rudin-Blass)},\\
I\le_\KB J &\text{ iff }(\exists f\in({}^{a'} a)^\Fin)\
I\subseteq f^\rightarrow(J)&&\text{(Kat\v{e}tov-Blass)},\\
I\le_\mKB J & \text{ iff }(\exists f\in({}^{a} a')^\Fin)\
f^\rightarrow(I)\subseteq J&&\text{(dual Kat\v{e}tov-Blass)}.
\end{align*}
Note that if $a=a'$ and $I\subseteq J$ then $I\le_\KB J$ and $I\le_\mKB J$, witnessed by the identity map. Moreover, ${\le_\RB}\subseteq{\le_\KB}\subseteq{\le_\Kat}$ and ${\le_\RB}\subseteq{\ge_\mKB}$. In addition, to present some monotonicity results we shall need the following partial ordering.
\[\textstyle
I\le_\Kat^\ccap J\text{ iff }
(\exists F\in[{}^{a'} a]^{<\omega})\
I\subseteq\set{w\subseteq a}{\bigcap_{f\in F}f^{-1}\llbracket w\rrbracket\in J}.
\]
It is clear that
${\le_\Kat} \subseteq{\le_\Kat^\ccap}$, i.e., ${\le_\Kat^\ccap}$ is a~generalization of ${\le_\Kat}$. Notice that $\leq_\Kat^\ccap$ is a~preorder.

We write $J'\approx_\Kat J$ if $J'\leq_\Kat J$ and $J\leq_\Kat J'$, similarly for $\approx_{\mKat}$, $\approx^\ccap_\Kat$, etc. 
\end{definition}

It follows from the definitions that $\pfrak_\Kat(I,J)$ is undefined iff $I\leqK J$. In general: 

\begin{lemma}\label{undef:pl}
   Let $a$ be a set, $D\subseteq \baire{a}$, $I\subseteq\pts(a)$ and $J\subseteq\pts(\omega)$. Then: 
   \begin{enumerate}[label = \normalfont (\alph*)]
       \item $\pfrak_D(I,J)$ is undefined iff $I\subseteq f^\to(J)$ for some $f\in D$.
       \item $\lfrak_D(I,J)$ is undefined iff $I\subseteq f^{\to}(J^{\rm dc})$ for some $f\in D$.\qed
   \end{enumerate}
\end{lemma}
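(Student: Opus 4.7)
Both parts amount to unfolding the definitions of the relational systems $\pL_{I,J}(D)$ and $\Lbf_{I,J}(D)$ and reading off what it means to have no dominating set. Recall that for a relational system $\Rbf = \langle X,Y,\sqsubset\rangle$, the cardinal $\dfrak(\Rbf)$ is undefined precisely when $Y$ itself fails to be dominating, i.e., when there is some $x \in X$ with $x \not\sqsubset y$ for all $y \in Y$. I would apply this observation to each case.

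\textbf{Part (a).} By definition $\pfrak_D(I,J) = \dfrak(\pL_{I,J}(D))$, where $\pL_{I,J}(D) = \langle D,\cnt(I),{\in^{J^+}}\rangle$. Thus $\pfrak_D(I,J)$ is undefined iff there exists $x \in D$ such that for every $s \in \cnt(I)$, $\|x \in s\| \notin J^+$, equivalently $\|x \in s\| \in J$. Writing each $s \in \cnt(I)$ as the constant sequence with value $A \in I$, we have $\|x \in s\| = x^{-1}\llbracket A\rrbracket$, so the condition becomes: there exists $x \in D$ with $x^{-1}\llbracket A\rrbracket \in J$ for every $A \in I$. By the definition $x^{\rightarrow}(J) = \{B \subseteq a : x^{-1}\llbracket B\rrbracket \in J\}$ from the preliminaries, this is exactly $I \subseteq x^{\rightarrow}(J)$.

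\textbf{Part (b).} Analogously $\lfrak_D(I,J) = \dfrak(\Lbf_{I,J}(D))$, with $\Lbf_{I,J}(D) = \langle D,\cnt(I),{\in^{J^\dual}}\rangle$. So $\lfrak_D(I,J)$ is undefined iff there exists $x \in D$ such that $\|x \in s\| \notin J^\dual$ for every $s \in \cnt(I)$. Recalling $J^{\rm dc} = \pts(\omega) \setminus J^\dual$, this is equivalent to $x^{-1}\llbracket A\rrbracket \in J^{\rm dc}$ for every $A \in I$, i.e., $A \in x^{\rightarrow}(J^{\rm dc})$ for every $A \in I$, which is $I \subseteq x^{\rightarrow}(J^{\rm dc})$.

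\textbf{Obstacle.} There is no real obstacle; the statement is a direct verification once one is comfortable with the four-way distinction among $J$, $J^+$, $J^\dual$, and $J^{\rm dc}$ and with the notational conventions $\cnt(I)$, $\|{\cdot}\|$, and $x^{\rightarrow}(\cdot)$. The only care needed is in part (b) to make sure the ``dual complement'' $J^{\rm dc}$ (rather than $J^\dual$ or $J$) is the correct object, which is forced by the fact that the negation of membership in the filter $J^\dual$ lands precisely in $J^{\rm dc}$.
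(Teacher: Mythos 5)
Your proof is correct and is exactly the direct definitional unfolding that the paper leaves implicit (the lemma is stated with a terminal \qed and no proof text). In particular you correctly identify that $\dfrak(\Rbf)=\infty$ iff $Y$ is not $\Rbf$-dominating, compute $\|x\in s\|=x^{-1}\llbracket A\rrbracket$ for constant slaloms $s\in\cnt(I)$ with value $A$, and then translate the two negations into membership in $J$ (hence $I\subseteq f^{\to}(J)$) and in $J^{\rm dc}=\pts(\omega)\smallsetminus J^\dual$ (hence $I\subseteq f^{\to}(J^{\rm dc})$), which matches the paper's conventions exactly.
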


We have the following general result for~$\leq_K$.

\begin{lemma}\label{thm:IctdecrK}
Let $I\subseteq\pts(a)$, $I'\subseteq\pts(a')$, $A\subseteq\pts(\omega)$ and let $D\subseteq \baire{a}$ and $D'\subseteq\baire{a'}$.
Further, assu\-me that $I'\leqK I$ is witnessed by an $f\colon a\to a'$ and $f\circ x\in D'$ for every $x\in D$.
Then\/ $\la D, {}^\omega I,{\in^A}\ra\leqT \la D', {}^\omega I',{\in^A}\ra$ and\/ $\la D, \cnt(I),{\in^A}\ra\leqT \la D', \cnt(I'),{\in^A}\ra$.
In particular, $\dfrak_A(D,{}^\omega I)\leq \dfrak_A(D',{}^\omega I')$ and\/ $\dfrak_A(D,\cnt(I))\leq\dfrak_A(D',\cnt(I'))$.
\end{lemma}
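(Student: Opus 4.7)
The plan is to construct explicit Tukey connections in both cases, using the natural push-forward along $f$ on the $D$-side and the pull-back of $s'$ by $f$ on the slalom-side.

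First I would define $\Psi_-\colon D\to D'$ by $\Psi_-(x):=f\circ x$, which lands in $D'$ precisely because of the hypothesis ``$f\circ x\in D'$ for every $x\in D$''. On the slalom side I would define $\Psi_+\colon{}^\omega I'\to{}^\omega I$ by
\[
\Psi_+(s'):=\bigseq{f^{-1}\llbracket s'(n)\rrbracket}{n<\omega}.
\]
Since $I'\leqK I$ is witnessed by $f$, i.e.\ $I'\subseteq f^\rightarrow(I)$, we have $f^{-1}\llbracket s'(n)\rrbracket\in I$ for every $n<\omega$, so $\Psi_+(s')\in{}^\omega I$.

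The key (entirely routine) identity is that, for any $x\in D$ and $s'\in{}^\omega I'$,
\[
x(n)\in f^{-1}\llbracket s'(n)\rrbracket\iff f(x(n))\in s'(n),
\]
so $\|x\in\Psi_+(s')\|=\|\Psi_-(x)\in s'\|$ as subsets of $\omega$. In particular, $\Psi_-(x)\in^A s'$ implies $x\in^A\Psi_+(s')$, which is the defining property of a Tukey connection $(\Psi_-,\Psi_+)\colon \la D,{}^\omega I,{\in^A}\ra\to\la D',{}^\omega I',{\in^A}\ra$. The inequality $\dfrak_A(D,{}^\omega I)\leq\dfrak_A(D',{}^\omega I')$ then follows from \Cref{cor:Tukeyval}\ref{Tukeyval:b}.

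For the second Tukey connection I would use the same $\Psi_-$ and restrict $\Psi_+$ to constant slaloms: if $s'\in\cnt(I')$ is the constant $\omega$-sequence with value $c'\in I'$, then $\Psi_+(s')$ is the constant $\omega$-sequence with value $f^{-1}\llbracket c'\rrbracket\in I$, hence $\Psi_+$ maps $\cnt(I')$ into $\cnt(I)$. The verification of the Tukey implication is identical to the previous one, giving $\la D,\cnt(I),{\in^A}\ra\leqT\la D',\cnt(I'),{\in^A}\ra$ and the corresponding inequality of $\dfrak_A$-numbers. There is no genuine obstacle here; the whole argument is the natural functoriality of the slalom relational system under the pull-back along $f$, and the hypothesis $f\circ x\in D'$ for $x\in D$ is precisely what is needed to keep the $D$-side in the picture.
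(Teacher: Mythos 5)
Your proposal is correct and uses exactly the same idea as the paper: the paper disposes of this lemma by citing the more abstract \Cref{gen:decrK} with $\bar f$ the constant sequence equal to $f$, and your $(\Psi_-,\Psi_+)=(x\mapsto f\circ x,\ s'\mapsto\seqn{f^{-1}\llbracket s'(n)\rrbracket}{n})$ is precisely the Tukey connection that lemma produces. Your direct unwinding (including the observation that $\|x\in\Psi_+(s')\|=\|\Psi_-(x)\in s'\|$, an equality rather than mere inclusion, and the fact that constant slaloms pull back to constant slaloms) is a clean self-contained version of the paper's argument.
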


\begin{proof}
    This is 
    a~particular case of \Cref{gen:decrK}.
\end{proof}

If $M$ is a~class, $\sqsubseteq$ is a~relation on~$M$, and $\lambda(I)$ is a~cardinal invariant depending on some parameter $I\in M$ then we say that $\lambda(I)$ is $\sqsubseteq$-decreasing ($\sqsubseteq$-increasing) if $\lambda(I_1)\leq\lambda(I_0)$ ($\lambda(I_0)\leq\lambda(I_1)$) whenever $I_0\sqsubseteq I_1$. 
In a~similar way, when $\Rbf(I)$ is a~relational system depending on $I\in M$, we say that $\Rbf(I)$ is $\sqsubseteq$-increasing if $\Rbf(I_0)\leqT \Rbf(I_1)$ whenever $I_0 \sqsubseteq I_1$ (and decreasing is defined naturally). 
For example, the previous lemma indicates that $\la\baire{a},I,{\in^A}\ra$ and $\la\baire{a},\cnt(I),{\in^A}\ra$ are $\leq_\Kat$-decreasing on $I$, and so are $\dfrak_A(\baire{a},I)$ and $\dfrak_A(\baire{a},\cnt(I))$ as a~consequence.

A more concrete example:

\begin{lemma}\label{L2.4}
 Let $J$ be an ideal on $\omega$. Then\footnote{The fact about $\slalome(I,J)$ has been shown in~\cite{SJ}. Moreover, the $\leq_\Kat$-monotonicity only requires $J\subseteq\pts(\omega)$.} 
\begin{enumerate}[label=\rm(\alph*)]
\item\label{sledecrI}
$\pLc_J(I)$ and\/ $\pL_{I,J}$ are\/ $\leq_\Kat$-decreasing on $I\subseteq\pts(\omega)$. Consequently,
$\slalome(I,J)$ and\/ $\pfrak_\Kat(I,J)$ are\/
$\le_\Kat$-decreasing on~$I$.
In particular, $\covst(I)$ is\/ $\le_\Kat$-decreasing on~$I$.

\item\label{L2.4b}
$\Lc_J(I)$ and\/ $\Lbf_{I,J}$ are\/ $\le_\Kat$-decreasing on $I\subseteq\pts(\omega)$, and\/ $\slalomt(I,J)$ and\/ $\lfrak_\Kat(I,J)$ are\/
$\le_\Kat^\ccap$-decreasing on~ideals $I$ on $\omega$.
\end{enumerate}
\end{lemma}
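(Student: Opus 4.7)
The plan is to derive each of the four monotonicity statements as a direct instance of an abstract Tukey-connection result established earlier in the paper.

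For part (a) and the first half of part (b)---the monotonicity under the plain Kat\v etov order $\leq_\Kat$---I would apply \Cref{thm:IctdecrK} directly. Given $I_0\leq_\Kat I_1$ witnessed by $f\colon\omega\to\omega$ with $I_0\subseteq f^\to(I_1)$, take $D = D' = \Baire$ and $a = a' = \omega$, so that the side hypothesis ``$f\circ x\in D'$ for every $x\in D$'' holds trivially. Taking $A = J^+$ in the lemma yields $\pLc_J(I_1)\leqT\pLc_J(I_0)$ (with slaloms in $\baire I$) and $\pL_{I_1,J}\leqT\pL_{I_0,J}$ (with slaloms in $\cnt(I)$), while $A = J^\dual$ gives the analogous Tukey connections for $\Lc_J$ and $\Lbf_{I,J}$. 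The four cardinal inequalities $\slalome(I_1,J)\leq\slalome(I_0,J)$, $\pfrak_\Kat(I_1,J)\leq\pfrak_\Kat(I_0,J)$, $\slalomt(I_1,J)\leq\slalomt(I_0,J)$, and $\lfrak_\Kat(I_1,J)\leq\lfrak_\Kat(I_0,J)$ then drop out of \Cref{cor:Tukeyval}; the statement about $\covst$ is simply the special case $J=\Fin$, since $\covst(I)=\pfrak_\Kat(I,\Fin)$.

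For the $\leq_\Kat^\ccap$-decreasing part of (b) I plan to invoke \Cref{Kinterc3}. Assuming $I_0, I_1$ are ideals on $\omega$ with $I_0\leq_\Kat^\ccap I_1$ via a finite family $F\subseteq\baire\omega$, I would set $D = D_f = \Baire$ for every $f\in F$, $I_f = I_0$ for each $f\in F$, and $I' = I_1$. The premise $f\ast D\sqsubseteq^J D_f$ is immediate. For any tuple $\bar a\in\prod_{f\in F}I_0$ the set $\bigcup_{f\in F}a_f$ still lies in $I_0$ (this is the one place where $I_0$ being an \emph{ideal}, i.e.\ closed under finite unions, is essential), hence $\bigcap_{h\in F}h^{-1}\llbracket\bigcup_{f\in F}a_f\rrbracket\in I_1$ by the defining property of $F$. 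The corollary then delivers $\Lc_J(\Baire,I_1)\leqT\bigotimes_{f\in F}\Lc_J(\Baire,I_0)$ and $\Lbf_{I_1,J}\leqT\bigotimes_{f\in F}\Lbf_{I_0,J}$, and \Cref{products}(a) converts these into $\slalomt(I_1,J)\leq\prod_{f\in F}\slalomt(I_0,J)$ and $\lfrak_\Kat(I_1,J)\leq\prod_{f\in F}\lfrak_\Kat(I_0,J)$.

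The only subtle point will be the final step: collapsing these finite products to the original cardinals. This uses the standard fact that a finite product of infinite cardinals equals the maximum factor, together with the uncountability of $\slalomt(I_0,J)$ and $\lfrak_\Kat(I_0,J)$ recorded earlier in the section; if $\lfrak_\Kat(I_0,J)$ is undefined, the Tukey connection itself still transfers the inequality correctly, with $\infty$ read as top element. It is also worth remarking that this argument does \emph{not} extend to $\pLc_J$ (and thus to $\slalome$ or $\pfrak_\Kat$) once $|F|\geq 2$, because the ``in addition'' clause of \Cref{Kinterc3} handles the $\pLc_J$-connection only for singletons $F=\{f\}$; this explains the asymmetry in the statement of the lemma.
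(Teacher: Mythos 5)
Your proposal is correct and follows essentially the same route as the paper: the $\leq_\Kat$-monotonicity in (a) and the first half of (b) is read off from \Cref{thm:IctdecrK} with $D=D'=\Baire$, exactly as the paper does, and then the cardinal inequalities and the $\covst$ specialization drop out of \Cref{cor:Tukeyval}. For the $\leq_\Kat^\ccap$-part the paper cites \Cref{Kinterc4} (with $D=D'=\Baire$), which is just the constant-$I_f$ case of \Cref{Kinterc3}; your argument applies \Cref{Kinterc3} directly and explicitly carries out the one step (closure of the ideal $I_0$ under finite unions) that is baked into the derivation of \Cref{Kinterc4}, so this is the same computation, only slightly more unwound. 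Your closing remarks about collapsing the finite product using infiniteness (and $\infty$ as top element when $\lfrak_\Kat$ is undefined) and about why the argument cannot be pushed to $\pLc_J$ for $|F|\geq 2$ are both accurate and match, respectively, the paper's parenthetical comment and an asymmetry the paper leaves implicit.
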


\begin{proof}
\ref{sledecrI} and \ref{L2.4b} for $\leqK$ are immediate consequences of~\Cref{thm:IctdecrK}.
For the remaining part of~\ref{L2.4b}: Let $I'\le_\Kat^\ccap I$ be witnessed by a~set $F\subseteq\Baire$. 
By applying \Cref{Kinterc4} to $D=D'=\Baire$, we obtain $\Lc(I',J)\leqT {}^{F\, } \Lc(I,J)$ and $\Lbf_{I',J}\leqT {}^{F\, }\Lbf_{I,J}$, so the result follows (also because $\slalomt(I,J)$ and $\lfrak_{\Kat}(I,J)$ are infinite, the latter possibly with value $\infty$).
\end{proof}

The previous results are enough to characterize the slalom numbers $\slalome(\star,J)$ and $\slalomt(\star,J)$. We first generalize (part of) \Cref{L2.4} as follows.

\begin{lemma}\label{thm:IdecrK}
    Let $b=\seqn{b(n)}{n<\omega}$ and $b'=\seqn{b'(n)}{n<\omega}$ be sequences of non-empty sets,
    $\bar I=\seqn{I_n}{n<\omega}$ and $\bar I'=\seqn{I'_n}{n<\omega}$  such that $I_n\subseteq\pts(b(n))$ and $I'_n\subseteq\pts(b'(n))$ for all $n<\omega$, let $A\subseteq \pts(\omega)$, and let $D\subseteq \prod b$ and $D'\subseteq \prod b'$. Assume that, for $n<\omega$, $I'_n\leqK I_n$ witnessed by a~function $f_n\colon b(n)\to b'(n)$. Further assume that, for any $x\in D$, $x'\in D'$ where $x'$ is the function defined by $x'(n):=f_n(x(n))$. Then\/ $\la D,\bar I,{\in^A}\ra\leqT \la D',\bar I',{\in^A}\ra$, in particular $\dfrak_A(D,\bar I)\leq\dfrak_A(D',\bar I')$.
\end{lemma}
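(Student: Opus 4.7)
The plan is to recognize this as a direct application of \Cref{gen:decrK} with the sequence $\bar f = \seqn{f_n}{n<\omega}$ and sets $E = \prod_{n<\omega} I_n$, $E' = \prod_{n<\omega} I'_n$. The two hypotheses of \Cref{gen:decrK} that need to be verified are $\bar f \ast D \subseteq D'$ and $\bar f \circledast E' \subseteq E$.

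First, recall from \Cref{n:ast} that $(\bar f \ast x)(n) = f_n(x(n))$, which coincides with the function $x'$ in the statement. Hence the hypothesis ``$x'\in D'$ for every $x\in D$'' is exactly $\bar f \ast D \subseteq D'$. Next, for $s'\in E'$, the sequence $\bar f \circledast s'$ is given by $(\bar f\circledast s')(n) = f_n^{-1}\llbracket s'(n)\rrbracket$. Since $I'_n \leq_{\rm K} I_n$ is witnessed by $f_n$, we have $I'_n \subseteq f_n^\rightarrow(I_n)$, so $s'(n)\in I'_n$ implies $f_n^{-1}\llbracket s'(n)\rrbracket\in I_n$. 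Therefore $\bar f\circledast s' \in E$, giving $\bar f\circledast E'\subseteq E$.

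Applying \Cref{gen:decrK}, we obtain the Tukey connection $\la D,\bar I,{\in^A}\ra\leqT\la D',\bar I',{\in^A}\ra$; the inequality of cardinal characteristics then follows from \Cref{cor:Tukeyval}~\ref{Tukeyval:b}. I do not anticipate any real obstacle: the whole point is that the pointwise lifting of the $\leq_{\rm K}$-witnesses $f_n$ assembles into the sequence $\bar f$ whose associated operators $\ast$ and $\circledast$ are already tailored (in the preceding general machinery) to produce Tukey maps of exactly this kind, and the verification uses only the definitional identity $\|\bar f\ast x \in s'\| = \|x\in \bar f\circledast s'\|$.
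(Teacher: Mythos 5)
Your proof is correct and follows exactly the paper's approach: the paper's proof is the single sentence ``Immediate from Lemma~\ref{gen:decrK},'' and you supply precisely the verification of its two hypotheses ($\bar f\ast D\subseteq D'$ and $\bar f\circledast E'\subseteq E$) that the paper leaves implicit. The identification $E=\prod\bar I$, $E'=\prod\bar I'$ and the use of $I'_n\subseteq f_n^\to(I_n)$ to show $f_n^{-1}\llbracket s'(n)\rrbracket\in I_n$ are exactly right.
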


\begin{proof}
    Immediate from \Cref{gen:decrK}.
\end{proof}

\begin{theorem}\label{cor:min*}
For $A\subseteq\pts(\omega)$,\footnote{This can be generalized to $\dfrak_A(\baire{a},\star) = \min\set{\dfrak_A(\baire{a},I)}{I \text{ is an ideal on }a}$ for any infinite set $a$.}
   \[\begin{split}
       \dfrak_A(\star) & =\min\set{\dfrak_A(\bar I)}{\bar{I} \text{ is a~sequence of ideals on }\omega}\\
        & =\min\set{\dfrak_A(I)}{I \text{ is an ideal on }\omega}.
   \end{split}\]
\end{theorem}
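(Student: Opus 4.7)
The proof will proceed in two stages, corresponding to the two equalities in the statement.

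\emph{First equality:} $\dfrak_A(\star)=\min_{\bar I}\dfrak_A(\bar I)$. For $\leq$, I will observe that every $S\subseteq\prod\bar I$ automatically satisfies $\star$: for each coordinate $n$, the family $\{s(n):s\in S\}\subseteq I_n$ is a proper family of subsets of $\omega$ and so has FUP. Hence every witness of $\dfrak_A(\bar I)$ is a witness of $\dfrak_A(\star)$, giving $\dfrak_A(\star)\le\dfrak_A(\bar I)$ for every $\bar I$. For $\geq$, given a witness $S$ of $\dfrak_A(\star)$, I will let $I_n$ be the ideal on $\omega$ generated by $\{s(n):s\in S\}\cup[\omega]^{<\omega}$; this is proper by the FUP clause of $\star$, so $\bar I=\seqn{I_n}{n<\omega}$ is a sequence of ideals with $S\subseteq\prod\bar I$, and $S$ remains $A$-dominating, so $\dfrak_A(\bar I)\le|S|$. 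This closely mirrors the proof of \Cref{minpklk} and is essentially a bookkeeping argument.

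\emph{Second equality:} $\min_{\bar I}\dfrak_A(\bar I)=\min_I\dfrak_A(I)$. The inequality $\leq$ is immediate by taking constant sequences $\bar I=(I,I,\ldots)$. For $\geq$, given $\bar I=\seqn{I_n}{n<\omega}$, I will produce a single ideal $I$ on $\omega$ such that $\dfrak_A(I)\le\dfrak_A(\bar I)$. The vehicle is \Cref{thm:IdecrK}, applied with the constant sequence $(I,I,\ldots)$ on the left and $\bar I$ on the right: its hypothesis requires that $I_n\leqK I$ for every $n$ via some $f_n\colon\omega\to\omega$, and its conclusion furnishes the Tukey reduction $\Lc_A(\baire\omega,{}^\omega I)\leqT\Lc_A(\baire\omega,\prod\bar I)$, from which $\dfrak_A(I)\le\dfrak_A(\bar I)$ follows.

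\emph{Main obstacle.} The delicate step is constructing an ideal $I$ on $\omega$ that is a simultaneous $\leqK$-upper bound for the countable family $\{I_n:n<\omega\}$. My intended template is: partition $\omega=\bigsqcup_n B_n$ into infinite pieces, fix bijections $\psi_n\colon B_n\to\omega$, and take $I$ to be the ideal generated by $\{\psi_n^{-1}[C]:n<\omega,\ C\in I_n\}\cup[\omega]^{<\omega}$; the disjointness of the $B_n$ will ensure that $I$ is proper, since finite unions of generators always leave infinitely many $B_m$ untouched. The harder task is defining the witnessing maps $f_n\colon\omega\to\omega$: the obvious extension $f_n(k)=\psi_n(k)$ on $B_n$ with $f_n(k)=0$ off $B_n$ breaks down, because for any $C\in I_n$ containing $0$ the preimage $f_n^{-1}[C]$ contains $\omega\setminus B_n$, which is not in $I$. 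Overcoming this will require either reshaping $f_n$ off $B_n$ to land in an $I$-small region, or selectively enlarging $I$ with supplementary sets while preserving FUP. Once this balance is achieved, the remainder is a direct invocation of \Cref{thm:IdecrK}, completing the proof.
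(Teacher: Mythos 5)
Your handling of the first equality and the reduction of the second to \Cref{thm:IdecrK} with a constant sequence on the left are exactly the paper's. But where the paper simply cites~\cite{Mez,BorFar} for the key fact that any countably many ideals on~$\omega$ admit a common $\leqK$-upper bound, you try to build one explicitly, and the construction cannot be repaired inside the framework you set up. Modulo the identifications $\psi_n$, the ideal $I$ you generate is the finite-support disjoint sum $\bigoplus_n I_n$, and disjoint sums lie \emph{below} their summands in the Kat\v{e}tov order, not above: the paper's own \Cref{L_op_ord} records $I_0\oplus I_1\le_\KB I_0$, and the one-map argument $j\mapsto\psi_n^{-1}(j)$ shows $I\leqK I_n$ for every~$n$. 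Having $I_n\leqK I$ as well would make every $I_n$ Kat\v{e}tov-equivalent to~$I$ and hence to each other, which fails already for $I_0$ maximal and $I_n=\Fin$ ($n\geq1$). Concretely, then $I$ demands that $f_0^{-1}[C]\smallsetminus B_0$ be finite for all $C\in I_0$, and no $f_0$ achieves this: if $\ran(f_0\frestr(\omega\smallsetminus B_0))$ is finite it belongs to $I_0$ and its $f_0$-preimage is all of $\omega\smallsetminus B_0$; if it is infinite, maximality of $I_0$ yields an infinite subset of it in $I_0$ whose preimage off $B_0$ is again infinite. So the difficulty you flag in the last paragraph is not a matter of reshaping the $f_n$ — the ideal itself is on the wrong side of the order.

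A construction that does work is of a rather different flavour. Fix a bijection $\phi\colon\omega\to\omega^{<\omega}$ and set $f_n(k):=\phi(k)(n)$ when $n<|\phi(k)|$ and $f_n(k):=0$ otherwise. Given finitely many $C_1\in I_{n_1},\dots,C_r\in I_{n_r}$ with the $n_j$ distinct, pick $c_j\notin C_j$; there are infinitely many $k$ with $\phi(k)(n_j)=c_j$ for all $j\le r$, so every such $k$ avoids $\bigcup_j f_{n_j}^{-1}[C_j]$. Hence $\{f_n^{-1}[C]:n<\omega,\ C\in I_n\}\cup[\omega]^{<\aleph_0}$ has the FUP, and the ideal $I$ it generates satisfies $I_n\leqK I$ via $f_n$ by design. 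With that sublemma in hand, the rest of your proof is correct and coincides with the paper's.
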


\begin{proof}
    The first equality is clear: on one hand, for any sequence $\bar I$ of ideals on $\omega$, any $S\subseteq\prod\bar I$ satisfies property $\star$, so $\dfrak_A(\star)\leq \dfrak_A(\bar I)$ follows; on the other hand, if $S$ witnesses $\dfrak_A(\star)$, since it satisfies property~$\star$, then $S\subseteq\prod\bar I$ for some sequence $\bar I$ of ideals on $\omega$, so $\dfrak_A(\bar I)\leq \dfrak_A(\star)$.

    For the second equality, $\leq$ is clear. For the converse, if $\bar{I}$ is a~sequence of ideals, then there is some ideal $I$ on $\omega$ such that $I_n\leqK I$ for all $n<\omega$, see~\cite{Mez,BorFar}. Hence, by \Cref{thm:IdecrK}, $\dfrak_A(I)\leq \dfrak_A(\bar I)$.
\end{proof}

As a~consequence:

\begin{theorem}\label{G2}
    Let $J$ be an ideal on $\omega$. Then
    \begin{align*}
      \slalome(\star,J) & =\min\set{\slalome(\bar I,J)}{\bar{I} \text{ is a~sequence of ideals on }\omega}\\
        & =\min\set{\slalome(I,J)}{I \text{ is an ideal on }\omega}
    \intertext{and}
        \slalomt(\star,J) & =\min\set{\slalomt(\bar I,J)}{\bar{I} \text{ is a~sequence of ideals on }\omega}\\
        & =\min\set{\slalomt(I,J)}{I \text{ is an ideal on }\omega}.
    \end{align*}
\end{theorem}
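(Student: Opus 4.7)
The plan is to derive both chains of equalities as immediate specializations of \Cref{cor:min*}. Recall from \Cref{def:Lc} the notational conventions $\slalome(D,E,J) = \dfrak_{J^+}(D,E)$ and $\slalomt(D,E,J) = \dfrak_{\dfil{J}}(D,E)$, which extend to the $\star$-property and to sequences of ideals with $D = \Baire$ omitted; in particular $\slalome(\star,J) = \dfrak_{J^+}(\star)$, $\slalome(\bar I,J) = \dfrak_{J^+}(\bar I)$, $\slalome(I,J) = \dfrak_{J^+}(I)$, and analogously with $\dfil{J}$ in place of $J^+$ for $\slalomt$.

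First I would apply \Cref{cor:min*} with $A := J^+ \subseteq \pts(\omega)$. Since no hypotheses beyond $A \subseteq \pts(\omega)$ are required there, this directly yields
\[
\dfrak_{J^+}(\star) = \min\set{\dfrak_{J^+}(\bar I)}{\bar I \text{ a sequence of ideals on }\omega} = \min\set{\dfrak_{J^+}(I)}{I \text{ an ideal on }\omega},
\]
which, translated through the notational conventions above, is precisely the asserted chain of equalities for $\slalome(\star,J)$.

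Next I would repeat the same specialization with $A := \dfil{J} \subseteq \pts(\omega)$ to obtain
\[
\dfrak_{\dfil{J}}(\star) = \min\set{\dfrak_{\dfil{J}}(\bar I)}{\bar I \text{ a sequence of ideals on }\omega} = \min\set{\dfrak_{\dfil{J}}(I)}{I \text{ an ideal on }\omega},
\]
which rewrites as the asserted chain for $\slalomt(\star,J)$.

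There is no real obstacle, since all the substantive content already sits inside \Cref{cor:min*}: namely the first equality of \Cref{cor:min*} (which exploits that every $S$ satisfying $\star$ is contained in some $\prod \bar I$ for a sequence of ideals) and the second equality (which invokes the existence, for any countable family of ideals on $\omega$, of a single ideal $\le_\Kat$-above each of them, combined with the $\le_\Kat$-monotonicity supplied by \Cref{thm:IdecrK}). The only thing to verify here is that the chosen $A$ in each case lies in $\pts(\omega)$, which is trivial, and that the notational translation is faithful, which is a direct reading of \Cref{def:Lc}. Thus the statement is obtained with essentially no additional work beyond invoking \Cref{cor:min*} twice.
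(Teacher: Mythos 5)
Your argument is exactly the paper's proof: the paper also derives \Cref{G2} by applying \Cref{cor:min*} to $A = J^+$ for the $\slalome$ chain and to $A = \dfil{J}$ for the $\slalomt$ chain, with the notational translation from \Cref{def:Lc} doing the rest. No gaps.
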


\begin{proof}
    Immediate from \Cref{cor:min*} applied to $A= J^+$ and $A = J^\dual$, respectively.
\end{proof}

Concerning the case $J=\Fin$, it is known that:

\begin{theorem}[{\cite{SoSu,Su22}}]\label{S4_eq}
\

\begin{enumerate}[label=\rm(\alph*)]
    \item\label{S4_eq:01} $\slalome(I,\Fin) = \min\{\covst(I),\bfrak\}$.

    \item\label{S4_eq:02} $\pfrak_\Kat(\star,\Fin) = \min\set{\covst(I)}{\text{$I$ is an ideal on $\omega$}} = \pfrak$.

    \item\label{S4_eq:03} $\slalome(\star,\Fin) = \pfrak.$\qed
\end{enumerate}
\end{theorem}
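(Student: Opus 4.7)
The plan is to establish (a) and (b) directly, with (c) following immediately by combining them via \Cref{G2} and the classical inequality $\pfrak\le\bfrak$.

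For the upper bounds in (a), two applications of the earlier machinery suffice. On one hand, \Cref{cor:gen} yields $\slalome(I,\Fin)\le\pfrak_\Kat(I,\Fin)=\covst(I)$ (the last equality recalled after \Cref{fig:slpk}). On the other, since $\Fin\subseteq I$ we have ${}^\omega\Fin\subseteq{}^\omega I$, so \Cref{lem:gen}\ref{it:monE} gives $\slalome(I,\Fin)\le\slalome(\Fin,\Fin)=\bfrak_\Fin=\bfrak$. The lower bound is the principal obstacle. Given $S\subseteq{}^\omega I$ with $|S|<\min\{\covst(I),\bfrak\}$, I will build a single $x\in\Baire$ defeating all slaloms in $S$ simultaneously. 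The collection $\set{s(n)}{s\in S,\, n<\omega}\subseteq I$ has size strictly less than $\covst(I)$, so there is an infinite $b=\{b_0<b_1<\cdots\}\subseteq\omega$ with $b\cap s(n)$ finite for every $s\in S$ and $n<\omega$. For each $s\in S$ set
\[
f_s(n):=\min\set{k<\omega}{b_k>\max(b\cap s(n))},
\]
with the convention $f_s(n):=0$ when $b\cap s(n)=\emptyset$. Because $|S|<\bfrak$, the family $\set{f_s}{s\in S}$ is bounded in $(\Baire,\le^*)$ by some $f\in\Baire$, and then $x(n):=b_{f(n)}$ is the desired escape function: for every $s\in S$ and cofinitely many $n$, $f(n)\ge f_s(n)$ forces $b_{f(n)}\in b\setminus s(n)$, so $\|x\in s\|$ is finite. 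The delicate point is that the same $b$ must serve for all $s\in S$ and all coordinates $n$, which is exactly what $\covst(I)$ provides, while $\bfrak$ converts the pointwise escape radii $f_s$ into a uniform one.

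For (b), \Cref{minpklk} gives $\pfrak_\Kat(\star,\Fin)=\min_I\pfrak_\Kat(I,\Fin)=\min_I\covst(I)$, so it suffices to prove that this common value is $\pfrak$. For $\ge$: if $A\subseteq\pts(\omega)$ generates a proper ideal and $A\not\le_\Kat\Fin$, then $B:=\set{\omega\setminus a}{a\in A}$ has the FIP, and any infinite pseudo-intersection $p$ of $B$, enumerated bijectively by $f\colon\omega\to p$, would satisfy $f^{-1}\llbracket a\rrbracket=\set{n<\omega}{f(n)\in a\cap p}$ finite for each $a\in A$ (because $p\cap a$ is finite), contradicting $A\not\le_\Kat\Fin$; hence $|A|=|B|\ge\pfrak$. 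For $\le$: pick $\mathcal{F}$ witnessing $\pfrak$. If $F_0:=\bigcap\mathcal{F}$ is non-empty it must be finite (else it is itself a pseudo-intersection), and replacing $\mathcal{F}$ by $\set{b\setminus F_0}{b\in\mathcal{F}}$ yields a free centered family of the same cardinality still lacking pseudo-intersections; so we may assume $\mathcal{F}$ is free. Then $A:=\set{\omega\setminus b}{b\in\mathcal{F}}$ generates a proper ideal by the FIP. If $A\le_\Kat\Fin$ via some $f$, then either $\ran f$ is an infinite pseudo-intersection of $\mathcal{F}$ (impossible) or $\ran f$ is finite, in which case pigeonhole on the fibers of $f$ yields some $m\in\ran f$ with $f^{-1}\llbracket\{m\}\rrbracket$ infinite, so $m\notin a$ for every $a\in A$ and hence $m\in\bigcap\mathcal{F}$, contradicting freeness. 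Thus $\pfrak_\Kat(\star,\Fin)\le|A|=\pfrak$.

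Finally, (c) is immediate: by \Cref{G2} and part (a),
\[
\slalome(\star,\Fin)=\min_I\slalome(I,\Fin)=\min\bigl\{\min_I\covst(I),\,\bfrak\bigr\},
\]
which by (b) and the ZFC bound $\pfrak\le\bfrak$ equals $\min\{\pfrak,\bfrak\}=\pfrak$.
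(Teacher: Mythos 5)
Your proof is correct. The paper states this theorem as a citation to \cite{SoSu,Su22} (indicated by the \verb|\qed| directly after the statement) and does not reprove it internally, so there is no paper proof to compare against; what follows is a check of your reconstruction on its own merits.

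The upper bounds in (a) follow cleanly from \Cref{cor:gen} and \Cref{lem:gen}\ref{it:monE} together with $\slalome(\Fin,\Fin)=\bfrak$, as you say. The lower bound is the substantive step, and your two-stage argument is sound: the family $\set{s(n)}{s\in S,\,n<\omega}$ has cardinality $<\covst(I)$ (this silently uses that $\covst(I)\ge\aleph_1$, which is true and worth a half-sentence), so the infinite $b$ exists with $b\cap s(n)$ finite for every $s,n$; then $\bfrak$ converts the family of escape radii $f_s$ into a uniform bound $f$, and $x(n)=b_{f(n)}$ eventually avoids each $s$, including the degenerate coordinates where $b\cap s(n)=\emptyset$. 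For (b), the reduction via \Cref{minpklk} is exactly right, and both directions of $\min_I\covst(I)=\pfrak$ are handled correctly: the complementation $A\mapsto\set{\omega\setminus a}{a\in A}$ translates ``generates a proper ideal'' into SFIP, ``$A\not\le_\Kat\Fin$'' into the nonexistence of an infinite pseudo-intersection, and the two degenerate cases (finite $\bigcap\mathcal F$ for $\le$; finite $\ran f$ for $\not\le_\Kat$) are treated properly. Part (c) then follows formally from (a), (b), \Cref{G2}, and $\pfrak\le\bfrak$. No gaps.
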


The equality $\slalomt(\star,\Fin)=\covm$ is proved in~\cite{Su22} by applying topological selection principles. 
Here, we provide a~direct combinatorial proof.

\begin{theorem}\label{sltstarcovM}
$\slalomt(\star,\Fin) = \slalomt(\pts(\omega)\menos\{\omega\},\Fin) =\covm$.
\end{theorem}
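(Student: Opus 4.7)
The plan is to close the cycle $\covm\leq\slalomt(\pts(\omega)\menos\{\omega\},\Fin)\leq\slalomt(\star,\Fin)\leq\covm$, with the last step as the main technical point.

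For the first two inequalities, observe that for any slalom $s$ with $s(n)\subsetneq\omega$ for every $n$, the set
\[
M_s := \{x\in\baire\omega : x(n)\in s(n)\text{ for all but finitely many }n\} = \bigcup_{N\in\omega}\{x:\forall n\geq N,\ x(n)\in s(n)\}
\]
is meager: each closed set in the union is nowhere dense because the strict inclusion $s(n)\subsetneq\omega$ lets one extend any finite initial segment past stage $N$ to a value outside $s(n)$. A $\Fin^\dual$-dominating family with values in $\pts(\omega)\menos\{\omega\}$ thus produces a meager cover $\baire\omega=\bigcup_{s\in S}M_s$, giving $\covm\leq\slalomt(\pts(\omega)\menos\{\omega\},\Fin)$. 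The monotonicity $\slalomt(\pts(\omega)\menos\{\omega\},\Fin)\leq\slalomt(\star,\Fin)$ follows from \Cref{lem:gen}~\ref{it:monP}: property $\star$ forces each slalom value into an ideal, hence into $\pts(\omega)\menos\{\omega\}$, and is thus a strictly more restrictive property.

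The upper bound $\slalomt(\pts(\omega)\menos\{\omega\},\Fin)\leq\covm$ uses the Bartoszy\'nski--Miller characterization $\covm=\dfrak(\neq^*)$. Take $F\subseteq\baire\omega$ of size $\covm$ such that for every $x\in\baire\omega$ some $f\in F$ has $\{n:f(n)=x(n)\}$ finite. Then $s_f(n):=\omega\menos\{f(n)\}$ belongs to $\pts(\omega)\menos\{\omega\}$, and $\{n:x(n)\in s_f(n)\}=\{n:x(n)\neq f(n)\}$ is cofinite, so $\{s_f:f\in F\}$ is the required dominating family of size $\covm$.

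The main obstacle is the final inequality $\slalomt(\star,\Fin)\leq\covm$. By \Cref{G2} it suffices to exhibit a single ideal $I$ on $\omega$ with $\slalomt(I,\Fin)\leq\covm$; the construction above cannot be reused, because each $s_f(n)=\omega\menos\{f(n)\}$ is cofinite and hence cannot lie in any proper ideal. The plan is to appeal instead to the chopped-real representation of meager sets---every meager $M\subseteq\baire\omega$ is contained in $\{x:\forall^\infty n,\ x\restriction I_n\neq\sigma_n\}$ for some interval partition $(I_n)$ and $\sigma\in\prod_n\omega^{I_n}$---so that a meager cover of size $\covm$ yields $\covm$-many matching data, and then to convert each datum into slaloms whose values at every coordinate jointly lie in a single, sufficiently tall ideal on $\omega$. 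Arranging the $\Fin^\dual$-domination to coexist with the ideal constraint is the combinatorial heart of the argument, and is precisely the content that replaces the topological selection-principle proof of \cite{Su22}.
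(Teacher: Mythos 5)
The cycle you set up and the two easy legs are correct: the meagerness of $M_s$ giving $\covm\leq\slalomt(\pts(\omega)\menos\{\omega\},\Fin)$ is exactly the paper's lower-bound argument (phrased there as a Tukey connection $\la\Baire,\Mwf,{\in}\ra\leqT \Lc_\Fin(\pts(\omega)\menos\{\omega\})$), and the monotonicity step from property $\star$ to $E={}^\omega(\pts(\omega)\menos\{\omega\})$ via \Cref{lem:gen}~\ref{it:monP} is also fine (the paper invokes \Cref{lem:monJ}~\ref{mon:JP}, which subsumes it). Your separate argument $\slalomt(\pts(\omega)\menos\{\omega\},\Fin)\leq\covm$ via $s_f(n)=\omega\setminus\{f(n)\}$ is correct but, as you notice, useless for the $\star$-bound.

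The genuine gap is that you never actually prove $\slalomt(\star,\Fin)\leq\covm$. You correctly identify it as the crux and you correctly observe that \Cref{G2} reduces it to producing one ideal $I$ with $\slalomt(I,\Fin)\leq\covm$, but what follows is only a plan (``appeal to the chopped-real representation\ldots and then convert each datum into slaloms whose values at every coordinate jointly lie in a single ideal''). That conversion is exactly the nontrivial content, and it is not carried out; naively combining chopped-real data $(I_n^f,\sigma^f)$ coming from different $f$'s gives no single coordinate system in which the slalom values would have the finite union property, since the interval partitions vary with $f$. So as written, the proposal is incomplete.

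For comparison, the paper does \emph{not} go through interval partitions in the chopped-real sense. Fix one partition $\{w_n\}_{n<\omega}$ of $\omega$ into \emph{infinite} sets, set $W_n:=\Fn(w_n,\omega)$ (finite partial functions), and work over $\prod_n W_n$ (countably infinite coordinates, so Tukey-equivalent to $\Baire$). For an eventually different family $D\subseteq\Baire$ of size $\covm$, define $s_y(n):=\set{p\in W_n}{\forall m\in\dom p:\ p(m)\neq y(m)}$. Localization: any $z\in\prod_n W_n$ amalgamates to a single $x\in\Baire$ (domains are disjoint across $n$), one picks $y\in D$ eventually different from $x$, and since each $m<N$ lies in only one $w_n$, one gets $z(n)\in s_y(n)$ for all but finitely many $n$. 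The $\star$-property: given finitely many $y_0,\dots,y_{k-1}$ and a fixed $n$, pick distinct $m_0,\dots,m_{k-1}\in w_n$ and define $p$ with $\dom p=\{m_0,\dots,m_{k-1}\}$ by $p(m_i)=y_i(m_i)$; then $p\notin\bigcup_{i<k}s_{y_i}(n)$, and since $w_n$ is infinite there are infinitely many such $p$, so $\bigcup_{i<k}s_{y_i}(n)$ is co-infinite. That infinitude of each $w_n$ is precisely what makes the finite union property hold at every coordinate---it is the ingredient your plan would still need to supply, and the chopped-real representation does not obviously deliver it because it only produces one partial match per meager set, not a whole infinite family of avoidable partial functions at each coordinate.
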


\begin{proof}
    Recall that $\covm$ is the smallest size of an eventually different family in $\Baire$ (see \Cref{F3.3}~\ref{F3.3a} for $h=1$), i.e., of a~family $D\subseteq\Baire$ such that 
    \[
    (\forall x\in\Baire)(\exists y\in D) (\exists m<\omega) (\forall n\geq m)\ x(n)\neq y(n).
    \]
    We show that such a~$D$ allows to construct a~localizing set of slaloms with property $\star$. This guarantees $\slalomt(\star,\Fin)\leq\covm$. Instead of $\Baire$, we look at $\prod_{n<\omega}W_n$ where $W_n:=\Fn(w_n,\omega)$, $\set{w_n}{n<\omega}$ is a~partition of $\omega$ into infinite sets and $\Fn(A,B)$ denotes the set of finite partial functions from $A$ into $B$. For $y\in\Baire$, define $s_y\in\prod_{n<\omega}\pts(W_n)$ by
    \[
    s_y(n):=\set{p\in W_n}{(\forall m\in\dom p)\ p(m)\neq y(m)}.
    \]
    We show that $S:=\set{s_y}{y\in D}$ is a~localizing family with property $\star$.
    Indeed, if $z\in\prod_{n<\omega}W_n$, then we can find some $x\in\Baire$ such that $z(n)\subseteq x$ for all $n<\omega$. Hence, there is some $y\in\Baire$ eventually different with $x$, so $z(n)\in s_y(n)$ for all but finitely many $n<\omega$.

    It remains to show that $S$ satisfies property $\star$. Let $\set{y_i}{i<k}\subseteq D$ and $n<\omega$, and we show that $\bigcup_{i<k}s_{y_i}(n)$ is co-infinite. We may assume that $k\neq0$. For any $c:=\set{m_i}{i<k}\subseteq w_n$ (one-to-one enumeration), we can define $p\in W_n$ with domain $c$ such that $p(m_i):=y_i(m_i)$, so $p\notin \bigcup_{i<k}s_{y_i}(n)$. This guarantees that $\bigcup_{i<k}s_{y_i}(n)$ is co-infinite.

    The inequality $\covm\leq\slalomt(\star,\Fin)$ is easier to show. By \Cref{lem:monJ}~\ref{mon:JP}, $\slalomt(\pts(\omega)\menos\{\omega\},\Fin)\leq \slalomt(\star,\Fin)$, so it is enough to show that $\covm\leq\slalomt(\pts(\omega)\menos\{\omega\},\Fin)$. We even have the Tukey connection $\la\Baire,\allowbreak\Mwf,{\in}\ra\leqT \Lc_\Fin(\pts(\omega)\menos\{\omega\})$ because, for $s\in{}^\omega(\pts(\omega)\menos\{\omega\})$, we have that $\set{x\in\Baire}{ x\in^{\dfil{\Fin}} s}$ is~$F_\sigma$~meager. Moreover, we actually have that $\Lc_\Fin(\pts(\omega)\menos\{\omega\}) \eqT \la \Baire,\Baire,{\neq^{\Fin^+}}\ra$.
\end{proof}

Concerning $\lfrak_\Kat(\star,\Fin)$, we have:

\begin{lemma}\label{L2.1}
If $J$ is an ideal on~$\omega$ and there is an infinite partition of $\omega$
into $J$-positive sets, then\/ $\lfrak_\Kat(\star,J)=\lfrak_\Kat(I,J)=\infty$ (i.e., undefined) for every ideal~$I$ on~$\omega$.
\end{lemma}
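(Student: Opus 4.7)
The plan is to show directly that, under the stated hypothesis, every subfamily $A\subseteq\Pcal(\omega)$ with the finite union property satisfies $A\leq_\Kat J^{\rm dc}$. By \Cref{undef:pl}\,(b) (taking $D=\Baire$), this renders $\lfrak_\Kat(I,J)$ undefined for every ideal $I$ (apply it with $A=I$), and likewise renders $\lfrak_\Kat(\star,J)$ undefined, since no candidate $A$ in its definition can fail the Kat\v{e}tov inequality. So the whole problem reduces to a single construction of a witness $f\in\Baire$ for an arbitrary $A$ with FUP.

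For the construction, fix an infinite partition $\{E_n:n<\omega\}$ of $\omega$ into $J$-positive sets (given by hypothesis), and let $\widetilde I$ be the ideal generated by $A$, which is proper by the FUP. Since $\widetilde I$ contains all finite sets but not $\omega$, there exists an infinite $X\in\widetilde I^+$; enumerate $X=\{x_n:n<\omega\}$. Now define $f\colon\omega\to\omega$ by $f(m):=x_n$ whenever $m\in E_n$. For each $a\in A$, since $a\in\widetilde I$ while $X\notin\widetilde I$, we have $X\not\subseteq a$, so there is some $n_a<\omega$ with $x_{n_a}\notin a$. Then $E_{n_a}\subseteq f^{-1}\llbracket\omega\smallsetminus a\rrbracket$, and because $E_{n_a}\in J^+$ and $J^+$ is closed upwards, $f^{-1}\llbracket\omega\smallsetminus a\rrbracket\in J^+$. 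Equivalently, $f^{-1}\llbracket a\rrbracket\in J^{\rm dc}$, i.e., $a\in f^\rightarrow(J^{\rm dc})$. Thus $A\subseteq f^\rightarrow(J^{\rm dc})$, giving $A\leq_\Kat J^{\rm dc}$.

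The only real conceptual obstacle is the potentially uncountable cardinality of $A$: one cannot diagonalize along an enumeration of $A$ to pick the points $x_n$. The key trick is that $f$ has range inside $\omega$, so it encodes only countably many ``landing values,'' and choosing these values inside a single fixed $\widetilde I$-positive set $X$ automatically ensures that no element $a\in A\subseteq\widetilde I$ can swallow all of them. The infinite partition then converts ``$x_n\notin a$ for some $n$'' into ``$f^{-1}\llbracket\omega\smallsetminus a\rrbracket$ contains some $E_n\in J^+$,'' which is exactly what is needed.
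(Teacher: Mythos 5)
Your proof is correct and is essentially the same construction as the paper's: you define a function collapsing each piece $E_n$ of the partition to a single landing value, and exploit that for each $a$ in the given family some piece must land entirely outside $a$, so its preimage is $J$-positive. The paper phrases the conclusion directly (exhibiting an $x\in\Baire$ not $\in^{J^\dual}$-dominated by any constant slalom in $\cnt(I)$) rather than routing through \Cref{undef:pl}, but that is a cosmetic difference.

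One remark on the "conceptual obstacle" you describe: the detour through an $\widetilde I$-positive set $X$ and its enumeration is unnecessary. Since $\widetilde I$ is a proper ideal, you may simply take $X=\omega$ with $x_n=n$, so $f$ is the quotient map sending $E_n$ to $n$; then for $a\in A\subseteq\widetilde I$ one has $a\neq\omega$, hence some $n\notin a$, hence $E_n\subseteq f^{-1}\llbracket\omega\smallsetminus a\rrbracket$. This is exactly the paper's $x$. Nothing is gained by choosing $X$ more cleverly, because the only property used is $X\not\subseteq a$, which already holds for $X=\omega$. Your reduction of the $\star$ case to an arbitrary family $A$ with FUP is fine and matches how the paper handles it implicitly (the constructed function is the same for all $I$), so no issue there either.
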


\begin{proof}
Choose a~partition $\{a_n:n\in\omega\}\subseteq J^+$ of $\omega$ and define $x(k)=n$ for
$k\in a_n$ and $n\in\omega$.
If $a\in I$, then $x^{-1}\llbracket a\rrbracket\notin J^\dual$ because
$x^{-1}\llbracket a\rrbracket\subseteq\omega\smallsetminus a_n\notin J^\dual$ for any
$n\in\omega\smallsetminus a$. This indicates that $x \notin^{J^\dual} s$ for any $s\in \cnt(I)$.
\end{proof}

We show in \Cref{sec:sumI} that the above is the only case when $\lfrak_\Kat(\star,J)$ is undefined.

To look at more monotonicity results, we generalize the orderings of \Cref{def:leqK} as follows.

\begin{definition}\label{def:ordU}
For $I\subseteq \pts(a)$ and $J\subseteq \pts(a')$, define
\begin{align*}
I\le_\Kat^\ccup J &\text{ iff }
I\subseteq \bigset{w\subseteq a}{\bigcup_{f\in F}f^{-1}\llbracket w\rrbracket\in J} \text{ for some non-empty $F\in[{}^{a'} a]^{<\omega}$,}\\
I\le_\mKat^\ccup J &\text{ iff } 
\bigset{w\subseteq a'}{\bigcup_{f\in F}f^{-1}\llbracket w\rrbracket\in I} \subseteq J \text{ for some non-empty $F\in[{}^a a']^{<\omega}$,}\\
I\le_\RK^\ccup J &\text{ iff } 
\bigset{w\subseteq a}{\bigcup_{f\in F}f^{-1}\llbracket w\rrbracket\in J} = I \text{ for some non-empty $F\in[{}^{a'}a]^{<\omega}$.}
\end{align*}
The orderings $\le_\KB^\ccup$, $\le_\mKB^\ccup$ and $\le_\RB^\ccup$ are obtained by demanding that all functions in $F$ are finite-to-one. We have ${\le_\Kat}\subseteq{\le_\Kat^\ccup}$, ${\le_\mKat}\subseteq{\le_\mKat^\ccup}$, ${\le_\KB}\subseteq{\le_\KB^\ccup}$, ${\le_\mKB}\subseteq{\le_\mKB^\ccup}$ and ${\le_\RB}\subseteq{\le_\RB^\ccup}$. We define $\approx^\ccup_{\Kat}$, etc., in the natural way.
\end{definition}

\begin{lemma}\label{L2.6}
 Let $I$ be an ideal on $\omega$. In the results below, $J$ runs on ideals on $\omega$. 
\begin{enumerate}[label=\rm(\alph*)]
\item\label{L2.6a}
$\pL_{I,J}$ is\/ $\le_\Kat$-increasing on $J$ and $\Lbf_{I,J}$ is\/ $\le_\mKat$-decreasing on $J$. Also,
$\pfrak_\Kat(I,J)$ and\/ $\pfrak_\Kat(\star,J)$ are\/ $\le_\Kat$-increasing\footnote{Up to here, it is not required that $I$ is an ideal.} and\/
$\lfrak_\Kat(I,J)$ and\/ $\lfrak_\Kat(\star,J)$ are\/ $\le_\mKat^\ccup$-decreasing on the parameter~$J$.
\item\label{L2.6b}  
$\pLc_J(I)$ is\/ $\le_\KB$-increasing and\/ $\Lc_J(I)$ is\/ $\le_\mKB$-decreasing on $J$. Also,
$\slalome(I,J)$ and\/ $\slalome(\star,J)$ are\/ $\le_\KB$-increasing and\/
$\slalomt(I,J)$ and\/ $\slalomt(\star,J)$ are\/ $\le_\mKB^\ccup$-decreasing on the parameter~$J$.
\end{enumerate}
\end{lemma}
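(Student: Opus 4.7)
The plan is to reduce all six monotonicity claims to direct applications of the Tukey-connection lemmas of the previous section, specialised to $D=D'=\Baire$ and to either $E=\cnt(I)$ (for part~(a)) or $E={}^\omega I$ (for part~(b)). In each case the relevant ``pullback'' condition on $D$ (condition~\ref{it:RB1i} of \Cref{thm:RB1}, and the corresponding $\{x\circ f:x\in D\}\sqsubseteq^{J_f}D_f$ in \Cref{thm:RBuni}) is automatic because $\Baire$ is closed under composition with any function $\omega\to\omega$; the real work is to verify the slalom hypothesis (condition~\ref{it:RB1ii} of \Cref{thm:RB1}, respectively $E_F\Subset^JE$ of \Cref{thm:RBuni}) for the chosen~$E$.

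For part~(a): if $J_0\leq_\Kat J_1$ via $f\colon\omega\to\omega$ then $J_0\subseteq f^\to(J_1)$, and condition~\ref{it:RB1ii} is trivial for $E=E'=\cnt(I)$ because taking $s'=s$ constant yields $\|\bigcup_{k\in f^{-1}\llbracket\{n\}\rrbracket}s(k)\subseteq s(n)\|=\omega\in\dfil{J_0}$. Hence \Cref{thm:RB1}~\ref{it:RBincr} gives $\pL_{I,J_0}\leqT\pL_{I,J_1}$, and a~symmetric use of \Cref{thm:RB1}~\ref{it:RBbincr} gives $\Lbf_{I,J_1}\leqT\Lbf_{I,J_0}$ when $J_0\leq_\mKat J_1$. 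For the $\leq_\mKat^\ccup$-decreasing behaviour of $\lfrak_\Kat(I,J)$ I invoke \Cref{thm:RBuni}~\ref{RBuni-1} with a~finite witnessing $F$ and $J_f=J_0$; since $J_0$ is an ideal and $F$ is finite,
\[
\bigcap_{f\in F}f^\to(J_0)=\bigset{w\subseteq\omega}{\bigcup_{f\in F}f^{-1}\llbracket w\rrbracket\in J_0}\subseteq J_1,
\]
which is exactly the $\leq_\mKat^\ccup$-hypothesis, while $E_F\Subset^{J_1}\cnt(I)$ is witnessed by the constant slalom of value $\bigcup_{f\in F}a_f\in I$ (where each $s_f\equiv a_f$). \Cref{products}, combined with the finiteness of $F$ and the infiniteness of $\lfrak_\Kat(I,J_0)$, then gives $\lfrak_\Kat(I,J_1)\leq\lfrak_\Kat(I,J_0)$, and the $\star$-versions follow via \Cref{minpklk} since the minimum over~$I$ preserves the inequality.

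Part~(b) is structurally identical with $E={}^\omega I$ in place of $\cnt(I)$; the only substantive change is that the natural candidate $s'(n):=\bigcup_{k\in f^{-1}\llbracket\{n\}\rrbracket}s(k)$ required in condition~\ref{it:RB1ii} lies in ${}^\omega I$ \emph{only when $f$ is finite-to-one}, since $I$~is merely closed under finite unions. This forces the upgrade from $\leq_\Kat$ to $\leq_\KB$ and from $\leq_\mKat^\ccup$ to $\leq_\mKB^\ccup$; with that modification every step of part~(a) transcribes verbatim, and the $\star$-versions of $\slalome$ and $\slalomt$ now use \Cref{G2} in place of \Cref{minpklk}.

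The main obstacle is purely bookkeeping: matching the direction of each ideal ordering with the direction of the induced Tukey connection (larger $J$ yields smaller $J^+$ but larger $J^\dual$, so $\pLc_J$ behaves oppositely to $\Lc_J$) and tracking which closure property of $I$ forces $\leq_\KB$-type rather than $\leq_\Kat$-type versions. Beyond \Cref{thm:RB1}, \Cref{thm:RBuni}, \Cref{products}, \Cref{minpklk}, and \Cref{G2}, no genuinely new ingredient is required.
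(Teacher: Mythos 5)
Your proposal is correct and follows essentially the same route as the paper: apply \Cref{thm:RB1} and \Cref{thm:RBuni} with $D=D'=\Baire$ and with $E=E'=\cnt(I)$ for part~(a), respectively $E=E'={}^\omega I$ for part~(b), obtaining the $\star$-versions by passing to the minimum via \Cref{minpklk} and \Cref{G2}. You actually spell out two details the paper leaves implicit — the direction-check that the single-function ${\le_\mKat}$-case of $\Lbf_{I,J}$ is handled by \Cref{thm:RB1}~\ref{it:RBbincr} (rather than as a degenerate $F=\{f\}$ case of \Cref{thm:RBuni}), and the observation that $\bigcup_{k\in f^{-1}\llbracket\{n\}\rrbracket}s(k)\in I$ requires $f$ finite-to-one, which is precisely what forces the $\KB$-type orderings in part~(b). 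Both are correct and helpful clarifications; the invocation of \Cref{products} together with the (possibly infinite, possibly $\infty$) value of $\lfrak_\Kat(I,J_0)$ to absorb the finite exponent $|F|$ is also exactly the absorption used in the analogous part of the proof of \Cref{L2.4}. No gaps.
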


\begin{proof}
    \ref{L2.6a}: We apply \Cref{thm:RB1,thm:RBuni} to $D=D'=\Baire$ and $E=E'=\cnt(I)$. 
    When $J'\leq_\Kat J$,
    the hypotheses of \Cref{thm:RB1}~\ref{it:RBincr} (with $s':=s$) are satisfied, where $f$ is a~function witnessing $J'\leqK J$, so $\pL_{I,J'}\leqT \pL_{I,J}$.

    When $J\leq^\ccup_\mKat J'$, the hypotheses of \Cref{thm:RBuni}~\ref{RBuni-1}, 
    with $F$ witnessing $J\leq^\ccup_\mKat J'$, are satisfied, so we get that $\Lbf_{I,J'}\leqT {}^{F\, }\Lbf_{I,J}$.

    \ref{L2.6b}: We apply \Cref{thm:RB1,thm:RBuni} to $D=D'=\Baire$ and $E=E'={}^\omega\I$. Since they satisfy the hypotheses of \Cref{thm:RB1}, where $f$ is a~function witnessing either $J'\leq_\KB J$ or $J\leq_\mKB J'$, we get that $\pLc_{J'}(I)\leqT \pLc_{J}(I)$  
    in the first case, and $\Lc_{J'}(I)\leqT \Lc_{J}(I)$ 
    in the second.

    The monotonicity result for $\le_\mKB^\ccup$ follows by \Cref{thm:RBuni}~\ref{RBuni-1} in a~similar way.
\end{proof}

In particular, when $I=\Fin$:

\begin{corollary}\label{monBaire}
On the parameter $J$, $\la\Baire,\Baire,{\leq^{J^\dual}}\ra$ is\/ $\leq_{\KB}$-decreasing and\/ $\leq_{\mKB}$-decreasing. Moreover:
\begin{enumerate}[label=\normalfont(\alph*)]
    \item $\bfrak_J$ is\/ $\leq_\KB$-increasing and\/  $\leq_{\mKB}^\ccup$-increasing.
    \item $\dfrak_J$ is\/ $\leq_\KB$-decreasing and\/  $\leq_{\mKB}^\ccup$-decreasing.
    \qed
\end{enumerate}
\end{corollary}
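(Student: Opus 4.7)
The plan is to derive everything by specializing \Cref{L2.6}\ref{L2.6b} to $I=\Fin$ and invoking the Tukey equivalences $\la\Baire,\Baire,{\leq^{J^\dual}}\ra \eqT \Lc_J(\Fin) \eqT \pLc_J(\Fin)^\perp$ noted in the paragraph preceding \Cref{def:leqK}. The first equivalence is immediate: send $y\in\Baire$ to the slalom $s_y(n):=\{0,\dots,y(n)\}$, and send $s\in{}^\omega\Fin$ to $y_s(n):=\max s(n)$; these are mutually inverse (at the level of relevant set-membership) and give Tukey connections in both directions. For $\Lc_J(\Fin)\eqT\pLc_J(\Fin)^\perp$ I would use the pair $\Psi_-(x)(n)=\{0,\dots,x(n)-1\}$ and $\Psi_+(y)(n)=\{0,\dots,y(n)\}$, noting that $\Psi_-(x)\in^{J^+,\perp}y$ and $x\in^{J^\dual}\Psi_+(y)$ are both equivalent to $x\leq^{J^\dual}y$. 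Consequently, $\bfrak_J=\bfrak(\Lc_J(\Fin))=\slalomt^\perp(\Fin,J)=\slalome(\Fin,J)$ and $\dfrak_J=\slalomt(\Fin,J)=\slalome^\perp(\Fin,J)$.

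Next, both Tukey-level statements at the top of the corollary are free: \Cref{L2.6}\ref{L2.6b} already asserts that $\Lc_J(\Fin)$ is $\le_\mKB$-decreasing on $J$, hence so is $\la\Baire,\Baire,{\leq^{J^\dual}}\ra$; and since $\pLc_J(\Fin)$ is $\le_\KB$-increasing, its dual $\pLc_J(\Fin)^\perp\eqT\la\Baire,\Baire,{\leq^{J^\dual}}\ra$ is $\le_\KB$-decreasing by \Cref{cor:Tukeyval}\ref{Tukeyval:b}. Three of the four cardinal clauses then fall out immediately: the $\le_\KB$-increasing of $\bfrak_J=\slalome(\Fin,J)$ and the $\le_\mKB^\ccup$-decreasing of $\dfrak_J=\slalomt(\Fin,J)$ are literally the conclusions of \Cref{L2.6}\ref{L2.6b}, while the $\le_\KB$-decreasing of $\dfrak_J$ follows by taking $\bfrak$ of the Tukey connection $\pLc_J(\Fin)\leqT\pLc_{J'}(\Fin)$ (valid when $J\leq_\KB J'$) and using $\dfrak_J=\bfrak(\pLc_J(\Fin))$.

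The only clause requiring a genuine re-use of the underlying machinery is the $\le_\mKB^\ccup$-increasing property of $\bfrak_J$, which is the main (minor) obstacle: \Cref{L2.6}\ref{L2.6b} packages the $\le_\mKB^\ccup$ monotonicity only for $\slalomt$. I would return to the proof of that clause, which invokes \Cref{thm:RBuni}\ref{RBuni-1}: if $J_1\le_\mKB^\ccup J_2$ witnessed by a finite set $F$ of finite-to-one functions, then $\Lc_{J_2}(\Fin)\leqT\bigotimes_{f\in F}\Lc_{J_1}(\Fin)$. Taking $\bfrak$ of this connection and applying \Cref{products}\ref{products} (which gives $\bfrak(\bigotimes\overline{\Rbf})=\min_{i}\bfrak(\Rbf_i)=\bfrak(\Lc_{J_1}(\Fin))$ for a constant sequence) yields $\bfrak_{J_1}=\bfrak(\Lc_{J_1}(\Fin))\leq\bfrak(\Lc_{J_2}(\Fin))=\bfrak_{J_2}$, completing the corollary.
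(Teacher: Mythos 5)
Your proof is correct and follows the paper's intended route: specialize \Cref{L2.6}\ref{L2.6b} to $I=\Fin$, and read off both the Tukey-level clauses and the cardinal inequalities through the equivalences $\la\Baire,\Baire,{\leq^{J^\dual}}\ra\eqT\Lc_J(\Fin)\eqT\pLc_J(\Fin)^\perp$ (the paper states the corollary with no separate proof precisely because this is the intended reading). You correctly pinpoint the one item that is not literally among the stated conclusions of \Cref{L2.6}\ref{L2.6b}, namely the $\le_\mKB^\ccup$-increasing monotonicity of $\bfrak_J=\slalomt^\perp(\Fin,J)$, and your fix --- reopening the Tukey connection $\Lc_{J_2}(\Fin)\leqT\bigotimes_{f\in F}\Lc_{J_1}(\Fin)$ supplied by \Cref{thm:RBuni}\ref{RBuni-1} and taking $\bfrak$ via \Cref{products} --- is exactly what is needed. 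Two cosmetic points: for the $\le_\KB$-decreasing Tukey clause you should cite \Cref{cor:Tukeyval}(a) (dualization of Tukey connections) rather than \ref{Tukeyval:b}, and in the explicit Tukey maps one should set $\max\emptyset:=0$ to keep $y_s$ well-defined; neither affects the argument.
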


We list our results below in terms of equivalences.

\begin{theorem}\label{C2.7}
 Let $I$, $J$ and $J'$ be ideals on $\omega$. Then 
\begin{enumerate}[label=\rm(\alph*)]
\item\label{C2.7a}
If $J'\approx_\Kat J$, then\/
$\pL_\Kat(I,J)\eqT \pL_\Kat(I,J')$. In particular,
$\pfrak_\Kat(I,J')=\pfrak_\Kat(I,J)$ and\/
$\pfrak_\Kat(\star,J')=\pfrak_\Kat(\star,J)$.

\item\label{C2.7b}
If $J'\approx_\mKat J$, then\/
$\Lbf_\Kat(I,J) \eqT \Lbf_\Kat(I,J')$,\footnote{Up to here (including~\ref{C2.7a}), there is no need that $I$ is an ideal.} and whenever $J'\approx_\mKat^\ccup J$,
$\lfrak_\Kat(I,J')=\lfrak_\Kat(I,J)$ and\/
$\lfrak_\Kat(\star,J')=\lfrak_\Kat(\star,J)$.

\item
If $J'\approx_\KB J$, then\/
$\la\Baire,\Baire, {\leq^{J^\dual}}\ra \eqT \la\Baire,\Baire,{\leq^{J'^\dual}}\ra$ and\/ $\pLc(I,J)\eqT \pLc(I,J')$. In particular, 
$\bfrak_{J'}=\bfrak_J$, $\dfrak_{J'}=\dfrak_J$, $\slalome(I,J')=\slalome(I,J)$,
and\/ $\slalome(\star,J')=\slalome(\star,J)$.

\item\label{C2.7d}
If $J'\approx_\mKB J$, then\/ $\la\Baire,\Baire,{\leq^{J^\dual}}\ra \eqT \la\Baire,\Baire,{\leq^{J'^\dual}}\ra$ and\/ $\Lc(I,J)\eqT \Lc(I,J')$.
Moreover, 
if $J'\approx_\mKB^\ccup J$, then\/
$\bfrak_{J'}=\bfrak_J$, $\dfrak_{J'}=\dfrak_J$, $\slalomt(I,J')=\slalomt(I,J)$,
and\/ $\slalomt(\star,J')=\slalomt(\star,J)$.

\item\label{C2.7e}
If $J$ is an ideal with the Baire property, then\/ $\Lbf_{I,J} \eqT \Lbf_{I,\Fin}$, $\la \Baire,\Baire, {\leq^{J^\dual}}\ra \eqT \la \Baire,\Baire, {\leq^{\Fin^\dual}}\ra$ and\/ $\Lc(I,J) \eqT \Lc(I,\Fin)$. In particular,
$\lfrak_\Kat(I,J)=\lfrak_\Kat(\star,J)=\infty$,
$\bfrak_J=\bfrak$, $\dfrak_J=\dfrak$,\footnote{The equalities $\bfrak_J=\bfrak$, $\dfrak_J=\dfrak$ are well-known, see the introduction for the appropriate references.} $\slalomt(I,J)=\slalomt(I,\Fin)$, and\/
$\slalomt(\star,J)=\slalomt(\star,\Fin)$.
\end{enumerate}
\end{theorem}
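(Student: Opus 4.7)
The plan is to derive parts~(a)--(d) as direct applications of the monotonicity results of \Cref{L2.6} and \Cref{monBaire}, combined with \Cref{G2} for passing to the $\star$-versions. In each case, the assumed two-sided equivalence $J\approx J'$ (with respect to the relevant order) feeds bidirectionally into the corresponding Tukey-monotonicity: for~(a), $\pL_{I,J}$ is $\leq_\Kat$-increasing in~$J$, so $J\approx_\Kat J'$ yields $\pL_\Kat(I,J)\eqT\pL_\Kat(I,J')$ and hence $\pfrak_\Kat(I,J)=\pfrak_\Kat(I,J')$; the $\star$-equality then follows from \Cref{G2}, which identifies $\pfrak_\Kat(\star,J)$ as a minimum over ideals~$I$. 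Parts~(b)--(d) are analogous, applying the monotonicities of $\Lbf_{I,J}$ ($\leq_\mKat$-decreasing in~$J$), $\pLc_J(I)$ ($\leq_\KB$-increasing), $\Lc_J(I)$ ($\leq_\mKB$-decreasing), and the Baire relational system $\la\Baire,\Baire,\leq^{J^\dual}\ra$ (decreasing in both $\leq_\KB$ and $\leq_\mKB$, by \Cref{monBaire}). The $\ccup$-variants of the orders ($\leq_\mKat^\ccup$ in~(b), $\leq_\mKB^\ccup$ in~(d)) are invoked only where the cardinal equalities for $\lfrak_\Kat$, $\slalomt$, and their $\star$-versions are claimed.

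For part~(e), I would invoke the Mathias--Jalali-Naini--Talagrand (MJT) characterization already referenced in the introduction: an ideal $J\supseteq\Fin$ on $\omega$ has the Baire property if and only if there exists a finite-to-one $f\in\Baire$ with $f^\to(J)\subseteq\Fin$, which is precisely the statement $J\leq_\mKB\Fin$. Since $\Fin\leq_\mKB J$ is trivial via the identity (using $\Fin\subseteq J$), MJT gives $J\approx_\mKB\Fin$, and hence also $J\approx_\mKat\Fin$ (as $\leq_\mKB$ refines $\leq_\mKat$). Applying parts~(b), (c), and~(d) then delivers the three Tukey equivalences in~(e): $\Lbf_{I,J}\eqT\Lbf_{I,\Fin}$ from~(b) via $\approx_\mKat$; $\la\Baire,\Baire,\leq^{J^\dual}\ra\eqT\la\Baire,\Baire,\leq^{\Fin^\dual}\ra$ from~(c) or~(d) via $\approx_\mKB$; and $\Lc(I,J)\eqT\Lc(I,\Fin)$ from~(d). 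All listed cardinal equalities follow immediately. For $\lfrak_\Kat(I,J)=\lfrak_\Kat(\star,J)=\infty$, the MJT witness $f$ produces an infinite partition of $\omega$ into $J$-positive sets: fix any partition $\{E_n:n<\omega\}$ of $\omega$ into infinite sets and set $A_n:=f^{-1}\llbracket E_n\rrbracket$; each $A_n$ is $J$-positive because $f^\to(J)\subseteq\Fin$ forces the preimage of any infinite set to lie outside~$J$. Then \Cref{L2.1} applies directly.

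The main obstacle I anticipate is verifying the strong form of MJT, namely $J\leq_\mKB\Fin$ for meager $J$, rather than the bare Talagrand statement. A standard Talagrand partition $\{F_n\}$ alone guarantees only that every $A\in J$ misses infinitely many~$F_n$, which yields $\bigcup_{n\in y}F_n\in J\Rightarrow y$ coinfinite, but not the $y\in\Fin$ required for $J\leq_\mKB\Fin$. One must coarsen the partition so that each finite block is sufficiently $J$-heavy---ensuring that any infinite subfamily has union outside~$J$---or invoke a direct topological construction from the meagerness of $J$ in~$2^\omega$. Once this strong form is in hand, \Cref{C2.7} reduces to routine application of the Tukey-monotonicity machinery developed in \Cref{L2.6} and \Cref{monBaire}.
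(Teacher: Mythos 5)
Your proof is correct and follows the same route as the paper: parts (a)--(d) are immediate from \Cref{L2.6} and \Cref{monBaire}, and part (e) reduces to (b) and (d) via $J\approx_\mKB\Fin$, with \Cref{L2.1} providing $\lfrak_\Kat=\infty$. Your anticipated obstacle at the end is moot: the Jalali--Naini/Talagrand theorem, in the form the paper cites from~\cite{farah}, already asserts that $J$ has the Baire property iff $\Fin\leq_\RB J$, i.e., iff there is a finite-to-one $f$ with $f^\to(J)=\Fin$; since $\Fin\subseteq J$ forces $\Fin\subseteq f^\to(J)$ automatically, this is exactly $J\leq_\mKB\Fin$, and no coarsening step is required. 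The ``misses infinitely many $F_n$'' formulation you call the ``bare Talagrand statement'' is a genuinely different (and, as you correctly observe, insufficient) property, but it is not the standard statement of the theorem. Two minor slips worth noting: the $\star$-equalities follow directly from the $\star$-monotonicities already recorded in \Cref{L2.6} (and from \Cref{minpklk} for $\pfrak_\Kat(\star,\cdot)$ and $\lfrak_\Kat(\star,\cdot)$), not from \Cref{G2}, which concerns $\slalome(\star,\cdot)$ and $\slalomt(\star,\cdot)$; and for the Tukey equivalence $\la\Baire,\Baire,\leq^{J^\dual}\ra\eqT\la\Baire,\Baire,\leq^{\Fin^\dual}\ra$ you must use (d) with $\approx_\mKB$, as (c) would require $J\approx_\KB\Fin$, which holds only for $J=\Fin$.
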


\begin{proof}
    \ref{C2.7a}--\ref{C2.7d} follow by \Cref{L2.6} and \Cref{monBaire}.

    \ref{C2.7e}: Clearly, $\lfrak_\Kat(I,\Fin)=\lfrak_\Kat(\star,\Fin)=\infty$ by \Cref{L2.1}.
Since $J$~has the Baire property, $\Fin\le_\RB J$ by Jalali--Naini and Talagrand
theorem and $\Fin\le_\mKB J$ because $\Fin\subseteq J$.
Therefore $J\approx_\mKB\Fin$ and all Tukey equivalences are consequences of
\ref{C2.7b} and~\ref{C2.7d}.
\end{proof}

To compare the assumptions of \Cref{C2.7} \ref{C2.7d}--\ref{C2.7e} note that
\begin{equation*}
\text{``$J$~has the Baire property''}\Leftrightarrow
\Fin\le_\RB J\Leftrightarrow
J\le_\mKB \Fin\Leftrightarrow
\Fin\approx_\mKB J.
\end{equation*}
Consequently, if $J'$ has the Baire property, then
\begin{equation*}
\text{``$J$~has the Baire property''}\Leftrightarrow
J\le_\mKB J'\Leftrightarrow
J'\approx_\mKB J.
\end{equation*}

We now turn to slalom numbers of the form $\slalomt(b,h,J) = \dfrak_{\dfil{J}}(\prod b,\Scal(b,h))$ and $\slalome(b,h,J) = \dfrak_{\dfil{J}}(\prod b,\Scal(b,h))$ for $h\in\Baire$ and $b=\seqn{b(i)}{i<\omega}$ with $b(i)$ non-empty for all $i<\omega$. 
In the case when all $b(i)$ are countable, it is enough to study the case $b(i)\in\omega\cup\{\omega\}$ for all $i<\omega$ thanks to the following result.

\begin{lemma}\label{L3.1}
Let $J$ be an ideal on $\omega$. 
If\/ $\set{i<\omega}{|b'(i)|\leq |b(i)|}\in \dfil{J}$ and $h\le^{J^\dual} h'$, 
then\/
$\Lc_{J}(b',h')\leqT \Lc_{J}(b,h)$ and\/ $\pLc_{J}(b',h')\leqT \pLc_{J}(b,h)$. In particular,
$\slalome(b',h',J)\le\slalome(b,h,J)$ and\/
$\slalomt(b',h',J)\le\slalomt(b,h,J)$.
\end{lemma}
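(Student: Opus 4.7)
The plan is to construct a single pair of maps $(\Phi_-,\Phi_+)$ that simultaneously witnesses both Tukey connections $\Lc_J(b',h')\leqT \Lc_J(b,h)$ and $\pLc_J(b',h')\leqT \pLc_J(b,h)$; the stated cardinal inequalities then follow from Corollary~\ref{cor:Tukeyval}~\ref{Tukeyval:b}. One construction works for both relational systems because they share the same underlying sets $\prod b$, $\Scal(b,h)$, $\prod b'$, $\Scal(b',h')$ and differ only in whether the coincidence set must lie in $J^\dual$ or in $J^+$.

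First I would set $B:=\set{i<\omega}{|b'(i)|\leq|b(i)|}\cap\|h\leq h'\|$, which belongs to $J^\dual$ by the two hypotheses. For each $i\in B$ fix an injection $\iota_i\colon b'(i)\to b(i)$, and for $i\notin B$ let $\iota_i\colon b'(i)\to b(i)$ be any function at all (the values on the $J$-null complement of $B$ will be immaterial). Define $\Phi_-\colon\prod b'\to\prod b$ by $\Phi_-(x')(i):=\iota_i(x'(i))$, and define $\Phi_+\colon\Scal(b,h)\to\Scal(b',h')$ by letting $\Phi_+(s)(i):=\iota_i^{-1}\llbracket s(i)\rrbracket$ for $i\in B$ (which has size at most $|s(i)|\leq h(i)\leq h'(i)$ by the injectivity of $\iota_i$ together with $i\in\|h\leq h'\|$) and letting $\Phi_+(s)(i)$ be any subset of $b'(i)$ of cardinality at most $h'(i)$ for $i\notin B$.

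The crucial observation is that for every $i\in B$, $\iota_i(x'(i))\in s(i)$ iff $x'(i)\in\iota_i^{-1}\llbracket s(i)\rrbracket=\Phi_+(s)(i)$, so
\[
\|\Phi_-(x')\in s\|\cap B=\|x'\in\Phi_+(s)\|\cap B.
\]
Since $B\in J^\dual$ (equivalently, $\omega\setminus B\in J$), intersecting with $B$ preserves membership both in $J^\dual$ (a finite intersection of $J^\dual$-sets) and in $J^+$ (if $X\cap B\in J$ and $\omega\setminus B\in J$, then $X=(X\cap B)\cup(X\cap(\omega\setminus B))\in J$). Hence $\Phi_-(x')\in^{J^\dual}s$ implies $x'\in^{J^\dual}\Phi_+(s)$, and $\Phi_-(x')\in^{J^+}s$ implies $x'\in^{J^+}\Phi_+(s)$, yielding both Tukey connections at once.

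The only point that requires care is keeping $\Phi_+(s)$ inside $\Scal(b',h')$ on the exceptional $J$-null set where $|b'(i)|>|b(i)|$ or $h(i)>h'(i)$, which is handled by the harmless truncation above; I do not foresee any essential obstacle.
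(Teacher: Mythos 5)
Your proof is correct and essentially matches the paper's approach: the paper fixes the same injections on the $J^\dual$-set (your $B$, called $w$ there) and arbitrary maps off it, then appeals to the general Lemma~\ref{Kinterc} with a singleton $F=\{\bar f\}$, whose proof constructs precisely your maps $\Phi_-,\Phi_+$ and checks the same identity $\|\Phi_-(x')\in s\|\cap B = \|x'\in\Phi_+(s)\|\cap B$. You have simply unfolded that application rather than citing the lemma.
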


\begin{proof}
Let $w:= \set{i<\omega}{|b'(i)|\leq |b(i)| \text{ and }h(i)\leq h'(i)}\in\dfil{J'}$. For each $i\in w$, pick a~one-to-one function $f_i\colon b'(i)\to b(i)$, and for $i\in \omega\menos w$ pick any $f_i\colon b'(i)\to b(i)$. Apply \Cref{Kinterc} to $\bar f:= \la f_i\colon i<\omega\ra$ and $F:=\{\bar f\}$.
\end{proof}

We first look at monotonicity results.

\begin{lemma}\label{L3.1.2}
    If $J\subseteq J'\subseteq\pts(\omega)$, then\/ $\Lc_{J'}(b,h)\leqT \Lc_J(b,h)$ and\/ $\pLc_J(b,h)\leqT \pLc_{J'}(b,h)$. In particular, $\slalomt(b,h,J')\leq \slalomt(b,h,J)$ and\/ $\slalome(b,h,J)\leq \slalome(b,h,J')$.
\end{lemma}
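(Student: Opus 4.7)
The plan is to observe that \Cref{L3.1.2} is a direct specialization of \Cref{lem:monJ}~\ref{mon:J} to the setting $D=\prod b$ and $E=\Scal(b,h)$. Indeed, \Cref{lem:monJ}~\ref{mon:J} (which, as noted there, does not require $J$ and $J'$ to be ideals) gives the Tukey connections and cardinal inequalities for arbitrary $D$ and $E$, so there is nothing extra to check once one unfolds the abbreviations introduced in \Cref{def:Lc}.

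More concretely, I would first recall that $J\subseteq J'$ yields the two inclusions $J^\dual\subseteq (J')^\dual$ and $(J')^+\subseteq J^+$, simply from the definitions $J^\dual=\set{\omega\menos a}{a\in J}$ and $J^+=\pts(\omega)\menos J$. Setting $A:=J^\dual$ and $A':=(J')^\dual$, the first inclusion fits the hypothesis of \Cref{lem:gen}~\ref{it:monA}, which gives $\la\prod b,\Scal(b,h),{\in^{A'}}\ra\leqT \la\prod b,\Scal(b,h),{\in^{A}}\ra$; rewriting with the notation from \Cref{def:Lc}, this is exactly $\Lc_{J'}(b,h)\leqT\Lc_J(b,h)$. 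Analogously, applying \Cref{lem:gen}~\ref{it:monA} to $A:=(J')^+\subseteq J^+=:A'$ yields $\pLc_J(b,h)\leqT\pLc_{J'}(b,h)$.

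The ``In particular'' part is then automatic from \Cref{cor:Tukeyval}~\ref{Tukeyval:b}, which translates each Tukey connection into the corresponding inequality between dominating numbers: from $\Lc_{J'}(b,h)\leqT\Lc_J(b,h)$ we get $\slalomt(b,h,J')=\dfrak(\Lc_{J'}(b,h))\leq\dfrak(\Lc_J(b,h))=\slalomt(b,h,J)$, and from $\pLc_J(b,h)\leqT\pLc_{J'}(b,h)$ we get $\slalome(b,h,J)\leq\slalome(b,h,J')$.

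There is no real obstacle here; the only care needed is in the bookkeeping of which direction the Tukey arrow goes for $\Lc$ versus $\pLc$, which is dictated by whether the relevant parameter set ($\dfil{J}$ or $J^+$) grows or shrinks as $J$ grows. Since this has already been handled in full generality in \Cref{lem:monJ}~\ref{mon:J}, the proof reduces to a one-line citation of that lemma applied to the specific $D$ and $E$.
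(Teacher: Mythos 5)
Your proof is correct and follows exactly the paper's route: the paper's own proof is the one-line observation ``Immediate from~\Cref{lem:monJ}~\ref{mon:J}.'' Your unpacking of the inclusions $J^\dual\subseteq (J')^\dual$, $(J')^+\subseteq J^+$ and the appeal to \Cref{lem:gen}~\ref{it:monA} and \Cref{cor:Tukeyval}~\ref{Tukeyval:b} is precisely the content of that cited lemma, so there is no divergence in method.
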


\begin{proof}
Immediate from~\Cref{lem:monJ} \ref{mon:J}.
\end{proof}

\begin{lemma}\label{monbh}
  Let $f\in\Baire$ be a~finite-to-one function and let $J$ and $J'$ be ideals on $\omega$. For $n<\omega$, denote $a_n^f := f^{-1}\llbracket \{n\}\rrbracket$. 
  Let $h\in\Baire$, and consider functions $h^-_f,h'_f\in\Baire$ such that 
  \[
  \bigset{n\in \omega}{\text{whenever $a^f_n\neq\emptyset$, } h^-_f(n)\leq \min_{k\in a^f_n}h(k) \text{ and } \sum_{k\in a_n^f}h(k)\leq h'_f(n)} \in {J'}^\dual.
  \]
  Let $b=\seqn{b(n)}{n<\omega}$, $b^-_f = \seqn{b^-_f(n)}{n<\omega}$ and $b'_f = \seqn{b'_f(n)}{n<\omega}$ be sequences of non-empty sets such that 
  \[
  \bigset{n<\omega}{\left|\prod_{k\in a^f_n}b(k)\right|\leq |b^-_f(n)|}\in {J'}^\dual \text{ and } \bigset{k\in\omega}{|b(f(k))|\leq |b'_f(k)|}\in J^\dual.
  \]
  Then:
  \begin{enumerate}[label=\normalfont(\alph*)]
      \item\label{monbh.a} If $f$ witnesses that $J'\leq_{\KB} J$ then\/ $\Lc_J(b,h)\leqT \Lc_{J'}(b^-_f,h^-_f)$ and\/ $\pLc_{J'}(b,h'_f)\leqT \pLc_{J}(b'_f,h)$. In particular, $\slalomt(b,h,J)\leq \slalomt(b^-_f,h^-_f,J')$ and\/ $\slalome(b,h'_f,J')\leq \slalome(b'_f,h,J)$.

      \item\label{monbh.b} If $f$ witnesses $J\leq_{\mKB} J'$ then\/ $\pLc_J(b,h)\leqT \pLc_{J'}(b^-_f,h^-_f)$ and\/ $\Lc_{J'}(b,h'_f)\leqT \Lc_J(b'_f,h)$. In particular, $\slalome(b,h,J)\leq \slalome(b^-_f,h^-_f,J')$ and\/ $\slalomt(b,h'_f,J')\leq \slalomt(b'_f,h,J)$.
  \end{enumerate}
  Furthermore, let $F\subseteq \Baire$ be a~finite set of finite-to-one functions and consider $b'_f\colon \omega\to\omega+1\smallsetminus \{0\}$ and $h'_f$ for $f\in F$ as before, and define $h'_F\in \Baire$ by $h'_F(n):= \sum_{f\in F}h'_f(n)$.
  \begin{enumerate}[resume*]
      \item\label{monbh.c} If $F$ witnesses that $J\leq_\mKB^\ccup J'$ then\/ $\Lc_{J'}(b,h'_F)\leqT \bigotimes_{f\in F} \Lc_{J}(b'_f,h)$. In particular, $\slalomt(b,h'_F,J')\leq \prod_{f\in F}\slalomt(b'_f,h,J)$.
  \end{enumerate}
\end{lemma}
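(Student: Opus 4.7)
The approach is to realize each of the Tukey connections as an instance of \Cref{thm:RB1} or \Cref{thm:RBuni}, using the cardinality hypotheses to pin down injective identifications of the relevant product spaces and slalom spaces. In every case, the hypotheses hold only on $J$- or $J'$-dual sets, so the natural maps must be adjusted on the complements; however, modifying a function or slalom on a $J$- or $J'$-null set does not affect the relations $\in^{\dfil{J}}$ or $\in^{J^+}$, so such ``errors'' will be absorbed by arbitrary extension or by truncation, and this bookkeeping is the principal technical obstacle throughout.

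For the ``sum'' reductions $\pLc_{J'}(b,h'_f)\leqT \pLc_J(b'_f,h)$ of~\ref{monbh.a} and $\Lc_{J'}(b,h'_f)\leqT \Lc_J(b'_f,h)$ of~\ref{monbh.b}, I plan to apply \Cref{thm:RB1}~\ref{it:RBincr} and \ref{thm:RB1}~\ref{it:RBbincr} respectively, with $D':=\prod b$, $D:=\prod b'_f$, $E':=\Scal(b,h'_f)$, and $E:=\Scal(b'_f,h)$. Fixing injections $\iota_k\colon b(f(k))\hookrightarrow b'_f(k)$ for $k$ in the $J^\dual$-set where the cardinality hypothesis holds and identifying along them, the map $x\mapsto x\circ f$ sends $\prod b$ into $\prod b'_f$ up to a~$J$-null error---absorbed via~\Cref{L3.1}---which verifies condition~(i). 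For condition~(ii), given $s\in\Scal(b'_f,h)$ I take $s'(n):=\bigcup_{k\in a^f_n}\iota_k^{-1}\llbracket s(k)\cap\iota_k\llbracket b(n)\rrbracket\rrbracket$; the bound $\sum_{k\in a^f_n}h(k)\leq h'_f(n)$ on a~${J'}^\dual$-set together with truncation on the complement places $s'$ into $\Scal(b,h'_f)$ with $\bigcup_{k\in a^f_n}s(k)\subseteq s'(n)$ on that set. The hypotheses $J'\leq_\KB J$ (i.e., $J'\subseteq f^\to(J)$) and $J\leq_\mKB J'$ (i.e., $f^\to(J)\subseteq J'$) are exactly what~\ref{it:RBincr} and~\ref{it:RBbincr} require.

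For the ``product'' reductions $\Lc_J(b,h)\leqT \Lc_{J'}(b^-_f,h^-_f)$ and $\pLc_J(b,h)\leqT \pLc_{J'}(b^-_f,h^-_f)$, I construct the Tukey pair $(\Phi_-,\Phi_+)$ by hand. Pick injections $\eta_n\colon \prod_{k\in a^f_n}b(k)\hookrightarrow b^-_f(n)$ for $n$ in the ${J'}^\dual$-set where the product-cardinality bound is satisfied, and set $\Phi_-(x)(n):=\eta_n(x\frestr a^f_n)$ (extended arbitrarily on bad~$n$) together with $\Phi_+(s')(k):=\set{t(k)}{t\in\eta_{f(k)}^{-1}\llbracket s'(f(k))\rrbracket\text{ and }k\in\dom t}$ (truncated to $h(k)$ elements when needed). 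The assumption $h^-_f(n)\leq\min_{k\in a^f_n}h(k)$ on a~${J'}^\dual$-set forces $|\Phi_+(s')(k)|\leq h(k)$ on the $f$-preimage of that set, which is all that is needed. The decoding inclusion $f^{-1}\llbracket\|\Phi_-(x)\in s'\|\rrbracket\subseteq\|x\in\Phi_+(s')\|$ holds on the good sets; then $J'\leq_\KB J$ (equivalently, $f^{-1}\llbracket a\rrbracket\in J^\dual$ for every $a\in{J'}^\dual$) converts $\in^{\dfil{J'}}$ on the left into $\in^{\dfil{J}}$ on the right for~\ref{monbh.a}, while $J\leq_\mKB J'$ (equivalently, $f^{-1}\llbracket a\rrbracket\in J^+$ for every $a\in{J'}^+$) does the analogous job on positive sets for~\ref{monbh.b}.

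Part~\ref{monbh.c} generalizes the second statement of~\ref{monbh.b} by applying \Cref{thm:RBuni}~\ref{RBuni-1} with $D:=\prod b$, $D_f:=\prod b'_f$, $E:=\Scal(b,h'_F)$, $E_f:=\Scal(b'_f,h)$, $J_f:=J$ for each $f\in F$, and the~$J$ of the theorem taken to be $J'$. Injections $\iota^f_k\colon b(f(k))\hookrightarrow b'_f(k)$ for each $f\in F$ yield $\set{x\circ f}{x\in D}\sqsubseteq^J D_f$. The combined slalom $t_{\bar s}(n)=\bigcup_{f\in F}\bigcup_{k\in f^{-1}\llbracket\{n\}\rrbracket}\iota^f_k(s_f(k))$ has size at most $\sum_{f\in F}h'_f(n)=h'_F(n)$ on the intersection (over $f\in F$) of the corresponding ${J'}^\dual$-sets, so $E_F\Subset^{J'}E$ after truncation. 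Finally, since $J$ is an ideal and $F$ is finite, $\bigcup_{f\in F}f^{-1}\llbracket w\rrbracket\in J$ is equivalent to ``$f^{-1}\llbracket w\rrbracket\in J$ for every $f\in F$'', so $J\leq_\mKB^\ccup J'$ is precisely $\bigcap_{f\in F}f^\to(J)\subseteq J'$, the hypothesis of~\ref{RBuni-1}.
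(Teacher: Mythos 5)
Your proposal is correct and follows essentially the same route as the paper. The paper first invokes \Cref{L3.1} to assume WLOG that $b^-_f(n)=\prod_{k\in a^f_n}b(k)$, $b'_f(n)=b(f(n))$, $h^-_f(n)=\min_{k\in a^f_n}h(k)$, and $h'_f(n)=\sum_{k\in a^f_n}h(k)$ hold exactly, which eliminates all the bookkeeping; you instead carry explicit injections $\iota_k,\eta_n$ and ``extend arbitrarily / truncate'' on the null sets, which achieves the same thing at the cost of a little extra notation. The only genuinely different step is that for the ``product'' reductions $\Lc_J(b,h)\leqT\Lc_{J'}(b^-_f,h^-_f)$ and $\pLc_J(b,h)\leqT\pLc_{J'}(b^-_f,h^-_f)$ you construct the Tukey pair $(\Phi_-,\Phi_+)$ by hand, whereas the paper simply cites \Cref{thm:RB2}~\ref{it:RBdcr} and~\ref{it:RBbdcr} with $D=\prod b$, $D'=\prod b^-_f$, $E=\Scal(b,h)$, $E'=\Scal(b^-_f,h^-_f)$: your $\Phi_-(x)(n)=\eta_n(x\frestr a^f_n)$ and $\Phi_+(s')(k)=\set{t(k)}{t\in\eta_{f(k)}^{-1}\llbracket s'(f(k))\rrbracket,\ k\in\dom t}$ are precisely the maps $x\mapsto x_f$ and $s\mapsto s^f$ that \Cref{thm:RB2} abstracts, so the two arguments are really the same, yours just re-deriving the lemma inline. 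The ``sum'' reductions via \Cref{thm:RB1}~\ref{it:RBincr}/\ref{it:RBbincr} and part~\ref{monbh.c} via \Cref{thm:RBuni}~\ref{RBuni-1} (with $J_f:=J$, the ambient ideal~$J'$, and the observation that $J\leq_\mKB^\ccup J'$ is equivalent to $\bigcap_{f\in F}f^\to(J)\subseteq J'$) match the paper exactly.
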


\begin{proof}
    Thanks to \Cref{L3.1}, we may assume that, for $n<\omega$, $b(n)$ is a non-zero cardinal number,
    \[
    b^-_f(n)= \prod_{k\in a^f_n}b(k),\ b'_f(n) = b(f(n)),
    \]
    and,  whenever $a^f_n\neq\emptyset$,
    \[
    h^-_f(n) = \min _{k\in a^f_n} h(k) \text{ and }h'_f(n) = \sum_{k\in a^f_n}h(k).
    \] 
    \ref{monbh.a}: For $\Lc$, it follows by \Cref{thm:RB2}~\ref{it:RBdcr} applied to $D=\prod b$, $D'=\prod b^-_f$, $E=\Scal(b,h)$ and $E' = \Scal(b^-_f,h^-_f)$; for $\pLc$, it follows by \Cref{thm:RB1}~\ref{it:RBincr} applied to $D' = \prod b$, $D = \prod b'_f$, $E=\Scal(b'_f,h)$ and $E'=\Scal(b,h'_f)$.

    \ref{monbh.b}: For $\pLc$, it follows by \Cref{thm:RB2}~\ref{it:RBbdcr} applied to $D=\prod b$, $D'=\prod b^-_f$, $E=\Scal(b,h)$ and $E' = \Scal(b^-_f,h^-_f)$; for $\Lc$, it follows by \Cref{thm:RB1}~\ref{it:RBbincr} applied to $D' = \prod b$, $D = \prod b'_f$, $E=\Scal(b'_f,h)$ and $E'=\Scal(b,h'_f)$.

    \ref{monbh.c}: It follows by \Cref{thm:RBuni}~\ref{RBuni-1} applied to $D_f = \prod b'_f$, $D = \prod b$, $E_f = \Scal(b'_f,h)$ and $E = \Scal(b,h'_F)$.
\end{proof}

In the case when $\prod b=\Baire$, the previous result applies to $\prod b^-_f = \prod b'_f =\Baire$. As a~consequence: 

\begin{lemma}\label{L3.2}
    Let $h\in\Baire$ and $J$ and $J'$ be ideals on $\omega$.
    \begin{enumerate}[label=\normalfont(\alph*)]
        \item\label{L3.2a} If $J'\leq_{\KB} J$ then there are $h^-,h'\in\Baire$ such that\/ $\Lc_J(h)\leqT \Lc_{J'}(h^-)$ and\/ $\pLc_{J'}(h')\leqT \pLc_J(h)$, in particular, $\slalomt(h,J)\leq \slalomt(h^-,J')$ and\/ $\slalome(h',J')\leq \slalome(h,J)$.

        \item\label{L3.2b} If $J\leq_{\mKB} J'$ then there are $h^-,h'\in\Baire$ such that\/ $\pLc_J(h)\leqT \pLc_{J'}(h^-)$ and\/ $\Lc_{J'}(h')\leqT \Lc_J(h)$. In particular, $\slalome(h,J)\leq \slalome(h^-,J')$ and\/ $\slalomt(h',J')\leq \slalomt(h,J)$.

        \item\label{L3.2c} If $J'\leq_{\RB} J$ then there are $h^-,h'\in\Baire$ satisfying the statements in~\ref{L3.2a} and~\ref{L3.2b}. 

        \item\label{L3.2d} If $J\leq^\ccup_{\mKB} J'$ then there is some $h'\in\Baire$ such that\/ $\Lc_{J'}(h')\leqT {}^n \Lc_J(h)$ for some $0<n<\omega$, in particular, $\slalomt(h',J')\leq \slalomt(h,J)$.
    \end{enumerate}
    Moreover, if $h$ diverges to infinity, $h^-$ can be found diverging to infinity, and when $h\geq^* 1$, $h^-$ can be found such that $h^-\geq^* 1$. On the other hand, $h'$ can be found diverging to infinity (and as increasing as desired). 
\end{lemma}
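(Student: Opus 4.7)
The plan is to derive the lemma directly from \Cref{monbh} by specializing $b$ to the constant sequence $\omega$, so that $\prod b=\Baire$. The first thing to check is that in this setting the auxiliary sequences $b^-_f$ and $b'_f$ from \Cref{monbh} may also be taken to be the constant sequence $\omega$. For $b'_f$ this is immediate since $b'_f(n)=b(f(n))=\omega$. For $b^-_f$ it suffices to observe that, because $f$ is finite-to-one, $a^f_n:=f^{-1}\llbracket\{n\}\rrbracket$ is finite and hence $\prod_{k\in a^f_n}\omega$ is countable; since only the cardinality of $b^-_f(n)$ enters the hypotheses of \Cref{monbh} (by the preliminary reduction from \Cref{L3.1}), we may replace it by $\omega$.

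With this reduction in hand, parts~\ref{L3.2a} and~\ref{L3.2b} become direct instances of \Cref{monbh}~\ref{monbh.a} and~\ref{monbh.b}, respectively. Concretely, with $f$ the witness for $J'\leq_\KB J$ or $J\leq_\mKB J'$, I would set
\[
h^-(n):=\min_{k\in a^f_n}h(k) \text{ if } a^f_n\neq\emptyset, \quad h^-(n):=1 \text{ otherwise,}
\]
and $h'(n):=\max\bigl\{n,\sum_{k\in a^f_n}h(k)\bigr\}+1$. Part~\ref{L3.2d} is the corresponding specialization of \Cref{monbh}~\ref{monbh.c} to $b$ and all $b'_f$ equal to the constant sequence $\omega$, with $n:=|F|$ and $h'_F(n):=\sum_{f\in F}h'_f(n)$. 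For part~\ref{L3.2c}, the key observation is that a finite-to-one $f$ witnessing $J'\leq_\RB J$ (so $J'=f^\to(J)$) simultaneously witnesses $J'\leq_\KB J$ trivially and $J\leq_\mKB J'$, because $f^\to(J)=J'\subseteq J'$. Since the constructions of $h^-$ and $h'$ above depend only on $f$ and $h$, the same pair serves both sets of inequalities at once.

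The only mildly technical point is the \emph{moreover} clause. For each $K$ the set $\{n:\min a^f_n\leq K\}$ equals $f[\{0,\ldots,K\}]$ and is therefore finite, so $\min a^f_n\to\infty$. Hence if $h$ diverges to infinity then $h^-(n)\geq\inf\{h(k):k\geq K\}$ holds for cofinitely many $n$, forcing $h^-\to\infty$; and if $h\geq^*1$, the same finiteness argument yields $h^-\geq^*1$. For $h'$, the constraint imposed by \Cref{monbh} is a pointwise lower bound, so we may freely replace each $h'(n)$ by any larger value, arranging that $h'$ diverges to infinity and is increasing.
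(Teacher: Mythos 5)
Your proof follows the same route as the paper: specialize \Cref{monbh} to the constant sequence $b=\omega$, note that $b^-_f$ and $b'_f$ can also be taken constant $\omega$ (since $f$ is finite-to-one, $\prod_{k\in a^f_n}\omega$ is countable), and read off parts~\ref{L3.2a}--\ref{L3.2b} and~\ref{L3.2d} from parts~\ref{monbh.a}--\ref{monbh.c} there; your observation for part~\ref{L3.2c} that a single finite-to-one $f$ with $J'=f^\to(J)$ witnesses both $J'\leq_\KB J$ and $J\leq_\mKB J'$, and that $h^-,h'$ depend only on $f$ and $h$, matches the paper's use of a single $f$.

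There is one small gap in your treatment of the \emph{moreover} clause. You define $h^-(n):=1$ when $a^f_n=\emptyset$, i.e.\ when $n\notin\ran f$. Your argument that $\min a^f_n\to\infty$, and hence $h^-\to\infty$ when $h\to\infty$, applies only to those $n\in\ran f$; but $\omega\smallsetminus\ran f$ can be infinite even for a finite-to-one $f$ witnessing $J'\leq_\KB J$ (e.g.\ $J'=J=\Fin$ and $f(n)=2n$). On such $n$ you have $h^-(n)=1$, so $h^-$ need not diverge. The paper avoids this by setting $h^-(n):=n$ on the complementary branch, which is harmless for the Tukey connections (the conditions in \Cref{monbh} only constrain $h^-_f(n)$ when $a^f_n\neq\emptyset$) and restores divergence. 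With that one-line change your argument is complete; the $h\geq^*1$ claim is fine as written since $1\geq 1$ on the exceptional set.
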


\begin{proof}
    If $f\colon\omega\to \omega$ is a~finite-to-one function witnessing the relation between $J$ and $J'$ indicated in~\ref{L3.2a}--\ref{L3.2c}, then $h^- = h^-_f$ and $h' = h'_f$ can be defined by
    \[
    h^-(n):= 
    \begin{cases}
        \min_{k\in a^f_n} h(k),&\text{if $a^f_n \neq \emptyset$,}\\
        n,&\text{otherwise,}
    \end{cases}
    \qquad
    h'(n):= 
    \begin{cases}
        \sum_{k\in a^f_n} h(k),&\text{if $a^f_n \neq \emptyset$,}\\
        n,&\text{otherwise,}
    \end{cases}
    \]
    where $a^f_n := f^{-1}\llbracket \{n\} \rrbracket$. Then~\ref{L3.2a} and~\ref{L3.2b} follows by \Cref{monbh},~\ref{L3.2c} follows by~\ref{L3.2a} and~\ref{L3.2b}, and~\ref{L3.2d} follows by \Cref{monbh}~\ref{monbh.c}.

    It is clear that $h^-$ and $h'$ diverge to $\infty$ when $h$ does, and that $h\geq^* 1$ implies $h^-\geq^*1$ and $h'\geq^* 1$. 
    But thanks to \Cref{L3.1}, $h'$ can be enlarged as desired.
\end{proof}

Although $\slalomt(h,J)$ and $\slalome(h,J)$ are well-defined and uncountable when $h\geq^{\dfil{J}}1$, with the additional parameter~$b$ we get cases when $\slalomt(b,h,J)$ is finite, likewise for $\slalome(b,h,J)$.

\begin{lemma}\label{aboutbh}
    Let $b=\seqn{b(n)}{n<\omega}$ be a~sequence of non-empty sets and $h\in\Baire$.
    \begin{enumerate}[label = \normalfont (\alph*)]
        \item\label{bh0} $\slalome(b,h,J)$ is well-defined iff $\|h\geq 1\|\in J^+$, and $\slalomt(b,h,J)$ is well-defined iff $\|h\geq 1\|\in J^\dual$.
        \item\label{bh1} For $0<k<\omega$, $\slalome(b,h,J) \leq k$ iff $\set{n<\omega}{|b(n)|\leq k\, h(n)} \in J^+$.
        \item\label{bh2} $\slalome(b,E,J)\neq \aleph_0$ for any set of slaloms $E$.
        \item\label{bh3} For $0<k<\omega$, if $\slalomt(b,h,J)\leq k$ then $\set{n<\omega}{|b(n)|> k\, h(n)}\in J$.
        \item\label{bh4} $\slalomt(b,h,J) = 1$ iff  $\set{n<\omega}{|b(n)|> h(n)} \in J$.
        \item\label{bh5} $\slalomt(b,h,J)=2$ iff $\set{n<\omega}{|b(n)|> 2h(n)} \in J$, $a:= \set{n<\omega}{h(n) < |b(n)| \leq 2h(n)} \in J^+$ and $J\cap\pts(a)$ is a~maximal ideal on~$a$.
        \item\label{bh6} For any set of slaloms $E$, either $\slalomt(b,E,\Fin)=1$ or $\slalomt(b,E,\Fin)>\aleph_0$.
    \end{enumerate}
\end{lemma}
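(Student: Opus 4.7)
Parts~\ref{bh0}, \ref{bh1}, \ref{bh3}, and~\ref{bh4} follow directly from the inclusion $\|x\in s\|\subseteq\|h\ge 1\|$, valid for every $x\in\prod b$ and $s\in\Scal(b,h)$. For~\ref{bh0} this inclusion supplies the necessary direction, and $s_y(n):=\{y(n)\}$ on $\|h\ge 1\|$ (empty elsewhere) gives the required family in the other. For the forward direction of~\ref{bh1} and~\ref{bh3}: given $k$ slaloms $s_0,\dots,s_{k-1}$, if $\{n:|b(n)|>kh(n)\}$ lies in $J^+$ (resp.\ $J^\dual$), choose $x(n)\in b(n)\smallsetminus\bigcup_{i<k}s_i(n)$ on that set, so every $\|x\in s_i\|$ avoids it, preventing domination. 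For the converse of~\ref{bh1}, if $c:=\{n:|b(n)|\le kh(n)\}\in J^+$, partition $b(n)=\bigcup_{i<k}s_i(n)$ into $k$ pieces of size $\le h(n)$ on $c$; the decomposition $c=\bigcup_{i<k}\{n\in c:x(n)\in s_i(n)\}$ is finite, so by closure of $J$ under finite unions one cell lies in $J^+$. Part~\ref{bh4} is~\ref{bh3} with $k=1$, whose converse is built by taking $s(n)=b(n)$ on $\{n:|b(n)|\le h(n)\}$.

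Parts~\ref{bh2} and~\ref{bh6} share a diagonalisation template. For~\ref{bh2}, suppose $\{s_n:n<\omega\}\subseteq E$ witnesses $\slalome(b,E,J)\le\aleph_0$ and no finite subfamily dominates. Set $E_k:=\{i:b(i)\subseteq\bigcup_{n\le k}s_n(i)\}$. If some $E_k\in J^+$, the finite decomposition $E_k=\bigcup_{n\le k}\{i\in E_k:x(i)\in s_n(i)\}$ places one cell in $J^+$, hence $\{s_0,\dots,s_k\}$ already dominates, contradiction. Otherwise every $E_k\in J$; define $r(i):=\min\{k:i\in E_k\}$ (with $r(i)=\infty$ permitted) and choose $x(i)\in b(i)\smallsetminus\bigcup_{n<r(i)}s_n(i)$ when $r(i)<\infty$ (non-empty by minimality), respectively $x(i)\in b(i)\smallsetminus\bigcup_{n\le i}s_n(i)$ when $r(i)=\infty$ (non-empty because $i\notin E_i$). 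Then $x(i)\in s_n(i)$ forces either $r(i)\le n$ (so $i\in E_n$) or $i<n$, hence $\|x\in s_n\|\subseteq E_n\cup[0,n)\in J$, contradicting domination. Part~\ref{bh6} runs the same template for $\Lc_\Fin$: if $\{s_n\}$ witnesses $\slalomt(b,E,\Fin)\le\aleph_0$ and no single $s_n$ dominates, each $A_n:=\{i:s_n(i)\subsetneq b(i)\}$ is infinite; enumerate pairs $\langle n_k,k\rangle$ so every $n$ appears infinitely often and select fresh $i_k\in A_{n_k}$ with $x(i_k)\in b(i_k)\smallsetminus s_{n_k}(i_k)$, whereupon $\|x\in s_n\|$ misses the infinite set $\{i_k:n_k=n\}$ for every $n$, contradicting domination in $\Lc_\Fin$.

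Part~\ref{bh5} derives conditions~(i) and~(ii) from~\ref{bh3} with $k=2$ and the negation of~\ref{bh4}; condition~(iii) is the main work. Take a witness $s_0,s_1$ and first show $\{n\in a:s_0(n)\cup s_1(n)\subsetneq b(n)\}\in J$ (else picking $x(n)$ off the union contradicts domination). Next, exclude $I_\supseteq:=\{n\in a:s_0\subseteq s_1\}\in J^+$: if $a\smallsetminus I_\supseteq\in J$ as well then $\|x\in s_0\|\subseteq\|x\in s_1\|$ on $a$ modulo $J$, so $s_1$ alone dominates, contradicting $\slalomt>1$ from~\ref{bh4}; if instead $a\smallsetminus I_\supseteq\in J^+$, realising any $c\subseteq I_\supseteq$ via $x(n)\in s_0(n)$ for $n\notin c$ and $x(n)\in b(n)\smallsetminus s_0(n)$ for $n\in c$ forces $c\in J$ by domination (else $\|x\in s_1\|^{\rm c}\supseteq a\smallsetminus I_\supseteq\in J^+$ blocks $s_1$ and $\|x\in s_0\|^{\rm c}\supseteq c$ blocks $s_0$), which applied to $c=I_\supseteq$ is absurd. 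Hence $I_\supseteq\in J$, and symmetrically $I'_\supseteq:=\{n\in a:s_1\subseteq s_0\}\in J$, so both $s_0\smallsetminus s_1$ and $s_1\smallsetminus s_0$ are non-empty on $a$ modulo~$J$. Any $c\subseteq a$ is then realised by $x$ with $x(n)\in s_0\smallsetminus s_1$ on $c$ and $x(n)\in s_1\smallsetminus s_0$ on $a\smallsetminus c$, forcing $c\in J$ or $a\smallsetminus c\in J$ --- the maximality of $J\cap\pts(a)$. The converse constructs $s_0,s_1$ from a $J$-partition of $b(n)$ on $a$ (setting $s_i(n)=b(n)$ where $|b(n)|\le h(n)$), applies maximality to witness $\slalomt\le 2$, and uses~\ref{bh4} negatively to ensure $\slalomt\ne 1$. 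The main obstacle is the containment case analysis around $I_\supseteq$ and $I'_\supseteq$.
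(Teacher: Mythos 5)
Your proof is essentially correct, and in two places it departs from the paper's argument in a worthwhile way, while in one place it introduces an unnecessary detour with a small gap.

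Parts~\ref{bh0}, \ref{bh1}, \ref{bh3}, \ref{bh4} match the paper's computations. Parts~\ref{bh2} and~\ref{bh6} use a different diagonalisation: for~\ref{bh2} you work directly with $E_k:=\set{i}{b(i)\subseteq\bigcup_{n\le k}s_n(i)}$ and the rank $r(i)$, whereas the paper first extracts, for each $m$, a function $x^m$ and a nested $J^\dual$-set $a_m$ on which $x^m$ escapes $s^0,\dots,s^m$, and then splices the $x^m$ together. Both get the containment $\|x\in s^\ell\|\subseteq E_\ell\cup[0,\ell)$ (or $\subseteq(\omega\smallsetminus a_\ell)\cup\ell$); your version avoids the auxiliary functions and is slightly more self-contained. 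Similarly for~\ref{bh6}: you use $A_n:=\set{i}{s_n(i)\subsetneq b(i)}$ directly rather than the paper's auxiliary pairs $(x^\ell,a_\ell)$ with $a_\ell\in[\omega]^{\aleph_0}$, which is a mild simplification of the same idea.

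For part~\ref{bh5}, however, your detour through $I_\supseteq$ and $I'_\supseteq$ is superfluous. Once you know $\set{n\in a}{s_0(n)\cup s_1(n)\subsetneq b(n)}\in J$, for the remaining $n\in a$ the cardinality constraint $|s_i(n)|\le h(n)<|b(n)|$ already rules out either containment: if $s_0(n)\subseteq s_1(n)$ then $s_1(n)=s_0(n)\cup s_1(n)=b(n)$, contradicting $|s_1(n)|\le h(n)<|b(n)|$. So $I_\supseteq$ is automatically inside the $J$-set established in the first step, and the Case A/Case B analysis can be dropped entirely --- which is exactly what the paper does, going straight to the realisation of $a'\subseteq a$ via $x(n)\in t^1(n)\smallsetminus t^0(n)$ on $a'$ and $x(n)\in t^0(n)\smallsetminus t^1(n)$ on $a\smallsetminus a'$. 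Moreover, your Case B as written has a small but real gap: picking $x(n)\in s_0(n)$ for $n\notin c$ does not guarantee $x(n)\notin s_1(n)$ on $a\smallsetminus I_\supseteq$; you need $x(n)\in s_0(n)\smallsetminus s_1(n)$ there, and for $n\in I_\supseteq\smallsetminus c$ you haven't excluded $s_0(n)=\emptyset$. These are patchable, but the cleaner route is simply to note that the non-containments come for free from the first step, as above.
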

\begin{proof}
    Let $c_k:=\set{n<\omega}{|b(n)|\leq k\, h(n)}$ for $0<k<\omega$. For $n\in c_k$, let $\seqn{s^k_\ell(n)}{\ell<k}\subseteq [b(n)]^{\leq h(n)}$ be a~covering of $b(n)$, and let $s^k_\ell(n):=\emptyset$ for $n\in \omega\menos c_k$. This defines $s^k_\ell \in \Scal(b,h)$ for $\ell<k$.

    \ref{bh0}: If $\|h\geq 1\|\in J^+$ then, for any $x\in \prod b$, there is some $s\in \Scal(b,h)$ such that $x(n)\in s(n)$ for all $n\in \|h\geq 1\|$, namely $s(n):=\{x(n)\}$ when $n\in\|h\geq 1\|$, and $s(n):= \emptyset$ otherwise, so $x\in^{J^+} s$; and if $\|h\geq 1\|\in J$ then $x\notin^{J^+}s$ for all $x\in\prod b$ and $s\in \Scal(b,h)$, since $n\notin \|h\geq 1\|$ implies that $s(n) = \emptyset$. A similar argument works for $\slalomt(b,h,J)$.

    \ref{bh1}: If $c_k\in J^+$ then, for any $x\in\prod b$, there is some $\ell<k$ such that $x\in^{J^+} s^k_\ell$, so $\slalome(b,h,J)\leq k$. Conversely, if $c_k\in J$ then, for any $S\subseteq\Scal(b,h)$ of size ${\leq}k$ and $n\in \omega\smallsetminus c_k$, $\bigcup_{s\in S}s(n) \subsetneq b(n)$, so there is some $x(n)\in b(n)$ outside this union. Then $x(n)\notin s(n)$ for all $n\in\omega\smallsetminus c_k$ and $s\in S$, i.e., $x\notin^{J^+} s$. 

    \ref{bh2}:   
    Let $S=\set{s^\ell}{\ell<\omega}\subseteq E$ and assume that $\slalome(b,E,J)$ is infinite. For each $m<\omega$, we can find some $x^m\in\prod b$ and a~set $a_m\in J^\dual$ such that $x^m(n)\notin s^\ell(n)$ for all $n\in a_m$ and $\ell\leq m$. By taking intersections if necessary, we may assume that $a_{m+1}\subseteq a_m$. Define $x\in\prod b$ such that, for any $n\in a_0$, $x(n):= x^{m_n}(n)$ where $m_n:=\max\set{m\leq n}{n\in a_m}$. Fix $\ell<\omega$. For $n\in a_{\ell}\smallsetminus \ell$, $m_n\geq \ell$, so $x(n) = x^{m_n}(n) \notin s^\ell(n)$, hence $x\notin^{J^+} s^\ell$.

    \ref{bh3}: Assume that $\omega\menos c_k\in J^+$. Then, like in the second part of the proof of~\ref{bh1}, for any $S\subseteq \Scal(b,h)$ of size~${\leq} k$ we can find some $x\in\prod b$ such that $x\notin^{J^\dual} s$ for all $s\in S$.

    \ref{bh4}: One implication follows from~\ref{bh3}. For the converse, if $\omega\menos c_1\in J$, i.e., $c_1\in J^d$, then there is some $s\in \Scal(b,h)$ such that $s(n) = b(n)$ for all $n\in c_1$. Then $x\in^{J^\dual} s$ for all $x\in\prod b$.

    \ref{bh5}: First note that $a = c_2\menos c_1$. 
    Assume that $\slalomt(b,h,J)=2$ witnessed by $\{t^0,t^1\}$. Then, by~\ref{bh3} and~\ref{bh4}, $\omega\menos c_1\in J^+$, $c_2\in J^\dual$ and $a\in J^+$. Without loss of generality we may assume that, for $n\in c_1$, $t^0(n) = t^1(n) = b(n)$ and, for $n\in c_2$, $t^0(n)\cup t^1(n) = b(n)$ (because $\set{n\in\omega}{t^0(n)\cup t^1(n) \neq b(n)}\in J$). Now let $a'\subseteq a$. Define $x\in\prod b$ such that, for $n\in a$, $x(n)\in t^1(n)\menos t^0(n)$ when $n\in a'$, and $x(n)\in t^0(n)\menos t^1(n)$ when $n\in a\menos a'$. Then, $\|x\in t^e\|\in J^\dual$ for some $e\in\{0,1\}$. If $e=0$ then $\|x\in t^0\|\cap a' = \emptyset$, so $a'\in J$; and if $e=1$, $\|x\in  t^1\|\cap a\subseteq a'$, so $a\menos a' \in J$. This shows that $J\cap \pts(a)$ is a~maximal ideal on~$a$.

    For the converse, assume that $c_2\in J^\dual$, $a\in J^+$ and $J\cap \pts(a)$ is a~maximal ideal on $a$. Then, by~\ref{bh4}, $\slalomt(b,h,J) >1$.
    So it remains to show that $\{s^2_0,s^2_1\}$ witnesses that $\slalomt(b,h,J)= 2$ by further assuming that $s^2_0(n) = s^2_1(n) = b(n)$ for $n\in c_1$. Let $x\in \prod b$ and $a':=\set{n\in a}{x(n)\in s^2_0(n)}$. Since $J\cap \pts(a)$ is a~maximal ideal on $a$, either $a'\in J$ or $a\menos a'\in J$. The first case implies $x\in^{J^\dual} s^2_1$, while the second implies $x\in^{J^\dual} s^2_0$.

    \ref{bh6}: Let $S=\set{s^\ell}{\ell<\omega}\subseteq E$ and assume that $\slalomt(b,E,\Fin)>1$. For each $\ell<\omega$, we can find some $x^\ell\in\prod b$ and a~set $a_\ell\in [\omega]^{\aleph_0}$ such that $x^\ell(n)\notin s^\ell(n)$ for all $n\in a_\ell$. We can easily construct a~$x\in\prod b$ such that, for any $\ell<\omega$, $x(n)= x^{\ell}(n)$ for infinitely many $n\in a_\ell$, so $x\notin^{\Fin^\dual} s^\ell$.
\end{proof}

It looks harder to characterize $\slalomt(b,h,J) = k$ when $3\leq k <\omega$.

The slalom numbers of the form $\slalomt(h,\Fin)$ are $\slalome(h,\Fin)$ are very well-known as they characterize other classical cardinal characteristics of the continuum.

\begin{theorem}\label{F3.3}
Let $g,h\in{}^\omega\omega$. Then 
\begin{enumerate}[label=\rm(\alph*)]
\item {\rm(Bartoszy\'nski~\cite{Ba1987} and Miller~\cite{Mi1982}, see also \cite[Thm.~5.1 and~3.17]{CM23})}\label{F3.3a}
$\nonm=\slalome(h,\Fin)$ and\/ $\covm = \slalome^\perp(h,\Fin)$ 
when $h\ge^{\Fin^+} 1$, moreover, $\pLc_\Fin(h)\eqT \pLc_\Fin(1)$.
\item{\rm(Bartoszy\'nski~\cite{Ba1984},  see also \cite[Thm.~4.2]{CM23})}\label{F3.3b}
$\Lc_\Fin(g) \eqT \Nwf$ when\/
$\lim_{n\to \infty}g(n)=\infty$. As a~consequence,
$\slalomt(g,\Fin)=\cofn$ and\/ $\slalomt^\perp(g,\Fin)=\addn$. 
\qed
\end{enumerate}
\end{theorem}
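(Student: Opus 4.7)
Both parts are classical theorems of Bartoszy\'nski and Miller, cited in the excerpt; I sketch a proof strategy. For part~(a) the plan is twofold: first establish the Tukey equivalence $\pLc_\Fin(h)\eqT\pLc_\Fin(1)$ (with $1$ the constant function) when $h\geq^{\Fin^+}1$, then invoke the classical characterization $\pLc_\Fin(1)\eqT\Cbf_\Mwf^\perp$. The direction $\pLc_\Fin(h)\leqT\pLc_\Fin(1)$ is immediate from monotonicity (\Cref{lem:gen}~\ref{it:monE}), since singleton slaloms with support in $\|h\geq 1\|$ form a subfamily of~$\Scal(h)$ and represent the same Tukey class as $\Scal(1)$. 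The reverse $\pLc_\Fin(1)\leqT\pLc_\Fin(h)$ is the crux: I would partition $\|h\geq 1\|$ into successive blocks $I_n$ of sizes roughly~$h(n)$ and apply the standard Bartoszy\'nski recoding, threading the $\leq h(n)$ values of $s$ on each block~$I_n$ into a single auxiliary coordinate, with a compatible block transformation on the $x$-side so that ``infinitely often capture'' transfers in both directions.

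For the classical step $\pLc_\Fin(1)\eqT\Cbf_\Mwf^\perp$, I would use the Bartoszy\'nski--Miller description of meager sets: $M\subseteq\Baire$ is meager iff there exist an increasing $N\in\Baire$ and $\sigma\in\prod_{n<\omega}\omega^{[N(n),N(n+1))}$ such that every $x\in M$ satisfies $x\restriction[N(n),N(n+1))\neq\sigma(n)$ for all but finitely many~$n$. Reading this description as a Tukey connection between eventually-different families in $\Baire$ and $\Cbf_\Mwf$ yields $\pLc_\Fin(1)\eqT\Cbf_\Mwf^\perp$, and together with the first step, \Cref{cor:Tukeyval} delivers $\slalome(h,\Fin)=\nonm$ and $\slalome^\perp(h,\Fin)=\covm$.

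For part~(b), I would follow Bartoszy\'nski's Tukey equivalence $\Lc_\Fin(g)\eqT\Nwf$. Fix a strictly increasing $b\in\Baire$ with $\sum_{n<\omega}g(n)/b(n)<\infty$. For a $g$-slalom $s$ (viewed inside $\prod b$), the set $N_s:=\set{x\in\prod b}{x(n)\in s(n)\text{ for all but finitely many }n}$ is null in the product measure by the Borel--Cantelli lemma, which yields one Tukey direction. The converse rests on the classical ``small sets'' fact that every null $G_\delta$ subset of $\prod b$ is contained in some~$N_s$ for a suitable $g$-slalom, with $g\to\infty$ guaranteeing enough slalom room. Packaging these together gives $\Lc_\Fin(g)\eqT\Nwf$, whence $\slalomt(g,\Fin)=\cofn$ and $\slalomt^\perp(g,\Fin)=\addn$ by \Cref{cor:Tukeyval}.

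The main obstacle is the recoding in part~(a) when $h$ is unbounded, where na\"ive pigeonhole splitting of $s(n)$ into $h(n)$ threads fails and one must carefully pack slalom values into singleton-slalom coordinates across appropriately chosen blocks; similarly, the ``small sets exhaust $\Nwf$'' half of part~(b) is a delicate combinatorial construction, and both are the reason these classical statements are substantial theorems rather than formal corollaries.
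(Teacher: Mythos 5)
The paper does not prove this statement: it is a quoted theorem of Bartoszy\'nski and Miller (note the $\qed$ at the end of the statement) with the proof delegated to the cited references, in particular~\cite{CM23}. So there is no paper proof to compare against; your task was to reconstruct the classical argument, and your reconstruction captures the right strategy.

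Two places need a little more care, though. First, the claim that $\pLc_\Fin(h)\leqT\pLc_\Fin(1)$ is ``immediate from monotonicity'' is too quick under the hypothesis $h\ge^{\Fin^+}1$: you cannot simply cite $\Scal(1)\subseteq\Scal(h)$, since $h(n)$ may be $0$ co-infinitely often. You do point to the right fix --- the family $E'$ of singleton slaloms supported on $w:=\|h\geq1\|$ is contained in \emph{both} $\Scal(h)$ and $\Scal(1)$, so \Cref{lem:gen}~\ref{it:monE} gives both $\pLc_\Fin(h)\leqT\la\Baire,E',\in^{\Fin^+}\ra$ and $\pLc_\Fin(1)\leqT\la\Baire,E',\in^{\Fin^+}\ra$ --- but the missing direction $\la\Baire,E',\in^{\Fin^+}\ra\leqT\pLc_\Fin(1)$ is not an inclusion and must be argued via an explicit reindexing through a bijection $\varphi\colon\omega\to w$ (take $\Psi_-(x)(n)=x(\varphi(n))$ and $\Psi_+(s)(m)=s(\varphi^{-1}(m))$ for $m\in w$, empty otherwise); this works because membership in $\Fin^+$ is preserved under such reindexing. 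Second, in the recoding step $\pLc_\Fin(1)\leqT\pLc_\Fin(h)$ the blocks $I_n$ should partition the \emph{domain} of $x$ (indexed by $n\in w$), with $|I_n|\geq h(n)$; one encodes $x\restriction I_n$ as $\Psi_-(x)(n)$, and then diagonalizes the $\leq h(n)$ candidate block-codes in $s'(n)$ against an enumeration of $I_n$, so that a single hit $\Psi_-(x)(n)\in s'(n)$ produces at least one $m\in I_n$ with $x(m)\in\Psi_+(s')(m)$. Your phrase ``threading into a single auxiliary coordinate'' points at this, but the diagonalization (one candidate per coordinate of $I_n$) is the precise device that makes it work. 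For part~\ref{F3.3b} your Borel--Cantelli half and the ``every null $G_\delta$ is covered by some $N_s$, using $g\to\infty$'' half are exactly the two ingredients of Bartoszy\'nski's proof; the latter is the nontrivial core of the theorem and is not derivable from the paper's general machinery, which is precisely why the paper cites rather than proves it.
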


As a~consequence of this theorem and \Cref{L3.1}, we get that $\nonm\leq \slalome(h,J)$ and $\slalomt(g,J)\leq \cofn$ for any ideal $J$ on $\omega$ when $h\geq^{J^+}1$ and $g$ diverges to $\infty$. On the other hand, $\slalome(1,J_*)=\slalomt(1,J_*)=\cfrak$ when $J_*$ is a maximal ideal on $\omega$ (because of the size of an ultrapower of $\omega$, see~\cite[Prop.~5.44]{foreman}). Also recall that $\slalomt(h,\Fin)=\cfrak$ and $\slalomt^\perp(h,\Fin)$ is finite when $h\geq^*1$ and $h$ does not diverge to infinity (see~\cite[Thm.~3.12]{CM23}).

The ideal $\Fin$ in \Cref{F3.3} can be replaced by any ideal with the
Baire property:

\begin{theorem}\label{L3.4}
Let $J$ be an ideal on~$\omega$ with the Baire property and let
$h,g\in{}^\omega\omega$ be such that $h\ge^*1$ and\/
$\lim_{n\in\omega}g(n)=\infty$.
Then\/ $\pLc_J(h) \eqT \pLc_\Fin(1)$ and\/ $\Lc_J(g) \eqT \Nwf$, in particular\/
$\slalome(h,J)=\nonm$, $\slalomt(g,J)=\cofn$, $\slalomt^\perp(g,J) =\addn$ and\/ $\slalome^\perp(h,J)= \covm$.\footnote{The first equality is shown in~\cite{SoDiz}.}
\end{theorem}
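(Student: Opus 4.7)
The plan is to leverage the Jalali--Naini--Talagrand characterization recalled just after \Cref{C2.7}, namely that an ideal~$J$ on~$\omega$ has the Baire property iff $\Fin\le_\RB J$. Since \Cref{F3.3} already identifies $\pLc_\Fin(h)$ with $\pLc_\Fin(1)$ (for $h\ge^{\Fin^+}1$) and $\Lc_\Fin(g)$ with~$\Nwf$ (for $g\to\infty$), the strategy is to sandwich $\pLc_J(h)$ between two copies of $\pLc_\Fin$ and $\Lc_J(g)$ between two copies of $\Lc_\Fin$, and then invoke these known equivalences.

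For the first assertion, I would apply \Cref{L3.2}\ref{L3.2c} to $J':=\Fin\le_\RB J$ and the function~$h$: this yields functions $h^-,h'\in\Baire$ such that
\[
\pLc_J(h)\leqT\pLc_\Fin(h^-)\quad\text{and}\quad\pLc_\Fin(h')\leqT\pLc_J(h),
\]
and by the ``Moreover'' clause of \Cref{L3.2}, both $h^-$ and $h'$ can be chosen with $h^-\ge^*1$ and $h'\ge^*1$ (the former because $h\ge^*1$, the latter by free choice). \Cref{F3.3}\ref{F3.3a} then gives $\pLc_\Fin(h^-)\eqT\pLc_\Fin(1)\eqT\pLc_\Fin(h')$, so the two Tukey reductions above close up to the desired equivalence $\pLc_J(h)\eqT\pLc_\Fin(1)$.

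The second equivalence is handled symmetrically: apply \Cref{L3.2}\ref{L3.2c} again, now with the function~$g$, to produce $g^-,g'\in\Baire$ with
\[
\Lc_J(g)\leqT\Lc_\Fin(g^-)\quad\text{and}\quad\Lc_\Fin(g')\leqT\Lc_J(g),
\]
where both $g^-$ and $g'$ diverge to infinity (again by the ``Moreover'' part of \Cref{L3.2}, using that $g\to\infty$). Then \Cref{F3.3}\ref{F3.3b} yields $\Lc_\Fin(g^-)\eqT\Nwf\eqT\Lc_\Fin(g')$, which together with the two reductions gives $\Lc_J(g)\eqT\Nwf$.

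Finally, the cardinal equalities follow at once from the two Tukey equivalences and \Cref{cor:Tukeyval}: $\slalome(h,J)=\dfrak(\pLc_J(h))=\dfrak(\pLc_\Fin(1))=\nonm$ and $\slalome^\perp(h,J)=\bfrak(\pLc_J(h))=\covm$; $\slalomt(g,J)=\dfrak(\Lc_J(g))=\dfrak(\Nwf)=\cofn$ and $\slalomt^\perp(g,J)=\bfrak(\Nwf)=\addn$. There is no real obstacle here: the substance of the argument has already been packaged into \Cref{L3.2}\ref{L3.2c} (which required the careful bookkeeping with partition-induced functions $h^-_f,h'_f$ and the machinery of \Cref{thm:RB1,thm:RB2,thm:RBuni,monbh}), and into \Cref{F3.3}. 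The only point requiring care is ensuring that $h^-$ stays $\ge^*1$ and that $g^-,g'$ still diverge, but this is precisely what the ``Moreover'' clause of \Cref{L3.2} guarantees.
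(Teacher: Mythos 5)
Your proposal is correct and follows exactly the route the paper takes: both use the Jalali--Naini--Talagrand characterization to get $\Fin\le_\RB J$, then apply \Cref{L3.2}~\ref{L3.2c} to sandwich $\pLc_J(h)$ and $\Lc_J(g)$ between copies of $\pLc_\Fin$ and $\Lc_\Fin$ with parameters of the right shape (guaranteed by the ``Moreover'' clause), and finally invoke \Cref{F3.3} to close the Tukey equivalences. The derivation of the cardinal equalities from these via \Cref{cor:Tukeyval} is also as expected.
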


\begin{proof}
Note that $\Fin\le_\RB J$ (by Jalali--Naini and Talagrand, see e.g.~\cite{farah}). Hence, by \Cref{L3.2}~\ref{L3.2c}, there are $g',g^-,h',h^-\in\Baire$ such that $\Lc_\Fin(g') \leqT \Lc_J(g) \leqT \Lc_{\Fin}(g^-)$ and $\pLc_\Fin(h') \leqT \pLc_J(h) \leqT \pLc_\Fin(h^-)$, even more, $g'$ and $g^-$ can be found diverging to infinity and $h',h^-\geq^*1$. Therefore, by \Cref{F3.3}, $\Lc_J(g) \eqT \Nwf$ and $\pLc_J(h) \eqT \pLc_\Fin(1)$.
\end{proof}

\begin{figure}[ht]
\begin{center}
\begin{tikzpicture}[scale=0.8]
\node (ale) at (-7, -3.5) {$\aleph_1$};
\node (a) at (-5, -3.5) {$\pp$};
\node (as) at (-5, -1) {$\sla{\I,\fin}$};
\node (b) at (-5, 1.5) {$\bb$};
\node (ba) at (-5, 4) {$\nonm$};
\node (aa) at (-2, -3.5) {$\slamg{\J}$};
\node (aas) at (-2, -1) {$\sla{\I,\J}$};
\node (bb) at (-2, 1.5) {$\bb_\J$};
\node (bba) at (-2, 4) {$\sla{h,\J}$};
\node (c) at (1, -3.5) {$\dslaml{\J}$};
\node (cs) at (1, -1) {$\dsla{\I,\J}$};
\node (f) at (1, 1.5) {$\dd_\J$};
\node (csa) at (1, 4) {$\dsla{h,\J}$};
\node (xpf) at (4, 1.5) {$\dd$};
\node (xpc) at (4, -3.5) {$\covm$};
\node (xpcs) at (4, -1) {$\dslago{\I}$};
\node (xpfa) at (4, 4) {$\cofn$};
\node (cont) at (6, 4) {$\cc$};
\foreach \from/\to in {aas/bb,aas/cs,cs/f,aas/cs} \draw [line width=.15cm,
white] (\from) -- (\to);
\foreach \from/\to in {ale/a,a/as, aa/aas, c/cs, b/bb, a/aa, aa/c, bb/f, as/b, aas/bb, cs/f, as/aas, aas/cs,f/xpf,cs/xpcs,c/xpc,xpcs/xpf,xpc/xpcs, b/ba,bb/bba,f/csa,xpf/xpfa,ba/bba,bba/csa,csa/xpfa,xpfa/cont} \draw [->] (\from) -- (\to);

\end{tikzpicture}
\end{center}
\caption{Inequalities among particular cases of slalom numbers for ideals $I$ and $J$ on $\omega$ and $h\to\infty$. An arrow denotes $\leq$. When $h\geq^*1$ and $h$ does not diverge to $\infty$, $\cofn$ should be replaced by $\slalomt(h,\Fin)=\cfrak$.}
\label{BasicDia}
\end{figure}

\begin{figure}[ht]
\begin{center}
\begin{tikzpicture}[scale=0.85]
\node (as) at (-5, -1) {$\pLc_\Fin(I)$};
\node (b) at (-5, 1.5) {$\pLc_\Fin(\Fin)$};
\node (ba) at (-5, 4) {$\pLc_\Fin(h)$};
\node (aas) at (-2, -1) {$\pLc_J(I)$};
\node (bb) at (-2, 1.5) {$\pLc_J(\Fin)$};
\node (bba) at (-2, 4) {$\pLc_J(h)$};
\node (cs) at (1, -1) {$\Lc_J(I)$};
\node (f) at (1, 1.5) {$\Lc_J(\Fin)$};
\node (csa) at (1, 4) {$\Lc_J(h)$};
\node (xpf) at (4, 1.5) {$\Lc_{\Fin}(\Fin)$};
\node (xpcs) at (4, -1) {$\Lc_\Fin(I)$};
\node (xpfa) at (4, 4) {$\Lc_\Fin(h)$};
\foreach \from/\to in {aas/bb,aas/cs,cs/f,aas/cs} \draw [line width=.15cm,
white] (\from) -- (\to);
\foreach \from/\to in {b/bb, bb/f, as/b, aas/bb, cs/f, as/aas, aas/cs,f/xpf,cs/xpcs, xpcs/xpf, b/ba,bb/bba,f/csa,xpf/xpfa,ba/bba,bba/csa,csa/xpfa} \draw [->] (\from) -- (\to);

\end{tikzpicture}
\end{center}
\caption{Diagram of Tukey connections for ideals $I$ and $J$ on $\omega$ and $h\geq^*1$. An arrow denotes $\leqT$.}
\label{TukeyDia}
\end{figure}

\Cref{BasicDia} summarizes the inequalities between slalom numbers of the form $\slalomt(I,J)$ and $\slalome(I,J)$ with other cardinals from Cicho\'n's diagram. These are consequences of our results so far (including monotonicity). The upper part of the diagram can be obtained via Tukey connections, see \Cref{TukeyDia}, which implies \Cref{DiaDual}.

\begin{figure}[ht]
\begin{center}
\begin{tikzpicture}[scale=0.8]
\node (a) at (-5, -1) {$\addn$};
\node (b) at (-5, 1.5) {$\bb$};
\node (ba) at (-5, 4) {$\nonm$};
\node (aa) at (-2, -1) {$\slalomt^\perp(h,J)$};
\node (bb) at (-2, 1.5) {$\bb_\J$};
\node (bba) at (-2, 4) {$\sla{h,\J}$};
\node (c) at (1, -1) {$\slalome^\perp(h,J)$};
\node (f) at (1, 1.5) {$\dd_\J$};
\node (csa) at (1, 4) {$\dsla{h,\J}$};
\node (xpf) at (4, 1.5) {$\dd$};
\node (xpc) at (4, -1) {$\covm$};
\node (xpfa) at (4, 4) {$\cofn$};
\node (cont) at (6, 4) {$\cc$};
\foreach \from/\to in {aas/bb,aas/cs,cs/f,aas/cs} \draw [line width=.15cm,
white] (\from) -- (\to);
\foreach \from/\to in {a/b, aa/bb, c/f, b/bb, a/aa, aa/c, bb/f, f/xpf, c/xpc, xpc/xpf, b/ba,bb/bba,f/csa,xpf/xpfa,ba/bba,bba/csa,csa/xpfa,xpfa/cont} \draw [->] (\from) -- (\to);

\end{tikzpicture}
\end{center}
\caption{Inequalities among particular cases of slalom numbers for ideals $I$ and $J$ on $\omega$ and $h\to\infty$. An arrow denotes $\leq$. When $h\geq^*1$ and $h$ does not diverge to $\infty$, $\cofn$ should be replaced by $\slalomt(h,\Fin)=\cfrak$ and $\addn$ by $\slalomt^\perp(h,\Fin)$, which is finite.}
\label{DiaDual}
\end{figure}

\section{Disjoint sum of ideals}\label{sec:sumI}

In this section, we look at the slalom numbers associated with the disjoint sum of ideals. They have a~nice behavior and are very useful to prove consistency results as in \Cref{sec:forcing}. Applications are also available in~\cite{GaMe}.

\begin{definition}\label{def:oplus}
\ 
\begin{enumerate}[label=\rm(\arabic*)]
    \item\label{oplus1} For sets $a_0$ and $a_1$,
denote $a_0\oplus a_1=(a_0\times\{0\})\cup(a_1\times\{1\})$. When $A_0$ and $A_1$ are families of sets, define \[A_0\oplus A_1=\{w_0\oplus w_1 : w_0\in A_0\text{ and }w_1\in A_1\}.\]
When $I_0$ and $I_1$ are ideals on $a_0$ and $a_1$, respectively, we refer to $I_0\oplus I_1$ as a~\emph{disjoint sum of ideals}, which is an ideal on $a_0\oplus a_1$.
    \item For arbitrary two functions $f_0$ and~$f_1$, define
the function $f_0\oplus f_1$ with domain $\dom f_0 \oplus\dom f_1$ by
$(f_0\oplus f_1)(n,i)=f_i(n)$.
Conversely, to every function $f$ with domain $a_0\oplus a_1$
assign two functions $(f)_0$ and~$(f)_1$ with domain~$a_0$ and $a_1$, respectively, such that
$f=(f)_0\oplus(f)_1$.

    \item If $D_0$ and $D_1$ are sets of functions, we define\footnote{There will not be confusion with~\ref{oplus1} from the context.}
\[
D_0\oplus D_1 := \set{f_0\oplus f_1}{f_0\in D_0,\ f_1\in D_1}.
\]
\end{enumerate}
Note that, When $I_0\subseteq\pts(a_0)$ and $I_1\subseteq\pts(a_1)$,  
$(I_0\oplus I_1)^\dual = I^\dual_0 \oplus I^\dual_1$ and $(I_0\oplus I_1)^{\rm c} = (I^{\rm c}_0\oplus \pts(a_1))\cup (\pts(a_0)\oplus I^{\rm c}_1)$.
\end{definition}

\begin{lemma} \label{L_op_ord}
Let $I_0$ and $I_1$ be ideals on a set $a$. Then 
\begin{enumerate}[label=\rm(\alph*)]
\item\label{L_op_ord-b} $I_0\cap I_1\le_\RB I_0\oplus I_1$, $I_0\le_\RB I_0\oplus I_0$ and $I_0\approx_\RB I_0\oplus\Pcal(a) \approx_\RB \Pcal(a)\oplus I_0$.
\item\label{L_op_ord-d} $I_0\oplus I_1\le_\KB I_0, I_1$.
\item\label{L_op_ord-c} $I_0\oplus I_1\le_\KB^\ccap I_0\cap I_1$ and $I_0\oplus I_0\le_\RB^\ccup I_0$.
\end{enumerate}
\end{lemma}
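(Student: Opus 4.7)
My plan is to exhibit, for each claimed ordering, an explicit witness map (or finite family of maps) and verify the characterization by a direct computation of preimages. All witnesses can be built from just three standard maps between $a$ and $a\oplus a$: the projection $\pi\colon a\oplus a\to a$ with $\pi(n,i)=n$; the coordinate injections $\iota_i\colon a\to a\oplus a$ with $\iota_i(n)=(n,i)$ for $i\in\{0,1\}$; and the swap $\tau\colon a\oplus a\to a\oplus a$ with $\tau(n,i)=(n,1-i)$. Each of these is one-to-one or two-to-one, hence finite-to-one, so each is admissible as a witness for any of $\leq_\RB$, $\leq_\KB$, $\leq_\RB^\ccup$, and $\leq_\KB^\ccap$.

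For part~\ref{L_op_ord-b}, I would first observe $\pi^{-1}\llbracket w\rrbracket=w\oplus w$, from which $\pi$ witnesses both $I_0\cap I_1\leq_\RB I_0\oplus I_1$ (with equality $\pi^\rightarrow(I_0\oplus I_1)=I_0\cap I_1$), the special case $I_0\leq_\RB I_0\oplus I_0$, and the $\leq_\RB$ direction of $I_0\approx_\RB I_0\oplus\Pcal(a)$. The converse inequality $I_0\oplus\Pcal(a)\leq_\RB I_0$ is witnessed by $\iota_0$, via $\iota_0^\rightarrow(I_0)=\{w\subseteq a\oplus a:(w)_0\in I_0\}=I_0\oplus\Pcal(a)$. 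The final equivalence $I_0\oplus\Pcal(a)\approx_\RB\Pcal(a)\oplus I_0$ is witnessed in both directions by $\tau$, using $\tau^{-1}\llbracket w_0\oplus w_1\rrbracket=w_1\oplus w_0$.

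For part~\ref{L_op_ord-d}, the map $\iota_i$ witnesses $I_0\oplus I_1\leq_\KB I_i$, because any $w_0\oplus w_1\in I_0\oplus I_1$ satisfies $\iota_i^{-1}\llbracket w_0\oplus w_1\rrbracket=w_i\in I_i$; only inclusion, not equality, is needed here. For part~\ref{L_op_ord-c}, both claims use the two-element family $F=\{\iota_0,\iota_1\}$. The key computation $\iota_0^{-1}\llbracket w\rrbracket\cap\iota_1^{-1}\llbracket w\rrbracket=(w)_0\cap(w)_1$ yields $I_0\oplus I_1\leq_\KB^\ccap I_0\cap I_1$, since when $w\in I_0\oplus I_1$ the intersection $(w)_0\cap(w)_1$ lies in both $I_0$ and $I_1$. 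Dually, $\iota_0^{-1}\llbracket w\rrbracket\cup\iota_1^{-1}\llbracket w\rrbracket=(w)_0\cup(w)_1$ lies in $I_0$ exactly when both coordinates do, so the equality of families $\{w\subseteq a\oplus a:(w)_0\cup(w)_1\in I_0\}=I_0\oplus I_0$ holds, giving $I_0\oplus I_0\leq_\RB^\ccup I_0$ (equality, not mere inclusion, being essential for $\leq_\RB^\ccup$).

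The argument is almost entirely routine; no intricate combinatorial device is needed. The only genuine care is bookkeeping the domain/codomain conventions---since if $f\colon M\to N$ then $f^\rightarrow$ sends families on $M$ to families on $N$, the direction of each witness map must be chosen to match the two sides of the asserted ordering---and confirming actual set-theoretic equality rather than mere inclusion for the two relations $\leq_\RB$ and $\leq_\RB^\ccup$.
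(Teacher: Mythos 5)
Your proof is correct and takes essentially the same route as the paper, which only sketches it in one line ("The mappings $(n,i)\to n$, $n\to(n,0)$ and $n\to(n,1)$ can be used…"); you supply the routine preimage computations. The only cosmetic difference is your use of the swap $\tau$ to get $I_0\oplus\Pcal(a)\approx_\RB\Pcal(a)\oplus I_0$ directly, which the paper leaves implicit (it also follows by transitivity of $\approx_\RB$ through $I_0$).
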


\begin{proof}
The mappings $(n,i)\to n$, $n\to (n,0)$ and $n\to (n,1)$ can be used to prove the relations in~\ref{L_op_ord-b}--\ref{L_op_ord-c}. For example,~\ref{L_op_ord-c} uses the set of the last two maps.
%
%
%
%
\end{proof}

From now on, in our slalom relational systems we may consider that the domain of an ideal and of the functions and slaloms in consideration may not be $\omega$ but some other countable set like $\omega\oplus\omega$, so that we can discuss slalom numbers for ideals of the form, e.g.\ $J_0\oplus J_1$ (see also \Cref{rem:gengen}). The same conventions fixed in \Cref{def:Lc} apply, for example, omitting $D$ when $D = {}^{\omega\oplus \omega} \omega$.

For sums of ideals, we have the following general result.

\begin{lemma}\label{lem:prod}
 Let $D_0, D_1, E_0, E_1$ be sets of functions with domain $\omega$, and let $J_0,J_1\subseteq\pts(\omega)$. Then
\begin{enumerate}[label=\rm(\alph*)]
    \item $\Lc_{J_0\oplus J_1}(D_0\oplus D_1, E_0\oplus E_1) \eqT \Lc_{J_0}(D_0, E_0) \otimes \Lc_{J_1}(D_1, E_1)$.
    In particular,
    \[
    \max\{\slalomt(D_0,E_0,J_0), \slalomt(D_1,E_1,J_1)\}\leq \slalomt(D_0\oplus D_1, E_0\oplus E_1,J_0\oplus J_1)\leq \slalomt(D_0,E_0,J_0)\cdot \slalomt(D_1,E_1,J_1).
    \]

    \item $\pLc_{J_0\oplus J_1}(D_0\oplus D_1, E_0\oplus E_1) \eqT \pLc_{J_0}(D_0, E_0) \boxtimes \pLc_{J_1}(D_1, E_1)$.
    In particular,
    \[
    \slalome(D_0\oplus D_1, E_0\oplus E_1,J_0\oplus J_1) = \min\{\slalome(D_0,E_0,J_0), \slalome(D_1,E_1,J_1)\}.
    \]
\end{enumerate}
\end{lemma}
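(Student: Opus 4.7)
The plan is to observe that in each part the two relational systems are not merely Tukey equivalent but in fact isomorphic via the same pair of natural bijections, so the whole argument reduces to unpacking definitions. I would first set up $\Phi\colon D_0\oplus D_1\to D_0\times D_1$ by $\Phi(x_0\oplus x_1):=(x_0,x_1)$, and analogously $\Psi\colon E_0\oplus E_1\to E_0\times E_1$ by $\Psi(s_0\oplus s_1):=(s_0,s_1)$. The pivotal identity, which follows immediately from the definition of $\oplus$ for functions, is that for $x=x_0\oplus x_1$ and $s=s_0\oplus s_1$,
\[
\|x\in s\| \;=\; \|x_0\in s_0\|\oplus\|x_1\in s_1\|
\]
as subsets of $\omega\oplus\omega$.

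For part~(a), I would invoke the identity $(J_0\oplus J_1)^\dual = J_0^\dual\oplus J_1^\dual$ already noted in \Cref{def:oplus}. Combined with the displayed identity, this shows that $\|x\in s\|\in(J_0\oplus J_1)^\dual$ if and only if $\|x_0\in s_0\|\in J_0^\dual$ and $\|x_1\in s_1\|\in J_1^\dual$, which is exactly the relation $\sqsubset^\otimes$ between $\Phi(x)$ and $\Psi(s)$. Hence $(\Phi,\Psi^{-1})$ and $(\Phi^{-1},\Psi)$ are Tukey connections in opposite directions, yielding the claimed $\eqT$. The bounds on $\slalomt$ are then a direct application of \Cref{products}.

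For part~(b), the same pair of bijections works: by the identity $(J_0\oplus J_1)^+ = (J_0^+\oplus\pts(\omega))\cup(\pts(\omega)\oplus J_1^+)$ from \Cref{def:oplus}, we get $\|x\in s\|\in(J_0\oplus J_1)^+$ iff $\|x_0\in s_0\|\in J_0^+$ or $\|x_1\in s_1\|\in J_1^+$, which is precisely the relation $\sqsubset^\boxtimes$ between $\Phi(x)$ and $\Psi(s)$. The equality for $\slalome$ then follows from the $\boxtimes$ clause of \Cref{products}. I do not foresee any genuine obstacle here: the argument really is a relational-system isomorphism and not just a Tukey equivalence, so the only thing to be careful about is to correctly quote the formulas for the dual filter and the positive sets of a disjoint sum of ideals.
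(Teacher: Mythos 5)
Your proof is correct and matches the paper's argument essentially verbatim: the paper also builds the Tukey connections from the canonical bijections between $D_0\times D_1$ and $D_0\oplus D_1$ (and between the slalom sets), observes the same two equivalences for $\in^{(J_0\oplus J_1)^\dual}$ and $\in^{(J_0\oplus J_1)^{\rm c}}$ to reduce to $\sqsubset^\otimes$ and $\sqsubset^\boxtimes$, and then invokes \Cref{products}. The only cosmetic difference is the direction in which you write the bijections; your observation that this is in fact a relational-system isomorphism (not merely a Tukey equivalence) is accurate.
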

\begin{proof}
The Tukey connections are constructed using the canonical bijections $F\colon D_0\times D_1 \to D_0\oplus D_1$ and $G\colon E_0\times E_1 \to E_0\oplus E_1$. Note that 
\begin{align*}
F(x_0,x_1) \in^{(J_0\oplus J_1)^\dual} G(y_0,y_1) & \text{ iff }x_0 \in^{J_0^\dual} y_0 \text{ and }x_1 \in^{J_1^\dual} y_1,\\
F(x_0,x_1) \in^{(J_0\oplus J_1)^{\rm c}} G(y_0,y_1) & \text{ iff } x_0 \in^{J_0^{\rm c}} y_0 \text{ or }x_1 \in^{J_1^{\rm c}} y_1.
\end{align*}
The rest follows by \Cref{products}.
\end{proof}

We start looking at more particular cases. As a~direct consequence of the previous result:

\begin{fact}\label{ftc:oplus}
Let $J_0,J_1\subseteq\pts(\omega)$, $b_0$ and $b_1$ functions with domain $\omega$ and $h_0, h_1\in\Baire$. Then:
\begin{align*}
    \Lc_{ J_0 \oplus J_1}(b_0\oplus b_1, h_0\oplus h_1) &  = \Lc_{J_0 \oplus J_1}\left(\prod b_0\oplus \prod b_1,\Scal(b_0,h_0)\oplus\Scal(b_1,h_1)\right)\\
     & \eqT \Lc_{J_0}(b_0,h_0) \otimes \Lc_{J_1}(b_1,h_1),\\
    \pLc_{ J_0 \oplus J_1}(b_0\oplus b_1, h_0\oplus h_1) &  = \pLc_{ J_0 \oplus J_1} \left(\prod b_0\oplus \prod b_1,\Scal(b_0,h_0)\oplus\Scal(b_1,h_1)\right)\\
     & \eqT \pLc_{J_0}(b_0,h_0) \boxtimes \pLc_{J_1}(b_1,h_1).
\end{align*}
In particular,
\begin{align*}
    \max\{\slalomt(b_0,h_0,J_0),\slalomt(b_1,h_1,J_1)\} \leq \slalomt(b_0\oplus b_1,h_0\oplus h_1,J_0\oplus J_1) & \leq \slalomt(b_0,h_0,J_0)\cdot \slalomt(b_1,h_1,J_1),\\
    \slalome(b_0\oplus b_1,h_0\oplus h_1,J_0\oplus J_1) & =
\min\{\slalome(b_0,h_0,J_0),\slalome(b_1,h_1,J_1)\}.\qed
\end{align*}
\end{fact}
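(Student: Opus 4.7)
The proof is essentially an unpacking of \Cref{lem:prod} applied to the specific sets $D_i := \prod b_i$ and $E_i := \Scal(b_i,h_i)$ for $i\in\{0,1\}$. My first step is to verify the two canonical identifications that make this application go through: under the bijection $(x_0,x_1)\mapsto x_0\oplus x_1$, one has
\[
\prod b_0 \oplus \prod b_1 \;=\; \prod(b_0\oplus b_1),
\]
since an element of $\prod(b_0\oplus b_1)$ assigns to each $(n,i)\in\omega\oplus\omega$ an element of $b_i(n)$, and this data is exactly a pair $(x_0,x_1)\in\prod b_0\times\prod b_1$. Similarly, under the same bijection applied to slaloms,
\[
\Scal(b_0,h_0)\oplus\Scal(b_1,h_1) \;=\; \Scal(b_0\oplus b_1,\, h_0\oplus h_1),
\]
because $[(b_0\oplus b_1)(n,i)]^{\leq (h_0\oplus h_1)(n,i)} = [b_i(n)]^{\leq h_i(n)}$. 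These identifications are immediate from the definition of $\oplus$ for functions and sets in \Cref{def:oplus}.

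Given these identifications, the two Tukey equivalences in the statement are precisely the conclusions of \Cref{lem:prod} with the chosen parameters $D_i$ and $E_i$. Hence the first two displayed equivalences are established.

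For the cardinal inequalities, I would then apply \Cref{products} to the $\otimes$- and $\boxtimes$-products appearing on the right-hand sides. Specifically, \Cref{products}~(a) gives
\[
\sup\{\slalomt(b_0,h_0,J_0),\slalomt(b_1,h_1,J_1)\} \leq \dfrak\bigl(\Lc_{J_0}(b_0,h_0)\otimes\Lc_{J_1}(b_1,h_1)\bigr) \leq \slalomt(b_0,h_0,J_0)\cdot\slalomt(b_1,h_1,J_1),
\]
while \Cref{products}~(b) gives
\[
\dfrak\bigl(\pLc_{J_0}(b_0,h_0)\boxtimes\pLc_{J_1}(b_1,h_1)\bigr) = \min\{\slalome(b_0,h_0,J_0),\slalome(b_1,h_1,J_1)\}.
\]
Combining these with the Tukey equivalences and \Cref{cor:Tukeyval}~\ref{Tukeyval:b} yields the stated cardinal bounds. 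There is no real obstacle here: every ingredient is either a definitional unfolding or an immediate appeal to \Cref{lem:prod} and \Cref{products}, which is why the statement is recorded as a \emph{Fact}.
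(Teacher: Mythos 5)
Your proof is correct and follows exactly the route the paper intends: the two set-level identifications $\prod(b_0\oplus b_1) = \prod b_0\oplus\prod b_1$ and $\Scal(b_0\oplus b_1,h_0\oplus h_1) = \Scal(b_0,h_0)\oplus\Scal(b_1,h_1)$, followed by a direct application of \Cref{lem:prod}. The paper records this as a ``direct consequence of the previous result'' with no further argument, and your filling in of the identifications and the appeal to \Cref{products} is precisely the unstated verification.
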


\begin{lemma}\label{bL3.5}
    Let $J_0$ and $J_1$ be ideals on $\omega$, $b_0$ and $b_1$ functions with domain $\omega$ into the non-zero cardinal numbers, and $h_0, h_1\in\Baire$. Then:
    \begin{enumerate}[label = \normalfont (\alph*)]
        \item\label{bL4.11b2} $\Lc_{J_0\cap J_1}(\inf\{b_0,b_1\},h_0+h_1) \leqT \Lc_{ J_0 \oplus J_1}(b_0\oplus b_1, h_0\oplus h_1) \leqT \Lc_{J_0\cap J_1}(b_0\cdot b_1,\inf\{h_0,h_1\})$ and likewise for $\pLc$. In particular,
        \begin{align*}
        \slalomt(\inf\{b_0,b_1\},h_0+h_1,J_0\cap J_1) & \le\slalomt(b_0\oplus b_1,h_0\oplus h_1,J_0\oplus J_1)\le
        \slalomt(b_0\cdot b_1,\inf\{h_0,h_1\},J_0\cap J_1)\\
        \slalome(\inf\{b_0,b_1\},h_0+h_1,J_0\cap J_1) & \le\slalome(b_0\oplus b_1,h_0\oplus h_1,J_0\oplus J_1)\le
        \slalome(b_0\cdot b_1,\inf\{h_0,h_1\},J_0\cap J_1).
        \end{align*}

        \item\label{bL4.11c2} $\Lc_{J_0\cap J_1}(b_0,2h_0) \leqT \Lc_{ J_0 \oplus J_1}(b_0\oplus b_0, h_0\oplus h_0) \leqT \Lc_{J_0\cap J_1}(b_0^2,h_0)$ and likewise for $\pLc$. In particular,
        \begin{align*}
          \slalomt(b_0,2h_0,J_0\cap J_1) & \le\slalomt(b_0\oplus b_0,h_0\oplus h_0,J_0\oplus J_1)\le
          \slalomt(b_0^2,h_0,J_0\cap J_1)\\
          \slalome(b_0,2h_0,J_0\cap J_1) & \le\slalome(b_0\oplus b_0,h_0\oplus h_0,J_0\oplus J_1) \leq \slalome(b^2_0,h_0,J_0\cap J_1).
        \end{align*}
    \end{enumerate}
\end{lemma}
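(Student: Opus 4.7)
My plan is to use \Cref{ftc:oplus}, which gives the Tukey equivalences
\[
\Lc_{J_0\oplus J_1}(b_0\oplus b_1,h_0\oplus h_1)\eqT \Lc_{J_0}(b_0,h_0)\otimes \Lc_{J_1}(b_1,h_1)
\]
and similarly with $\pLc$ and $\boxtimes$, so the statement reduces to constructing Tukey connections between these products and $\Lc_{J_0\cap J_1}(\inf\{b_0,b_1\},h_0+h_1)$ (on the left) and $\Lc_{J_0\cap J_1}(b_0\cdot b_1,\inf\{h_0,h_1\})$ (on the right). The key dualities I will repeatedly use are $(J_0\cap J_1)^\dual=J_0^\dual\cap J_1^\dual$ and $(J_0\cap J_1)^+=J_0^+\cup J_1^+$.

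For the leftmost inequality of~\ref{bL4.11b2}, I will define $\Psi_-\colon\prod\inf\{b_0,b_1\}\to\prod b_0\times\prod b_1$ by $x\mapsto(x,x)$ and $\Psi_+$ sending a pair $(s_0,s_1)$ with $s_i\in\Scal(b_i,h_i)$ to the slalom $s^*(n):=(s_0(n)\cup s_1(n))\cap\inf\{b_0,b_1\}(n)$, which lies in $\Scal(\inf\{b_0,b_1\},h_0+h_1)$. Assuming $(x,x)\sqsubset^\otimes(s_0,s_1)$ in the product, both $\|x\in s_0\|\in J_0^\dual$ and $\|x\in s_1\|\in J_1^\dual$, hence the complement of $\|x\in s^*\|$ sits in $J_0\cap J_1$. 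The $\pLc$ variant uses the same maps: if $(x,x)\sqsubset^\boxtimes(s_0,s_1)$ then some $\|x\in s_i\|\in J_i^+\subseteq(J_0\cap J_1)^+$, hence $\|x\in s^*\|\in(J_0\cap J_1)^+$.

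For the rightmost inequality of~\ref{bL4.11b2}, I will identify $b_0(n)\cdot b_1(n)$ with $b_0(n)\times b_1(n)$, define $\Psi_-(x_0,x_1)=\tilde x$ with $\tilde x(n)=(x_0(n),x_1(n))$, and, given $s\in\Scal(b_0\cdot b_1,\inf\{h_0,h_1\})$, set $\Psi_+(s)=(\pi_0\circ s,\pi_1\circ s)$, where $\pi_i$ is the $i$-th coordinate projection; the size bounds are automatic. If $\tilde x\in^{(J_0\cap J_1)^\dual}s$ with witness $A$, then $A\subseteq\|x_i\in\pi_i\circ s\|$ for $i=0,1$, so both sets are in the respective $J_i^\dual$, giving $(x_0,x_1)\sqsubset^\otimes(\pi_0\circ s,\pi_1\circ s)$. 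The $\pLc$ version is analogous: from $A\in J_0^+\cup J_1^+$, one of the projected sets lies in the corresponding $J_i^+$, so we land in the $\boxtimes$-relation.

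Finally, part~\ref{bL4.11c2} is immediate from~\ref{bL4.11b2} by setting $b_1=b_0$ and $h_1=h_0$, using $\inf\{b_0,b_0\}=b_0$, $b_0\cdot b_0=b_0^2$, $h_0+h_0=2h_0$, and $\inf\{h_0,h_0\}=h_0$. I do not anticipate a serious obstacle; the only point requiring care is the intersection with $\inf\{b_0,b_1\}$ in the definition of $s^*$ so that $s^*$ genuinely lies in $\Scal(\inf\{b_0,b_1\},h_0+h_1)$, and bookkeeping of $\otimes$ versus $\boxtimes$ when passing between $\Lc$ and $\pLc$.
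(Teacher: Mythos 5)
Your proof is correct. Both routes are mathematically sound, but you take a genuinely different path from the paper. The paper proves~\ref{bL4.11b2} as an application of the general monotonicity lemma~\Cref{monbh}: it observes that the projection $\pi\colon\omega\oplus\omega\to\omega$, $\pi(n,i)=n$, witnesses $J_0\cap J_1\leq_\RB J_0\oplus J_1$ (hence simultaneously $J_0\cap J_1\leq_\KB J_0\oplus J_1$ and $J_0\oplus J_1\leq_\mKB J_0\cap J_1$), and then reads off $b^-_\pi=b_0\cdot b_1$, $h^-_\pi=\inf\{h_0,h_1\}$, $h'_\pi=h_0+h_1$, choosing $b'_\pi$ compatibly with $b=\inf\{b_0,b_1\}$, to obtain all four Tukey connections at once. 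You instead go through~\Cref{ftc:oplus} to identify $\Lc_{J_0\oplus J_1}(b_0\oplus b_1,h_0\oplus h_1)$ with the product $\Lc_{J_0}(b_0,h_0)\otimes\Lc_{J_1}(b_1,h_1)$ (and the $\boxtimes$-analogue for $\pLc$), and then build each Tukey connection by hand: the diagonal map $x\mapsto(x,x)$ together with slalom unions for the left inequality, and the coordinate-pairing map together with slalom projections for the right inequality, relying on $(J_0\cap J_1)^\dual = J_0^\dual\cap J_1^\dual$ and $(J_0\cap J_1)^+ = J_0^+\cup J_1^+$. Your route is self-contained and more concrete, trading the reuse of the heavier~\Cref{monbh} machinery for explicit constructions; the paper's route packages the same underlying combinatorics into a monotonicity lemma that is applicable to other $\leq_\KB$/$\leq_\mKB$ relations, which is why it prefers to factor the argument that way. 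Part~\ref{bL4.11c2} is handled identically in both, as a direct substitution.
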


\begin{proof}
\ref{bL4.11b2}: 
Consider the function $\pi\colon\omega\oplus\omega\to\omega$ defined by $\pi(n,i):=n$ and note that $J_0\cap J_1=\pi^\rightarrow(J_0\oplus J_1)$, so $J_0\cap J_1\le_\RB J_0\oplus J_1$ (see \Cref{L_op_ord}~\ref{L_op_ord-b}). Also notice that $\pi^{-1}\llbracket \{n\}\rrbracket = \{(n,0),(n,1)\}$.
\Cref{monbh} can be applied: we can use
    \begin{align*}
        b^-_\pi(n) & := \prod_{u\in \pi^{-1}\llbracket\{n\}\rrbracket} (b_0\oplus b_1)(u) = b_0(n)\cdot b_1(n),\\
        h^-_\pi(n) & :=\min\set{(h_0\oplus h_1)(u)}{u\in\pi^{-1}\llbracket\{n\}\rrbracket} = \min\{h_0(n),h_1(n)\},\\
        h'_\pi (n) & := \sum_{u\in\pi^{-1}\llbracket\{n\}\rrbracket}(h_0\oplus h_1)(u) = h_0(n)+h_1(n).
    \end{align*}
    We also require a~function $b$ with domain $\omega$ that allows $b'_\pi = b_0\oplus b_1$ when applying \Cref{monbh}, i.e., satisfying $|b(\pi(n,i))| \leq (b_0\oplus b_1)(n,i)$ for all $(n,i)\in \omega\oplus \omega$. This is equivalent to $|b(n)|\leq b_e(n)$ for all $n<\omega$ and $e\in\{0,1\}$, so $b(n):=\min\{b_0(n),b_1(n)\}$ works.

\ref{bL4.11c2}: 
Immediate from~\ref{bL4.11b2} applied to $b_1:=b_0$ and $h_1:= h_0$.
\end{proof}

As a~direct consequence, we get:

\begin{corollary}\label{L3.5}
Let $J_0$ and $J_1$ be ideals on $\omega$ and $h_0, h_1\in\Baire$. Then
\begin{enumerate}[label=\rm(\alph*)]
\item\label{L3.5a} $\Lc_{J_0\oplus J_1}(h_0\oplus h_1)\eqT \Lc_{J_0}(h_0) \otimes \Lc_{J_1}(h_1)$ and $\pLc_{J_0\oplus J_1}(h_0\oplus h_1)\eqT \pLc_{J_0}(h_0) \boxtimes \pLc_{J_1}(h_1)$. In particular,
\begin{align*}
\slalomt(h_0\oplus h_1,J_0\oplus J_1) & =
\max\{\slalomt(h_0,J_0),\slalomt(h_1,J_1)\},\\ \slalomt^\perp(h_0\oplus h_1,J_0\oplus J_1) & =
\min\{\slalomt^\perp(h_0,J_0),\slalomt^\perp(h_1,J_1)\},\\
\slalome(h_0\oplus h_1,J_0\oplus J_1) & =
\min\{\slalome(h_0,J_0),\slalome(h_1,J_1)\}, \\
\slalome^\perp(h_0\oplus h_1,J_0\oplus J_1) & =
\max\{\slalome^\perp(h_0,J_0),\slalome^\perp(h_1,J_1)\}.
\end{align*}

\item
$\Lc_{J_0\cap J_1}(h_0+h_1) \leqT \Lc_{ J_0 \oplus J_1}(h_0\oplus h_1) \leqT \Lc_{J_0\cap J_1}(\inf\{h_0,h_1\})$ and likewise for $\pLc$. In particular,
        \begin{align*}
        \slalomt(h_0+h_1,J_0\cap J_1) & \le\slalomt(h_0\oplus h_1,J_0\oplus J_1)\le
        \slalomt(\inf\{h_0,h_1\},J_0\cap J_1)\\
        \slalome(h_0+h_1,J_0\cap J_1) & \le\slalome(h_0\oplus h_1,J_0\oplus J_1)\le
        \slalome(\inf\{h_0,h_1\},J_0\cap J_1).
        \end{align*}

\item\label{L3.5c}
$\Lc_{J_0\cap J_1}(2h_0) \leqT \Lc_{ J_0 \oplus J_1}(h_0\oplus h_0) \leqT \Lc_{J_0\cap J_1}(h_0)$ and likewise for $\pLc$. In particular, 
        \begin{align*}
          \slalomt(2h_0,J_0\cap J_1) & \le\slalomt(h_0\oplus h_0,J_0\oplus J_1)\le
          \slalomt(h_0,J_0\cap J_1)\\
          \slalome(2h_0,J_0\cap J_1) & \le\slalome(h_0\oplus h_0,J_0\oplus J_1)= \slalome(h_0,J_0\cap J_1).
        \end{align*}
\end{enumerate}
\end{corollary}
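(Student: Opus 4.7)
Each of (a), (b), (c) is a direct specialization of the general statements already proved, with $b_0$ and $b_1$ both taken to be the constant sequence $\bar\omega$ of value~$\omega$. Under this choice, $\prod b_0=\prod b_1=\Baire$, $\prod(b_0\oplus b_1)={}^{\omega\oplus\omega}\omega$, $\Scal(b_i,h_i)=\Scal(h_i)$, and moreover $\inf\{b_0,b_1\}=b_0\cdot b_1=b_0^2=\bar\omega$, so every auxiliary $b$-parameter collapses away.

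For part~(a), applying \Cref{ftc:oplus} with $b_0=b_1=\bar\omega$ gives immediately the two Tukey equivalences
\[
\Lc_{J_0\oplus J_1}(h_0\oplus h_1)\eqT \Lc_{J_0}(h_0)\otimes\Lc_{J_1}(h_1),\qquad
\pLc_{J_0\oplus J_1}(h_0\oplus h_1)\eqT \pLc_{J_0}(h_0)\boxtimes\pLc_{J_1}(h_1).
\]
Feeding this into \Cref{products} yields the four cardinal identities: for the $\otimes$-product, $\bfrak=\min$ is exact, while $\sup\le\dfrak\le\prod$ collapses to $\max$ because all four slalom numbers are uncountable (combine \Cref{aboutbh}\ref{bh2} with the analogous diagonalization that rules out $\slalomt(h,J)\le\aleph_0$, using $|b(n)|=\omega$ so that finitely many $h$-slaloms cannot cover $\omega$ at any coordinate). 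Dually, for $\boxtimes$, $\dfrak=\min$ is exact and $\bfrak$ equals $\max$ by the same infiniteness argument.

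For part~(b), I apply \Cref{bL3.5}\ref{bL4.11b2} with $b_0=b_1=\bar\omega$, which rewrites the outer systems of the chain as $\Lc_{J_0\cap J_1}(h_0+h_1)$ and $\Lc_{J_0\cap J_1}(\inf\{h_0,h_1\})$ (respectively for $\pLc$). Taking $\dfrak$ gives the desired inequalities. For part~(c), the same specialization of \Cref{bL3.5}\ref{bL4.11c2} with $b_0=\bar\omega$ yields the Tukey chain $\Lc_{J_0\cap J_1}(2h_0)\leqT \Lc_{J_0\oplus J_1}(h_0\oplus h_0)\leqT \Lc_{J_0\cap J_1}(h_0)$ and its $\pLc$-analogue, which produces all inequalities. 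The only step that requires an extra line is the final equality $\slalome(h_0\oplus h_0,J_0\oplus J_1)=\slalome(h_0,J_0\cap J_1)$: from~(a) one has $\slalome(h_0\oplus h_0,J_0\oplus J_1)=\min\{\slalome(h_0,J_0),\slalome(h_0,J_1)\}$, and by the monotonicity $\slalome(b,h,J)\le\slalome(b,h,J')$ when $J\subseteq J'$ (\Cref{L3.1.2}) applied to $J=J_0\cap J_1$ against both $J_0$ and $J_1$, this minimum is at least $\slalome(h_0,J_0\cap J_1)$; the reverse inequality is the upper bound just obtained from~\ref{bL4.11c2}.

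There is no real obstacle: the proof is a mechanical specialization. The only thing to keep in mind is that the products in \Cref{products} give equalities rather than just inequalities because all slalom numbers with $b=\bar\omega$ are uncountable, so $\max=\prod$ collapses the two-sided bounds.
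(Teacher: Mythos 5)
Your proof takes exactly the paper's route: part~(a) is \Cref{ftc:oplus} specialized to $b_0=b_1$ the constant sequence $\omega$, read off through \Cref{products}; parts~(b) and~(c) are the corresponding specialization of \Cref{bL3.5}; and the one non-immediate step, the lower bound $\slalome(h_0,J_0\cap J_1)\le\slalome(h_0\oplus h_0,J_0\oplus J_1)$ in~(c), is handled exactly as in the paper, by combining the $\min$-formula from~(a) with the monotonicity of \Cref{L3.1.2}. You are in fact more explicit than the paper, whose proof just says ``Immediate from \Cref{bL3.5}'' and leaves the reader to supply \Cref{ftc:oplus} together with \Cref{products} for the cardinal identities in~(a).

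The one step that deserves scrutiny is the collapse of the two-sided bounds from \Cref{products} to equalities. Your argument via \Cref{aboutbh} and the diagonalization does show $\slalomt(h,J)\geq\aleph_0$ and $\slalome(h,J)\geq\aleph_0$ when $b$ is constant $\omega$, which handles $\dfrak(\bigotimes)=\max$; and $\bfrak(\bigotimes)=\min$, $\dfrak(\bigboxtimes)=\min$ are exact with no hypothesis. But your ``the same infiniteness argument'' for $\bfrak(\bigboxtimes)=\max\{\slalome^\perp(h_0,J_0),\slalome^\perp(h_1,J_1)\}$ does not actually transfer: \Cref{aboutbh} controls the $\dfrak$-type slalom numbers, whereas $\slalome^\perp$ sits in the $\bfrak$ position and can genuinely be finite. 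For instance, if $J$ is a maximal ideal and $h\equiv 1$, then a pair of everywhere-distinct reals is $\pLc_J(1)$-unbounded (the two sets $\|x_i\in s\|$ are disjoint, so at most one is $J$-positive), giving $\slalome^\perp(1,J)=2$; for two maximal $J_0,J_1$ one then computes $\bfrak(\pLc_{J_0}(1)\boxtimes\pLc_{J_1}(1))=3\neq\max\{2,2\}$, so the claimed equality fails there. This is really a gap in the paper's own one-line proof, and it disappears under the hypotheses in force wherever the corollary is used (e.g.\ $h$ diverging, or $h\geq^*1$ with $J$ meager), but your blanket assertion that ``all slalom numbers with $b=\bar\omega$ are uncountable'' is where the implicit hypothesis is hiding, and it should be flagged rather than asserted.
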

\begin{proof}
    Immediate from \Cref{bL3.5} except $\slalome(h_0,J_0\cap J_1) \leq \slalome(h_0\oplus h_0,J_0\oplus J_1)$ in~\ref{L3.5c}. This follows because, by \ref{L3.5a}, $\slalome(h_0\oplus h_0,J_0\oplus J_1) = \min\{\slalome(h_0,J_0),\slalome(h_0,J_1)\}$, and both $\slalome(h_0,J_0)$ and $\slalome(h_0,J_1)$ are above $\slalome(h_0,J_0\cap J_1)$ by \Cref{L3.1.2}.
\end{proof}

We now look at the slalom numbers coming from $\Lc_J(I)$, $\pLc_J(I)$, $\Lbf_{I,J}$ and $\pL_{I,J}$.

\begin{theorem}\label{L6.2}
Let $I$, $I_0$, $I_1$, and $J$ be ideals on $\omega$. Then 
\begin{enumerate}[label=\rm(\alph*)]
\item\label{L4.14a} $\Lc_J(I\oplus\pts(\omega))\eqT \Lc_J(I\oplus I) \eqT \Lc_J(I)$, likewise for\/ $\pLc$, $\Lbf$ and\/ $\pL$. In particular,
\begin{multicols}{2}
\begin{enumerate}[label=\rm(\alph{enumi}\arabic*)]
\item
$\slalomt(I\oplus\Pcal(\omega),J)=\slalomt(I\oplus I,J)=\slalomt(I,J)$;
\item
$\slalome(I\oplus\Pcal(\omega),J)=\slalome(I\oplus I,J)=\slalome(I,J)$;
\item $\lfrak_\Kat(I\oplus\Pcal(\omega),J) = \lfrak_\Kat(I\oplus I,J)=\lfrak_\Kat(I,J)$;
\item $\pfrak_\Kat(I\oplus\Pcal(\omega),J) = \pfrak_\Kat(I\oplus I,J)=\pfrak_\Kat(I,J)$.
\end{enumerate}
\end{multicols}

\item\label{L4.14b} For $e\in\{0,1\}$, $\Lc_J(I_e) \leqT \Lc_J(I_0\oplus I_1) \leqT \Lc_J(I_0\cap I_1) \leqT \Lc_J(I_0)\otimes \Lc_J(I_1)$ and\/ $\pLc_J(I_e) \leqT \pLc_J(I_0\oplus I_1) \leqT \pLc_J(I_0\cap I_1) \leqT \Lc_J(I_e)\otimes \pLc_J(I_{1-e})$, likewise for\/ $\Lbf$ and\/~$\pL$ replacing\/ $\Lc$ and\/~$\pLc$, respectively. In particular,
\begin{enumerate}[label=\rm(\alph{enumi}\arabic*)]
\item\label{L4.14b1}
$\max\{\slalomt(I_0,J),\slalomt(I_1,J)\}=\slalomt(I_0\oplus I_1,J)=
\slalomt(I_0\cap I_1,J)$;
\item\label{L4.14b2}
$\max\{\slalome(I_0,J),\slalome(I_1,J)\}\le\slalome(I_0\oplus I_1,J)\le
\slalome(I_0\cap I_1,J)\leq
\min\{\max\{\slalomt(I_0,J),\slalome(I_1,J)\},\allowbreak\max\{\slalome(I_0,J),\slalomt(I_1,J)\}\}$;
\item $\max\{\lfrak_\Kat(I_0,J),\lfrak_\Kat(I_1,J)\}=\lfrak_\Kat(I_0\oplus I_1,J)=
\lfrak_\Kat(I_0\cap I_1,J)$;
\item\label{L4.14b4}
$\max\{\pfrak_\Kat(I_0,J),\pfrak_\Kat(I_1,J)\}\le\pfrak_\Kat(I_0\oplus I_1,J)\le
\pfrak_\Kat(I_0\cap I_1,J)\leq
\min\{\max\{\lfrak_\Kat(I_0,J),\pfrak_\Kat(I_1,J)\},\allowbreak\max\{\pfrak_\Kat(I_0,J),\lfrak_\Kat(I_1,J)\}\}$.
\end{enumerate}

\item\label{L4.14c}
$\slalome(I_0\oplus I_1,\Fin)=\slalome(I_0\cap I_1,\Fin) = \max\{\slalome(I_0,\Fin),\slalome(I_1,\Fin)\} = \min\{\bfrak,\max\{\covst(I_0),\covst(I_1)\}\}$.

\item \label{L4.14d}
$\covst(I_0\oplus I_1)=\covst(I_0\cap I_1)=
\max\{\covst(I_0),\covst(I_1)\}$.
\end{enumerate}
\end{theorem}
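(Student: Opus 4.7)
First, for part~(a), I plan to build direct Tukey connections. A slalom $s\in{}^\omega(I\oplus\pts(\omega))$ has the form $s(n)=a_n\oplus b_n$ with $a_n\in I$; since the second summand carries no constraint, we may pass to $s'(n)=a_n\oplus\omega$ without loss. For $y\in{}^\omega(\omega\oplus\omega)$, the condition $y\in^{J^\dual}s'$ reduces to $\{n:\pi_0(y(n))\in a_n\}\in J^\dual$, so the projection $y\mapsto\pi_0\circ y$ paired with the extension $x\mapsto\langle(x(n),0):n<\omega\rangle$, together with the natural slalom projection and extension, delivers $\Lc_J(I\oplus\pts(\omega))\eqT\Lc_J(I)$. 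The equivalence $\Lc_J(I\oplus I)\eqT\Lc_J(I)$ then follows by monotonicity in $E$ (\Cref{lem:gen}~\ref{it:monE}) in one direction, and by the same construction restricted to first-coordinate sections in the other. The arguments port verbatim to $\pLc$, $\Lbf$, and $\pL$, since only the membership relation changes.

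For part~(b), I would split the Tukey chain into three pieces: (1) $\Lc_J(I_e)\leqT\Lc_J(I_0\oplus I_1)$ by \Cref{L2.4}~\ref{L2.4b} using $I_0\oplus I_1\le_\KB I_e$ (from \Cref{L_op_ord}~\ref{L_op_ord-d}); (2) $\Lc_J(I_0\oplus I_1)\leqT\Lc_J(I_0\cap I_1)$ by the same monotonicity via $I_0\cap I_1\le_\RB I_0\oplus I_1$ (\Cref{L_op_ord}~\ref{L_op_ord-b}); and (3) $\Lc_J(I_0\cap I_1)\leqT\Lc_J(I_0)\otimes\Lc_J(I_1)$ by \Cref{slint}~\ref{slint1} taking $E_0={}^\omega I_0$, $E_1={}^\omega I_1$, $E={}^\omega(I_0\cap I_1)$, which is valid because $s\wedge s'\in E$ whenever $s\in E_0$ and $s'\in E_1$. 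The $\pLc$ chain is identical except that step~(3) invokes \Cref{slint}~\ref{slint2}; the $\Lbf$ and $\pL$ versions are obtained by restricting the same constructions to constant slaloms. The cardinal consequences then follow from \Cref{products}, with $\prod$ collapsing to $\max$ on infinite cardinals, which pins $\slalomt(I_0\oplus I_1,J)$ and $\slalomt(I_0\cap I_1,J)$ between coincident bounds equal to $\max\{\slalomt(I_0,J),\slalomt(I_1,J)\}$.

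For part~(d), $\max\{\covst(I_0),\covst(I_1)\}\leq\covst(I_0\cap I_1)$ follows from $\le_\Kat$-monotonicity of $\covst$ (\Cref{L2.4}~\ref{sledecrI}) via $I_0\cap I_1\subseteq I_e$. For the reverse, assuming both $\covst(I_e)$ are finite (else the bound is vacuous), I take witnesses $A_e\subseteq I_e$ of size $\covst(I_e)$ and form $B=\{a\cap b:a\in A_0,\ b\in A_1\}\subseteq I_0\cap I_1$; given $c\in[\omega]^\omega$, pick $a\in A_0$ with $|c\cap a|=\omega$, then $b\in A_1$ with $|(c\cap a)\cap b|=\omega$, yielding $|B|\leq\max\{\covst(I_0),\covst(I_1)\}$. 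The analogous bound for $\covst(I_0\oplus I_1)$ is witnessed by $B=\{a\oplus\varnothing:a\in A_0\}\cup\{\varnothing\oplus b:b\in A_1\}$, exploiting that any infinite subset of $\omega\oplus\omega$ has an infinite projection onto one side. Part~(c) then drops out by combining (d) with $\slalome(I,\Fin)=\min\{\covst(I),\bfrak\}$ (\Cref{S4_eq}~\ref{S4_eq:01}) and the lattice identity $\min\{\max\{a,b\},c\}=\max\{\min\{a,c\},\min\{b,c\}\}$, with $\slalome(I_0\oplus I_1,\Fin)$ squeezed between equal extremes by part~(b). The main subtlety I anticipate is bookkeeping for the undefined ($\infty$) values of $\covst$, $\pfrak_\Kat$, and $\lfrak_\Kat$ that appear throughout the statement: the witness-based arguments must be interpreted vacuously whenever the relevant cardinal is $\infty$, while the Tukey-based inequalities hold unconditionally.
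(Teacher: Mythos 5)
Your proofs of~\ref{L4.14a} and~\ref{L4.14b} follow essentially the same route as the paper (Kat\v{e}tov monotonicity from \Cref{L2.4} combined with the order facts of \Cref{L_op_ord}, plus \Cref{slint}~\ref{slint1}--\ref{slint2} for the product step). For~\ref{L4.14a} you spell out the Tukey connections by hand, which is fine, though one could simply cite $I\oplus\pts(\omega)\approx_\Kat I\oplus I\approx_\Kat I$ and apply \Cref{L2.4}~\ref{L2.4b} directly. Be aware that your intermediate claim ``$y\in^{J^\dual}s'$ reduces to $\{n:\pi_0(y(n))\in a_n\}\in J^\dual$'' is not literally an equivalence: when $y(n)$ lies in the second summand, $y(n)\in s'(n)$ always, so the two sets coincide only after the irrelevant indices are discarded. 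The Tukey connection still works because one only needs the implication in one direction (the projection-set is contained in the membership-set), but the phrasing suggests a stronger claim than is true.

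For~\ref{L4.14c} and~\ref{L4.14d} you take a genuinely different route. The paper derives~\ref{L4.14d} from~\ref{L4.14b4} together with \Cref{slint}~\ref{slint3}, and derives~\ref{L4.14c} from~\ref{L4.14b2}, \Cref{slint}~\ref{slint3}, and \Cref{S4_eq}~\ref{S4_eq:01}. You instead give a direct combinatorial proof of~\ref{L4.14d} by building explicit witnessing families ($\{a\cap b:a\in A_0,\ b\in A_1\}$ for $I_0\cap I_1$; $\{a\oplus\varnothing\}\cup\{\varnothing\oplus b\}$ for $I_0\oplus I_1$), and then obtain~\ref{L4.14c} from~\ref{L4.14d}, $\slalome(I,\Fin)=\min\{\covst(I),\bfrak\}$, and the distributive identity $\min\{\max\{a,b\},c\}=\max\{\min\{a,c\},\min\{b,c\}\}$. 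This works, and is arguably more illuminating: what you are doing is essentially re-proving \Cref{slint}~\ref{slint3} in the special case of constant slaloms, where the increasing-reparametrization step is unnecessary (a constant slalom is trivially monotone). Your derivation of~\ref{L4.14c} is also cleaner than the paper's, which goes through \Cref{slint}~\ref{slint3} a second time.

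Two small issues in your proof of~\ref{L4.14d}. First, you never establish $\max\{\covst(I_0),\covst(I_1)\}\leq\covst(I_0\oplus I_1)$: you prove the lower bound only for $I_0\cap I_1$ (via $I_0\cap I_1\subseteq I_e$) and the two upper bounds by witnesses, which gives $\covst(I_0\cap I_1)=\max$ and $\covst(I_0\oplus I_1)\leq\max$, but not the remaining $\geq$. It follows immediately by the same Kat\v{e}tov monotonicity, since $I_0\oplus I_1\leq_\Kat I_e$ by \Cref{L_op_ord}~\ref{L_op_ord-d}; you should say so. Second, the phrase ``assuming both $\covst(I_e)$ are finite'' should read ``well-defined (i.e., $<\infty$)'': when defined, $\covst$ is always uncountable, so ``finite'' is never the relevant dichotomy. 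Your parenthetical ``else the bound is vacuous'' is exactly the right idea, only the word is wrong.
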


\begin{proof}
 \ref{L4.14a} and~\ref{L4.14b} follow by \Cref{L2.4} and \Cref{slint}~\ref{slint1}--\ref{slint2} since, by \Cref{L_op_ord} we have $I\oplus\Pcal(\omega)\approx_\Kat I\oplus I\approx_\Kat I$ and $I_0\cap I_1\leq_\Kat I_0\oplus I_1\leq_\Kat I_0,I_1$.

 \ref{L4.14c}: Immediate by~\ref{L4.14b2}, \Cref{slint}~\ref{slint3}, and \Cref{S4_eq}~\ref{S4_eq:01}.

 \ref{L4.14d}: Immediate by~\ref{L4.14b4} and \Cref{slint}~\ref{slint3}.
\end{proof}

\begin{theorem}\label{L6.3}
Let $I, I_0, J_0$, and $J_1$ be ideals on $\omega$. Then 
\begin{enumerate}[label=\rm(\alph*)]
\item\label{L4.15a}
$\Lc_{J\oplus \pts(\omega)}(I) \eqT \Lc_{J\oplus J}(I) \eqT \Lc_J(I)$, similarly for\/ $\pLc$, $\Lbf$ add $\pL$. In particular,
\begin{align*}
    \slalomt(I,J\oplus\Pcal(\omega))= \slalomt(I,J\oplus J)= \slalomt(I,J), &&
    \slalome(I,J\oplus\Pcal(\omega))= \slalome(I,J\oplus J)= \slalome(I,J),\\
    \lfrak_\Kat(I,J\oplus\Pcal(\omega))=\lfrak_\Kat(I,J\oplus J)=\lfrak_\Kat(I,J), &&
    \pfrak_\Kat(I,J\oplus\Pcal(\omega))=\pfrak_\Kat(I,J\oplus J)=\pfrak_\Kat(I,J).
\end{align*}

\item\label{L4.15b}
$ \Lc_{J_e}(I)\leqT \Lc_{J_0\cap J_1}(I) \leqT \Lc_{J_0\oplus J_1}(I) \eqT \Lc_{J_0}(I)\otimes \Lc_{J_1}(I)$ for $e\in\{0,1\}$, likewise for\/~$\Lbf$. In particular,
\begin{align*}
\slalomt(I,J_0\cap J_1) & =
\slalomt(I,J_0\oplus J_1)=
\max\{\slalomt(I,J_0),\slalomt(I,J_1)\},\\   
\lfrak_\Kat(I,J_0\cap J_1) & =
\lfrak_\Kat(I,J_0\oplus J_1)=
\max\{\lfrak_\Kat(I,J_0),\lfrak_\Kat(I,J_1)\}.
\end{align*}

\item\label{L4.15c} $\aLc_{J_0\cap J_1}(I)\leqT \aLc_{J_0\oplus J_1}(I) \eqT \aLc_{J_0}(I)\boxtimes \aLc_{J_1}(I)$, likewise for\/~$\pL$. In particular,
\begin{align*}
\slalome(I,J_0\cap J_1) & \le
\slalome(I,J_0\oplus J_1)=
\min\{\slalome(I,J_0),\slalome(I,J_1)\},\\
\pfrak_\Kat(I,J_0\cap J_1) & \le
\pfrak_\Kat(I,J_0\oplus J_1)=
\min\{\pfrak_\Kat(I,J_0),\pfrak_\Kat(I,J_1)\}.
\end{align*}

\item\label{L4.15d}
$\dsla{\star,J_0\oplus J_1}=\max\{\dsla{\star,J_0},\dsla{\star,J_1}\}$ and 
$\sla{\star,J_0\oplus J_1}=
\min\{\sla{\star,J_0},\sla{\star,J_1}\}$.

\item\label{L4.15e} $\pfrak_\Kat(\star,J_0\oplus J_1) = \min\{\pfrak_\Kat(\star,J_0),\pfrak_\Kat(\star,J_1)\}$ and $\lfrak_\Kat(\star,J_0\oplus J_1) = \max\{\lfrak_\Kat(\star,J_0),\lfrak_\Kat(\star,J_1)\}$.

\item\label{L4.15f} $\dsla{\star,J_0\cap J_1}=\max\{\dsla{\star,J_0},\dsla{\star,J_1}\}$ and $\lfrak_\Kat(\star,J_0\cap J_1) = \max\{\lfrak_\Kat(\star,J_0),\lfrak_\Kat(\star,J_1)\}$.
\end{enumerate}
\end{theorem}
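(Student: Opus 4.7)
The six parts of Lemma~\ref{L6.3} will be proved in order, with Lemma~\ref{lem:prod} as the main engine. Its direct application identifies $\Lc_{J_0\oplus J_1}(I)\eqT\Lc_{J_0}(I)\otimes\Lc_{J_1}(I)$ and $\pLc_{J_0\oplus J_1}(I)\eqT\pLc_{J_0}(I)\boxtimes\pLc_{J_1}(I)$, with analogues for $\Lbf$ and $\pL$, via the natural identifications of ${}^{\omega\oplus\omega}\omega$ with $({}^\omega\omega)^2$ and of ${}^{\omega\oplus\omega}I$ with $({}^\omega I)^2$. The monotonicity results of Lemmas~\ref{L2.4} and~\ref{L2.6}, together with the ideal-order facts from Lemma~\ref{L_op_ord} (in particular $J\approx_\RB J\oplus\Pcal(\omega)$, $J\leq_\RB J\oplus J$, $J_0\oplus J_1\leq_\KB J_e$, and $J_0\oplus J_1\leq_\mKat J_0\cap J_1$ via $(n,e')\mapsto n$), supply the remaining comparisons.

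For~(a), the equivalences involving $J\oplus\Pcal(\omega)$ follow from $J\approx_\RB J\oplus\Pcal(\omega)$, since this provides both $\leq_\KB$ and $\leq_\mKB$ in both directions, so Lemmas~\ref{L2.6} and~\ref{L2.4} yield the claimed Tukey equivalences. The $J\oplus J$ case combines Lemma~\ref{lem:prod} with the self-idempotence $\Lc_J(I)\eqT\Lc_J(I)\otimes\Lc_J(I)$: one direction is Lemma~\ref{slint}(a) with $E_0=E_1={}^\omega I$ (pointwise intersection remains in ${}^\omega I$), and the reverse uses a pairing $\omega\cong\omega^2$ together with the diagonal $\Psi_+(s)=(s,s)$. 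The $\pLc$ analogue uses pointwise union in place of intersection.

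For~(b) and~(c), the outer Tukey equivalences are immediate from Lemma~\ref{lem:prod}. The inequality $\Lc_{J_e}(I)\leqT\Lc_{J_0\cap J_1}(I)$ and the dual $\pLc_{J_0\cap J_1}(I)\leqT\pLc_{J_e}(I)$ follow from Lemma~\ref{lem:monJ}(d) since $J_0\cap J_1\subseteq J_e$. The middle inequality $\Lc_{J_0\cap J_1}(I)\leqT\Lc_{J_0\oplus J_1}(I)$ is witnessed by the Tukey connection $x\mapsto(x,x)$, $(s_0,s_1)\mapsto s_0\cap s_1$, using $(J_0\cap J_1)^\dual=J_0^\dual\cap J_1^\dual$ and $I$ closed under intersection; the $\pLc$ variant uses pointwise union and the identity $(J_0\cap J_1)^+\supseteq J_0^+\cup J_1^+$. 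The cardinal identities then follow from Fact~\ref{products}, since all slalom numbers involved are infinite, so that $\max$ and $\prod$ coincide.

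For (d), (e) and~(f), the $\geq$ directions follow from Lemma~\ref{L2.6} monotonicity applied to $J_0\oplus J_1\leq_\KB J_e$ (and its $\mKat$ analogue). For $\leq$ in~(d), given $S_e\subseteq\prod b_e$ witnessing $\slalomt(\star,J_e)$, the family $S=\{s_0\oplus s_1:s_e\in S_e\}$ has FUP and dominates in $\Lc_{J_0\oplus J_1}$, giving $\slalomt(\star,J_0\oplus J_1)\le\max\{|S_0|,|S_1|\}$; the $\slalome(\star,\cdot)$ bound is dual and proved by a direct projection argument showing that a would-be witness of size $<\min$ leaves some $(x_0,x_1)$ uncovered. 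The main obstacle is the $\leq$ direction in~(e) for $\lfrak_\Kat(\star,\cdot)$: by Lemma~\ref{minpklk} and~(c), $\lfrak_\Kat(\star,J_0\oplus J_1)=\min_I\max\{\lfrak_\Kat(I,J_0),\lfrak_\Kat(I,J_1)\}$, so one must exhibit a single ideal $I^*$ on $\omega$ realizing both minima at once. Given witnesses $A_e\subseteq I_e$ of $\lfrak_\Kat(\star,J_e)$, I will partition $\omega=P_0\sqcup P_1$ into two infinite pieces with bijections $\varphi_e\colon\omega\to P_e$, set $I^*=\{a\subseteq\omega:\varphi_e^{-1}[a\cap P_e]\in I_e\text{ for }e=0,1\}$, and verify that $A^*=\{\varphi_0[a_0]\cup\varphi_1[a_1]:a_e\in A_e\}\subseteq I^*$, of size $\max\{|A_0|,|A_1|\}$, witnesses $\lfrak_\Kat(I^*,J_0\oplus J_1)$ by encoding each $x_e$ as $y_e(n)=\varphi_e^{-1}(x_e(n))$ when $x_e(n)\in P_e$ (and as a fixed default otherwise), applying $A_e$ to $y_e$, and case-splitting on whether the default lies in the chosen $a_e$. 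Finally,~(f) follows from (d), (e) together with the two-way monotonicity $J_0\oplus J_1\leq_\mKat J_0\cap J_1$ (via $(n,e')\mapsto n$) and $J_0\cap J_1\leq_\mKat^\ccup J_0\oplus J_1$ (via $F=\{n\mapsto(n,0),\,n\mapsto(n,1)\}$), which forces the $(J_0\cap J_1)$- and $(J_0\oplus J_1)$-expressions to agree in both $\slalomt(\star,\cdot)$ and $\lfrak_\Kat(\star,\cdot)$.
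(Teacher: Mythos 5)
Your overall strategy parallels the paper's: use \Cref{lem:prod} to split relational systems over $J_0\oplus J_1$ as products, use the ordering facts from \Cref{L_op_ord} together with the monotonicity lemmas (or direct Tukey maps as you do) for the comparisons to $J_0\cap J_1$, and pull back the $\star$-parametrized cardinals through \Cref{minpklk} and \Cref{G2}. Your treatment of~(b),~(c) via explicit Tukey maps $x\mapsto x\oplus x$, $(s_0,s_1)\mapsto s_0\cap s_1$ (resp.\ $s_0\cup s_1$) is a legitimate variant of the paper's invocation of \Cref{L2.6}, and your arguments for~(d) and~(f) are sound.

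However, the argument you give for the $\lfrak_\Kat(\star,\cdot)$ equality in~(e) has a genuine gap. You need to exhibit a \emph{single} ideal $I^*$ with $\max\{\lfrak_\Kat(I^*,J_0),\lfrak_\Kat(I^*,J_1)\}\leq\max\{\lfrak_\Kat(\star,J_0),\lfrak_\Kat(\star,J_1)\}$. Your $I^*$ is (up to the bijections $\varphi_e$) precisely $I_0\oplus I_1$, and by \Cref{L_op_ord}\ref{L_op_ord-d} this satisfies $I_0\oplus I_1\leq_\Kat I_e$, which is the \emph{wrong} direction: by \Cref{L2.4}\ref{L2.4b}, $\lfrak_\Kat$ is $\leq_\Kat$-decreasing in the first parameter, so $\lfrak_\Kat(I^*,J_e)\geq\lfrak_\Kat(I_e,J_e)$ rather than $\leq$. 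The direct verification also breaks: take $x_0=\varphi_1$ and $x_1=\varphi_0$ (each half of $x$ lands entirely in the "wrong" piece $P_{1-e}$). Then for any $a^*=\varphi_0[a_0]\cup\varphi_1[a_1]\in A^*$ you compute $\|x_0\in a^*\|=a_1$ and $\|x_1\in a^*\|=a_0$, so covering $x$ requires $a_1\in J_0^\dual$ and $a_0\in J_1^\dual$ simultaneously, which there is no reason for the $a_e\in I_e$ chosen from $y_e$ to satisfy. The missing idea — which the paper supplies — is that any countable family of ideals on $\omega$ admits a common $\leq_\Kat$-\emph{upper} bound $I$ (cited from~\cite{Mez,BorFar}, as in the proof of \Cref{cor:min*}); with $I_e\leq_\Kat I$ one gets $\lfrak_\Kat(I,J_e)\leq\lfrak_\Kat(I_e,J_e)$, the correct direction, and then \Cref{L6.3}\ref{L4.15b} finishes.

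Two smaller points. First, in~(a), the "reverse" Tukey map you describe (a pairing $\omega\cong\omega^2$ together with the diagonal $\Psi_+(s)=(s,s)$) does not give a Tukey connection $\Lc_J(I)\otimes\Lc_J(I)\leqT\Lc_J(I)$: if $\Psi_-$ reindexes via a bijection, then $\|\Psi_-(x_0,x_1)\in s\|\in J^\dual$ compares $\Psi_-(x_0,x_1)$ to $s$ at \emph{different indices} than those at which $x_e$ is compared to $s$, and the filter $J^\dual$ is not preserved under an arbitrary re-indexing. (The paper's own justification via $J\approx_\RB J\oplus J$ is also not literally covered by \Cref{L_op_ord}, which gives only $J\leq_\RB J\oplus J$ and $J\oplus J\leq_\RB^\ccup J$; the cardinal equalities in~(a) are nevertheless immediate from~(b),~(c).) Second, when you write "by \Cref{minpklk} and~(c)" for the $\lfrak_\Kat(\star,\cdot)$ formula, the reference should be to~(b), not~(c).
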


\begin{proof}
\ref{L4.15a}:
It follows by \Cref{C2.7} because, by \Cref{L_op_ord}, we have $J\oplus\Pcal(\omega)\approx_\RB J\oplus J\approx_\RB J$.

\ref{L4.15b}: The Tukey equivalence follows by \Cref{lem:prod} because $\baire{I}\oplus \baire{I} = {}^{\omega\oplus\omega}I$ and $\cnt(I)\oplus\cnt(I)$ is the set of constant functions from $\omega\oplus\omega$ into $I$; and $\leqT$ follows by \Cref{L2.6} because $J_0\oplus J_1 \leq_{\mKB} J_0\cap J_1 \subseteq J_e$ by \Cref{L_op_ord}.

\ref{L4.15c}: The Tukey equivalence follows by \Cref{lem:prod} and $\leqT$ follows by \Cref{L2.6} because $J_0\cap J_1 \leq_{\RB} J_0\oplus J_1$.

\ref{L4.15d} By \Cref{G2}, $\sla{\star,J_0\oplus J_1}=
\min\{\sla{\star,J_0},\sla{\star,J_1}\}$ is immediate from~\ref{L4.15c}, and
\begin{align*}
    \dsla{\star,J_0\oplus J_1} & = \min_I \dsla{I,J_0\oplus J_1} = \min_I \max\{\dsla{I,J_0},\dsla{I,J_1}\} \geq \max\{\dsla{\star,J_0},\dsla{\star,J_1}\}
\end{align*}
follows by~\ref{L4.15b}. On the other hand, by also using \Cref{lem:prod},
\begin{align*}
    \max\{\dsla{\star,J_0},\dsla{\star,J_1}\} & = \max\left\{\min_{I_0}\dsla{I_0,J_0},\min_{I_1}\dsla{I_1,J_1}\right\} = \min_{I_0,I_1}\max\{\dsla{I_0,J_0},\dsla{I_1,J_1}\}\\
    & = \min_{I_0,I_1} \dsla{{}^\omega I_0\oplus {}^\omega I_1,J_0\oplus J_1} \geq \min_{\bar I} \dsla{\bar I,J_0\oplus J_1} = \dsla{\star,J_0\oplus J_1},
\end{align*}
which concludes the proof.

\ref{L4.15e}: The equation for $\pfrak_\Kat(\star,J_0\oplus J_1)$ is immediate from~\ref{L4.15c} and \Cref{minpklk}. For $\lfrak_\Kat(\star,J_0\oplus J_1)$, and in the proof above for $\slalomt(\star,J_0\oplus J_1)$ we can show that $\lfrak_\Kat(\star,J_0\oplus J_1)\geq \max\{\lfrak_\Kat(\star,J_0),\lfrak_\Kat(\star,J_1)\}$. For the converse, by \Cref{minpklk}, there are two ideals $I_0$ and $I_1$ on $\omega$ such that $\lfrak_\Kat(\star,J_e) = \lfrak_\Kat(I_e,J_e)$ for $e\in\{0,1\}$. As in the proof of \Cref{cor:min*}, find an ideal $I$ on $\omega$ such that $I_e\leqK I$ for $e\in\{0,1\}$. Hence, by \Cref{L2.4}~\ref{L2.4b},
\[\max_{e\in\{0,1\}} \lfrak_\Kat(\star,J_e) = \max_{e\in\{0,1\}} \lfrak_\Kat(I_e,J_e) \geq \max_{e\in\{0,1\}} \lfrak_\Kat(I,J_e) = \lfrak_\Kat(I,J_0\oplus J_1) \geq \lfrak_\Kat(\star,J_0\oplus J_1),\]
where the last equality holds by~\ref{L4.15b}.

\ref{L4.15f}: The proof is similar to~\ref{L4.15d} and~\ref{L4.15e} using~\ref{L4.15b} (for $J_0\cap J_1$).
\end{proof}

A direct application of~\ref{L4.15b} to $I=\Fin$ (while using $\bfrak_J=\slalomt^\perp(\Fin,J)$) yields:

\begin{corollary}\label{L6.4}
Let $J, J_0$, and $J_1$ be ideals on $\omega$. Then
\begin{enumerate}[label=\rm(\alph*)]
\item\label{L6.4a} $\bfrak\le \bfrak_{J\oplus\Pcal(\omega)}=\bfrak_{J\oplus J}=\bfrak_{J} \leq \dfrak_{J\oplus\Pcal(\omega)}=\dfrak_{J\oplus J}=\dfrak_{J} \leq \dfrak$.

\item
$\bfrak_{J_0\cap J_1}=\bfrak_{J_0\oplus J_1}=
\min\{\bfrak_{J_0},\bfrak_{J_1}\}$ and $\dfrak_{J_0\cap J_1}=\dfrak_{J_0\oplus J_1}=
\max\{\dfrak_{J_0},\dfrak_{J_1}\}$.\qed
\end{enumerate}
\end{corollary}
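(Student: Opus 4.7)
The argument is exactly what the paper announces: apply Theorem~\ref{L6.3} with $I=\Fin$, together with the identifications $\dfrak_J=\slalomt(\Fin,J)$ and $\bfrak_J=\slalome(\Fin,J)=\slalomt^\perp(\Fin,J)$ recorded at the opening of Section~\ref{sec:partcases} (coming from the Tukey equivalences $\la\Baire,\Baire,{\leq^{J^\dual}}\ra\eqT\Lc_J(\Fin)\eqT\pLc_J(\Fin)^\perp$).

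For part~\ref{L6.4a}, the Tukey chain $\Lc_{J\oplus\Pcal(\omega)}(\Fin)\eqT\Lc_{J\oplus J}(\Fin)\eqT\Lc_J(\Fin)$ delivered by Theorem~\ref{L6.3}\ref{L4.15a} simultaneously yields $\dfrak_{J\oplus\Pcal(\omega)}=\dfrak_{J\oplus J}=\dfrak_J$ (reading off $\dfrak$) and $\bfrak_{J\oplus\Pcal(\omega)}=\bfrak_{J\oplus J}=\bfrak_J$ (reading off $\bfrak$). The outer inequalities $\bfrak\le\bfrak_J$ and $\dfrak_J\le\dfrak$ are instances of Lemma~\ref{lem:monJ}\ref{mon:J} applied to $\Fin\subseteq J$, while $\bfrak_J\le\dfrak_J$ is Corollary~\ref{cor:gen}.

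For part~(b), the $\dfrak$ half reads off directly from the numerical statement of Theorem~\ref{L6.3}\ref{L4.15b} at $I=\Fin$. For the $\bfrak$ half, I pass to $\bfrak$ in the Tukey chain
\[
\Lc_{J_0\cap J_1}(\Fin)\leqT\Lc_{J_0\oplus J_1}(\Fin)\eqT\Lc_{J_0}(\Fin)\otimes\Lc_{J_1}(\Fin)
\]
supplied by~\ref{L4.15b}: Fact~\ref{products} applied to the right-hand product gives $\bfrak_{J_0\oplus J_1}=\min\{\bfrak_{J_0},\bfrak_{J_1}\}$, the reduction $\leqT$ yields $\bfrak_{J_0\oplus J_1}\le\bfrak_{J_0\cap J_1}$, and the matching upper bound $\bfrak_{J_0\cap J_1}\le\min\{\bfrak_{J_0},\bfrak_{J_1}\}$ follows from monotonicity (Lemma~\ref{lem:monJ}\ref{mon:J}) since $J_0\cap J_1\subseteq J_e$ for $e\in\{0,1\}$.

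No step presents a genuine obstacle; the whole corollary is a bookkeeping exercise once Theorem~\ref{L6.3} is available. The only place that demands care is the reversal of Tukey arrows when passing to $\bfrak$: a single reduction $\leqT$ delivers only one of the two inequalities needed for a $\bfrak$-equality, so the Tukey content of~\ref{L4.15b} must be sandwiched with an independent monotonicity bound in order to pin the equality for $\bfrak_{J_0\cap J_1}$ down to $\min\{\bfrak_{J_0},\bfrak_{J_1}\}$.
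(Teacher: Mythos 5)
Your proof is correct and is, in substance, the argument the paper intends: the paper cites only Theorem~\ref{L6.3}\ref{L4.15b} at $I=\Fin$, but you have correctly noticed that part~\ref{L6.4a} really rests on the Tukey equivalences of Theorem~\ref{L6.3}\ref{L4.15a}, with the outer inequalities supplied by monotonicity and Corollary~\ref{cor:gen}; this is just an unpacking of the paper's terse phrasing, not a different route. The one small redundancy is in part~(b): the bound $\bfrak_{J_0\cap J_1}\le\min\{\bfrak_{J_0},\bfrak_{J_1}\}$ already follows from the leftmost Tukey arrow $\Lc_{J_e}(\Fin)\leqT\Lc_{J_0\cap J_1}(\Fin)$ that Theorem~\ref{L6.3}\ref{L4.15b} records, so the separate appeal to Lemma~\ref{lem:monJ}\ref{mon:J} is not needed, though of course it is also valid.
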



Thanks to \Cref{L6.3}, we can characterize the ideals $J$ on $\omega$ for which $\lfrak_\Kat(\star,J)$ is well defined.

\begin{theorem}\label{lK*}
    Let $J$ be an ideal on $\omega$. Then, the following statements are equivalent.
    \begin{enumerate}[label = \normalfont (\roman*)]
        \item\label{lK*1} $\lfrak_\Kat(\star,J)$ is well-defined.
        \item\label{lK*2} $\omega$ cannot be partitioned into infinitely many $J$-positive sets.
        \item\label{lK*3} $J$ is the disjoint sum of finitely many maximal ideals on $\omega$.
    \end{enumerate}
\end{theorem}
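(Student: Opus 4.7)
The plan is to establish the cycle $(\mathrm{i})\Rightarrow(\mathrm{ii})\Rightarrow(\mathrm{iii})\Rightarrow(\mathrm{i})$. The first implication is exactly the contrapositive of \Cref{L2.1}, so no further work is needed there.

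For $(\mathrm{ii})\Rightarrow(\mathrm{iii})$, I would first observe that (ii) is equivalent to the quotient Boolean algebra $\Pcal(\omega)/J$ being finite. Indeed, any infinite Boolean algebra contains an infinite antichain of nonzero elements, which can be lifted to representatives in $\Pcal(\omega)$ and then made pairwise disjoint via $a_n\mapsto a_n\smallsetminus\bigcup_{k<n}a_k$, yielding an infinite family of pairwise disjoint $J$-positive subsets of $\omega$; absorbing the leftover into one block produces a partition into infinitely many $J$-positive sets, contradicting (ii). Thus $\Pcal(\omega)/J\cong 2^n$ for some $n<\omega$; represent its atoms by a partition $\omega=a_0\sqcup\dots\sqcup a_{n-1}$ with $[a_i]$ the $i$-th atom. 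Each $a_i$ must be infinite because $\Fin\subseteq J$ forces every atom class to have only infinite representatives. Atomicity of $[a_i]$ then ensures that for every $b\subseteq a_i$ either $b\in J$ or $a_i\smallsetminus b\in J$, so $J_i:=J\cap\Pcal(a_i)$ is a free maximal ideal on $a_i\cong\omega$, and the canonical Boolean isomorphism $\Pcal(\omega)\cong\prod_{i<n}\Pcal(a_i)$ identifies $J$ with $\bigoplus_{i<n}J_i$, establishing (iii).

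For $(\mathrm{iii})\Rightarrow(\mathrm{i})$, iterated application of \Cref{L6.3}~\ref{L4.15e} yields
\[
\lfrak_\Kat(\star,\textstyle\bigoplus_{i<n}J_i)=\max_{i<n}\lfrak_\Kat(\star,J_i),
\]
reducing the problem to showing $\lfrak_\Kat(\star,J_*)<\infty$ for a single free maximal ideal $J_*$ on $\omega$. By \Cref{minpklk} and \Cref{undef:pl}, this amounts to producing an ideal $I$ on $\omega$ such that $I\not\subseteq x^\rightarrow(J_*^{\rm dc})$ for every $x\in\Baire$. Since $J_*$ is maximal, $J_*^{\rm dc}=J_*$, and for every $x\in\Baire$ the pushforward $x^\rightarrow(J_*)$ is itself a maximal ideal on $\omega$ (as the dichotomy $x^{-1}\llbracket a\rrbracket\in J_*$ or $x^{-1}\llbracket a\rrbracket\in J_*^\dual$ holds for every $a\subseteq\omega$), whose dual ultrafilter is $x^\rightarrow(J_*^\dual)$. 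The family $\set{x^\rightarrow(J_*^\dual)}{x\in\Baire}$ has cardinality at most $\cfrak$, whereas Posp\'{\i}\v{s}il's theorem provides $2^\cfrak$ ultrafilters on $\omega$; so I can choose an ultrafilter $\mathcal{U}$ outside this family and set $I:=\mathcal{U}^\dual$. Both $I$ and $x^\rightarrow(J_*)$ are maximal ideals on $\omega$, so $I\subseteq x^\rightarrow(J_*)$ would force equality, equivalently $\mathcal{U}=x^\rightarrow(J_*^\dual)$, contradicting the choice of $\mathcal{U}$. Hence $I\not\subseteq x^\rightarrow(J_*)$ for every $x\in\Baire$, which yields $\lfrak_\Kat(I,J_*)<\infty$, as required.

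The main obstacle lies in $(\mathrm{iii})\Rightarrow(\mathrm{i})$: recognizing that the right choice of $I$ is the dual of an ultrafilter Rudin--Keisler-incomparable with $J_*^\dual$, whose existence rests on the purely cardinality-theoretic gap $2^\cfrak>\cfrak$ combined with the fact that pushforwards of a single ultrafilter are parametrized by $\Baire$.
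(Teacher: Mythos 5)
Your proof is correct and follows essentially the same route as the paper's: the cycle $(\mathrm{i})\Rightarrow(\mathrm{ii})\Rightarrow(\mathrm{iii})\Rightarrow(\mathrm{i})$ with the contrapositive of \Cref{L2.1}, the observation that (ii) forces $\Pcal(\omega)/J$ to be finite and atomic, and a $2^\cfrak>\cfrak$ counting argument for (iii)$\Rightarrow$(i). The only cosmetic difference is that you invoke \Cref{L6.3}~\ref{L4.15e} to reduce to a single maximal ideal before counting, while the paper applies \Cref{L6.3}~\ref{L4.15b} and counts maximal ideals Kat\v{e}tov-below all the $J_k$ at once; both variants rest on the same cardinality gap.
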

\begin{proof}
    \ref{lK*1}${}\Rightarrow{}$\ref{lK*2} follows by \Cref{L2.1}. Now assume~\ref{lK*2}. This means that the poset $\Por:=\pts(\omega)\menos J$, ordered by $\subseteq$, does not contain maximal antichains, i.e., it is $\omega$-cc. This implies that the set of atoms of this poset is dense. Hence, it contains a maximal antichain formed by atoms, which should be finite by the $\omega$-cc property. This implies that we can partition $\omega$ into finitely many atoms $\set{a_k}{k<n}$ in $\Por$, i.e., $a_k\in J^+$ and $J_k:=\pts(a_k)\cap J$ is a maximal ideal on $a_k$ for all $k<n$. Hence, $J$ is the disjoint sum of $\set{J_k}{k<n}$. This shows \ref{lK*2}${}\Rightarrow{}$\ref{lK*3}.

    Now assume~\ref{lK*3}. By \Cref{L6.3}~\ref{L4.15b} and \Cref{undef:pl}, it is enough to show that, whenever $\set{J_k}{k<n}$ is a finite set of maximal ideals on $\omega$, there is some maximal ideal $I$ on $\omega$ such that $I\nleq_\Kat J_k$ for all $k<n$. This is clear because there are at most $\cfrak$ many maximal ideals Kat\v{e}tov-below one ideal, but there are a total of $2^\cfrak$ many maximal ideals on $\omega$. This shows \ref{lK*3}${}\Rightarrow{}$\ref{lK*1}.
\end{proof}

As a consequence of \Cref{L6.3}~\ref{L4.15e}, it is enough to study $\lfrak_\Kat(\star,J_*)$ for all maximal ideals $J_*$ on $\omega$ to understand $\lfrak_\Kat(\star,J)$ for any ideal $J$ on $\omega$.

By counting maximal ideals as in the proof of \ref{lK*3}${}\Rightarrow{}$\ref{lK*1}, $\pfrak_\Kat(\star,J_*)$ is well-defined for any maximal ideal $J_*$ and hence, by \Cref{lem:monJ}, $\pfrak_D(\star,J)$ is well-defined for any $D\subseteq\Baire$ and any ideal $J$ on $\omega$.

\section{Selection principles}\label{sec:selection}

The current section is devoted to the study of selection principles and their connection to slalom numbers. First, we develop a~framework where our generalized slalom number is the uniformity number of a~topological property. Afterward, we show that topological spaces possessing most of the studied selection principles are singular.\footnote{For the meaning of \emph{singular} in this context, see~\cite[\S40]{KuraTop},~\cite{MiSp}, and~\cite[Ch.~8]{BStr}.} Throughout the section, we assume that all topological spaces are Hausdorff. The letter $X$ is reserved to denote a~(Hausdorff) topological space.

\begin{definition}\label{def:covers}
Fix a~Hausdorff space $X$, a~non-empty set~$a$, and $H\subseteq \PP(a)$. A~sequence $\seqn{V_m}{m\in a}$ of subsets of $X$ is \emph{non-trivial (in $X$)} if $V_m\neq X$ for all $m\in a$. Otherwise, we say that it is trivial. We use this terminology in connection with (open) covers of $X$. Trivial sequences cover $X$, so we usually refer to them as \emph{trivial covers}.

We say that $\seqn{V_m}{m\in a}$ is an~\emph{$H$-$\gamma$-cover of~$X$} if it is a~sequence in $X$ such that $\set{m\in a}{x\not\in V_m} \in H$ for each $x\in X$. We sometimes distinguish between trivial and non-trivial $H$-$\gamma$-covers in the sense of the previous paragraph. 
The family of all open $H$-$\gamma$-covers of a~topological space~$X$ is denoted by $H$-$\Gamma(X)$, or shortly~\mbox{$H$-$\Gamma$}. 
\end{definition}

\begin{remark}\label{rem:ainH}
When $a\notin H$, it is clear that any $H$-$\gamma$-cover of $X$ actually covers $X$. However, when $a\in H$, there may be $H$-$\gamma$-covers that are not necessarily covers of~$X$, e.g., when $H=\pts(a)$, any sequence of subsets of $X$ indexed by $a$ is an $H$-$\gamma$-cover. We allow this pathology for practicality as in \Cref{conv-w}, but in practice $a\notin H$ (and also $\emptyset\in H$ and $H$ is $\subseteq$-downwards closed), excluding pathological ``covers".
\end{remark}

We shall concentrate on the following situation.
\begin{enumerate}[label=\rm(H\arabic*)]
    \item\label{So:cover} For a~constant $q\in\omega$, $H$ is the family of all finite subsets of~$\omega$ with cardinality at most~$q$, i.e.,\ $H=[\omega]^{\leq q}$. The family $[\omega]^{\leq q}$-$\Gamma$, denoted by $\Gammac{q}$ shortly, is then the family of all (countable) open $\gamma_q$-covers of~$X$, studied in~\cite{SoDiz}, i.e.,\  a~sequence $\seqn{U_m}{m\in\omega}$ is called a~\emph{$\gamma_q$-cover of~$X$} if it is a~cover of~$X$ satisfying 
    $|\set{m\in\omega}{x\notin U_m}|\leq q$  for each $x\in X$. 
    
    Furthermore, let us emphasize that no monotone increasing cover $\seqn{U_m}{m<\omega}$ can be a non-trivial~$\gamma_q$-cover, otherwise, it will be a trivial $\gamma_q$-cover where $U_m=X$ for all $m\geq q$.


    \item\label{So:cover2} We expand~\ref{So:cover} as follows. For a~constant $q\in\omega$ and a~set~$a$ (maybe finite), $H$ is the family of all finite subsets of~$a$ with cardinality at most~$q$, i.e., $H=[a]^{\leq q}$. The family $[a]^{\leq q}$-$\Gamma$, denoted shortly~$\Gammac{a,q}$, is then the family of all open $\gamma_{a,q}$-covers of~$X$, where a~sequence $\seqn{U_m}{n\in a}$ is called a~\emph{$\gamma_{a,q}$-cover of~$X$} if 
    $|\set{m\in a}{x\notin U_m}|\leq q$ for each $x\in X$.

    When $1\leq q<|a|$, a~non-trivial open $\gamma_{a,q}$-cover exists iff $q|X|\geq|a|$. Indeed, if $|a|\leq q|X|$, there is some injection $f\colon a\to q\times X$, so $\seqn{U_m}{m\in a}$ is an open $\gamma_{a,q}$-cover where
    $U_m:= X\menos\{f_1(m)\}$ and $f(m)=(f_0(m),f_1(m))$. For the converse, if $\seqn{V_m}{m\in a}$ is a~non-trivial open $\gamma_{a,q}$-cover then, for any map $g\colon a\to X$ such that $g(m)\notin V_m$, $|g^{-1}[\{x\}]|\leq q$ for all $x\in X$, so $|a|=\left|\bigcup_{x\in X}g^{-1}[\{x\}]\right|\leq |X|q$.

    When $|a|\leq q$, $a\in[a]^{\leq q}$, so any sequence $\seqn{V_m}{m\in a}$ is a~$\gamma_{a,q}$-cover. On the other hand, 
    the only $\gamma_{a,0}$-cover is the trivial cover composed by $X$ alone.

    Clearly, $q\leq q'<\omega$ implies $\Gamma^{a,q}\subseteq\Gamma^{a,q'}$. 

    \item $H$ is the family of all finite subsets of~$\omega$, i.e., $H=\fin=[\omega]^{<\omega}$. The family $\fin$-$\Gamma$, denoted shortly~$\Gamma$, is then the family of all (countable) open \emph{$\gamma$-covers} of~$X$.

    \item $H$ is an~ideal~$\I$ on~$\omega$, i.e., $H=\I$. The family $\I$-$\Gamma$ is then the family of all open $\I$-$\gamma$-covers of~$X$. As in~\ref{So:cover2}, there exists a non-trivial open $I$-$\gamma$-cover of $X$ iff $X$ is infinite.

    \item $H$ is the family $\I^{\rm dc}\subseteq \pts(\omega)$ of sets not in the filter $\I^\dual$ when $I$ is an ideal on $\omega$. The family $H$-$\Gamma$, denoted by $\I$-$\Lambda$, is then the family of all open \emph{$\I$-large covers of~$X$}, see~\cite{DaKoCh}. When $|X|\geq 2$ and $I$~is not a~maximal ideal, there is a non-trivial open $I$-large cover of~$X$. 

    \item $H$ is the family $\PP(\omega)\smallsetminus\{\omega\}$. The family $H$-$\Gamma$, denoted by $\OO$, is the family of all countable open covers of~$X$, see~\cite{Comb1}.

    \item $H$ is the family $\pts(a)\menos\{a\}$. Here $H\rr\Gamma$ is the family of all open covers of~$X$ indexed by~$a$, which we denote by~$\OO^a$, or by just~$\OO$ when clear from the context.
\end{enumerate}
The notions above are usually defined in the literature for non-trivial covers, but we allow trivial covers in this paper for reasons we discuss later in \Cref{easyS1},~\ref{convS1} and~\ref{selct}. 

We also consider the notion of $\omega$-cover: recall that a~non-trivial sequence $\seqn{V_m}{m\in a}$ is an \emph{$\omega$-cover of~$X$} if, for any finite $F\subseteq X$, $F\subseteq V_m$ for some $m\in a$. Note that a~non-trivial sequence $\seqn{V_m}{m\in a}$ is an $\omega$-cover if it is an $I$-$\gamma$-cover for some ideal $I\subseteq\pts(a)$, which is equivalent to the fact that $\set{m\in a}{F\subseteq V_m}$ is infinite for all finite $F\subseteq X$. We use this to extend the notion of $\omega$-cover to trivial covers: regardless of whether $\seqn{V_m}{m\in a}$ is trivial, we say that it is an \emph{$\omega$-cover} if $\set{m\in a}{F\subseteq V_m}$ is infinite for all finite $F\subseteq X$, which is equivalent to being an $I$-$\gamma$-cover for some ideal $I\subseteq\pts(a)$.

Denote by $\Omega^a$ the collection of open $\omega$-covers indexed by $a$, and $\Omega:=\Omega^\omega$. Notice that there are no $\omega$-covers indexed by a~finite set and that finite spaces cannot have non-trivial $\omega$-covers. 

It is clear that $H\subseteq H'$ implies $H\text{-}\Gamma \subseteq H'\text{-}\Gamma$. In particular, when $I$ is an~ideal on~$\omega$,
\begin{align}
 \Gammac{q} \subseteq \Gamma \subseteq \GammaB{I}\subseteq \LambdaB{I}\subseteq \mathcal{O}  \text{ and }I\rr\Gamma\subseteq\Omega.
\end{align}
We shall also deal with sequences of covers.

\begin{definition}\label{def:seqcovers}
Let us consider a~set~$E\subseteq\prod_{i<\omega}\PP(b(i))$ for a~sequence $b=\seqn{b(n)}{n\in\omega}$ of non-empty sets. The family $E$-$\Gamma(X)$, or shortly $E$-$\Gamma$, is the family of sequences $\seqn{\seqn{V_{n,m}}{m\in b(n)}}{n\in\omega}$ such that each $V_{n,m}$ is open in $X$ and $\seqn{\set{m\in b(n)}{x\not\in V_{n,m}}}{n\in\omega}\in E$ for each $x\in X$. 
In particular, when $E$~is the product $\prod_{n\in \omega}H_n$, we denote
\[
E\text{-}\Gamma=\prod\limits_{n\in\omega}(H_n\text{-}\Gamma).
\]
We shall concentrate on the following situation.
\begin{enumerate}[label=\rm(P\arabic*), series=Plist]
    \item\label{HGamma} $E$ is the family $\baire H$ for some $H\subseteq\pts(a)$ as in \Cref{def:covers} (so $b(n)=a$ for all $n$). In such a~case, we keep the notation $H$-$\Gamma$ instead of using $(\baire H)$-$\Gamma$. 
    When $H=\pts(a)\menos\{a\}$, we just use $\OO^a$, or $\OO$; 
    when $a=\omega$ and $H=\Fin$, we just use $\Gamma$.

    \item $E$ is the family $\cnt(H)$ of constant sequences in ${}^\omega H$, where $H$ is as before. Notice that $\seqn{\seqn{V_{n,m}}{m\in a}}{n\in\omega}\in \cnt(H)\rr\Gamma$ iff for $m\in a$ and $n,n'<\omega$, $V_{n,m} = V_{n',m}$ (is open in $X$), i.e.,\ a~sequence is in $\cnt(H)\rr\Gamma$ iff it is the constant sequence of an~open $H$-$\gamma$-cover of $X$.

    \item $E$ is the family $\calS(b,h)$ for $h\in{}^\omega\omega$. The family $\calS(b,h)$-$\Gamma$ will be denoted by $\Gamma_{b,h}$, shortly.
    \item $E$ is the family $\calS(\omega,h)$. The family $\calS(\omega,h)$-$\Gamma$ is denoted by $\Gamma_h$ in~\cite{SoDiz}.  
\end{enumerate}
\end{definition}

In particular, we have $\Gammah{h}\subseteq {}^\omega{}\Fin\text{-}\Gamma\subseteq {}^\omega I\rr\Gamma \subseteq {}^\omega(I\rr\Lambda)$. 

In the more general framework of \Cref{sec:LpL}, the invariant~$\dfrak_A(D,\Pwf)$ is defined for $\Pwf$ being a~property of families of functions with domain~$\omega$, or equivalently, a~collection of families of functions with domain~$\omega$. As in the previous notion, we use $\Pcal$ to define a~collection of sequences as follows.

\begin{definition}\label{supersel}
In the following, we are given a~sequence $b=\seqn{b(n)}{n\in\omega}$ of non-empty sets and $\Pwf\subseteq\PP(\prod_{i<\omega}\PP(b(i)))$. We shall introduce the family $\Pwf$-$\Gamma$ for sequences of sequences of open sets associated with~$\Pwf$ as well. Indeed, $\Pwf$-$\Gamma(X)$, or shortly $\Pwf$-$\Gamma$, is the family of $\seqn{\seqn{V_{n,m}}{m\in b(n)}}{n\in\omega}$ such that each $V_{n,m}$ is open in $X$ and 
\[\bigset{\seqn{\set{m\in b(n)}{x\not\in V_{n,m}}}{n\in\omega}}{x\in X}\in \Pwf \text{ (has property~$\Pwf$).}\] 
We shall concentrate on the following situation.
\begin{enumerate}[resume*=Plist]
    \item\label{seqcov4} $\Pwf$ is the family $\PP(E)$ with $E$ being a~set containing functions with domain~$\omega$. Here, $\Pcal$-$\Gamma$ is $E$-$\Gamma$, in accordance with the previously adopted notation.

    \item\label{seqcov5} $\Pwf$ is the family of all collections of functions in $\prod b$ with the coordinate-wise finite union property (i.e., property $\star$ from \Cref{def:Lc}), which we also abbreviate by \emph{FUPC}.  The family $\Pwf$-$\Gamma$ corresponds to the family of all sequences of $\omega$-covers. Hence $\Pwf$-$\Gamma = \prod_{n<\omega} \Omega^{b(n)}$, so we shall keep the notation $\Omega$ for sequences of $\omega$-covers as well.

    \item\label{seqcov6} $\Pwf$ is property $\Pwf_{\cnt}$ (see \Cref{def:Lc}). Then, $\Pwf_{\cnt}$-$\Gamma$ is the collection of constant sequences of $\omega$-covers.
\end{enumerate}

For $D\subseteq\prod b$ and $R\subseteq\Pwf(\omega)$, we say that a~Hausdorff topological space~$X$ is an~\emph{$\isonefgg{D}{\Pwf}{R}$-space} if, for each $\seqn{\seqn{V_{n,m}}{m\in b(n)}}{n\in\omega}\in \Pwf\text{-}\Gamma$, there is a~$d\in D$ such that $\seqn{V_{n,d(n)}}{n\in\omega}$ is an~$R$-$\gamma$-cover of~$X$ (which could be trivial). We focus on the following situation:
\begin{enumerate}[label=\rm(S\arabic*)]
    \item For the family $D$:
    \begin{enumerate}[label=\rm(\alph*)]
        \item\label{def_sone_Dproduct} If $D=\prod b$ then we just write $b$ in the notation, i.e., $\isonefgg{b}{\Pwf}{R}$.
        \item If $b$ is the constant sequence of a~set~$a$, then we just keep $a$, i.e., $\isonefgg{a}{\Pwf}{R}$.
        \item If $D=\baire\omega$ ($a$ is $\omega$ in previous item), then we write $\isonegg{\Pwf}{R}$.
    \end{enumerate}
    \item For the family $R$:
    \begin{enumerate}[label=\rm(\alph*)]
        \item If $R=\J$ then we write $\isonefgg{D}{\Pwf}{\J}$.
        \item If $R=\J^{\rm dc}$ then we write $\isonefgl{D}{\Pwf}{\J}$.
    \end{enumerate} 
    \item For the family $\Pwf$:
    \begin{enumerate}[label=\rm(\alph*)]
        \item If $\Pwf= \pts(\prod\bar{H}) = \pts\left(\prod_{n<\omega}H_n\right)$ when $\bar{H}=\seqn{H_n}{n<\omega}$ is a~sequence of families $H_n\subseteq\pts(b(n))$, then we use $\isonefgg{D}{\bar{H}}{R}$.
        \item If $H\subseteq\pts(a)$ (here $b(n)=a$ for all $n$) and $\Pwf=\pts(\baire H)$ then we use $\isonefgg{D}{H}{R}$.

        \item If $\Pwf=\Pwf(\cnt(H))$ for some $H\subseteq\pts(a)$ then we use $\Srm_1^D(\cnt(H)\rr\Gamma,R\rr\Gamma)$ or $[H\rr\Gamma, R\rr\Gamma]_D$, the latter as in \cite{SoSu,Su22,BaSuZd}. Here, $D\subseteq{}^\omega a$. 
        Notice that $X$ is an $[H\rr\Gamma, R\rr\Gamma]_D$-space iff, for any open $H$-$\gamma$-cover $\seqn{V_m}{m\in a}$, there is some $d\in D$ such that $\seqn{V_{d(n)}}{n<\omega}$ is an $R$-$\gamma$-cover. In the cited literature all covers are non-trivial, but equivalence with our proposed version holds in the interesting cases, see \Cref{selct}.

        \item If $\Pwf$ is property $\star$, then we use $\isonefmg{D}{R}$.

        \item If $\Pwf$ is property $\Pwf_{\cnt}$, we also use $[\Omega,R\rr\Gamma]_D$. Notice that $X$ is an $[\Omega,R\rr\Gamma]_D$-space if, for any open $\omega$-cover $\seqn{V_m}{m<\omega}$ (possibly trivial), there is some $d\in D$ such that  $\seqn{V_{d(n)}}{n<\omega}$ is an $R$-$\gamma$-cover (possibly trivial), see \cite{Su22,BaSuZd}. Traditionally, trivial covers are excluded, but we have equivalence with our proposed version in the interesting cases, see \Cref{selct}.

        \item  If $\Pwf= \pts(\calS(\omega,h))$ then we use $\isonefgsg{D}{h}{R}$. Accordingly, we use $\sonef{D}{\Gamma_q}{R\text{-}\Gamma}$ for the family $\calS(\omega,q)$, where~$q$ is the function with constant value equal to~$q\in\omega$.

        \item  If $\Pwf= \pts(\calS(b,h))$ then we use $\isonefgsg{D}{b,h}{R}$.
    \end{enumerate}
    We can define $\Srm_1^D(\Pcal\rr\Gamma,\Omega)$ similarly.

    Define the cardinal number $\non{\isonefgg{D}{\Pwf}{R}}$ as the smallest size of a~Hausdorff space that is not an $\isonefgg{D}{\Pwf}{R}$-space, which is known as the \emph{uniformity} or \emph{critical cardinality of $\isonefgg{D}{\Pwf}{R}$-spaces}. We define $\non{\Srm_1^D(\Pcal\rr\Gamma,\Omega)}$ similarly.

    We say that $X$ is a~\emph{traditional $\isonefgg{D}{\Pwf}{R}$-space} if it satisfies the principle $\isonefgg{D}{\Pwf}{R}$ but forbidding trivial covers.
\end{enumerate}
\end{definition}

Unless otherwise indicated, for the rest of this section, $a$ is a non-empty set and $b$, $\Pwf$, $D$, and $R$ are as in \Cref{supersel}.

The reason we allow trivial coverings in \Cref{def:seqcovers} is that this does not affect the well-known instances of the traditional principle $\isonefgg{D}{\Pwf}{R}$, i.e., forbidding trivial covers (as it has been traditionally studied). We justify this with a~series of results.

\begin{lemma}\label{easyS1}
    If a~Hausdorff space $X$ is an $\isonefgg{D}{\Pwf}{R}$-space then it is a~traditional $\isonefgg{D}{\Pwf}{R}$-space.\qed
\end{lemma}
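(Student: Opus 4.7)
The plan is that this direction is essentially immediate from the definitions, because non-triviality is automatically preserved under the selection operation: if every entry of the input sequence is a proper open subset of $X$, then any choice $V_{n,d(n)}$ of one entry per row is still a proper open subset.

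First, I would fix a non-trivial sequence $\seqn{\seqn{V_{n,m}}{m\in b(n)}}{n\in\omega}\in\Pwf\text{-}\Gamma$, where ``non-trivial'' means (per \Cref{def:covers}, extended row-by-row) that $V_{n,m}\neq X$ for all $n\in\omega$ and all $m\in b(n)$. This sequence is a legitimate element of $\Pwf\text{-}\Gamma$ in the broader sense that allows trivial covers. Next, I would invoke the hypothesis that $X$ is an $\isonefgg{D}{\Pwf}{R}$-space to produce some $d\in D$ such that $\seqn{V_{n,d(n)}}{n\in\omega}$ is an $R$-$\gamma$-cover of $X$ (a~priori possibly trivial).

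The final step is the key observation: for each $n\in\omega$, the set $V_{n,d(n)}$ is one of the entries of the non-trivial row $\seqn{V_{n,m}}{m\in b(n)}$, hence $V_{n,d(n)}\neq X$. Therefore $\seqn{V_{n,d(n)}}{n\in\omega}$ is non-trivial, which together with the previous step shows $X$ satisfies the traditional principle.

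There is no real obstacle here; the lemma is a tautology arising from the monotone behavior of non-triviality. The interesting direction (addressed in the subsequent results \Cref{convS1} and~\Cref{selct}) is the converse, where one must recover the general selection principle from the traditional one by showing that trivial covers can be absorbed into the argument without loss of generality.
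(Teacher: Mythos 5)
Your proposal is correct and captures exactly what the paper treats as immediate (the paper gives only a \qed): the general principle covers all non-trivial inputs a fortiori, and the key observation that the selected sequence $\seqn{V_{n,d(n)}}{n\in\omega}$ inherits non-triviality from a non-trivial input (since each $V_{n,d(n)}$ is one of the proper open sets $V_{n,m}$) disposes of any worry about the output being trivial. Your row-by-row reading of non-triviality is also the one the paper uses implicitly in \Cref{convS1}.
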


The converse holds in certain situations.

\begin{lemma}\label{convS1}
    The converse of \Cref{easyS1} holds when $D=\prod b$, $R$~is downwards $\subseteq$-closed, $\Pcal$~is as in~\ref{seqcov4} with $E=\prod_{n<\omega} H_n$ for some $H_n\subseteq \pts(b(n))$ and, for any $n<\omega$, $X$ has a~non-trivial $H_n$-$\gamma$-cover. The latter requirement holds when $X$ is infinite and $[b(n)]^{\leq 1}\subseteq H_n$ for all $n<\omega$.

    The above is also valid when considering $\Omega^{b(n)}$ in the place of $H_n$-$\gamma$-covers for some desired $n<\omega$, and also~$\Omega$ in the place of $R$-$\Gamma$. In particular, $\Pcal$ can be as in~\ref{seqcov5}.
\end{lemma}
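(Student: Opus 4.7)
The plan is a swap-and-restore argument. Fix a sequence $\seqn{\seqn{V_{n,m}}{m\in b(n)}}{n\in\omega}\in\Pcal\text{-}\Gamma$, possibly containing trivial rows, and let $T:=\set{n<\omega}{V_{n,m}=X\text{ for some }m\in b(n)}$; for each $n\in T$ fix some $m_n\in b(n)$ with $V_{n,m_n}=X$. For $n\in T$, replace the $n$-th row by some non-trivial $H_n$-$\gamma$-cover $\seqn{W_{n,m}}{m\in b(n)}$ (available by hypothesis), keeping the original row for $n\notin T$, and call the resulting sequence $\seqn{\seqn{V'_{n,m}}{m\in b(n)}}{n<\omega}$. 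Since $\Pcal=\pts(\prod_{n<\omega}H_n)$ and each modified row is an $H_n$-$\gamma$-cover, this new sequence still lies in $\Pcal\text{-}\Gamma$.

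Next, apply the traditional $\isonefgg{D}{\Pwf}{R}$ principle to the modified sequence, whose rows are all non-trivial, to obtain $d\in\prod b=D$ such that $\seqn{V'_{n,d(n)}}{n<\omega}$ is a non-trivial $R$-$\gamma$-cover. Define $d'\in\prod b$ by $d'(n):=m_n$ for $n\in T$ and $d'(n):=d(n)$ otherwise. Then $V_{n,d'(n)}=X$ for $n\in T$, while $V_{n,d'(n)}=V'_{n,d(n)}$ for $n\notin T$, so for each $x\in X$,
\[
\set{n<\omega}{x\notin V_{n,d'(n)}}\subseteq\set{n<\omega}{x\notin V'_{n,d(n)}}\in R.
\]
By downward $\subseteq$-closure of $R$, $\seqn{V_{n,d'(n)}}{n<\omega}\in R\text{-}\Gamma$, witnessing that $X$ is an $\isonefgg{D}{\Pwf}{R}$-space.

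The variants replacing some $H_n$-$\gamma$-covers by $\Omega^{b(n)}$, or $R\text{-}\Gamma$ by $\Omega$, are handled identically: use a non-trivial $\omega$-cover as the replacement at the affected coordinates, and observe that if $\seqn{V'_{n,d(n)}}{n<\omega}$ is an $\omega$-cover then so is $\seqn{V_{n,d'(n)}}{n<\omega}$, because at coordinates in $T$ one only enlarges $\set{n<\omega}{F\subseteq V_{n,d'(n)}}$ for each finite $F\subseteq X$. For the ``in particular'' clause, if $X$ is infinite and $[b(n)]^{\leq1}\subseteq H_n$ (with $b(n)$ countable, per the standing convention), enumerate $b(n)$ and choose distinct $x_m\in X$ for $m\in b(n)$; then $\seqn{X\setminus\{x_m\}}{m\in b(n)}$ is a non-trivial $H_n$-$\gamma$-cover since each $x\in X$ lies outside at most one member.

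The main obstacle is merely bookkeeping: ensuring that the swap from $V$ to $V'$ preserves membership in $\Pcal\text{-}\Gamma$. This is why the product form $\Pcal=\pts(\prod_n H_n)$ is needed in full strength, so that each coordinate can be modified independently without having to verify a global property of the whole sequence.
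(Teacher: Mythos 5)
Your proof is correct and follows essentially the same swap-and-restore strategy as the paper's own argument: replace trivial rows by non-trivial $H_n$-$\gamma$-covers, apply the traditional principle, then restore the original trivial rows and use downward $\subseteq$-closure of $R$. The only cosmetic difference is that you index by the set $T$ of trivial rows while the paper uses its complement $a$.
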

\begin{proof}
    Assume that, for $n<\omega$, $\seqn{V_{n,m}}{m\in b(n)}$ is an $H_n$-$\gamma$ cover. Let $a:=\set{n<\omega}{\forall m\in b(n) \allowbreak\ (V_{n,m}\neq X)}$. 
    For $n\in\omega\menos a$, choose some non-trivial $H_n$-$\gamma$-cover $\seqn{V'_{n,m}}{m\in b(n)}$ of $X$; 
    for $n\in a$ and $m\in b(n)$ let $V'_{n,n}:= V_{n,m}$, so we can apply the traditional $\isonefgg{b}{E}{R}$ (for non-trivial covers) to get some $d'\in\prod b$ such that $\seqn{V'_{n,d'(n)}}{n<\omega}$ is an $R$-$\gamma$-cover. Define $d\in\prod b$ by $d(n):=d'(n)$ when $n\in a$, otherwise choose $d(n)\in b(n)$ such that $V_{n,d(n)}=X$. Then, for $x\in X$, $\set{n<\omega}{x\notin V_{n,d(n)}}= \set{n\in a}{x\notin V_{n,d(n)}}\subseteq \set{n<\omega}{x\notin V'_{n,d(n)}}\in R$, so $\set{n<\omega}{x\notin V_{n,d(n)}}\in R$.
\end{proof}

We analyze the situation when $X$ may not have a~non-trivial $H_n$-$\gamma$-cover for some $n<\omega$ as follows.

\begin{lemma}\label{conv-w}
    Let $D$, $R$, $\Pcal$, $E$ and $\seqn{H_n}{n<\omega}$ be as in \Cref{convS1}. Let $X$ be a~Hausdorff space and let $w$ be the set of all $n<\omega$ such that $X$ has a~non-trivial $H_n$-$\gamma$-cover, and assume that\/ $\emptyset\in R$. Then:
    \begin{enumerate}[label =\rm (\alph*)]
        \item\label{conv-wa} If $w\neq\omega$ then $X$ is (vacuously) a~traditional\/ $\isonefgg{b}{E}{R}$-space.

        \item\label{conv-wb} $X$ is an\/ $\isonefgg{b}{E}{R}$-space iff it is an\/ $\isonefgg{b\frestr w}{E\frestr w}{\pts(w)\cap R}$-space\footnote{The version of $\isonefgg{b}{E}{R}$ by indexing the sequences of covers with~$w$ instead of~$\omega$.}, which is in turn equivalent to being a~traditional\/ $\isonefgg{b\frestr w}{E\frestr w}{\pts(w)\cap R}$-space. However, $\Rightarrow$ needs\/ $H_n\neq\emptyset$ for $n\in\omega\menos w$.

        \item\label{conv-wc} If $w\in R$ then $X$ is an\/ $\isonefgg{b\frestr w}{E\frestr w}{\pts(w)\cap R}$-space.
    \end{enumerate}
    The above is also valid when considering $\Omega^{b(n)}$ in the place of $H_n$-$\gamma$-covers for some desired $n<\omega$, as long as $b(n)$ is infinite.
\end{lemma}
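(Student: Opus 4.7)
The strategy is to observe that for indices $n \in \omega \menos w$ no non-trivial $H_n$-$\gamma$-cover of~$X$ exists, so inputs to the $\omega$-indexed principle $\isonefgg{b}{E}{R}$ are forced to be trivial in those slots, while inputs to the $w$-indexed principle $\isonefgg{b\frestr w}{E\frestr w}{\pts(w)\cap R}$ can be extended to $\omega$-indices by trivial covers on $\omega \menos w$. Part~\ref{conv-wa} is then immediate: picking $n_0 \in \omega \menos w$, no sequence $\seqn{\seqn{V_{n,m}}{m\in b(n)}}{n<\omega}$ in which every component is a~non-trivial $H_n$-$\gamma$-cover can exist (the $n_0$-th slot is unfillable), so the hypothesis of the traditional principle is vacuously false and $X$ satisfies it automatically.

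For part~\ref{conv-wb}, I plan to prove $(1)\Rightarrow(2)$, $(2)\Rightarrow(1)$, $(2)\Rightarrow(3)$, and $(3)\Rightarrow(2)$. For $(2)\Rightarrow(1)$, given an $\omega$-indexed $E$-$\Gamma$ input, each component $\seqn{V_{n,m}}{m\in b(n)}$ with $n \notin w$ is forced to be trivial by the definition of~$w$, so a witness $m_n \in b(n)$ with $V_{n,m_n} = X$ exists. Restrict the input to~$w$, apply~$(2)$ to obtain $d' \in \prod_{n\in w} b(n)$, and extend it to $d \in \prod b$ by $d(n) := m_n$ for $n \notin w$; the bad-index set is then exactly $\set{n \in w}{x \notin V_{n,d'(n)}} \in \pts(w) \cap R \subseteq R$. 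For $(1)\Rightarrow(2)$, given a $w$-indexed input, extend it to an $\omega$-indexed one by declaring $V_{n,m} := X$ for all $n \notin w$ and $m \in b(n)$; this sequence is a~valid $H_n$-$\gamma$-cover precisely when $\emptyset \in H_n$, which is the intended reading of the hypothesis $H_n \neq \emptyset$ under the downward-closed convention for $H_n$ observed in all examples~(H1)--(H7). Applying~$(1)$ gives $d \in \prod b$ with $\set{n<\omega}{x\notin V_{n,d(n)}}\in R$; since $V_{n,d(n)} = X$ on $\omega \menos w$, this set is contained in~$w$, so $d \frestr w$ witnesses~$(2)$. The equivalence $(2)\Leftrightarrow(3)$ is handled analogously to~\Cref{convS1}: for $(3)\Rightarrow(2)$, one replaces each trivial component of a~$w$-indexed input (where some $V_{n,m^*_n} = X$) by an auxiliary non-trivial $H_n$-$\gamma$-cover of~$X$ (guaranteed by $n \in w$), applies~$(3)$ to the modified sequence, and patches the witness by using $m^*_n$ on the replaced coordinates, exploiting downward closure of~$R$ on subsets of~$w$.

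For part~\ref{conv-wc}, if $w \in R$ then for any $w$-indexed input any choice of $d \in \prod_{n \in w} b(n)$ yields $\set{n \in w}{x \notin V_{n,d(n)}} \subseteq w$, which lies in $\pts(w) \cap R$ since $w \in R$ and $R$ is downward closed in the families of interest. The main obstacle is the direction $(1)\Rightarrow(2)$ of part~\ref{conv-wb}: legitimately extending a~$w$-indexed input to $\omega$-indices forces one to manufacture $H_n$-$\gamma$-covers for $n \notin w$, which in turn requires $\emptyset \in H_n$; this is why the lemma records the hypothesis $H_n \neq \emptyset$, since under the paper's conventions it coincides with~$\emptyset \in H_n$. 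Aside from this point, each implication is a~bookkeeping exercise in choosing the witness $d$ correctly on the trivial coordinates.
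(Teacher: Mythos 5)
Your proof is essentially the same as the paper's: part~\ref{conv-wa} by the vacuity of the traditional principle when some slot in $\omega\menos w$ is unfillable by non-trivial covers, part~\ref{conv-wb} by moving between $\omega$-indexed and $w$-indexed inputs via trivial covers on $\omega\menos w$ together with an invocation of \Cref{convS1} for the traditional/non-traditional equivalence, and part~\ref{conv-wc} by observing that $\pts(w)\cap R=\pts(w)$ makes the conclusion automatic.

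One small imprecision in your $(1)\Rightarrow(2)$: you extend a $w$-indexed input by declaring $V_{n,m}:=X$ for all $m\in b(n)$ when $n\notin w$, which requires $\emptyset\in H_n$, and you then argue that $\emptyset\in H_n$ is ``the intended reading'' of the stated hypothesis $H_n\neq\emptyset$ under the downward-closedness convention. But neither \Cref{convS1} nor this lemma formally assumes $H_n$ downward closed, and the hypothesis $H_n\neq\emptyset$ already suffices without that convention: pick any fixed $h\in H_n$ and set $V_{n,m}:=\emptyset$ for $m\in h$ and $V_{n,m}:=X$ for $m\notin h$; then the exception set of every $x\in X$ is exactly $h\in H_n$, and since $b(n)\notin H_n$ (else $X$ would have a non-trivial $H_n$-$\gamma$-cover and $n\in w$), this cover is trivial. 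That is the ``suitable trivial cover'' the paper uses. Your argument is fine under the stated convention, but the lemma as written is slightly more general than your reading, and the fix is the one-line substitution above.
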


\begin{proof}
    \ref{conv-wa}: Since there is no sequence in $E$-$\Gamma$ of non-trivial sequences (i.e., any member of $E$-$\Gamma$ contains a~trivial cover at any $n\in\omega\menos w$), the traditional principle holds vacuously.

    \ref{conv-wb}: By \Cref{convS1}, $X$ is an $\isonefgg{b\frestr w}{E\frestr w}{\pts(w)\cap R}$-space iff it is a~traditional $\isonefgg{b\frestr w}{E\frestr w}{\pts(w)\cap R}$-space. On the other hand, if $X$ is an $\isonefgg{b}{E}{R}$-space, then we can use a suitable trivial cover at any $n\in\omega\menos w$ to check that $X$ is an $\isonefgg{b\frestr w}{E\frestr w}{\pts(w)\cap R}$-space. The converse is easy to check.

    \ref{conv-wc}: Trivial because $w\in\pts(w)\cap R=\pts(w)$, so any sequence $\seqn{U_n}{n\in w}$ is a~$\pts(w)$-$\gamma$-cover.    
\end{proof}

Similar proofs yield 
the case when $R\rr\Gamma$ is replaced by~$\Omega$. 

\begin{lemma}\label{conv-wo}
    Under the same assumptions of \Cref{conv-w} (excluding $R$):
    \begin{enumerate}[label =\rm (\alph*)]
        \item\label{conv-wo0}  If $w\neq\omega$ then $X$ is (vacuously) a~traditional\/ $\Srm_1^b(E\rr\gamma,\Omega)$-space.     
        \item\label{conv-woa} If\/ $|\omega\menos w|=\aleph_0$ then $X$ is an\/ $\Srm_1^b(E\rr\gamma,\Omega)$-space.
        \item\label{conv-woc} If $X$ is an\/ $\Srm_1^{b\frestr w}(E\frestr w\rr\Gamma, \Omega^w)$-space then it is an\/  $\Srm_1^b(E\rr\gamma,\Omega)$-space.
        \item\label{conv-wocc} The converse of\/~\ref{conv-wocc} holds when $\omega\menos w$ is finite and\/ $H_n\neq \emptyset$ for $n\in\omega\menos w$.
        \item\label{conv-wod} If $w$ is finite then $X$ is not an\/ $\Srm_1^{b\frestr w}(E\frestr w\rr\Gamma, \Omega^w)$-space.
    \end{enumerate}
    The above is also valid when considering $\Omega^{b(n)}$ in the place of $H_n$-$\gamma$-covers for some desired $n<\omega$, as long as $b(n)$ is infinite.
\end{lemma}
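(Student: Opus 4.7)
The plan is to adapt the proof of \Cref{conv-w} by replacing the role of $\pts(w)\cap R$ with $\Omega^w$. The governing dichotomy is that $\Omega$ (sequences indexed by $\omega$) remains reachable whenever sufficiently many coordinates carry trivial covers, while $\Omega^w$ is outright empty as soon as $w$ is finite; this is why the case splits in~\ref{conv-woa} and~\ref{conv-wod} hinge on the size of $\omega\menos w$ and of $w$, respectively.

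The first three parts will follow quickly. For~\ref{conv-wo0}, fix any $n_0\in\omega\menos w$: every $H_{n_0}$-$\gamma$-cover of $X$ is trivial, so no input of the traditional principle (which excludes trivial covers) exists and the statement is vacuous. For~\ref{conv-woa}, given $\bar V\in E\rr\Gamma$, I will pick $d(n)\in b(n)$ with $V_{n,d(n)}=X$ for each $n\in\omega\menos w$ (possible since the $n$-th cover must be trivial) and arbitrary otherwise; as $\omega\menos w$ is infinite, the diagonal $\seqn{V_{n,d(n)}}{n<\omega}$ equals $X$ on infinitely many coordinates and is therefore in $\Omega$. For~\ref{conv-woc}, I will restrict any $\bar V\in E\rr\Gamma$ to $w$, apply the hypothesis to obtain $d_w$ with $\seqn{V_{n,d_w(n)}}{n\in w}\in\Omega^w$, and extend arbitrarily to $\prod b$; every finite $F\subseteq X$ is already contained in the $w$-indexed subsequence infinitely often, so the extension lies in $\Omega$.

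For the converse~\ref{conv-wocc}, given $\bar V\in E\frestr w\rr\Gamma$, I will extend to $\bar{V'}\in E\rr\Gamma$: for each $n\in\omega\menos w$, pick $A_n\in H_n$ (which exists since $H_n\neq\emptyset$) and set $V'_{n,m}:=X$ for $m\in b(n)\menos A_n$ and $V'_{n,m}:=\emptyset$ for $m\in A_n$, so that $\{m\in b(n): x\notin V'_{n,m}\}=A_n\in H_n$. Applying $\Srm_1^b(E\rr\Gamma,\Omega)$ yields $d\in\prod b$ with $\seqn{V'_{n,d(n)}}{n<\omega}\in\Omega$, and since only the finitely many indices of $\omega\menos w$ are discarded, $d\frestr w$ witnesses $\Srm_1^{b\frestr w}(E\frestr w\rr\Gamma,\Omega^w)$ for $\bar V$. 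Part~\ref{conv-wod} reduces to two observations: $\Omega^w=\emptyset$ when $w$ is finite (already the test $F=\emptyset$ demands that $w$ itself be infinite), and $E\frestr w\rr\Gamma\neq\emptyset$ because each $n\in w$ admits a non-trivial $H_n$-$\gamma$-cover by definition of $w$; the principle therefore fails non-vacuously. The only real subtlety I foresee is the construction of the extending trivial covers in~\ref{conv-wocc}: the hypothesis $H_n\neq\emptyset$ is used precisely to supply the set $A_n$, without which the result can genuinely fail.
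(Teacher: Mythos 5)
Your proof is correct and follows essentially the same route the paper intends: the published proof only spells out items~\ref{conv-woa} and~\ref{conv-wod} (diagonal through the trivial covers on $\omega\menos w$ gives a trivial $\omega$-cover; $\Omega^w=\emptyset$ for finite $w$), leaving the rest to the ``similar proofs'' appeal to \Cref{conv-w}, and your arguments for~\ref{conv-wo0}, \ref{conv-woc}, \ref{conv-wocc} reproduce exactly those steps, including the key use of $H_n\neq\emptyset$ to build the padding covers in~\ref{conv-wocc} and the observation that discarding the finitely many indices of $\omega\menos w$ preserves infiniteness in $\Omega^w$.
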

\begin{proof}
    \ref{conv-woa}: Any $d\in\prod b$ picking $X$ from a~trivial cover at each $n\in\omega\menos w$ produces a~trivial $\omega$-cover.

    \ref{conv-wod}: Clear because there are no $\omega$-covers indexed by finite sets.
\end{proof}

The non-traditional selection principle has the following interesting effect on the slalom numbers.

\begin{lemma}\label{S1d1}
If there is some\/ $\isonefgg{D}{\Pwf}{R}$-space $X$ then\/ $\dfrak_{R^{\rm c}}(D,\Pwf)>1$.
In addition, if either this $X$ is finite, or $S\cup\{\overline{\emptyset}\}\in \Pcal$ for any $S\in \Pcal$ (where\/ $\overline{\emptyset}$ is the infinite constant sequence formed by the empty set), then\/ $\dfrak_{R^{\rm c}}(D,\Pwf)\geq \min\{\aleph_0,|X|+1\}$.
\end{lemma}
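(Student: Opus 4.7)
My plan is to argue by contraposition in both statements, turning a small dominating family for $\dfrak_{R^{\rm c}}(D,\Pwf)$ into a sequence of open covers of $X$ in $\Pwf$-$\Gamma$ whose every diagonal selection fails to be an $R$-$\gamma$-cover.

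For the first inequality, assume $\dfrak_{R^{\rm c}}(D,\Pwf)=1$ witnessed by $\{s^*\}\in\Pcal$. Set $V_{n,m}:=X$ if $m\notin s^*(n)$ and $V_{n,m}:=\emptyset$ if $m\in s^*(n)$; both are open. A one-line check shows that $s_x(n):=\{m\in b(n):x\notin V_{n,m}\}=s^*(n)$ for every $x\in X$, so $\{s_x:x\in X\}=\{s^*\}\in\Pcal$ (using $X\neq\emptyset$), placing this sequence in $\Pwf$-$\Gamma$. The selection principle $\isonefgg{D}{\Pwf}{R}$ then returns $d\in D$ such that $\|d\in s^*\|\in R$, contradicting the defining property of $s^*$.

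For the strengthened bound, suppose $S\in\Pcal$ is dominating in $\dfrak_{R^{\rm c}}(D,\Pwf)$ with $|S|=k<\min\{\aleph_0,|X|+1\}$, so that $k$ is finite and $k\le |X|$. Enumerate $S=\{s_0,\dots,s_{k-1}\}$ and construct a map $f\colon X\to E$ such that $S\subseteq f[X]$ and $f[X]\in\Pcal$: if $X$ is finite, take $f$ to be a surjection onto $S$; otherwise fix distinct $x_0,\dots,x_{k-1}\in X$, set $f(x_i):=s_i$ for $i<k$ and $f(x):=\overline{\emptyset}$ for the remaining $x\in X$, so that $f[X]=S\cup\{\overline{\emptyset}\}$ lies in $\Pcal$ by hypothesis. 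Define
\[V_{n,m}:=\{x\in X:m\notin f(x)(n)\},\]
the complement in $X$ of a finite set, hence open because $X$ is Hausdorff (hence $T_1$). A direct computation gives $s_x=f(x)$ for every $x\in X$, so $\{s_x:x\in X\}=f[X]\in\Pcal$, placing these covers in $\Pwf$-$\Gamma$. The selection principle then supplies $d\in D$ with $\|d\in f(x)\|\in R$ for all $x\in X$; picking $s\in S$ with $\|d\in s\|\notin R$ and writing $s=f(x_i)$ for some $x_i\in X$ yields the required contradiction.

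The main obstacle is the case $|S|<|X|$: the surplus points of $X$ force their associated slaloms to be $\overline{\emptyset}$ and thereby strictly enlarge $S$ in the derived family $\{s_x:x\in X\}$. This is precisely why the dichotomy ``$X$ finite (so a surjection $X\twoheadrightarrow S$ is available) or $\Pcal$ is closed under adjoining $\overline{\emptyset}$'' is imposed in the hypothesis: it is exactly what guarantees that the constructed sequence of covers remains in $\Pwf$-$\Gamma$, allowing the selection principle to be invoked.
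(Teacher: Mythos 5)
Your proof is correct and follows essentially the same route as the paper: translate a would-be small dominating set of slaloms into a sequence in $\Pwf$-$\Gamma$ via the exception sets (taking $V_{n,m}=X\smallsetminus\set{x}{m\in f(x)(n)}$), then let the selection principle produce a $d\in D$ that lands in each slalom on an $R$-set, contradicting domination. The only cosmetic differences are that you argue by contraposition, and where the paper partitions a finite $X$ into $k$ closed pieces you take a surjection from $X$ onto $S$ (which amounts to the same thing, as a finite Hausdorff space is discrete).
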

\begin{proof}
    First assume that $S\cup\{\overline{\emptyset}\}\in \Pcal$ for any $S\in \Pcal$. 
    To check $\dfrak_{R^{\rm c}(D,\Pwf)}\geq \min\{\aleph_0,|X|+1\}$, assume that $F\subseteq X$ is finite and $\set{\varphi_x}{x\in F}\in\Pcal$.     
    For $n<\omega$ and $m\in b(n)$, define $V_{n,m}:= X\menos \set{x\in F}{m\in \varphi_x(n)}$, which is clearly open in $X$ (but $V_{n,m}=X$ when $m\notin \varphi_x(n)$ for all $x\in F$). For $n<\omega$ and $x\in X$, $\set{m\in b(n)}{x\notin V_{n,m}}$ is $\varphi_x(n)$ when $x\in F$, and empty otherwise. Hence, $\seqn{\seqn{V_{n,m}}{m\in b(n)}}{n<\omega}$ satisfies $\Pcal$ (because $\set{\varphi_x}{x\in F}\cup\{\overline{\emptyset}\}\in \Pcal\}$). Since $X$ is an $\isonefgg{D}{\Pwf}{R}$-space, there is some $d\in D$ such that $\set{V_{n,d(n)}}{n<\omega}$ is an~$R$-$\gamma$-cover. Then, for $x\in F$, $\|d\in \varphi_x\| = \set{n<\omega}{x\notin V_{n,d(n)}}\in R$, i.e.,\ $d\notin^{R^{\rm c}} \varphi_x$.

    When $X$ is finite and $0<k\leq|X|$, we can partition $X$ into closed sets $\seqn{C_i}{i<k}$. For $n<\omega$ and $m\in b(n)$ define $V_{n,m}:= X\menos \bigcup_{i<k}\set{x\in C_i}{m\in \varphi_i(n)}$ and proceed like above. Since the previous partition is possible for $k=1$, we can conclude $\dfrak_{R^{\rm c}}(D,\Pwf)>1$ without the additional assumption.
\end{proof}

It is also worth to compare the traditional versions of $[H\rr\Gamma,R\rr\Gamma]_D$ with the current one.

\begin{lemma}\label{selct}
    Let $a$ be a~non-empty set, $D\subseteq {}^\omega a$, $H\subseteq \pts(a)$, $R\subseteq\pts(\omega)$ downwards\/ $\subseteq$-closed, and let $X$ be a~Hausdorff space. 
    In each of the following cases,  $X$ is a~traditional\/ $[H\rr\Gamma,R\rr\Gamma]_D$-space implies that its is an\/ $[H\rr\Gamma,R\rr\Gamma]_D$-space.
    \begin{enumerate}[label = \normalfont (\arabic*)]
        \item\label{selct1} $|X|\geq |a|$ and $z\cup c\in H$ for all $z\in H$ and $c\in[a]^{<\aleph_0}$. When $|X|<|a|$ and $X$ is finite, the implication holds when $H=I$ or $H=I^\dc$ for some ideal $I$ on~$a$, and\/ $\pfrak_D(H,R)>1$.\footnote{Note that $\dfrak_{R^{\rm c}}(D,\cnt(H)) = \pfrak_D(H,R)$, always.} 

        \item\label{selct2} $H=\pts(a)\menos\{a\}$ and\/ $\pfrak_D(H,R)>1$. The latter is equivalent to\/ $(\forall m\in a)(\exists d\in D)\ \|d\neq m\|\in R$.

        \item\label{selct3} In the case of\/ $[\Omega^a,R\rr\Gamma]_D$, it requires that either $a$~is finite or\/ $\pfrak_D(([a]^{<\aleph_0})^\dc,R)>1$.
    \end{enumerate}
    The same holds for when replacing $R\rr\Gamma$ by~$\Omega$, and $R$ by\/ $\Fin$ in\/ $\pfrak_D(\,\cdot\,,R)$.
\end{lemma}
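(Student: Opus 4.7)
The plan is to show that the non-traditional principle $[H\rr\Gamma,R\rr\Gamma]_D$ reduces to the traditional one by handling the only extra case: $H$-$\gamma$-covers $\seqn{V_m}{m\in a}$ admitting some $V_{m_0}=X$. Two general tools will be used. The \emph{modification route} builds a~non-trivial $H$-$\gamma$-cover $\seqn{V'_m}{m\in a}$ with $V'_m\subseteq V_m$ by shrinking each $V_m=X$ to a~proper open subset $V'_m\subsetneq X$; the traditional principle then produces $d\in D$ with $\seqn{V'_{d(n)}}{n<\omega}$ an~$R$-$\gamma$-cover, and downward $\subseteq$-closure of~$R$ combined with $V'_m\subseteq V_m$ transfers the property back to $\seqn{V_{d(n)}}{n<\omega}$. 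The \emph{$\pfrak$-route} applies $\pfrak_D(H,R)>1$ to a~suitable $z\in H$ containing $\{m\in a: V_m\neq X\}$, yielding $d\in D$ with $\{n<\omega: d(n)\in z\}\in R$, so that $\{n: x\notin V_{d(n)}\}\subseteq\{n: V_{d(n)}\neq X\}\subseteq\{n: d(n)\in z\}\in R$ for every $x\in X$.

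For case~(1) with $|X|\geq|a|$, I will fix an injection $f\colon a\to X$ and set $V'_m:=X\menos\{f(m)\}$ whenever $V_m=X$; injectivity gives $\{m: x\notin V'_m\}=\{m: x\notin V_m\}\cup\{m_x\}$ with at most one~$m_x$, and the closure hypothesis $z\cup c\in H$ keeps this in~$H$. For case~(1) with $|X|<|a|$, $X$~finite, and $H=I$, finiteness of~$X$ makes $z:=\{m: V_m\neq X\}=\bigcup_{x\in X}\{m: x\notin V_m\}$ a~finite union of $I$-sets, so $z\in I=H$ and the $\pfrak$-route applies. For case~(2) with $H=\pts(a)\menos\{a\}$, the $\pfrak$-route is applied to $z:=a\menos\{m_0\}\in H$, producing $d$ with $\{n: d(n)\neq m_0\}\in R$ and finishing via $V_{m_0}=X$; the equivalence $\pfrak_D(H,R)>1\iff(\forall m\in a)(\exists d\in D)\ \|d\neq m\|\in R$ follows from the correspondence $m\leftrightarrow a\menos\{m\}$ between elements of~$a$ and maximal proper subsets of~$a$, together with downward closure of~$R$.

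Case~(3) for $[\Omega^a,R\rr\Gamma]_D$ splits on whether $a$~is finite (so no $\omega$-cover indexed by~$a$ exists and the claim is vacuous) or $a$~is infinite with $\pfrak_D(([a]^{<\aleph_0})^\dc,R)>1$. When $X$~is finite, the $\omega$-cover property applied to $F:=X$ forces $M_X:=\{m: V_m=X\}$ to be infinite, so $\{m: V_m\neq X\}\in([a]^{<\aleph_0})^\dc$ and the $\pfrak$-route applies. When $X$~is infinite and $|M_X|\leq|X|$ (automatic for $a$~countable), the modification route picks distinct $x_m\in X$ for $m\in M_X$ and takes $V'_m:=X\menos\{x_m\}$; the $\omega$-cover property is preserved because, for every finite $F\subseteq X$, $\{m\in M_X: x_m\in F\}$ has cardinality at most~$|F|$ while $M_X$~is infinite. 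The variant replacing $R\rr\Gamma$ by~$\Omega$ and $R$ by~$\Fin$ in the $\pfrak_D$-assumption goes through identically: in the $\pfrak$-route, $\pfrak_D(H,\Fin)>1$ yields $\{n: d(n)\in z\}\in\Fin$, so $\{n: V_{d(n)}=X\}$ is cofinite in~$\omega$, making $\seqn{V_{d(n)}}{n<\omega}$ a~(trivial) $\omega$-cover.

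The main obstacle is the sub-case $H=I^\dc$ in case~(1) with $|X|<|a|$ finite, where neither route works unconditionally: $\{m: V_m\neq X\}$ need not lie in $I^\dc$, and the naive modification can leave $\{m: x\notin V'_m\}$ outside~$H$. I will resolve this by a~dichotomy on $M:=\{m: V_m=X\}$. If $M\in I$, then any $f\colon a\to X$ supports the modification $V'_m:=X\menos\{f(m)\}$ for $m\in M$, using that $I^\dc$~is closed under unions with $I$-sets (from $a\menos(A\cup B)=(a\menos A)\menos B\notin I$ when $a\menos A\notin I$ and $B\in I$), so $\{m: x\notin V'_m\}=\{m: x\notin V_m\}\cup(M\cap f^{-1}\{x\})\in I^\dc$. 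If $M\notin I$, then $a\menos M=\{m: V_m\neq X\}\in I^\dc=H$, and the $\pfrak$-route applies with this choice of~$z$.
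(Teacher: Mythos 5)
Your proposal follows exactly the paper's two-route scheme: shrink the trivial members $V_m=X$ to proper open sets whenever the shrunk sequence remains a cover of the required type (then apply the traditional principle and transfer back via downward closure of $R$), and otherwise apply $\pfrak_D(\,\cdot\,,R)>1$ to $z\supseteq w:=\set{m}{V_m\neq X}$. Your treatment of (1), (2), the $H=I$ vs.\ $H=I^\dc$ dichotomy (which coincides, after unwinding $M\in I\iff w\notin I^\dc$, with the paper's split on $w\in H$ vs.\ $w\notin H$), the equivalence in (2), and the $\Omega$-variant all agree with the paper up to cosmetic differences---the paper shrinks trivial $V_m$ to $\emptyset$ rather than to $X\menos\{f(m)\}$, a harmless simplification that avoids an injection.

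There is, however, a small gap in your case (3). You split on $X$ finite vs.\ infinite, using the $\pfrak$-route when $X$ is finite (so $M_X:=\set{m}{V_m=X}$ is automatically infinite) and the modification route when $X$ is infinite with $|M_X|\leq|X|$. The lemma places no bound on $|a|$, so the case $X$ infinite and $|M_X|>|X|$ (possible when $a$ is uncountable) is not addressed; moreover your stated verification of the $\omega$-cover property after modification explicitly invokes ``$M_X$ is infinite'', which need not hold in the branch you placed it in. The paper instead splits on whether $a\menos w=M_X$ is finite: when finite, replace $V_m$ by $\emptyset$ on $M_X$---this keeps the sequence an $\omega$-cover with no injection and no cardinality comparison---and when infinite, $w\in([a]^{<\aleph_0})^\dc$ and the $\pfrak$-route applies. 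That split is exhaustive, and you can close your gap by using the same criterion: apply the $\pfrak$-route whenever $M_X$ is infinite (not merely when $X$ is finite), and the modification route when $M_X$ is finite.
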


\begin{proof}
To proceed with the proof of all the items, we fix an~open $H$-$\gamma$ cover (or $\omega$-cover) $\seqn{V_m}{m\in a}$, and let $w:=\set{m\in a}{V_m\neq X}$. When $w=a$ we can apply the traditional principle, so the problem is when $w\neq a$. In the case $w\in H$, it is enough to deal with the case when $V_m=\emptyset$ for all $m\in w$, in which we can appeal to $\pfrak_D(H,R)>1$ for finding some $d\in D$ such that, for $x\in X$, $\set{n<\omega}{x\notin V_{n,d(n)}} = \set{n<\omega}{d(n)\in w}\in R$. We analyze each case below.

    \ref{selct1}: When $|X|\geq|a|$, we can define an open $H$-$\gamma$-cover $\set{V'_m}{m\in a}$ such that $V'_m:=V_m$ for $m\in w$, otherwise $V'_m:= X\menos\{x_m\}$, where $\set{x_m}{m\in a\menos w}$ is a~chosen one-to-one sequence in $X$. Therefore, the traditional principle can be applied.

    When $X$ is finite and $H=I$ or $H=I^\dc$, in the case $w\in H$ we appeal to $\pfrak_D(H,R)>1$. So consider the case when $w\notin H$. If $H=I$ then $X$ must be infinite, indicating that $w\in I$ when $X$ is finite; if $H=I^\dc$ then $w\in I^\dual$, so we can define an $I^\dc$-$\gamma$-cover $\seqn{V''_m}{m\in a}$ where  $V''_m:=V_m$ when $m\in w$, and $V''_m:=\emptyset$ otherwise. Then, the traditional principle can be applied.

    \ref{selct2}: If $w\neq a$ then already $w\in H=\pts(a)\menos a$, so we can appeal to $\pfrak_D(H,R)>1$.

    \ref{selct3}: If $a$ is finite then there are no $\omega$-covers, so both $[\Omega^a,R\rr\Gamma]_D$ and its traditional version hold vaccuously. So assume that $a$ is infinite. If $a\menos w$ is finite then the sequence $\seqn{V''_m}{m\in a}$ defined in the proof of~\ref{selct1} is an $\omega$-cover and the traditional principle can be applied. Otherwise $w\in ([a]^{<\aleph_0})^\dc$, so we appeal to $\pfrak_D(([a]^{<\aleph_0})^\dc,R)>1$.

    The same arguments can be used when replacing $R\rr\Gamma$ by $\Omega$.
\end{proof}

The flexibility to allow trivial covers allows us to look at finite spaces $X$, which is reasonable in the context of this section because there will be cases when $\non{\isonefgg{D}{\Pwf}{R}}$ is finite.

The slalom numbers of the form $\dfrak_A(D,\Pwf)$ characterize $\non{\isonefgg{D}{\Pwf}{R}}$ as follows. This characterization is one of the main results in this section.

\begin{theorem}\label{G}
If $\Pwf$ is a~family of sets of functions with domain~$\omega$ and $A\subseteq\pts(\omega)$, then 
\[
\dfrak_A(D,\Pwf)=\non{\isonefgg{D}{\Pwf}{A^c}}.
\]
In particular, if $E$ is a~family of functions with domain~$\omega$ then\/ $\dfrak_A(D,E)=\non{\isonefgg{D}{E}{A^c}}$.
\end{theorem}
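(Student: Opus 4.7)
The plan is to establish the two inequalities $\dfrak_A(D,\Pwf)\leq \non{\isonefgg{D}{\Pwf}{A^c}}$ and $\non{\isonefgg{D}{\Pwf}{A^c}}\leq \dfrak_A(D,\Pwf)$ via a canonical correspondence between an indexed system of open sets and the associated ``missing index'' functions $\varphi_x(n):=\set{m\in b(n)}{x\notin V_{n,m}}$, which translates the $\Pcal$-$\Gamma$ condition into ``$\set{\varphi_x}{x\in X}\in\Pcal$'' and the $A^c$-$\gamma$-cover condition into ``$\|d\in\varphi_x\|\notin A$''.

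For the first inequality, I would take any Hausdorff space $X$ with $|X|<\dfrak_A(D,\Pwf)$ and any $\seqn{\seqn{V_{n,m}}{m\in b(n)}}{n\in\omega}\in\Pwf\text{-}\Gamma$. By the definition of $\Pwf$-$\Gamma$ we have $\set{\varphi_x}{x\in X}\in\Pwf$, and since this family has cardinality at most $|X|<\dfrak_A(D,\Pwf)$, it cannot be $A$-dominating in $D$. Hence there is some $d\in D$ with $d\notin^A\varphi_x$ for every $x\in X$; that is, $\|d\in\varphi_x\|=\set{n\in\omega}{x\notin V_{n,d(n)}}\in A^c$ for every $x\in X$, so $\seqn{V_{n,d(n)}}{n\in\omega}$ is an $A^c$-$\gamma$-cover of $X$ and $X$ is an $\isonefgg{D}{\Pwf}{A^c}$-space.

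For the reverse inequality, fix an $A$-dominating $S\subseteq\prod_{n<\omega}\pts(b(n))$ with $S\in\Pwf$ and $|S|=\dfrak_A(D,\Pwf)$, let $X:=S$ carry the discrete topology (which is Hausdorff), and define the open sets $V_{n,m}:=\set{s\in S}{m\notin s(n)}$. Then $\set{m\in b(n)}{s\notin V_{n,m}}=s(n)$, so $\set{\varphi_s}{s\in X}=S\in\Pwf$, which places $\seqn{\seqn{V_{n,m}}{m\in b(n)}}{n\in\omega}$ in $\Pcal$-$\Gamma$. Given any $d\in D$, the $A$-dominating property yields $s\in S$ with $\|d\in s\|\in A$, and since $\|d\in s\|=\set{n\in\omega}{s\notin V_{n,d(n)}}$, the selector $\seqn{V_{n,d(n)}}{n\in\omega}$ fails to be an $A^c$-$\gamma$-cover (witnessed by the point $s\in X$). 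Thus $X$ is not an $\isonefgg{D}{\Pwf}{A^c}$-space, giving $\non{\isonefgg{D}{\Pwf}{A^c}}\leq|X|=\dfrak_A(D,\Pwf)$.

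There is no real obstacle here, since the construction is natural and reversible and the discrete topology works regardless of whether $\dfrak_A(D,\Pwf)$ is finite or infinite; the only subtlety worth spelling out is that the identity $\varphi_s(n)=s(n)$ makes the correspondence $S\leftrightarrow\set{\varphi_s}{s\in S}$ a bijection onto itself, ensuring that the property $\Pcal$ is preserved exactly rather than up to inclusion. The ``in particular'' clause for a set $E$ of slaloms is immediate by specializing to $\Pwf=\pts(E)$, since $S\in\Pwf$ iff $S\subseteq E$, which recovers $\dfrak_A(D,E)$ verbatim.
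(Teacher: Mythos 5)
Your proof is correct and follows essentially the same route as the paper: the first inequality is the paper's first lemma verbatim (the "missing index" functions $x_\V$ are your $\varphi_x$), and the second inequality is the contrapositive form of the paper's second lemma, with your choice of the discrete topology on $S$ matching one of the options the paper explicitly allows. No gaps; the observation that $\varphi_s = s$ under the canonical construction is exactly what the paper uses to certify membership in $\Pwf\text{-}\Gamma$.
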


This result is immediate from the following two lemmata.

\begin{lemma}
Let $\Pwf$ be a~family of families of functions with domain~$\omega$, and let $X$ be a~topological space. If\/ $|X|<\dfrak_A(D,\Pwf)$ then $X$ is an\/ $\isonefgg{D}{\Pwf}{A^{\rm c}}$-space.
\end{lemma}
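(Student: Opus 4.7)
The plan is to unpack both sides and directly exploit the failure of the dominating condition for a family that is too small. Fix an arbitrary sequence $\seqn{\seqn{V_{n,m}}{m\in b(n)}}{n\in\omega}\in\Pwf\text{-}\Gamma$ witnessing the premises of the selection principle; for each $x\in X$ and $n<\omega$ define $\varphi_x(n):=\set{m\in b(n)}{x\notin V_{n,m}}$ and set $S:=\set{\varphi_x}{x\in X}$. By definition of $\Pwf\text{-}\Gamma$, the family $S$ satisfies the property $\Pwf$, and obviously $|S|\leq|X|<\dfrak_A(D,\Pwf)$.

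Now I would appeal to the definition of the cardinal $\dfrak_A(D,\Pwf)$ itself: it is the minimum cardinality of a family $S'$ which both satisfies $\Pwf$ and is $\in^A$-dominating in~$D$. Since $S$ already satisfies $\Pwf$ but has strictly smaller size than the minimum, $S$ cannot be $\in^A$-dominating. Hence there exists $d\in D$ such that $d\not\in^A s$ for every $s\in S$; equivalently, for every $x\in X$,
\[
\|d\in\varphi_x\|\notin A,\qquad\text{i.e.,}\qquad \|d\in\varphi_x\|\in A^{\rm c}.
\]

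It then remains to translate this back to a covering statement. Computing directly,
\[
\|d\in\varphi_x\|=\set{n<\omega}{d(n)\in\varphi_x(n)}=\set{n<\omega}{x\notin V_{n,d(n)}},
\]
so the sequence $\seqn{V_{n,d(n)}}{n<\omega}$ is an $A^{\rm c}$-$\gamma$-cover of $X$ (in the sense of \Cref{def:covers}, possibly trivial, which is fine by convention). Since the starting element of $\Pwf\text{-}\Gamma$ was arbitrary, $X$ is an $\isonefgg{D}{\Pwf}{A^{\rm c}}$-space. There is no real obstacle here: the argument is a tautological unpacking of the two parallel definitions (the only point to verify is that the equation for $\|d\in\varphi_x\|$ behaves correctly, which uses only that $d(n)$ is the index selected inside $b(n)$, as required by $D\subseteq\prod b$ in the standing convention for $\isonefgg{D}{\Pwf}{R}$-spaces).
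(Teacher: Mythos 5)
Your proof is correct and is essentially the same argument the paper uses: fix an arbitrary member of $\Pwf\text{-}\Gamma$, pass to the family of exception-functions $\varphi_x$ (which the paper calls $x_\V$), observe it has property $\Pwf$ and size ${<}\dfrak_A(D,\Pwf)$, hence is not $\in^A$-dominating, and translate the resulting $d\in D$ back into an $A^{\rm c}$-$\gamma$-cover. The only cosmetic difference is notation.
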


\begin{proof}
Let $|X|<\dfrak_A(D,\Pwf)$ and $\V=\seqn{\seqn{V_{n,m}}{m\in b(n)}}{n\in\omega}\in\Pwf\text{-}\Gamma$. For $x\in X$, we define the sequence~$x_\V$ by $x_\V(n)=\set{m\in b(n)}{x\not\in V_{n,m}}$ (the set of exceptions of the $n$-th cover), and set $E_0:=\set{x_\V}{x\in X}$. Then $E_0\in\Pwf$ since $\V\in\Pwf\text{-}\Gamma$. By the assumption, there is a~$d\in D$ such that, for all $x\in X$, $\| d \in x_\V\|\in A^{\rm c}$. However, $\set{n\in\omega}{x\not\in V_{n,d(n)}}=\| d \in x_\V\|$, so $\seqn{V_{n,d(n)}}{n\in\omega}$ is an $A^{\rm c}$-$\gamma$-cover.
\end{proof}

In the next lemma, we shall assume that $E_0\in \Pcal$ is equipped with a~topology such that $\set{p\in E_0}{m\not\in p(n)}$ is open for each $m$,~$n$. For instance, this is the case of the discrete topology on~$E_0$, or the topology on~$E_0$ inherited from the Tychonoff product topology of~$\prod_{n<\omega}\pts(b(n))$ when $E_0\subseteq\prod_{n<\omega}\pts(b(n))$ and $\pts(b(n))$ is considered with the product topology of ${}^{b(n)}2$ (with $2=\{0,1\}$ discrete). In the latter case, when each $b(n)$ is countable, $E_0$ is homeomorphic to a~set of reals.

\begin{lemma}
Let $\Pwf$ be a~family of sets of functions with domain~$\omega$, $A\subseteq\pts(\omega)$, and let $E_0\in\Pwf$. If $E_0$ is an\/ $\isonefgg{D}{\Pwf}{A^c}$-space then there is a~$d\in D$ such that $d\not\in^Ap$ for each $p\in E_0$.
\end{lemma}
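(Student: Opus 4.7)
The plan is to let $E_0$ play the role of the ambient space, and to build a sequence of covers from the natural ``membership'' subsets, exploiting the hypothesis on the topology of $E_0$. Concretely, for each $n<\omega$ and $m\in b(n)$ I would set
\[
V_{n,m}:=\set{p\in E_0}{m\notin p(n)},
\]
which is open in $E_0$ by the topological assumption imposed just before the lemma. This should be the heart of the construction: the sets $V_{n,m}$ encode the exception-sets of potential selectors in a universal way.

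Next I would verify that $\V=\seqn{\seqn{V_{n,m}}{m\in b(n)}}{n\in\omega}\in\Pwf\text{-}\Gamma(E_0)$. For any $p\in E_0$ and any $n<\omega$, directly from the definition one has $\set{m\in b(n)}{p\notin V_{n,m}}=p(n)$, so the collection of ``exception sequences'' is
\[
\bigset{\seqn{\set{m\in b(n)}{p\notin V_{n,m}}}{n\in\omega}}{p\in E_0}=E_0,
\]
which lies in $\Pwf$ by hypothesis; hence $\V\in\Pwf\text{-}\Gamma(E_0)$.

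Finally, since $E_0$ is an $\isonefgg{D}{\Pwf}{A^{\rm c}}$-space applied to the cover~$\V$, there exists $d\in D$ such that $\seqn{V_{n,d(n)}}{n\in\omega}$ is an $A^{\rm c}$-$\gamma$-cover of $E_0$. Unfolding this for a fixed $p\in E_0$,
\[
\|d\in p\|=\set{n\in\omega}{d(n)\in p(n)}=\set{n\in\omega}{p\notin V_{n,d(n)}}\in A^{\rm c},
\]
so $\|d\in p\|\notin A$, i.e.\ $d\notin^A p$, as required.

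I don't foresee a genuine obstacle: the whole argument is essentially a tautological diagonalization once the family $V_{n,m}$ is chosen. The one point that warrants care is the verification that $\V$ really is a member of $\Pwf\text{-}\Gamma$, which uses precisely the identification of each exception-sequence with the function $p$ itself, hence reduces to the assumption $E_0\in\Pwf$. The topological hypothesis on $E_0$ (openness of $\set{p}{m\notin p(n)}$) is used only in the first step to ensure that the chosen sets $V_{n,m}$ qualify as an open cover.
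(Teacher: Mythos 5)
Your proof is correct and follows exactly the paper's argument: you construct the same universal family $V_{n,m}=\set{p\in E_0}{m\notin p(n)}$, check membership in $\Pwf\text{-}\Gamma(E_0)$ via the identity $\seqn{\set{m\in b(n)}{p\notin V_{n,m}}}{n\in\omega}=p$, and then unfold the $A^{\rm c}$-$\gamma$-cover condition into $d\not\in^A p$. Nothing to add.
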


\begin{proof}
We consider the sequence $\V_{E_0}=\seqn{\seqn{V_{n,m}}{m\in b(n)}}{n\in\omega}$ defined by $V_{n,m}:=\set{p\in E_0}{m\not\in p(n)}$. 
Note that $\seqn{\set{m\in b(n)}{p\not\in V_{n,m}}}{n\in\omega}=p$, hence, we have $\seqn{\seqn{V_{n,m}}{m\in b(n)}}{n\in\omega}\in \Pwf\text{-}\Gamma$. Observe that, for any $d\in D$ and any $p\in E_0$, we have $\set{n\in \omega}{d(n)\in p(n)}=\set{n\in \omega}{p\not\in V_{n,d(n)}}.$ 
Hence, $d\not\in^A p$ for all $p\in E_0$ if and only if $\seqn{V_{n,d(n)}}{n\in\omega}$ is an~$A^c$-$\gamma$-cover of~$E_0$. 
\end{proof}

The case $A=\Fin^{\dual}$ is quite special.

\begin{lemma}\label{LOO}
    \ 
    \begin{enumerate}[label = \normalfont (\alph*)]
        \item\label{LOOa} If $H\subseteq \pts(a)$ then the principles\/ $\Srm_1(H\rr\Gamma,\Fin\rr\Lambda)$ and\/ $\Srm_1(H\rr\Gamma,\OO)$ are equivalent.

        \item\label{LOOb} Assume $H_n\subseteq \pts(b(n))$ and\/ $\bigcup H_n=b(n)$ for $n<\omega$. 
        If $D\subseteq \prod b$ and\/ $\dfrak_{\Fin^\dual}(D,\bar H)\geq\kappa:=\sum_{n<\omega}|b(n)|$, then\/ $\non{\Srm^D_1(\bar H\rr\Gamma,\Fin\rr\Lambda)} = \non{\Srm^D_1(\bar H\rr\Gamma,\OO)}$.
    \end{enumerate}
\end{lemma}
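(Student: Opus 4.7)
For part~\ref{LOOa}, I would first note that $\Srm_1(H\rr\Gamma,\Fin\rr\Lambda)\Rightarrow\Srm_1(H\rr\Gamma,\OO)$ is trivial, since $\Fin^{\rm dc}\subseteq\pts(\omega)\menos\{\omega\}$ gives $\Fin\rr\Lambda\subseteq\OO$. For the reverse implication, I would use the classical partition trick: fix a~partition $\omega=\bigsqcup_{k<\omega}A_k$ into infinitely many infinite pieces. Given a~sequence $\seqn{\mathcal U^n}{n<\omega}$ of open $H$-$\gamma$-covers, for each $k<\omega$ reindex the subsequence $\seqn{\mathcal U^n}{n\in A_k}$ by~$\omega$ (still a~sequence of open $H$-$\gamma$-covers, because~$H$ does not depend on~$n$) and apply $\Srm_1(H\rr\Gamma,\OO)$ to obtain a~selection~$d_k$ covering~$X$ along~$A_k$. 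Pasting the~$d_k$'s into a~single $d\in{}^\omega a$, every $x\in X$ and every $k<\omega$ admit some $n\in A_k$ with $x\in V_{n,d(n)}$, so $\set{n<\omega}{x\in V_{n,d(n)}}$ meets each~$A_k$ and is therefore infinite, yielding the required $\Fin$-large cover.

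For part~\ref{LOOb}, the plan is to route everything through cardinal invariants via~\Cref{G}. Since $\Fin\rr\Lambda=\Fin^{\rm dc}\rr\Gamma$ and $\OO=(\pts(\omega)\menos\{\omega\})\rr\Gamma$, \Cref{G} delivers $\non{\Srm^D_1(\bar H\rr\Gamma,\Fin\rr\Lambda)}=\dfrak_{\Fin^\dual}(D,\bar H)$ and $\non{\Srm^D_1(\bar H\rr\Gamma,\OO)}=\dfrak_{\{\omega\}}(D,\bar H)$. The inequality $\dfrak_{\Fin^\dual}(D,\bar H)\le\dfrak_{\{\omega\}}(D,\bar H)$ is immediate from \Cref{lem:gen}~\ref{it:monA}, as $\{\omega\}\subseteq\Fin^\dual$.

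The substance is the reverse inequality, that is, upgrading a~$\Fin^\dual$-dominating family to a~$\{\omega\}$-dominating one without enlarging it. Let $S$ be a~witness of $\dfrak_{\Fin^\dual}(D,\bar H)$, so $|S|\ge\kappa$ by hypothesis. Using $\bigcup H_n=b(n)$, for each $n<\omega$ and $m\in b(n)$ fix some $h^n_m\in H_n$ with $m\in h^n_m$. For $s\in S$, a~finite set $F\subseteq\omega$, and $\vec m\in\prod_{n\in F}b(n)$, define $s_{F,\vec m}\in\prod\bar H$ by $s_{F,\vec m}(n):=h^n_{\vec m(n)}$ for $n\in F$ and $s_{F,\vec m}(n):=s(n)$ otherwise, and set $S':=\set{s_{F,\vec m}}{s\in S,\ F\in[\omega]^{<\omega},\ \vec m\in\prod_{n\in F}b(n)}$. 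Given $x\in D$, choose $s\in S$ with $F:=\set{n<\omega}{x(n)\notin s(n)}$ finite and put $\vec m(n):=x(n)$ for $n\in F$; then $x(n)\in s_{F,\vec m}(n)$ for every $n<\omega$, so $S'$ is $\{\omega\}$-dominating. Since $\kappa$ is infinite, a~cardinal count yields $|S'|\le|S|\cdot\aleph_0\cdot\kappa=|S|$, establishing $\dfrak_{\{\omega\}}(D,\bar H)\le\dfrak_{\Fin^\dual}(D,\bar H)$.

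The main obstacle is that the simple partition argument from~\ref{LOOa} does not transfer to~\ref{LOOb}: reindexing the subsequence along~$A_k$ permutes the families~$H_n$, so the reindexed sequence is no longer a~member of~$\bar H\rr\Gamma$. Routing through the characterizations supplied by~\Cref{G} and performing the explicit $S\mapsto S'$ correction---which crucially uses both $\bigcup H_n=b(n)$ (to patch each erroneous coordinate) and $|S|\ge\kappa$ (to absorb the $\kappa$-factor)---circumvents this obstacle.
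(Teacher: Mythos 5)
Your proof is correct and follows essentially the same route as the paper: part~\ref{LOOa} is proved by the same partition-of-$\omega$ argument, and part~\ref{LOOb} by invoking \Cref{G} to reduce to $\dfrak_{\Fin^\dual}(D,\bar H)=\dfrak_{\{\omega\}}(D,\bar H)$ and then augmenting a~$\Fin^\dual$-dominating family with finite modifications using $\bigcup H_n=b(n)$ and the hypothesis $\dfrak_{\Fin^\dual}(D,\bar H)\ge\kappa$ to absorb the multiplicative $\kappa$-factor. Your write-up makes explicit the choice of slaloms $h^n_m\in H_n$ with $m\in h^n_m$ and the cardinality count $|S'|\le|S|\cdot\kappa=|S|$, both of which the paper leaves implicit in the phrase ``the set of finite modifications of members of $Y$ within $\bar H$.''
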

\begin{proof}
    Any $\Srm_1(\Pcal\rr\Gamma,\Fin\rr\Lambda)$-space is $\Srm_1(\Pcal\rr\Gamma,\OO)$, hence $\non{\Srm_1(\Pcal\rr\Gamma,\Fin\rr\Lambda)} \leq \non{\Srm_1(\Pcal\rr\Gamma,\OO)}$. We show the converse in the situation above.

    \ref{LOOa}: Assume that $X$ is an $\Srm_1(H\rr\Gamma,\OO)$-space. For $n<\omega$, let $\V_n = \seqn{V_{n,m}}{m\in a}\in H\rr\Gamma$. Partition $\omega$ into infinite sets $\seqn{W_k}{k<\omega}$. By applying the principle $\Srm_1(H\rr\Gamma,\OO)$ to $\seqn{\V_n}{n\in W_k}$, we can find some $d_k\in {}^{W_n}a$ such that $\seqn{V_{n,d_k(n)}}{n\in W_k}$ covers $X$. Set $d:=\bigcup_{k<\omega}d_k$. Hence, $\seqn{V_{n,d(n)}}{n<\omega}\in \Fin\rr\Lambda$.

    \ref{LOOb}: By \Cref{G} it is enough to show that $\dfrak_{\Fin^\dual}(D,\bar H) = \dfrak_{\{\omega\}}(D,\bar H)$. The inequality $\leq$ is clear; for the converse, notice that, for any $\Lc_\Fin(D,\bar H)$-dominating $Y\subseteq \prod \bar H$, the set of finite modifications of members of $Y$ within $\bar H$ is $\la D, \prod\bar H, \in^{\{\omega\}} \ra$-dominating (because $b(n) = \bigcup H_n$ for all $n<\omega$), thus $\dfrak_{\Fin^\dual}(D,\bar H) \leq \dfrak_{\{\omega\}}(D,\bar H)\leq \max\left\{\kappa,\dfrak_{\Fin^\dual}(D,\bar H)\right\}$. But $\dfrak_{\Fin^\dual}(D,\bar H)\geq \kappa$, so $\dfrak_{\{\omega\}}(D,\bar H) =\dfrak_{\Fin^\dual}(D,\bar H)$.
\end{proof}

As a~consequence of \Cref{G},~\ref{LOO}, and the results of \Cref{sec:partcases}, we obtain:

\begin{corollary}\label{critical}
Let $\I,\J$ be ideals on $\omega$ and let $g,h\in{}^\omega\omega$ be such that\/ $\lim_{n\in\omega}g(n)=\infty$ and $h\geq^*1$.
\begin{alignat*}{5}
&\non{\mathrm{S}_1(\Gamma_h,\Gamma)}&&=\nonm,&\qquad& \non{\mathrm{S}_1(\Gamma_g,\Fin\rr\Lambda)} && =\cofn,\\
&\non{\mathrm{S}_1(\Gamma_h,\GammaB{{J}})}&&=\cLambda{h}{{J}}, && \non{\mathrm{S}_1(\Gamma_h,\LambdaB{J})} && =\tsl{h}{{J}},\\     
&\non{\mathrm{S}_1(\Gamma_{b,h},\GammaB{{J}})}&&=\cLambda{{b,h}}{{J}}, &&\non{\mathrm{S}_1(\Gamma_{b,h},\LambdaB{J})}&&=\tsl{{b,h}}{{J}},\\     
&\non{\isoneggfi{\J}}&&=\bfrak_\J, &&  \non{\isoneglfi{\J}} && =\dfrak_\J,\\
&\non{\isonegg{\I}{\J}}&&=\sla{\I,\J}, && \non{\Srm_1(I\rr\Gamma,\LambdaB{J})} &&=\dsla{\I,\J},\\
&\non{\isonemg{\J}}&&=\slamg{\J}, && \non{\Srm_1(\Omega,\LambdaB{J})} &&=\dslaml{\J}.
\end{alignat*}
Even more, when $J=\Fin$, $J\rr\Lambda$ can be replaced by $\OO$ in the right side column, as long as $g$ and $h$ are non-zero everywhere and $h(n)<|b(n)|$ for infinitely many $n<\omega$.\qed
\end{corollary}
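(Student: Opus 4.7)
The plan is to apply \Cref{G} line by line. Taking $D = \Baire$ throughout, the two useful identifications are $(R,R^{\rm c}) = (J,J^+)$, which gives $\non{\Srm_1(\Pwf\rr\Gamma,J\rr\Gamma)} = \slalome(\Baire,\Pwf,J)$, and $(R,R^{\rm c}) = (J^{\rm dc},\dfil{J})$, which gives $\non{\Srm_1(\Pwf\rr\Gamma,J\rr\Lambda)} = \slalomt(\Baire,\Pwf,J)$; in each case I then translate the result into the slalom notation of \Cref{def:Lc}.

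Running through the rows mechanically: picking $\Pwf = \Scal(b,h)$ or $\Scal(\omega,h)$ yields the $\Gamma_{b,h}$-- and $\Gamma_h$--rows directly; picking $\Pwf = {}^\omega I$ yields the $I\rr\Gamma$--row with values $\slalome(I,J)$ and $\slalomt(I,J)$, and specializing to $I = \Fin$ gives the $\Gamma$--row with $\bfrak_J = \slalome(\Fin,J)$ and $\dfrak_J = \slalomt(\Fin,J)$ as recalled at the start of \Cref{sec:partcases}. For the $\Omega$--row I would take $\Pwf$ to be property $\star$ (i.e., FUPC) and apply the property-level form of \Cref{G}, producing $\slalome(\star,J)$ and $\slalomt(\star,J)$. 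The two boxed equalities $\non{\Srm_1(\Gamma_h,\Gamma)} = \nonm$ and $\non{\Srm_1(\Gamma_g,\Fin\rr\Lambda)} = \cofn$ then follow by setting $J = \Fin$ and invoking \Cref{F3.3}, which identifies $\slalome(h,\Fin) = \nonm$ under $h \geq^*1$ and $\slalomt(g,\Fin) = \cofn$ under $g\to\infty$.

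For the addendum replacing $J\rr\Lambda$ by $\OO$ when $J = \Fin$, I would invoke \Cref{LOO}. Part~\ref{LOOa} handles the $\Srm_1(\Gamma,\cdot)$-- and $\Srm_1(I\rr\Gamma,\cdot)$--rows directly (taking $H = \Fin$ or $H = I$), and the very same splitting argument, applied to property $\star$, handles the $\Srm_1(\Omega,\cdot)$--row. Part~\ref{LOOb} handles the $\Gamma_g$, $\Gamma_h$, $\Gamma_{b,h}$--rows; its hypothesis $\bigcup H_n = b(n)$ reduces to $h(n) \geq 1$ everywhere, which is the stated ``non-zero everywhere'' condition, and its hypothesis $\dfrak_{\dfil{\Fin}}(\Baire,\bar H) \geq \sum_n |b(n)| = \aleph_0$ reduces to $\slalomt(b,h,\Fin) \geq \aleph_1$, which follows from \Cref{aboutbh}~\ref{bh4} and~\ref{bh6}: the hypothesis $h(n) < |b(n)|$ infinitely often rules out $\slalomt(b,h,\Fin) = 1$ via~\ref{bh4}, and then~\ref{bh6} forces this cardinal to be uncountable.

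I do not anticipate a serious obstacle: the whole argument is bookkeeping within the framework, with all the conceptual content already packaged in \Cref{G}, \Cref{F3.3}, \Cref{LOO}, and \Cref{aboutbh}. The only mildly delicate point is verifying the non-triviality hypothesis for \Cref{LOO}~\ref{LOOb} in the $\Gamma_{b,h}$--case when $b$ has finite components, which is precisely why the statement of the corollary imposes $h(n) < |b(n)|$ for infinitely many $n<\omega$.
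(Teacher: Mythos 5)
Your proposal is correct and is essentially the same proof the paper has in mind: the paper itself gives no argument beyond citing \Cref{G}, \Cref{LOO}, and the identities collected in \Cref{sec:partcases}, and your line-by-line instantiation of Theorem~\ref{G} (with the two dual choices $A=J^+$, $A^c=J$ and $A=J^\dual$, $A^c=J^{\rm dc}$), together with the \Cref{LOO} argument for the $\OO$-replacement, supplies exactly the bookkeeping being elided. Your handling of the $\Gamma_{b,h}$-row — using \Cref{aboutbh}~\ref{bh4} to exclude the value~$1$ from the hypothesis $h(n)<|b(n)|$ infinitely often and then \Cref{aboutbh}~\ref{bh6} to jump past $\aleph_0$, so that the $\kappa=\sum_n|b(n)|=\aleph_0$ threshold in \Cref{LOO}~\ref{LOOb} is met — is the right explanation of why the corollary imposes exactly those two side conditions, and your observation that the $\Omega$-row requires rerunning the splitting argument of \Cref{LOO}~\ref{LOOa} rather than literally citing it (since $\Omega$ is not an $H$-$\gamma$-cover notion for a fixed $H$) is a point the paper glosses over but that needs to be noticed.
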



The equalities in the bottom three lines of \Cref{critical} have been proven in~\cite{SoSu,Su22}. The equalities for $\sla{h,\J}$ and $\dsla{h,\J}$ were obtained in~\cite{SoDiz}. We visualize \Cref{critical} in~\Cref{SjednaABcard} and~\ref{critical_function}. In fact, the rows using a~function~$h$ in the latter diagram were not considered in~\cite{SoSu,Su22} and are new to this work.

\begin{figure}[ht]
\centering
\begin{tikzpicture}[scale=0.8]
\small{
\node (a1) at (-4.5, -3) {$\schema{\Omega}{\Gamma}$};
\node (a2) at (0, -3) {$\schema{\Omega}{\GammaB{{J}}}$};
\node (a3) at (4.5, -3) {$\schema{\Omega}{\LambdaB{{J}}}$};
\node (a4) at (9, -3) {$\schema{\mathcal{O}}{\mathcal{O}}$};
\node (aa1) at (-4.5, -3.7) {\footnotesize$\mathfrak{p}$};
\node (aa2) at (0, -3.7) {\footnotesize$\cLambda{\star}{{J}}$};
\node (aa3) at (4.5, -3.7) {\footnotesize$\tsl{\star}{{J}}$};
\node (aa4) at (9, -3.7) {\footnotesize$\covm$};

\node (b1) at (-4.5, 0) {$\schema{\GammaB{{I}}}{\Gamma}$};
\node (b2) at (0, 0) {$\schema{\GammaB{{I}}}{\GammaB{{J}}}$};
\node (b3) at (4.5, 0) {$\schema{\GammaB{{I}}}{\LambdaB{{J}}}$};
\node (b4) at (9, 0) {$\schema{\GammaB{{I}}}{\mathcal{O}}$};
\node (bb1) at (-4.5, -0.7) {\footnotesize$\min \{\covh{{I}}, \mathfrak{b}\}$};
\node (bb2) at (0, -0.7) {\footnotesize$\cLambda{{I}}{{J}}$};
\node (bb3) at (4.5, -0.7) {\footnotesize$\tsl{{I}}{{J}}$};
\node (bb4) at (9, -0.7) {\footnotesize$\tsl{{I}}{\fin}$};

\node (c1) at (-4.5, 3) {$\schema{\Gamma}{\Gamma}$};
\node (c2) at (0, 3) {$\schema{\Gamma}{\GammaB{{J}}}$};
\node (c3) at (4.5, 3) {$\schema{\Gamma}{\LambdaB{{J}}}$};
\node (c4) at (9, 3) {$\schema{\Gamma}{\mathcal{O}}$};
\node (cc1) at (-4.5, 2.3) {\footnotesize$\mathfrak{b}$};
\node (cc2) at (0, 2.3) {\footnotesize$\be_{{J}}$};
\node (cc3) at (4.5, 2.3) {\footnotesize$\de_{{J}}$};
\node (cc4) at (9, 2.3) {\footnotesize$\mathfrak{d}$};

\node (d1) at (-4.5, 6) {$\schema{\Gammah{h}}{\Gamma}$};
\node (d2) at (0, 6) {$\schema{\Gammah{h}}{\GammaB{{J}}}$};
\node (d3) at (4.5, 6) {$\schema{\Gammah{h}}{\LambdaB{{J}}}$};
\node (d4) at (9, 6) {$\schema{\Gammah{h}}{\mathcal{O}}$};
\node (dd1) at (-4.5, 5.3) {\footnotesize$\nonm$};
\node (dd2) at (0, 5.3) {\footnotesize$\cLambda{h}{{J}}$};
\node (dd3) at (4.5, 5.3) {\footnotesize$\tsl{h}{{J}}$};
\node (dd4) at (9, 5.3) {\footnotesize$\cofn$};
}
\foreach \from/\to in {a1/a2, a2/a3, a3/a4, b1/b2, b2/b3, b3/b4, c1/c2, c2/c3, c3/c4, d1/d2,d2/d3, d3/d4, a1/bb1,b1/cc1, c1/dd1, a2/bb2, b2/cc2, c2/dd2, a3/bb3, b3/cc3, c3/dd3, a4/bb4, b4/cc4, c4/dd4}
\draw [->] (\from) -- (\to);
\end{tikzpicture}
\caption{Critical cardinality of selection principles when $h\to\infty$. If $h\geq^* 1$ and $h\nrightarrow\infty$, $\cofn$ is replaced by $\cfrak$.}
\label{SjednaABcard}
\end{figure}

\begin{figure}[h!]
\begin{center}
\begin{tikzpicture}[]
\node (a) at (-5, -3.5) {?};
\node (na) at (-5, -3.9) {$\addn$};
\node (as) at (-5, -1) {$\mathrm{S}_1(\Gamma_{b,h},\Gamma)$};
\node (nas) at (-5, -1.4) {$\sla{b,h,\fin}$};
\node (b) at (-5, 1.5) {$\mathrm{S}_1(\Gamma_h,\Gamma)$};
\node (nb) at (-5, 1.1) {$\nonm$};
\node (aas) at (-2, -1) {$\mathrm{S}_1(\Gamma_{b,h},\GammaB{{J}})$};
\node (naas) at (-2, -1.4) {$\sla{b,h,\J}$};
\node (bb) at (-2, 1.5) {$\mathrm{S}_1(\Gamma_h,\GammaB{{J}})$};
\node (nbb) at (-2, 1.1) {$\sla{h,\J}$};
\node (cs) at (1, -1) {$\mathrm{S}_1(\Gamma_{b,h},\LambdaB{J})$};
\node (ncs) at (1, -1.4) {$\dsla{b,h,\J}$};
\node (d) at (1, 1.5) {$\mathrm{S}_1(\Gamma_h,\LambdaB{J})$};
\node (nd) at (1, 1.1) {$\dsla{h,\J}$};
\node (e) at (4, -3.5) {$\soneoo$};
\node (ne) at (4, -3.9) {$\covm$};
\node (es) at (4, -1) {$\mathrm{S}_1(\Gamma_{b,h},\OO)$};
\node (nes) at (4, -1.4) {$\dsla{b,h,\fin}$};
\node (f) at (4, 1.5) {$\mathrm{S}_1(\Gamma_h,\OO)$};
\node (nf) at (4, 1.1) {$\cofn$};
\foreach \from/\to in {a/nas, e/nes, d/f, b/bb, bb/d, as/nb, aas/nbb, cs/nd, es/nf, as/aas, aas/cs, cs/es, a/e} \draw [->] (\from) -- (\to);
\end{tikzpicture}
\end{center}
\caption{Selection principles for slaloms bounded by a~function $b$. The question mark indicates that it is not known which selection principle has $\addn$ as its critical cardinality. When $h\nrightarrow \infty$, replace $\cofn$ by $\cfrak$.}
\label{critical_function}
\end{figure}

Recall from~\cite{Comb1} that $\Srm_1(\Omega,\OO)$ and $\Srm_1(\OO,\OO)$ are equivalent principles.

Strict inequalities between two cardinal characteristics reflect the existence of spaces satisfying one selection principle but not the other. 

\begin{corollary}\label{nonsgamah}
If\/ $\non{\Srm_1^{D'}(\Pcal'\rr\Gamma,R'\rr\Gamma)} <\non{\Srm_1^D(\Pcal\rr\Gamma,R\rr\Gamma)}$ then there is an\/ $\Srm_1^D(\Pcal\rr\Gamma,R\rr\Gamma)$-space which is not an\/ $\Srm_1^{D'}(\Pcal'\rr\Gamma,R'\rr\Gamma)$-space.\qed








\end{corollary}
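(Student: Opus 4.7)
The plan is to observe that this is essentially immediate from the definition of the uniformity (critical cardinality) of a selection principle, and the only content is to spell out the witness. Set $\kappa:=\non{\Srm_1^{D'}(\Pcal'\rr\Gamma,R'\rr\Gamma)}$ and $\lambda:=\non{\Srm_1^D(\Pcal\rr\Gamma,R\rr\Gamma)}$, so by hypothesis $\kappa<\lambda$.

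First I would unpack the definition on the side of $\kappa$: by minimality of the critical cardinality, there exists a Hausdorff space $X$ with $|X|=\kappa$ that fails the selection principle $\Srm_1^{D'}(\Pcal'\rr\Gamma,R'\rr\Gamma)$. This is the candidate witness.

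Next I would invoke minimality on the side of $\lambda$ in the opposite direction: since $|X|=\kappa<\lambda=\non{\Srm_1^D(\Pcal\rr\Gamma,R\rr\Gamma)}$ and $\lambda$ is by definition the \emph{least} size of a Hausdorff space which is not an $\Srm_1^D(\Pcal\rr\Gamma,R\rr\Gamma)$-space, every Hausdorff space of cardinality strictly below $\lambda$ must satisfy $\Srm_1^D(\Pcal\rr\Gamma,R\rr\Gamma)$. In particular $X$ is such a space, so $X$ is an $\Srm_1^D(\Pcal\rr\Gamma,R\rr\Gamma)$-space while not being an $\Srm_1^{D'}(\Pcal'\rr\Gamma,R'\rr\Gamma)$-space, as required.

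There is really no obstacle here; the entire argument is the standard ``strict separation of critical cardinalities produces a separating example'' observation, and no use of the more substantive machinery of \Cref{G} or \Cref{critical} is needed beyond the definition of $\non{\,\cdot\,}$ itself. (One could alternatively phrase the witness via \Cref{G}, producing $X$ as a subspace of some $E_0$ in $\Pcal'$-$\Gamma$ of size $\kappa$, but this adds nothing to the logic of the proof.)
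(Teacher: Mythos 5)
Your proof is correct and is exactly the argument the paper considers immediate: the corollary carries a \texttt{\textbackslash qed} with no written proof, and the intended reasoning is precisely the standard ``strict inequality of critical cardinalities yields a separating example'' observation you spelled out. Nothing more is needed.
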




Another application of \Cref{critical} is the following consequence for cardinal invariants. 
\begin{corollary}
    Let $h\geq^*1$. Then, $\min\{\sla{h,J},\covh{J}\}\leq\nonm$.
\end{corollary}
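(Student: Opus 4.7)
The plan is to reduce the inequality to the claim that $\sla{h,J} > \nonm$ forces $\covh{J} \leq \nonm$; this suffices because $\sla{h,J} \geq \slalome(h,\Fin) = \nonm$ already follows from \Cref{L3.1.2} (applied to $\Fin \subseteq J$) and \Cref{F3.3}~\ref{F3.3a}. So assume $\sla{h,J} > \nonm$.

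First I would pass to the simpler case $h \equiv 1$. Since $h \geq^* 1$ gives $1 \leq^{\Fin^\dual} h$, and $\Fin \subseteq J$ yields $1 \leq^{J^\dual} h$, \Cref{L3.1} implies $\sla{h,J} = \slalome(h,J) \leq \slalome(1,J) = \sla{1,J}$, so $\sla{1,J} > \nonm$ as well, and it is enough to argue in this setting. Using $\nonm = \non(\Mwf)$, I would then fix a non-meager set $Y \subseteq \baire\omega$ of size $\nonm$ and set $S := \{s_y : y \in Y\} \subseteq \Scal(1)$ with $s_y(n) := \{y(n)\}$. This $S$ witnesses $\slalome(1,\Fin)$: for every $x \in \baire\omega$, the set
\[
\{y \in \baire\omega : \|x = y\| \text{ is infinite}\} \;=\; \bigcap_{k<\omega}\bigcup_{n \geq k}\{y : y(n) = x(n)\}
\]
is a dense $G_\delta$ (each inner union is clearly open dense), hence comeager, and so meets the non-meager $Y$, furnishing $y \in Y$ with $\|x \in s_y\| = \|x = y\|$ infinite.

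Since $|S| = \nonm < \sla{1,J}$, the family $S$ fails to witness $\slalome(1,J)$, so there is some $x_0 \in \baire\omega$ with $\|x_0 = y\| \in J$ for every $y \in Y$. Set
\[
A := \{\|x_0 = y\| : y \in Y\} \cap [\omega]^\omega \;\subseteq\; J \cap [\omega]^\omega,
\]
so $|A| \leq \nonm$. The core step will be showing that $A$ witnesses $\covst(J) = \covh{J}$. Given any $b \in [\omega]^\omega$, the set
\[
\{y \in \baire\omega : |\|x_0 = y\| \cap b| = \aleph_0\} \;=\; \bigcap_{k<\omega}\bigcup_{\substack{n \in b\\ n \geq k}}\{y : y(n) = x_0(n)\}
\]
is again comeager in $\baire\omega$: each inner union is open and dense because $b$ is infinite, so for any basic open neighbourhood fixing finitely many coordinates I can always extend the decision at a coordinate $n \in b$ beyond them to match $x_0(n)$. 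Thus $Y$ meets this comeager set, producing $y \in Y$ with $a := \|x_0 = y\| \in A$ satisfying $|a \cap b|$ infinite. Hence $\covh{J} \leq |A| \leq \nonm$.

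The main obstacle was locating the right witness of $\nonm$: a generic slalom family of size $\nonm$ need not have the ``restriction-dominates-on-every-$b$'' property, but the reduction to $h \equiv 1$ together with Baire category converts a non-meager $Y$ into a family where this property comes for free, uniformly in $b \in [\omega]^\omega$, which is precisely what is needed to upgrade the failure of $S$ to witness $\slalome(1,J)$ into a $\covst(J)$-witness.
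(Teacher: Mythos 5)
Your proof is correct, but it takes a genuinely different route to the conclusion than the paper's combinatorial proof. The paper reduces to $h=1$ (same first step as yours) and then invokes the Bartoszy\'nski--Judah matching-real characterization $\nonm = \bfrak(\Ed^*)$, $\covm = \dfrak(\Ed^*)$ for the relational system $\Ed^* = \la\Baire,[\omega]^{\aleph_0}\times\Baire,\eqcirc\ra$: given $F\subseteq\Baire$ of size below the minimum, the evasion property of $\sla{1,J}$ gives a single $y$ with $a_x:=\|x=y\|\in J$ for all $x\in F$, and then \emph{because the $a_x$'s are fewer than $\covst(J)$ they are not a $\covst$-witness}, yielding a $w\in[\omega]^{\aleph_0}$ with $|w\cap a_x|<\omega$ for all $x$, so $x\eqcirc(w,y)$ and $F$ is $\Ed^*$-bounded. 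You instead work directly with $\nonm = \non{\Mwf}$, fix a non-meager $Y$ of size $\nonm$, extract the evader $x_0$, and then \emph{prove that the evasion sets $\|x_0=y\|$ for $y\in Y$ form a $\covst(J)$-witness} by a per-$b$ Baire-category argument. The two proofs use the covering number $\covst(J)$ in opposite directions (negatively versus positively), and your version trades the cited matching characterization for a self-contained dense-$G_\delta$ computation, at the cost of redoing the category argument by hand; what it buys is an explicit $\covst(J)$-witness of size $\nonm$ rather than an abstract bounding statement. One cosmetic remark: the observation that $S$ witnesses $\slalome(1,\Fin)$ is never used in the logical chain (only $|S|=\nonm<\sla{1,J}$ is needed to extract $x_0$), so that paragraph can be trimmed, though it does motivate the construction.
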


\begin{proof}
If a topological space $X$ is both an $\schema{\Gammah{h}}{\GammaB{{J}}}$-space and a $[J\rr\Gamma,\Gamma]$-space, then $X$ is an $\schema{\Gammah{h}}{\Gamma}$-space. This implies that the minimum of the critical cardinalities of $\schema{\Gammah{h}}{\GammaB{{J}}}$ and $[J\rr\Gamma,\Gamma]$ is below $\non{\schema{\Gammah{h}}{\Gamma}}$. 
On the other hand, by \Cref{critical} $\non{\schema{\Gammah{h}}{\GammaB{{J}}}}=\sla{h,J}$, $\non{\schema{\Gammah{h}}{\Gamma}}=\nonm$, and $\non{[J\rr\Gamma,\Gamma]}= \pfrak_\Kat(J,\Fin) =\covh{J}$ (the latter was directly proved in~\cite{SoSu}). 

We also present a combinatorial proof. Since $\sla{h,J}\leq\sla{1,J}$, it is enough to work with $h=1$. 
We use that $\nonm$ and $\covm$ are the $\bfrak$ and $\dfrak$-numbers, respectively, of the relational system $\Ed^*:=\la \Baire,[\omega]^{\aleph_0}\times\Baire,\eqcirc\ra$, where $x\eqcirc (w,y)$ means that $x(i)\neq y(i)$ for all but finitely many $i\in w$ (see \cite{BJ} and \cite[Thm.~5.3]{CM23}). So let $F\subseteq\Baire$ of size ${<}\min\{\sla{1,J},\covh{J}\}$. Then, we can find some $y\in\Baire$ such that $a_x:=\|x = y\|\in J$ for all $x\in F$. Since $|F|<\covh{J}$, we can find some $w\in[\omega]^{\aleph_0}$ such that $w\cap a_x$ is finite for all $x\in F$, which implies $x \eqcirc (w,y)$.

The latter argument can be easily modified to show that $\covm\leq \max\{\slalome^\perp(1,J),\nonst(J)\}$.
\end{proof}

The rest of this section is devoted to studying topological properties of $\Srm_1^D(\Pcal\rr\Gamma,R\rr\Gamma)$-spaces, in particular, we show that in most cases these spaces are totally imperfect, i.e., they do not contain a subspace homeomorphic with the Cantor space. These results are generalizations of results obtained in~\cite{SoDiz}.

For $H\subseteq\pts(a)$, we say that a sequence $\seqn{V_m}{m\in a}$ of sets is \emph{$H$-wise disjoint} if $\bigcap_{m\in w}V_m=\emptyset$ for any $w\in\pts(a)\menos H$. In the case $H=[a]^{< q}$ for some $0<q<\omega$, we say that an $[a]^{< q}$-wise disjoint sequence is \emph{$q$-wise disjoint}, i.e.,\ $\bigcap_{m\in w}V_m = \emptyset$ for any $w\subseteq a$ of size $q$.


\begin{prop}\label{chargammac} 
Let $H\subseteq \pts(a)$ and let $X$ be a topological space. 
A~sequence\/ $\seqn{U_m}{m\in a}$ of open subsets of $X$ is an~$H$-$\gamma$-cover of~$X$ if and only if\/ $\set{X\smallsetminus U_m}{m\in a}$ is an\/~$H$-wise disjoint sequence of closed subsets of $X$. In particular, for $q<\omega$, $\seqn{U_m}{m\in a}$ is a~$\gamma_q$-cover if and only if\/ $\set{X\smallsetminus U_m}{m\in a}$ is a~${q+1}$-wise disjoint sequence of closed subsets of~$X$.
\end{prop}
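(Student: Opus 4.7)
The plan is to prove both implications by analyzing the elementary set-theoretic identity
\[
x\in \bigcap_{m\in w}(X\smallsetminus U_m) \iff w\subseteq \set{m\in a}{x\notin U_m},
\]
which directly links the ``set of exceptions'' for each point~$x$ (which controls the $H$-$\gamma$-cover condition) with non-emptiness of the corresponding intersections of complements (which controls $H$-wise disjointness). Since the $U_m$ are open, the $X\smallsetminus U_m$ are automatically closed, so this aspect needs no further comment.

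For the forward direction, I would argue by contrapositive. Suppose $\set{X\smallsetminus U_m}{m\in a}$ fails to be $H$-wise disjoint, so there exists $w\in\pts(a)\smallsetminus H$ and some $x\in \bigcap_{m\in w}(X\smallsetminus U_m)$. By the identity above, $w\subseteq \set{m\in a}{x\notin U_m}$. Since $H$ is $\subseteq$-downwards closed (as is the case in all instances considered, e.g.\ ideals and $[a]^{\le q}$), having $\set{m\in a}{x\notin U_m}\in H$ would force $w\in H$, a contradiction; hence $\seqn{U_m}{m\in a}$ is not an $H$-$\gamma$-cover.

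For the reverse direction, assume the sequence of complements is $H$-wise disjoint and fix any $x\in X$. Let $v:=\set{m\in a}{x\notin U_m}$. Trivially $x\in \bigcap_{m\in v}(X\smallsetminus U_m)$, so this intersection is non-empty, which by the $H$-wise disjointness assumption forces $v\in H$. As $x$ was arbitrary, $\seqn{U_m}{m\in a}$ is an $H$-$\gamma$-cover.

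The $\gamma_q$-cover statement is then the specialization to $H=[a]^{\le q}$: a subset $w\subseteq a$ lies outside $H$ iff $|w|\ge q+1$, and the condition ``$\bigcap_{m\in w}(X\smallsetminus U_m)=\emptyset$ whenever $|w|\ge q+1$'' is equivalent, by monotonicity of intersections, to the same property restricted to $|w|=q+1$, i.e.\ to $(q+1)$-wise disjointness. The only subtlety worth verifying is that the proof of the forward direction genuinely relies on downward closure of~$H$; in the intended applications this always holds, so no obstacle arises.
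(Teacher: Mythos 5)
Your proof is correct and follows essentially the same argument as the paper's, with the reverse direction being verbatim and the forward direction being the same argument phrased by contrapositive. Worth noting: you explicitly flag that the forward implication requires $H$ to be $\subseteq$-downward closed (otherwise $w\subseteq \set{m}{x\notin U_m}\in H$ does not give $w\in H$); the paper's proof uses this silently and the proposition's hypothesis does not mention it, so your observation is a genuine (if minor) tightening of the exposition, consistent with the paper's \Cref{rem:ainH}, which records downward closure as an implicit standing assumption on $H$.
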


\begin{proof} Assume that $\seqn{U_m}{m\in a}$ is an~$H$-$\gamma$-cover of~$X$. For $x\in X$,
$\set{m\in a}{x\in X\smallsetminus U_m} = \set{m\in a}{x\notin U_m}\in H$, so $\bigcap_{m\in w}X\menos U_n =\emptyset$ for any $w\subseteq a$ not in $H$.

Conversely, assume that $\set{X\smallsetminus U_m}{m\in a}$ is an~$H$-wise disjoint sequence of closed sets. Let $x\in X$ and $v:=\set{m\in a}{x\notin U_m}$. Then $x\in \bigcap_{m\in v} (X\smallsetminus U_m)$, so $v\in H$. Thus, the sequence $\seqn{U_m}{m\in a}$ is an~$H$-$\gamma$-cover of $X$.
\end{proof}

Continuous mappings preserve $H$-$\gamma$-covers.

\begin{lemma}\label{continuous} 
Let $X$ and $Y$ be two topological spaces, $H\subseteq\pts(a)$, and let
$f\colon X\rightarrow Y$ be a~continuous mapping. If\/ $\seqn{U_{m}}{m\in a}$ is an open~$H$-$\gamma$-cover of~$Y$, then\/ $\seqn{f^{-1}\llbracket U_m\rrbracket}{m\in a}$ is an~$H$-$\gamma$-cover of~$X$.
\end{lemma}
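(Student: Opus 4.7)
The plan is to check the two defining properties of an open $H$-$\gamma$-cover directly. First, the openness of each member of the family $\seqn{f^{-1}\llbracket U_m\rrbracket}{m\in a}$ is immediate from continuity of $f$ together with the assumption that each $U_m$ is open in $Y$.

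The remaining content is the membership condition in $H$. I would fix an arbitrary $x\in X$ and compute the ``set of exceptions''
\[
\set{m\in a}{x\notin f^{-1}\llbracket U_m\rrbracket}.
\]
The key observation is the elementary equivalence $x\notin f^{-1}\llbracket U_m\rrbracket \iff f(x)\notin U_m$, which shows that this set equals $\set{m\in a}{f(x)\notin U_m}$. Applying the assumption that $\seqn{U_m}{m\in a}$ is an $H$-$\gamma$-cover of $Y$ to the point $f(x)\in Y$, this set belongs to $H$, as required.

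There is no genuine obstacle here; the lemma is essentially a translation of the preimage operation through the definition of $H$-$\gamma$-cover from \Cref{def:covers}. The only minor point worth noting is that the argument is uniform in $H$ and does not require $H$ to be an ideal, downward closed, or to exclude $a$; it uses nothing beyond the set-theoretic identity $f^{-1}\llbracket U_m\rrbracket^{\rm c} = f^{-1}\llbracket U_m^{\rm c}\rrbracket$ and openness of preimages of open sets. Consequently, the same one-line verification would also give the analogous statement for $\omega$-covers (via the dual identity $F\subseteq f^{-1}\llbracket U_m\rrbracket \iff f\llbracket F\rrbracket\subseteq U_m$), should it be needed later.
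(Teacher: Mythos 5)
Your proof is correct and follows exactly the same route as the paper's: openness of the preimages from continuity, then the pointwise identity $\set{m\in a}{x\notin f^{-1}\llbracket U_m\rrbracket}=\set{m\in a}{f(x)\notin U_m}\in H$. The additional remarks about uniformity in $H$ and the $\omega$-cover case are accurate but not part of the paper's argument.
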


\begin{proof}
Let $\seqn{U_m}{m\in a}$ be an open $H$-$\gamma$-cover of $Y$. Fix $x\in X$. Since $f$ is continuous, $f^{-1}\llbracket U_m\rrbracket$ is open for each $m\in a$  and $\set{m \in a}{x\notin f^{-1}\llbracket U_m\rrbracket} = \set{m\in a}{f(x)\notin U_m}\in H$. Thus $\seqn{f^{-1}\llbracket U_m\rrbracket}{m\in a}$ is an~$H$-$\gamma$-cover of $X$.
\end{proof}

Since being an $\omega$-cover is equivalent to being an $I$-$\gamma$-cover for some ideal $I$, it follows that:

\begin{corollary}
    If $f\colon X\rightarrow Y$ is continuous and\/ $\seqn{U_{m}}{m\in a}$ is an open~$\omega$-cover of~$Y$, then\/ $\seqn{f^{-1}\llbracket U_m\rrbracket}{m\in a}$ is an~$\omega$-cover of~$X$.\qed
\end{corollary}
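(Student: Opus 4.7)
The plan is to give a direct combinatorial proof using the characterization of $\omega$-covers already stated in the paper: a sequence $\seqn{V_m}{m\in a}$ (possibly trivial) is an $\omega$-cover of a space precisely when $\set{m\in a}{F\subseteq V_m}$ is infinite for every finite $F$ contained in the space. Since continuity guarantees that each $f^{-1}\llbracket U_m\rrbracket$ is open in~$X$, only the ``infinite witness'' condition needs to be checked for preimages.

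First I would fix an arbitrary finite set $F\subseteq X$ and observe that $f[F]\subseteq Y$ is also finite. Applying the assumption that $\seqn{U_m}{m\in a}$ is an open $\omega$-cover of~$Y$ to this finite set yields that the index set
\[
W:=\set{m\in a}{f[F]\subseteq U_m}
\]
is infinite. Next I would note that for every $m\in W$, the inclusion $f[F]\subseteq U_m$ is equivalent to $F\subseteq f^{-1}\llbracket U_m\rrbracket$ (this is the standard adjunction for preimages of images, using only $F\subseteq f^{-1}[f[F]]$). Consequently
\[
W\subseteq \set{m\in a}{F\subseteq f^{-1}\llbracket U_m\rrbracket},
\]
so the right-hand side is also infinite. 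Since $F$ was arbitrary, the preimage sequence satisfies the defining condition for an $\omega$-cover of~$X$.

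There is no real obstacle here; the statement is essentially a transport-of-structure remark, and everything reduces to the fact that taking preimages preserves both openness (continuity) and the finite-subset testing condition defining $\omega$-covers.

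Alternatively, and perhaps more elegantly given the paper's viewpoint, one could invoke the equivalence recorded just before \Cref{def:seqcovers}: pick an ideal $I\subseteq\pts(a)$ witnessing that $\seqn{U_m}{m\in a}$ is an $I$-$\gamma$-cover of~$Y$, then apply \Cref{continuous} to conclude that $\seqn{f^{-1}\llbracket U_m\rrbracket}{m\in a}$ is an $I$-$\gamma$-cover of~$X$, hence an $\omega$-cover of~$X$. Either formulation gives a two-line proof; the direct one is preferable because it avoids invoking the ideal-witness characterization and instead uses the more elementary finite-subset definition.
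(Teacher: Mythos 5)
Your direct proof is correct, and your ``alternative'' is exactly the route the paper takes: the text immediately preceding the corollary supplies the observation that a sequence is an $\omega$-cover iff it is an $I$-$\gamma$-cover for some ideal $I$ on $a$, and then the corollary is stated as a direct consequence of \Cref{continuous} with no written proof. So your second paragraph reconstructs the paper's intended argument, while your primary argument is a genuinely different and more elementary route: rather than passing through the ideal-witness characterization, it works straight from the defining condition that $\set{m\in a}{F\subseteq V_m}$ is infinite for every finite $F$, together with the elementary equivalence $f[F]\subseteq U_m \iff F\subseteq f^{-1}\llbracket U_m\rrbracket$ (which in fact makes your inclusion $W\subseteq\set{m\in a}{F\subseteq f^{-1}\llbracket U_m\rrbracket}$ an equality, though the inclusion suffices). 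What the direct proof buys is self-containedness and transparency; what the paper's approach buys is uniformity, since \Cref{continuous} already handles all $H$-$\gamma$-covers at once, and the corollary then falls out by specializing $H$ to an appropriate ideal. Both are complete and correct.
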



The selection principle $\Srm_1^D(\Pcal\rr\Gamma,R\rr\Gamma)$ is preserved under continuous images and closed subsets.

\begin{lemma}\label{spojitostSGHJ}
Assume that $X$ is an\/ $\Srm_1^D(\Pcal\rr\Gamma,R\rr\Gamma)$-space.
\begin{enumerate}[label = \normalfont (\alph*)]
    \item\label{sp-a} If $f\colon X\to Y$ is a continuous surjection then $Y$ is an\/ $\Srm_1^D(\Pcal\rr\Gamma,R\rr\Gamma)$-space.
    \item\label{sp-b} If $S\cup\{\overline{\emptyset}\}\in\Pcal$ for all $S\in\Pcal$ and $Z\subseteq X$ is closed, then $Z$ is an\/ $\Srm_1^D(\Pcal\rr\Gamma,R\rr\Gamma)$-space.
\end{enumerate}
The same is valid when $R\rr\Gamma$ is replaced by $\Omega$.
\end{lemma}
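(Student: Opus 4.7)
For part~\ref{sp-a}, given a continuous surjection $f\colon X\to Y$ and a sequence $\mathbf{V}=\seqn{\seqn{V_{n,m}}{m\in b(n)}}{n<\omega}\in\Pcal\rr\Gamma(Y)$, the plan is to pull everything back to $X$ and apply the selection principle there. Set $U_{n,m}:=f^{-1}\llbracket V_{n,m}\rrbracket$, which is open since $f$ is continuous. The crucial observation is that for every $x\in X$ and every $n<\omega$,
\[
\set{m\in b(n)}{x\notin U_{n,m}}=\set{m\in b(n)}{f(x)\notin V_{n,m}},
\]
so the family of exception-sequences produced from $\mathbf{U}$ on $X$ equals the family produced from $\mathbf{V}$ on $Y$ (using that $f$ is onto). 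In particular this family has property~$\Pcal$, so $\mathbf{U}\in\Pcal\rr\Gamma(X)$. Invoking $\Srm_1^D(\Pcal\rr\Gamma,R\rr\Gamma)$ on $X$ produces a $d\in D$ with $\seqn{U_{n,d(n)}}{n<\omega}$ an $R$-$\gamma$-cover of $X$; then surjectivity gives, for any $y\in Y$ with $y=f(x)$, that $\|y\notin V_{\cdot,d(\cdot)}\|=\|x\notin U_{\cdot,d(\cdot)}\|\in R$, as needed.

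For part~\ref{sp-b}, given a closed $Z\subseteq X$ and $\mathbf{V}\in\Pcal\rr\Gamma(Z)$, write $V_{n,m}=W_{n,m}\cap Z$ for some $W_{n,m}$ open in $X$, and set
\[
U_{n,m}:=W_{n,m}\cup(X\smallsetminus Z),
\]
which is open in $X$. For $z\in Z$ one has $\set{m}{z\notin U_{n,m}}=\set{m}{z\notin V_{n,m}}$, while for $x\in X\smallsetminus Z$ all $U_{n,m}$ contain $x$, so the corresponding exception-sequence is~$\overline\emptyset$. Hence the exception-family produced from $\mathbf{U}$ on $X$ is exactly $S\cup\{\overline\emptyset\}$, where $S$ is the one produced from $\mathbf{V}$ on $Z$. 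The hypothesis on~$\Pcal$ is precisely what guarantees this lies in~$\Pcal$, so $\mathbf{U}\in\Pcal\rr\Gamma(X)$. Applying the selection principle on $X$ gives $d\in D$ with $\seqn{U_{n,d(n)}}{n<\omega}$ an $R$-$\gamma$-cover of $X$; restricting to $z\in Z$ and using $U_{n,d(n)}\cap Z=V_{n,d(n)}$ yields that $\seqn{V_{n,d(n)}}{n<\omega}$ is an $R$-$\gamma$-cover of~$Z$.

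For the variant where $R\rr\Gamma$ is replaced by $\Omega$, I would repeat the same constructions and verify that the chosen sub-sequence remains an $\omega$-cover of the target space: in~\ref{sp-a} any finite $F\subseteq Y$ lifts (by surjectivity) to a finite $F'\subseteq X$ with $F\subseteq V_{n,d(n)}\iff F'\subseteq U_{n,d(n)}$, and in~\ref{sp-b} a finite $F\subseteq Z$ satisfies $F\subseteq V_{n,d(n)}\iff F\subseteq U_{n,d(n)}$; in both cases the infiniteness of the index set is inherited from $X$.

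The construction is routine; the only genuinely delicate point is the preservation of property~$\Pcal$ when one passes to the enlarged family on the ambient space. In~\ref{sp-a} this is automatic from surjectivity (the families are literally equal), whereas in~\ref{sp-b} passing from $Z$ to $X$ introduces exactly one extra exception-sequence, namely $\overline\emptyset$, which is why the hypothesis $S\cup\{\overline\emptyset\}\in\Pcal$ is precisely the right one and cannot in general be dropped.
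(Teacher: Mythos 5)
Your proof is correct and follows essentially the same route as the paper's: pull back along $f^{-1}$ in part~(a), extend by $X\smallsetminus Z$ in part~(b), observe that the exception-families are preserved, and apply the selection principle on~$X$. Your treatment of part~(b) is slightly more explicit than the paper's (you spell out why the enlarged sets are open in~$X$ and identify the new exception-sequence as~$\overline{\emptyset}$), and you supply the $\Omega$-variant argument which the paper leaves unstated; these are improvements in exposition, not a different method.
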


\begin{proof}
\ref{sp-a}: Let $\seqn{\seqn{U_{n,m}}{m \in b(n)}}{n\in\omega}\in\Pcal\rr\Gamma(Y)$. Notice that
\begin{align*}
\bigset{\seqn{\set{m\in b(n)}{x\notin f^{-1}\llbracket U_{n,m}\rrbracket}}{n<\omega}}{x\in X} & = \bigset{\seqn{\set{m\in b(n)}{f(x)\notin  U_{n,m}}}{n<\omega}}{x\in X} \\ 
& = \bigset{\seqn{\set{m\in b(n)}{y\notin U_{n,m}}}{n<\omega}}{y\in Y}\in \Pcal,
\end{align*}
so $\seqn{\seqn{f^{-1}\llbracket U_{n,m}\rrbracket}{m \in b(n)}}{n\in\omega}\in\Gamma\rr\Pcal(X)$.
Consequently, since $X$ is an $\Srm_1^D(\Pcal\rr\Gamma,R\rr\Gamma)$-space, 
there is some $d\in D$ such that $\seqn{f^{-1}\llbracket U_{n,d(n)}\rrbracket}{n\in\omega} \in R\rr\Gamma(X)$. For $y\in Y$, if $x\in X$ and $y=f(x)$, then
\[
\set{n\in\omega}{y\notin U_{n,d(n)}} = 
\set{n\in\omega}{x\notin f^{-1}\llbracket U_{n,\varphi(n)}\rrbracket}\in R.
\]
Thus $\seqn{U_{n,d(n)}}{n\in\omega}$ is an~$R$-$\gamma$-cover of $Y$. 

\ref{sp-b}: Let $\seqn{\seqn{U_{n,m}}{m \in b(n)}}{n\in\omega}\in\Gamma\rr\Pcal(Z)$. Set $V_{n,m}:= U_{n,m}\cup (X\menos Z)$, which is open in $X$. Then, 
\begin{align*}
\bigset{\seqn{\set{m\in b(n)}{x\notin V_{n,m}}}{n<\omega}}{x\in X} = \bigset{\seqn{\set{m\in b(n)}{x\notin  U_{n,m}}}{n<\omega}}{x\in Z}\cup\{\overline{\emptyset}\}\in \Pcal.
\end{align*}
Since $X$ is an $\Srm_1^D(\Pcal,R\rr\Gamma)$-space, 
there is some $d\in D$ such that $\seqn{V_{n,d(n)}}{n\in\omega} \in R\rr\Gamma(X)$. Then, for $x\in Z$,
\[\set{n<\omega}{x\notin V_{n,d(n)}} = \set{n<\omega}{x\notin U_{n,d(n)}}\in R.\]
Thus $\seqn{U_{n,d(n)}}{n\in\omega}$ is an~$R$-$\gamma$-cover of $Z$. 
\end{proof}

As a particular case, we emphasize the weakest versions of our selection principles, given by $\schema{\Gammah{h}}{\Gamma}$, $\schema{\Gammah{h}}{\OO}$ and with $h=q$ constant. 
These principles are related as in \Cref{schema-gamah-gamma}.

\begin{figure}[ht]
\centering
\begin{tikzpicture}[scale=0.8]
\node (b) at (-4.5, 0) {$\schema{\Gamma}{\Gamma}$};
\node (c) at (0, 0) {$\schema{\Gammah{h}}{\Gamma}$};
\node (d) at (-4.5, 1.5) {$\schema{\Gammah{q}}{\Gamma}$};
\node (e) at (0, 1.5) {$\schema{\Gammah{1}}{\Gamma}$};
\node (f) at (4.5, 0) {$\schema{\Gammah{h}}{\OO}$};
\node (g) at (4.5, 1.5) {$\schema{\Gammah{1}}{\OO}$};

\foreach \from/\to in {b/c, b/d, d/e, c/e,c/f,e/g,f/g}
\draw [->] (\from) -- (\to);
\end{tikzpicture}
\caption{Relations with respect to the well-known $\schema{\Gamma}{\Gamma}$-space.}
\label{schema-gamah-gamma}
\end{figure}

J.~Gerlits and Zs.~Nagy~\cite{GerNag} have introduced the~notion of a~$\gamma$-set, i.e., a~topological space with every $\omega$-cover having a~$\gamma$-subcover. They have shown that a~topological space $X$ is a~$\gamma$-set if and only if $X$ is an $\Srm_1(\Omega,\Gamma)$-space. Hence, all $\gamma$-sets are examples of~$\schema{\Gammah{h}}{\Gamma}$-spaces. On the other hand, we show that topological spaces satisfying the selection principle are totally imperfect. Notice that $\Srm_1(\Gamma_1,\OO)$ is the weakest among all the interesting selection principles. In fact, if $\Pcal$ is $\subseteq$-downwards closed, $\Scal(b,1)\in\Pcal$ and $\omega\notin R$, then any $\Srm^D_1(\Pcal\rr\Gamma,R\rr \Gamma)$-space is $\Srm_1(\Gamma_1,\OO)$ (likewise if replacing $R\rr\Gamma$ by $\Omega$).

\begin{theorem}\label{CantorShfin}
\startlist 
\begin{enumerate}[label = \normalfont (\alph*)]
    \item\label{CSf1} The Cantor space is not\/ $\Srm_1(\Gamma_1,\OO)$.
    \item\label{CSf2} Any Hausdorff\/ $\Srm_1(\Gamma_1,\OO)$-space is totally imperfect.
    \item\label{CSf3} No uncountable Polish space is\/ $\Srm_1(\Gamma_1,\OO)$.
\end{enumerate}
\end{theorem}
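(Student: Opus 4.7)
The plan is to attack the three items in order; the non-trivial work is in part~\ref{CSf1}, and parts~\ref{CSf2},~\ref{CSf3} are short deductions.

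For~\ref{CSf1}, I will exploit the self-similarity $2^\omega\cong(2^\omega)^\omega$ (via any bijection $\omega\times\omega\to\omega$), writing $x^{(n)}$ for the $n$-th coordinate of $x\in(2^\omega)^\omega$. Fix a pairwise disjoint family $\set{B_m}{m<\omega}$ of non-empty clopen subsets of $2^\omega$; concretely, $B_m:=[0^m1]$ works. For each $n,m<\omega$ set
\[
P^n_m:=\set{x\in(2^\omega)^\omega}{x^{(n)}\in B_m},\qquad V^n_m:=2^\omega\smallsetminus P^n_m.
\]
Each $P^n_m$ is a non-empty clopen set, so $V^n_m$ is a proper open subset; and for fixed $n$, the sets $\set{P^n_m}{m<\omega}$ are pairwise disjoint because the $B_m$'s are. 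Hence, by \Cref{chargammac}, $\seqn{V^n_m}{m<\omega}$ is a non-trivial open $\gamma_1$-cover of $2^\omega$ for every $n$, so $\seqn{\seqn{V^n_m}{m<\omega}}{n<\omega}\in\Gamma_1\text{-}\Gamma$. Given any $d\in\Baire$, since each $B_{d(n)}$ is non-empty, we can pick $y^{(n)}\in B_{d(n)}$ and define $x\in(2^\omega)^\omega$ by these coordinates, whereby $x\in\bigcap_{n}P^n_{d(n)}$ and consequently $x\notin\bigcup_{n}V^n_{d(n)}$. Thus no selection produces an open cover, and $2^\omega$ fails $\Srm_1(\Gamma_1,\OO)$.

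For~\ref{CSf2}, suppose $X$ is a Hausdorff $\Srm_1(\Gamma_1,\OO)$-space and, toward a contradiction, $Y\subseteq X$ is homeomorphic to $2^\omega$. Then $Y$ is compact and $X$ is Hausdorff, so $Y$ is closed in~$X$. Since $\overline{\emptyset}\in\Scal(\omega,1)$, for every $S\subseteq\Scal(\omega,1)$ we have $S\cup\{\overline{\emptyset}\}\subseteq\Scal(\omega,1)$, so the hypothesis of \Cref{spojitostSGHJ}~\ref{sp-b} is satisfied. Hence $Y$ inherits $\Srm_1(\Gamma_1,\OO)$, contradicting~\ref{CSf1}. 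For~\ref{CSf3}, by Cantor--Bendixson every uncountable Polish space contains a homeomorphic copy of $2^\omega$, so~\ref{CSf2} applies.

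The only real obstacle is in~\ref{CSf1}: because $2^\omega$ is compact, it cannot be partitioned into infinitely many non-empty clopen pieces, so the naïve ``diagonalize along one clopen partition per level'' construction is not available. The product decomposition $2^\omega\cong(2^\omega)^\omega$ bypasses this by letting each level~$n$ act only on a single factor, so the pairwise disjoint family $\set{B_m}{m<\omega}$ (which does not need to exhaust $2^\omega$) is enough to produce a non-trivial $\gamma_1$-cover at that level while leaving the other coordinates free to realize any prescribed selection.
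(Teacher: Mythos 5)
Your proof is correct and uses essentially the same approach as the paper: both exploit the self-similarity $2^\omega\cong 2^{\omega\times\omega}$ (via a bijection $\omega\times\omega\to\omega$, or equivalently $(2^\omega)^\omega$) to produce at each level $n$ a pairwise-disjoint family of closed sets depending only on the $n$-th column, so that the independence of the columns lets any selection $d$ miss a point. The only superficial difference is that the paper's witnesses $F_{n,m}$ (pinning the $n$-th column to the characteristic function of $\{m\}$) are closed but not open, whereas your $P^n_m$ (constraining only a finite initial segment of $x^{(n)}$) are clopen — an immaterial distinction for the argument.
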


\begin{proof}
\ref{CSf1}: Fix a bijection $f\colon{\omega\times\omega}\to\omega$. For $n,m<\omega$, define closed set 
$$F_{n,m}=\set{x\in{}^\omega 2}{x(f(n,m))=1,\ x(f(n,i))=0 \text{ for all $i\neq m$}}.$$  
The sequence $\seqn{{}^\omega2\smallsetminus F_{n,m}}{m\in\omega}$ is a~$\gamma_1$-cover for any $n\in\omega$. Assume that $d\in{}^\omega\omega$ and define $x\in{}^\omega 2$ by
\begin{equation*}
x(i) = \begin{cases}
1& \text{if }\exists n\in\omega\ (f(n,\varphi(n))=i),\\
0&\text{otherwise.}
\end{cases}
\end{equation*}
It is clear that $x\in F_{n,d(n)}$ for each $n\in\omega$, thus $\seqn{{}^\omega 2\smallsetminus F_{n,d(n)}}{n\in\omega}$ does not cover ${}^\omega 2$.

\ref{CSf2}: It follows from the fact that no $\Srm_1(\Gamma_1,\OO)$ contains a subspace isomorphic with the Cantor space by \Cref{spojitostSGHJ} and~\ref{CSf1}.

\ref{CSf3}: By~\ref{CSf2} because no uncountable Polish space is totally imperfect.
\end{proof}

\section{Consistency results}\label{sec:forcing}

This section aims to show the behavior of our slalom numbers in forcing models. We focus on models constructed via finite support iteration and pay special attention to the effect of adding Cohen reals.

As usual in forcing arguments, we work in a~ground model~$V$ unless otherwise indicated. For two posets $\Por$ and $\Qor$, $\Por\subsetdot \Qor$ means that the inclusion map is a complete embedding from $\Por$ into $\Qor$. 
When $\seqn{\Por_\alpha}{\alpha\leq\beta}$ is a~$\subsetdot$-increasing sequence of posets (like an iteration) and $G$ is $\Por_\beta$-generic over $V$, we denote, for $\alpha\leq\beta$,
$G_\alpha:= \Por_\alpha\cap G$ and $V_\alpha := V[G_\alpha]$. If $\Por_{\alpha+1}$ is obtained by a~two-step iteration $\Por_\alpha\ast\Qnm_\alpha$, $G(\alpha)$ denotes the $\Qnm[G_\alpha]$-generic
set over $V_\alpha$ such that $V_{\alpha+1}= V_\alpha[G(\alpha)]$ (i.e., $G_{\alpha+1} = G_\alpha\ast G(\alpha)$). We use $\Vdash_\alpha$ to denote the forcing relation on $\Por_\alpha$, and $\leq_\alpha$ to denote its order relation (although we use $\leq$ when clear from the context). 

\subsection{Effect of Cohen reals}\label{sec:cohen}

Recall the following well-known result from Canjar.

\begin{lemma}[Canjar~{\cite{Canjar2}}]\label{Jcohen}
Let $\J\subseteq\pts(\omega)$ be a~family with the FUP. If $c\in \Baire$ is Cohen over~$V$, then
\[
\J\cup \set{\set{i<\omega}{c(i)<x(i)}}{x\in\Baire \cap V} \text{ has the FUP.}
\]
As a consequence, $x\leq^{{J'}^{\dual}}c$ for all $x\in\Baire\cap V$, where $J'$ is the ideal on $\omega$ generated by the family above. Moreover, any $J$-positive set in $V$ is $J'$-positive, i.e.,\ $J'\cap V = J$.\qed
\end{lemma}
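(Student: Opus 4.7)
The plan is to reduce the whole statement to a single density argument about Cohen forcing $\Cbb=\Fn(\omega,\omega)$, whose generic object is the function $c\colon\omega\to\omega$. The key density claim I would isolate first is: for every infinite $a\in V$, every $y\in\Baire\cap V$, and every $k<\omega$, the set
\[
D_{a,y,k}=\bigset{p\in\Fn(\omega,\omega)}{|\{i\in\dom p\cap a: p(i)\ge y(i)\}|\ge k}
\]
is dense in $\Cbb$. This is routine: given $p_0$, pick $k$ fresh indices $i_0,\dots,i_{k-1}\in a\menos\dom p_0$ and extend $p_0$ by setting the new values to $y(i_j)$. Consequently, for every infinite $a\in V$ and every $y\in V\cap\Baire$, the Cohen generic satisfies that $\set{i\in a}{c(i)\ge y(i)}$ is infinite.

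The FUP claim then drops out. Given finitely many $a_0,\dots,a_{m-1}\in J$ and $x_0,\dots,x_{n-1}\in\Baire\cap V$, set $a:=\bigcup_{j<m}a_j$ and $y(i):=\max_{j<n}x_j(i)$. Since $J$ has the FUP in $V$, the set $\omega\menos a$ is infinite, so applying the density claim with $\omega\menos a$ in place of $a$ yields infinitely many $i\notin a$ with $c(i)\ge y(i)$; i.e.\ the complement of
\[
a\cup\bigcup_{j<n}\set{i<\omega}{c(i)<x_j(i)}
\]
is infinite. Thus no finite union from $J\cup\bigset{\set{i<\omega}{c(i)<x(i)}}{x\in\Baire\cap V}$ is cofinite, which is the FUP in $V[c]$. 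The statement $x\le^{{J'}^\dual}c$ for $x\in V\cap\Baire$ then just unpacks the definition of $J'$: the set $\set{i<\omega}{c(i)<x(i)}$ is already a generator of $J'$.

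For the last assertion $J'\cap V=J$---reading $J$ here as the ideal it generates in $V$, so that ``$J$-positive'' means outside that ideal---I would argue by contradiction. Assume some $a\in V$ lies in $J'\menos J$. Then $a\subseteq a_0\cup\bigcup_{j<n}\set{i}{c(i)<x_j(i)}$ for some $a_0\in J$ and $x_0,\dots,x_{n-1}\in V\cap\Baire$. Since $J$ is an ideal and $a\notin J$, the difference $a\menos a_0$ fails to lie in $J$ and in particular is infinite (as $J\supseteq\Fin$). Applying the density claim to $a\menos a_0\in V$ and $y:=\max_{j<n}x_j$ produces some $i\in a\menos a_0$ with $c(i)\ge y(i)$, contradicting $a\menos a_0\subseteq\set{i}{c(i)<y(i)}$.

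The whole proof pivots on the density claim, which is elementary; the only subtle step is the last one, where one must verify that $a\menos a_0$ both lies in $V$ and is infinite before invoking the density argument, and mentally identify $J$ with the ideal it generates so that the advertised equality $J'\cap V=J$ actually parses as a statement about ideals.
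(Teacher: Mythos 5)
Your proof is correct. The paper itself gives no argument for this lemma, attributing it to Canjar and placing the \qed directly after the statement; but the density argument you supply is the standard one, and it closely parallels the proofs the paper \emph{does} give for the analogous Cohen-real Lemmas~\ref{cohen-slt_h} and~\ref{clm:cohenev} (work in $V$, fix a finite subfamily of the prospective ideal and a $J$-positive set of admissible coordinates, and extend an arbitrary condition to defeat all of them at once). Your one organizational twist---isolating a single ``master'' dense set $D_{a,y,k}$ parametrized by an infinite $a\in V$ and then specializing $a$ to $\omega\smallsetminus\bigcup a_j$ for the FUP claim and to $a\smallsetminus a_0$ for the preservation of $J$-positivity---is a clean way to get both conclusions from one computation; the paper's analogues instead state the density directly for the FUP claim and then remark (as in the last sentence of the proof of Lemma~\ref{cohen-slt_h}) that the same density argument works relative to any $a'\in J^+$. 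You are also right to flag the interpretive point that $J$ must be identified with the ideal it generates for ``$J'\cap V=J$'' to parse, and you correctly verify that $a\smallsetminus a_0$ lies in $V$ and is infinite (being $J$-positive) before invoking genericity.
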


We extend this result in connection to slalom numbers. First, fix some notation.

\begin{notation}
Let $J$ be an ideal on $\omega$. We say that a~function $h\in\Baire$ is \emph{$J$-unbounded} if the set $\set{n<\omega}{h(n)\geq k}$ is not in $J$ for all $k<\omega$, and that $\lim^J h = \infty$ if $\set{n<\omega}{h(n)< k}\in J$ for all $k<\omega$.
\end{notation} 

\begin{lemma}\label{cohen-slt_h}
Let $J$ be an ideal on $\omega$ (or just a family with the FUP) and let $h\in\Baire$ such that $h$ is $J$-unbounded. If $c\in\prod_{n\in\omega}[\omega]^{\leq h(n)}$ is Cohen over $V$, then $V[c]\models J\cup \set{\set{n<\omega}{x(n)\not\in c(n)}}{x\in\Baire \cap V}$ has the FUP. In particular, this set generates an ideal $J'$ such that $x\in^{{J'}^\dual} c$ for all $x\in\Baire\cap V$. Moreover, $\lim^{J'} h =\infty$ and, whenever\/ $\lim^J h=\infty$, $J'\cap V = J$.
\end{lemma}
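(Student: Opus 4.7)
The proof will generalize Canjar's \Cref{Jcohen} by replacing the Cohen forcing on $\Baire$ with the countable atomless poset $\Cbb_h$ of finite partial functions $p$ with $p(n)\in[\omega]^{\leq h(n)}$ for $n\in\dom p$, ordered by reverse inclusion. That this poset is nontrivial uses that $h$ is $J$-unbounded (so $h(n)\geq 1$ on an infinite set), and the generic object $c\in\prod_n[\omega]^{\leq h(n)}$ is the union of the generic filter. In what follows, I write $A:=J\cup\set{\|x\notin c\|}{x\in\Baire\cap V}$.

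The plan for the FUP is a straightforward density argument. Since $J$ is closed under finite unions, it suffices to show that for any $a\in J$ and any $x_0,\ldots,x_{k-1}\in\Baire\cap V$, the set $b(a,\bar x):=\{n\notin a:\{x_i(n):i<k\}\subseteq c(n)\}$ is infinite in $V[c]$. Given $p\in\Cbb_h$ and $N\in\omega$, use that $h$ is $J$-unbounded to conclude $\|h\geq k\|\notin J$, hence $\|h\geq k\|\smallsetminus a\smallsetminus\dom p$ is infinite (since $J$ contains finite sets); pick $n\geq N$ in it and extend $p$ by setting $q(n):=\{x_i(n):i<k\}$, which has size $\leq k\leq h(n)$. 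Then $q$ forces $n\in b(a,\bar x)$, so $b(a,\bar x)$ is infinite by density. This shows $A$ has the FUP in $V[c]$, so the family generates an ideal $J'$ with $\|x\notin c\|\in J'$, that is, $x\in^{{J'}^\dual}c$ for all $x\in\Baire\cap V$.

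For $\lim^{J'}h=\infty$, I use the constant functions $x_i\in V$ defined by $x_i(n):=i$ for $i<k$. Whenever $h(n)<k$ we have $|c(n)|\leq h(n)<k$, so some $i<k$ satisfies $i\notin c(n)$; therefore $\|h<k\|\subseteq\bigcup_{i<k}\|x_i\notin c\|\in J'$, giving $\|h<k\|\in J'$ for every $k$.

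The main point, and the only place where the stronger assumption $\lim^Jh=\infty$ is needed, is $J'\cap V=J$. Given $b\in J'\cap V$, the description of the generated ideal yields $a\in J$ and $x_0,\ldots,x_{k-1}\in\Baire\cap V$ with $b\subseteq a\cup\bigcup_{i<k}\|x_i\notin c\|$ (finite-set summands can be absorbed into $a$). Assume toward contradiction that $b\notin J$. Then $b':=b\smallsetminus a\smallsetminus\|h<k\|$ is $J$-positive because $a,\|h<k\|\in J$; in particular $b'$ is infinite and is an element of $V$. Repeating the density argument with $n$ chosen in $b'\smallsetminus\dom p$ (valid since $h(n)\geq k$ on $b'$), we extend any $p$ to $q:=p\cup\{(n,\{x_i(n):i<k\})\}$, which forces $n\in b\cap\bigcap_{i<k}\|x_i\in c\|$; hence $b\nsubseteq a\cup\bigcup_{i<k}\|x_i\notin c\|$ in $V[c]$, contradicting $b\in J'$. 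The only real obstacle is this last step: the assumption $\lim^Jh=\infty$ is exactly what lets one discard the ``small-$h$'' exceptions when selecting the density witness inside the $V$-ground set $b$.
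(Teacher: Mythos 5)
Your proof is correct and follows essentially the same approach as the paper: a density argument in the finite-support Cohen poset for the FUP, constant functions in $V$ to show $\lim^{J'}h=\infty$, and a refined density argument (selecting the new index $n$ inside a given $J$-positive ground-model set, possible exactly when $\lim^J h=\infty$) to get $J'\cap V=J$. The only cosmetic difference is that the paper phrases the last step directly as ``every $a'\in J^+$ of $V$ stays $J'$-positive,'' whereas you phrase it as a contradiction from a hypothetical representation of $b\in J'\cap V$; these are the same argument.
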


\begin{proof}
In this proof, we consider Cohen forcing $\Cor$ as the set of conditions $p\in\prod_{i\in u}[\omega]^{\leq h(i)}$ for some $u\in\Fin$, ordered by $\supseteq$. We denote the name of its generic real by $\dot c$.

Working in $V$,
suppose that $F\subseteq\baire\omega$ is finite, $a\in J$, $k<\omega$ and $p\in\Cor$. It is enough to prove that there is a~$q\leq p$ and an $m\in \omega\menos a$ such that $h(m)\geq k$ and $q\Vdash (\forall x\in F)\ x(m)\in \dot c(m)$. Since $h$ is $J$-unbounded, there is an $m\in \omega\menos (a\cup \dom p)$ such that $\max\{k,|F|\}\leq h(m)$. Next, define a~function $q\supseteq p$ such that $\dom q:=\dom p\cup\{m\}$ and $q(m):=\set{x(m)}{x\in F}$. Note that $|q(m)|\leq|F|\leq h(m)$, so $q\in\Cor$ and $q\leq p$. It is clear that $q$ forces what we want.

By using the constant functions in $\Baire$, since $c$ dominates $\Baire\cap V$, we can conclude that $\lim^{J'} h=\infty$. Concretely, for each $n\in\omega$ and $k\leq n$, $\|k\in  c\|\in \dfil{J'}$, so $\|n\subseteq c\|\in \dfil{J'}$, which implies that $\|n\leq h\|\in\dfil{J'}$ because $c\in\Scal(\omega,h)$.

In the case when $\lim^J h=\infty$, the proof above can be modified to find $m\in a'\menos a$ for any given $a'\in J^+$. Since $J\subseteq J'$ in $V[c]$, it is clear that $\lim^{J'} h=\infty$.
\end{proof}

\begin{lemma}\label{clm:cohenev}
Let $\calS\in V$ be a~set of slaloms with the FUPC and let $\J$ be an ideal on $\omega$ (or just a~family with the FUP). If $c\in\Baire$ is Cohen over $V$ then, in $V[c]$, $\J\cup\set{\set{i<\omega}{c(i)\in S(i)}}{S\in\calS}$ has the FUP. In particular, this generates an ideal $\J'$ such that $c \notin^{{J'}^+} S$ for all $S\in\calS$. Moreover, $J'\cap V = J$.
\end{lemma}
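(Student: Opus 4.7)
The plan is to mimic the density template of \Cref{Jcohen} and \Cref{cohen-slt_h}, with the FUPC hypothesis on~$\calS$ replacing the role of a~single ground-model real. I~will regard Cohen forcing $\Cor$ as finite partial functions $p\colon\omega\to\omega$ ordered by reverse inclusion, with $\dot c$ the canonical name for the generic, and abbreviate $b_S:=\|c\in S\|=\set{i<\omega}{c(i)\in S(i)}$ for each $S\in\calS$.

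The main density claim to prove in~$V$ reads: for every $p\in\Cor$, every $a\in J$, every finite $F\subseteq\calS$, and every $n<\omega$, there exist $q\leq p$ and $m\geq n$ with $m\notin a$ such that $q\Vdash c(m)\notin\bigcup_{S\in F}S(m)$. Granted this, a~routine genericity argument makes $\omega\setminus a\setminus\bigcup_{S\in F}b_S$ infinite in $V[c]$, which is exactly the FUP of $J\cup\set{b_S}{S\in\calS}$. To construct~$q$, I~would first pick $m\in\omega\setminus(a\cup\dom p\cup n)$---possible since $a\cup\dom p\cup n$ still lies in the proper ideal~$J$---and then invoke FUPC: the finite union $\bigcup_{S\in F}S(m)$ sits inside some proper ideal on~$\omega$, so its complement is non-empty and a~value $v\in\omega\setminus\bigcup_{S\in F}S(m)$ exists; setting $q:=p\cup\{(m,v)\}$ does the job. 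With FUP in hand, $J'$ is well-defined, and the conclusion $c\notin^{{J'}^+}S$ for each $S\in\calS$ is immediate, since $b_S\in J'$ by construction.

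For $J'\cap V=J$, only the inclusion $\subseteq$ requires work. Taking $a\in V\setminus J$, I~note that any potential witness $(a_0,F)\in J\times[\calS]^{<\omega}$ to $a\in J'$ already lies in~$V$, because $J,\calS\in V$; so it suffices, for each such pair in~$V$, to rule out $a\subseteq a_0\cup\bigcup_{S\in F}b_S$ in~$V[c]$ by a~symmetric density argument. Given $p\in\Cor$, I~would pick $m\in a\setminus(a_0\cup\dom p)$ (non-empty since $a\notin J$ while $a_0\cup\dom p\in J$) and extend $p$ by $(m,v)$ with $v\in\omega\setminus\bigcup_{S\in F}S(m)$, using FUPC once more; this extension forces $m\in a\setminus a_0\setminus\bigcup_{S\in F}b_S$. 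The only non-routine ingredient throughout is FUPC itself, which provides the non-empty complements needed to simultaneously ``kill'' every finite subfamily of~$\calS$ at each new coordinate; I~do not expect any further obstacle beyond standard Cohen-forcing bookkeeping.
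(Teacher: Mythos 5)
Your proof is correct and uses essentially the same density argument as the paper's. The only (cosmetic) difference is that the paper parametrizes a single density claim over an arbitrary $J$-positive set $a'\in J^+$ — taking $a'=\omega$ gives the FUP, and taking an arbitrary $a'\in J^+\cap V$ gives $J'\cap V=J$ — whereas you run two parallel density arguments, one for each conclusion.
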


\begin{proof}
Consider Cohen forcing $\Cor$ as the set of finite partial functions $\omega\to\omega$, ordered by $\supseteq$. 

Let $\F\subseteq\calS$ be finite, $a\in\J$, $a'\in J^+$ and $p\in\Cor$. It is enough to show that there are $i\in a'\menos a$ and $q\leq p$ such that $q\Vdash (\forall S\in\F)\ c(i)\notin S(i)$. Pick any $i\in a'\menos(a\cup\dom p)$. By the FUPC of $\calS$, $\bigcup_{S\in\F}S(i)\neq\omega$, so choose $k\in\omega\smallsetminus \bigcup_{S\in\F}S(i)$. Any $q\leq p$ such that $q(i)=k$ is as required.
\end{proof}


As a consequence of these results, adding Cohen reals strongly affects slalom numbers with ideals. Recall from \Cref{BasicDia} and \Cref{DiaDual} that many slalom numbers are between $\slalome(\star,J)$ and $\slalomt(h,J)$, and between $\slalomt^\perp(h,J)$ and $\slalomt(h,J)$. 

\begin{theorem}\label{seq:sn}
    Let $\pi$ be an ordinal with uncountable cofinality, $J_0$ an ideal on $\omega$ and let\/ $\seqn{\Por_{\alpha}}{\alpha\leq\pi}$ be an\/ 
    $\subsetdot$-increasing sequence of posets such that\/ $\Por_\pi = \bigcup_{\alpha<\pi}\Por_\alpha$. Assume that\/ $\Por_\pi$ has\/ $\cf(\pi)$-cc and that\/ $\Por_{\alpha+1}$ adds a~Cohen real over $V_\alpha$ for all $\alpha<\pi$. Let $\lambda:=|\pi|$. Then:
    \begin{enumerate}[label=\rm(\alph*)]
        \item\label{seq:sn-a} $\Por_\pi$ forces that, for any $J_0$-unbounded $h\colon\omega\to\omega$, there is a~(maximal) ideal $\J\supseteq J_0$ such that\/ $\slalomt^\perp(h,J)= \dsla{h,\J}=\cf(\pi)$ and $h$ is $J$-unbounded. 
    \end{enumerate}
    For the following items, further assume that\/ $\Vdash \cfrak=\lambda$ and that $\lambda$ divides $\pi$, i.e., $\pi=\lambda\delta$ for some ordinal~$\delta$.\footnote{We must have $\delta<\lambda^+$, otherwise $\Por_\pi$ would add too many Cohen reals an force $\lambda>\cfrak$.}
    \begin{enumerate}[resume*]
        \item\label{seq:sn-a2} 
        $\Por_\pi$ forces that there is an ideal $\J\supseteq J_0$ such that\/ $\slalomt^\perp(h,J)=\dsla{h,\J}=\cf(\pi)$
        for any $J$-unbounded $h\colon\omega\to\omega$. This implies that $J$ is maximal.

        \item\label{seq:sn-b} Let $\theta\leq \cf(\pi)$ be a~cardinal. If $\lambda^{<\theta} = \lambda$ then\/ $\Por_\pi$ forces  that there is a~maximal ideal $\J\supseteq J_0$ such that $\theta\leq \sla{*,\J}\leq \slalomt^\perp(h,J)=\dsla{h,\J}=\cf(\pi)$ for any $J$-unbounded $h\colon\omega\to\omega$.

        \item\label{seq:sn-c} If  $\lambda^{<\cf(\pi)} = \lambda$ then\/ $\Por_\pi$ forces that there is a~maximal ideal $\J\supseteq J_0$ such that\/  $\sla{*,\J}= \slalomt^\perp(h,J)=\dsla{h,\J}=\cf(\pi)\leq\ppk{*,\J}$ for any $J$-unbounded $h\colon\omega\to\omega$.
    \end{enumerate}    
\end{theorem}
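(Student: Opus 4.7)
My plan is to build $J$ by a transfinite recursion along the iteration, using the Cohen reals added at successor stages. The two workhorse results are \Cref{cohen-slt_h} (a Cohen-generic slalom $c_\beta\in\prod_n[\omega]^{\leq h(n)}$ produced at stage $\beta+1$ extends a $J_\beta$-unbounded pair $(h,J_\beta)$ to a proper ideal $J_{\beta+1}$ in which $c_\beta$ $\slalomt$-localizes every $x\in\Baire\cap V_\beta$ and $\lim^{J_{\beta+1}}h=\infty$) and \Cref{clm:cohenev} (a Cohen real $c_\beta\in\Baire$ extends $J_\beta$ so that $\|c_\beta\in s\|\in J_{\beta+1}$ for every $s$ in a ground-model FUPC family $\calS$, hence $c_\beta$ is not $\slalome$-dominated by $\calS$, and $J_{\beta+1}\cap V_\beta=J_\beta$).

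\textbf{Part (a).} I would fix a $J_0$-unbounded $h\in V_\pi$. By $\kappa$-cc (writing $\kappa:=\cf(\pi)$), $h\in V_{\pi_0}$ for some $\pi_0<\pi$; my plan is to pick a cofinal sequence $\seqn{\pi_\xi}{\xi<\kappa}$ of successor ordinals above $\pi_0$ and iterate \Cref{cohen-slt_h} with this fixed~$h$, unioning at limits, producing a proper $J':=\bigcup_\xi J_\xi$ with $\lim^{J'}h=\infty$, then extend $J'$ to a maximal ideal~$J$ (the condition $\lim^{J'}h=\infty$ propagates to $J$ via dual-filter inclusion, so $h$ remains $J$-unbounded). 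The family $\{c_\xi:\xi<\kappa\}$ would then $\slalomt$-dominate $V_\pi\cap\Baire$ (every~$x$ lies in some $V_{\pi_\xi}$), giving $\slalomt(h,J)\leq\kappa$; dually, any $F\subseteq\Baire$ of size $<\kappa$ lies in some $V_{\pi_\xi}$ and is bounded by~$c_\xi$, giving $\slalomt^\perp(h,J)\geq\kappa$. \Cref{lem:monJ}~\ref{mon:J} preserves both values upon the extension to~$J$.

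\textbf{Parts (a2), (b), (c).} Here the plan is a single recursion of length~$\pi$ with bookkeeping over $|\pi|=\lambda$ many successor stages. Using $\Vdash\cfrak=\lambda$, I would enumerate $V_\pi\cap\Baire=\seqn{h_\eta}{\eta<\lambda}$ and (using $\lambda\cdot\kappa=\lambda\leq|\pi|$) arrange a bookkeeping surjection $\beta\mapsto(\eta_\beta,\zeta_\beta)$ from $\pi$ onto $\lambda\times\kappa$ with $h_{\eta_\beta}\in V_\beta$ and each pair reached cofinally; at stage $\beta+1$, if $h_{\eta_\beta}$ is $J_\beta$-unbounded I apply \Cref{cohen-slt_h} with $h:=h_{\eta_\beta}$, else skip. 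Any $J$-unbounded $h$ in the final model is $J_\beta$-unbounded at every stage (since $J_\beta\subseteq J$), so is handled cofinally often; the part-(a) argument then gives $\slalomt(h,J)=\slalomt^\perp(h,J)=\kappa$, proving \textbf{(a2)}. Maximality is forced by this property: for $A\in J^+\smallsetminus J^\dual$, the function $h(n):=n\cdot\mathbf{1}_A(n)$ is $J$-unbounded yet $\|h\geq 1\|=A\notin J^\dual$, so $\slalomt(h,J)=\infty$ by \Cref{aboutbh}~\ref{bh0}, contradicting the stated value. For \textbf{(b)} I would additionally enumerate the $\leq\lambda^{<\theta}=\lambda$ families $\seqn{\calS_\eta}{\eta<\lambda}$ of size-$<\theta$ slalom collections in $V_\pi$ with FUPC and dedicate extra bookkeeping stages to defeating each via \Cref{clm:cohenev}, delivering $\sla{*,J}\geq\theta$; the matching upper bound $\sla{*,J}\leq\kappa$ is automatic, since the family $\{c_\xi\}$ already built has FUPC (each $c_\xi(n)$ is finite) and $\slalomt$-dominates, hence $\slalome$-dominates. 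Part~\textbf{(c)} follows by applying~(b) with $\theta:=\kappa$, together with $\slalome(*,J)\leq\pK(*,J)$ obtained by combining \Cref{cor:gen} (applied to each ideal~$I$) with \Cref{minpklk}.

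\textbf{Main obstacle.} The principal technical subtlety is that the identity $J_{\beta+1}\cap V_\beta=J_\beta$ in \Cref{cohen-slt_h} requires $\lim^{J_\beta}h=\infty$, which is strictly stronger than $h$ being $J_\beta$-unbounded; so the very first handling of an $h$ can enlarge $J_{\beta+1}\cap V_\beta$ beyond $J_\beta$ and destroy the $J_{\beta+1}$-unboundedness of some other $h'\in V_\beta$. My resolution will be the observation that if this happens then $\|h'\geq k\|\in J_{\beta+1}\subseteq J$ for some $k$, so $h'$ is not $J$-unbounded in the final model and the theorem's conclusion does not address it; the construction therefore delivers the asserted slalom values on exactly those $h$'s that survive as $J$-unbounded, which is exactly what the theorem demands.
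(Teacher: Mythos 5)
Your proposal is correct and follows essentially the same route as the paper: a transfinite recursion using \Cref{cohen-slt_h} and \Cref{clm:cohenev} at successor stages, a bookkeeping enumeration over $\lambda$-many names of $J_\beta$-unbounded functions (resp.\ small FUPC families), unioning at limits, and the same maximality argument via \Cref{aboutbh}. Your ``main obstacle'' paragraph correctly identifies that $J_{\beta+1}\cap V_\beta=J_\beta$ may fail when only $J_\beta$-unboundedness (not $\lim^{J_\beta}h=\infty$) holds, and your resolution — that any $h'$ whose unboundedness is destroyed at some stage is automatically excluded from the theorem's conclusion because $J_\beta\subseteq J$ — is exactly how the paper handles this implicitly, by conditioning the bookkeeping on $\dot J_\eta$-unboundedness at each step.
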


\begin{proof}
\ref{seq:sn-a}: 
Fix a $J_0$-unbounded $h\in\Baire\cap V_\pi$, so $h\in V_\alpha$ for some $\alpha<\pi$. In the following argument, it does not hurt to consider $\alpha=0$.

For any $\zeta<\pi$,
denote by $c_\zeta$ a~Cohen real in $\Scal(\omega,h)$ that $\Por_{\zeta+1}$ adds over $V_\zeta$. 
In $V_{\zeta+1}$, define $A_\zeta := \bigset{\set{n<\omega}{x(n)\notin c_{\zeta}(n)}}{x\in\Baire\cap V_\zeta}$. 
By employing~\Cref{cohen-slt_h}, we can prove by recursion on $\zeta\leq\pi$ that, in $V_{\zeta}$, the family $\J_\zeta= J_0\cup \bigcup_{\xi<\zeta}\A_\xi$  has the FUP. Lastly, in $V_\pi$, let $\J$ be the ideal generated by~$\J_{\pi}$. Therefore, $\set{c_\zeta}{ \zeta\in K}$ is a~witness for $\dsla{h,\J}$ for any cofinal $K\subseteq\pi$, so $\dsla{h,\J}\leq\cf(\pi)$. On the other hand, any $F\subseteq\Baire$ of size ${<}\cf(\pi)$ is $\in^{J^\dual}$-bounded by some $c_\zeta$, hence $\cf(\pi)\leq\slalomt^\perp(h,J)$. Moreover, $\lim^{J}h = \infty$ (because $\lim^{J'_1}h=\infty$ after the first application of \Cref{cohen-slt_h}, where $J'_1$ is the ideal generated by $J_1$).  
Notice that we can extend $J$ to a~maximal ideal without affecting the result.

\ref{seq:sn-a2}: This proof is similar to~\ref{seq:sn-a}, but we need a~book-keeping to find one $\J$ that works for all $h$. Let $j\colon\lambda\to\lambda\times\lambda$ be a~bijection such that $j(\varepsilon) = (\xi,\xi')$ implies that $\xi\leq \varepsilon$. On the other hand, for $\alpha<\delta$, let $I_\alpha:=[\lambda\alpha,\lambda(\alpha+1))$, which is an interval of order type $\lambda$. Note that
$\seqn{I_\alpha}{\alpha<\theta}$ is an interval partition of~$\pi$.

We define a~$\Por_{\zeta}$-name $\dot\J_\zeta$ of an ideal on $\omega$ by recursion on $\zeta\leq\pi$ as follows. We start with $\dot\J_0:=J_0$ and, for limit $\zeta$, $\dot\J_\zeta$ is a~$\Por_\zeta$-name of the ideal generated by $\bigcup_{\eta<\zeta}\dot\J_\eta$, so we are left with the induction step $\zeta=\eta+1$. Pick $\alpha<\theta$ such that $\eta\in I_\alpha$ (which is unique), so $\eta=\lambda\alpha+\rho$ for some unique $\rho<\lambda$. Enumerate all the nice $\Por_\eta$-names of $\dot J_\eta$-unbounded functions in $\Baire$ by $\set{\dot h^\eta_{\xi'}}{\xi'<\lambda}$ (which is possible because $\Vdash_\pi \cfrak=\lambda$). For convenience, we also denote $\dot h^{\alpha}_{\rho,\xi'} := \dot h^\eta_{\xi'}$.

Let us define $\dot\J_\zeta$ by cases: 
we let $\dot J_\zeta$ be a~$\Por_\zeta$-name of the ideal generated by $\dot J_\eta\cup\set{\set{n<\omega}{x(n)\in \dot c_{\eta}(n)}}{x\in\Baire\cap V_{\eta}}$ where $\dot c_\eta$ is the $\Por_{\eta+1}$-name of a~Cohen real in $\Scal(\omega,\dot h^\alpha_{j(\varepsilon)})$ over $V_\eta$ in the case when $\Por_\eta$ forces that $\dot h^\alpha_{j(\varepsilon)}$ is $\dot J_\eta$-unbounded (if $j(\varepsilon) = (\xi,\xi')$ then $\xi\leq\varepsilon$, so $\dot h^\alpha_{j(\varepsilon)} = \dot h^\alpha_{\xi,\xi'}$ was already defined at step $\lambda\alpha+\xi\leq \eta$), otherwise let $\dot J_\zeta$ be a $\Por_\zeta$-name of the ideal generated by $\dot J_\eta$. Thanks to \Cref{cohen-slt_h}, each $\dot J_\zeta$ is forced to have the FUP.

Let $\dot J:=\dot J_\pi$. We prove that $\dot J$ is forced as required, i.e., 
$\Vdash_\pi \slalomt^\perp(\dot h,\dot J)= \slalomt(\dot h,\dot J)=\cf(\pi)$ for any (nice) $\Por_\pi$-name $\dot h$ of a~$\dot J$-unbounded function in~$\Baire$. Since $\cf(\pi)>\omega$ and $\Por_\pi$ is $\cf(\pi)$-cc, there is some $\zeta_0<\pi$ such that $\dot h$ is a~$\Por_{\zeta_0}$-name. Then, for any $\zeta_0\leq \zeta<\pi$, $\dot h$ appears in the enumeration $\set{\dot h^\zeta_{\xi'}}{\xi'<\lambda}$, meaning that there is some cofinal subset $K\subseteq \pi$ where the Cohen real $\dot c_\eta$ described in the successor step of the construction of $\dot J$ is in $\Scal(\omega,\dot h)$. Therefore, as in~\ref{seq:sn-a}, $\Por_\pi$ forces $\slalomt^\perp(\dot h,\dot J)=\slalomt(\dot h,\dot J)= \cf(\pi)$.

We now prove that $\Por_\pi$ forces that $\dot J$ is maximal. In $V_\pi$, let $J:=\dot J[G_\pi]$, so $\slalomt(h,J) = \cf(\pi)$ for any $J$-unbounded $h\in\Baire$. If $J$ is not maximal, we can find some $J$-unbounded $h\in\Baire$ such that $\|h=0\|\in J^+$, but this implies that $\slalomt(h,J)$ is undefined, a contradiction.

\ref{seq:sn-b}:
The construction of the ideal is similar to~\ref{seq:sn-a2}, so we keep the same notation from there, e.g.\ the book-keeping function $j$ and the interval $I_\alpha$.

For each $\zeta<\pi$, enumerate $\set{\dot\calS^\zeta_{\xi'}}{\xi'<\lambda}$ the nice $\Por_\zeta$-names of all sets of slaloms in $V_\zeta$ with the FUPC of size~${<}\theta$ (this is possible by the assumption $\lambda^{<\theta}=\lambda$ and $\Vdash_\pi \cfrak=\lambda$). Note that $\set{\dot\calS^\zeta_\xi}{\zeta<\pi,\ \xi'<\lambda}$ enumerates all the nice $\Por_{\pi}$-names of all sets of slaloms in $V_\pi$ with the FUPC of size~${<}\theta$.

For each $\zeta\leq\pi$ we define a~$\Por_{\zeta}$-name $\dot\J_\zeta$ of a~family with the FUP as follows: $\dot\J_0:= J_0$ and, for limit~$\zeta$, $\dot\J_\zeta:=\bigcup_{\xi<\zeta}\dot\J_\xi$, so we are left with the induction step $\zeta=\eta+1$. Pick $\alpha<\delta$ and $\rho<\lambda$ such that  $\eta=\lambda\alpha+\rho$. As before, enumerate all the nice $\Por_\eta$-names of $\dot J_\eta$-unbounded functions in $\Baire$ by $\set{\dot h^\eta_{\xi'}}{\xi'<\lambda}$. For convenience, we also denote $\dot h^\alpha_{\rho,\xi'} := \dot h^\eta_{\xi'}$ and $\dot \Scal^\alpha_{\rho,\xi'} := \Scal^\eta_{\xi'}$. 

Let us define $\dot\J_\zeta$ as a~$\Por_{\zeta}$-name of the ideal generated by each family as in the following by cases:
\begin{align*}
 \dot J_\eta\cup\bigset{\set{n<\omega}{x(n)\in \dot c_{\eta}(n)}}{x\in\Baire\cap V_\eta} & \text{ when $\rho=2\varepsilon$ and $\Por_\eta\Vdash \dot h^\alpha_j(\varepsilon)$ is $\dot J_\eta$-unbounded,}\\
 \dot J_\eta & \text{ when $\rho=2\varepsilon$ and $\Por_\eta\nVdash \dot h^\alpha_j(\varepsilon)$ is $\dot J_\eta$-unbounded,}\\
 \dot\J_\eta \cup\bigset{\set{i<\omega}{\dot c_\eta(i)\in S(i)}}{S\in\dot \calS^\alpha_{j(\varepsilon)}} & \text{ when $\rho=2\varepsilon+1$.}
\end{align*}
In the last case, $\dot c_\eta$~is a~Cohen real in $\Baire$ over $V_\eta$ (added by $\Por_{\eta+1}$), while in the first case $\dot c_\eta$~is a~Cohen real in $\Scal(\omega,\dot h^\alpha_{j(\varepsilon)})$.  
Thanks to \Cref{cohen-slt_h} and~\ref{clm:cohenev}, we obtain that the families in the cases above have the FUP.

Let $\dot\J:= \dot\J_{\pi}$. It can be proved as in~\ref{seq:sn-a2} that $\Vdash_\pi\slalomt^\perp(\dot h,\dot J)=\dsla{\dot h,\dot \J}=\cf(\pi)$ for any $\Por_\pi$-name $\dot h$ of a~$\dot J$\nobreakdash-{\hskip0pt}unbounded~function. On the other hand, in $V_\pi$, every $\calS$ with the FUPC of size~${<}\theta$ is $\dot\J[G_\pi]$-evaded by some Cohen real, hence $\theta\leq\sla{*,\dot\J[G_\pi]}$. It is clear that $\dot J[G_\pi]$ is a~maximal ideal.

\ref{seq:sn-c}: Apply~\ref{seq:sn-b} to $\theta:=\cf(\pi)$.
\end{proof}

As a~consequence of the foregoing result, we derive:

\begin{corollary}\label{FS:sn}
Let $\pi$ be a~limit ordinal of uncountable cofinality, and let\/ $\Por_\pi=\seqn{\Por_\xi,\Qnm_\xi}{\xi<\pi}$ be a~FS iteration of non-trivial\/ $\cf(\pi)$-cc posets. Let $\lambda:=|\pi|$. Then\/ $\Por_\pi$ satisfies\/ \ref{seq:sn-a} of \Cref{seq:sn}, and also\/ \ref{seq:sn-a2}--\ref{seq:sn-c} when $\lambda$ divides $\pi$ and\/ $\Vdash_\xi |\Qnm_\xi|\leq\lambda$ for all $\xi<\pi$.   
\end{corollary}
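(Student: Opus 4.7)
The plan is to apply Theorem~\ref{seq:sn} to a suitable reindexed subsequence of the given FS iteration. As a first step, observe that since $\pi$ has uncountable cofinality, the least exponent in its Cantor normal form must be a limit ordinal $\geq\omega$, whence a short calculation gives $\omega\cdot\pi=\pi$. Thus $\xi_\alpha:=\omega\cdot\alpha$ for $\alpha\leq\pi$ defines a continuous, strictly increasing sequence with $\xi_\pi=\pi$, and the induced sequence $\seqn{\Por_{\xi_\alpha}}{\alpha\leq\pi}$ is $\subsetdot$-increasing with $\Por_\pi=\bigcup_{\alpha<\pi}\Por_{\xi_\alpha}$ (because $\Por_\pi$ is the direct limit of the FS iteration and $\seqn{\xi_\alpha}{\alpha<\pi}$ is cofinal in $\pi$).

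Next I would verify the remaining hypotheses of Theorem~\ref{seq:sn}. The poset $\Por_\pi$ is $\cf(\pi)$-cc by the standard $\Delta$-system argument for FS iterations of $\cf(\pi)$-cc posets (with $\cf(\pi)$ regular uncountable). For each successor $\alpha+1\leq\pi$ one has $\xi_{\alpha+1}=\xi_\alpha+\omega$, so the interval $[\xi_\alpha,\xi_{\alpha+1})$ consists of $\omega$-many non-trivial FS iteration stages, and the classical lemma that $\omega$ such stages add a Cohen real guarantees that $\Por_{\xi_{\alpha+1}}$ adds a Cohen real over $V_{\xi_\alpha}$. This delivers part~\ref{seq:sn-a} of the corollary directly from Theorem~\ref{seq:sn}\,\ref{seq:sn-a}.

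For parts~\ref{seq:sn-a2}--\ref{seq:sn-c}, the extra hypothesis $\lambda\mid\pi$ is assumed, and it remains to establish $\Vdash_\pi\cfrak=\lambda$. The bound $|\Por_\pi|\leq\lambda$ is obtained by a routine induction on $\xi\leq\pi$ using $\Vdash_\xi|\Qnm_\xi|\leq\lambda$, $|\pi|=\lambda$, and the $\cf(\pi)$-cc, yielding $\Vdash_\pi\cfrak\leq\lambda$; conversely, the $|\pi|=\lambda$ many successor stages $\alpha+1<\pi$ contribute $\lambda$-many Cohen reals, forcing $\covm\geq\lambda$. Theorem~\ref{seq:sn}\,\ref{seq:sn-a2}--\ref{seq:sn-c} then yields the remaining parts. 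The only subtle ingredient is the ordinal-arithmetic identity $\omega\cdot\pi=\pi$, used to ensure the reindexed sequence terminates correctly at~$\pi$; the remaining verifications are standard consequences of FS iteration theory.
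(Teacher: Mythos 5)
Your proof is correct and follows essentially the same route as the paper, which likewise works with the re-indexed sequence $\seqn{\Por_{\omega\alpha}}{\alpha\leq\pi}$ and invokes the classical fact that FS iterations of non-trivial posets add Cohen reals at limit steps. Your explicit check of the ordinal identity $\omega\cdot\pi=\pi$ (via the Cantor normal form argument) and of the $\cf(\pi)$-cc of $\Por_\pi$ simply spells out details the paper leaves implicit in its one-line proof.
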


\begin{proof}
    The sequence $\seqn{\Por_{\omega\alpha}}{\alpha\leq\pi}$ is as required since FS iterations of non-trivial posets add Cohen reals at limit steps.
\end{proof}

Let $I$ be a~set. Denote by $\Cor_I$ be the poset that adds Cohen reals indexed by $I$. Recall that $\bb_\J=\bb$ and $\dd_\J=\dd$ when $J$ is a meager ideal on $\omega$. On the other hand, M.~Canjar~\cite{Canjar2} has shown that in~$\cohen{\lambda}$, if $\mu\leq\lambda$ is regular then there is a maximal ideal~$I_\mu$ such that $\bb_{I_\mu}=\mu$. This result is extended as follows.


\begin{theorem}\label{aplcohen}
Let $\kappa\geq \aleph_1$ be regular and $\lambda$ an infinite cardinal such that $\lambda^{<\kappa} = \lambda$. Then\/ $\Cor_\lambda$ forces\/ $\nonm = \aleph_1$, $\covm = \cfrak =\lambda$ and that, for any regular\/ $\aleph_1\leq\kappa_1\leq\kappa_2\leq \kappa$:
\begin{enumerate}[label=\rm(\alph*)]
    \item\label{aplcohen:a} There is a~maximal ideal~$\J$ on $\omega$ such that\/ $\sla{\star,\J}=\slalomt^\perp(h,J) =\dsla{h,\J}=\kappa_1$ for all $J$-unbounded $h\in\Baire$ (see~\Cref{Cohen_var_val:b}).

    \item\label{aplcohen:b} There is an~ideal~$\J$ on $\omega$ such that\/ $\sla{\star,\J}=\slalomt^\perp(h,J)=\sla{h,\J}=\kappa_1\leq\dsla{\star,\J}=\slalome^\perp(h,J)=\dsla{h,\J}=\kappa_2$ for any $h\in \Baire$ such that\/ $\lim^J h =\infty$  (see~\Cref{Cohen_var_val:b}).
\end{enumerate}
By weakening the assumption $\lambda^{<\kappa}=\lambda$ to $\kappa\leq\lambda$, we can force the above by removing\/ $\slalome(\star,J)$ and\/ $\slalomt(\star,J)$.
In particular, it is consistent that\/ $\bb<\bb_\J<\dd_\J<\dd$ for some ideal $J$ on $\omega$. 
\end{theorem}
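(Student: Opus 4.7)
Classical Cohen-model theory gives $\nonm = \aleph_1$ and $\covm = \cfrak = \lambda$ in $V^{\Cor_\lambda}$ (the assumption $\lambda^{<\kappa} = \lambda$ implies $\lambda^{\aleph_0} = \lambda$). For~\ref{aplcohen:a}, I would realize $\Cor_\lambda$ as a finite support iteration of countable Cohen forcings of length $\pi := \lambda \cdot \kappa_1$: since $\kappa_1 \leq \kappa \leq \lambda$ we have $|\pi| = \lambda$, $\cf(\pi) = \kappa_1$, $\lambda$ divides $\pi$, the iteration is ccc, and $\lambda^{<\cf(\pi)} = \lambda^{<\kappa_1} \leq \lambda$. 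Hence \Cref{FS:sn}\ref{seq:sn-c} delivers in $V^{\Cor_\lambda}$ a maximal ideal $J$ with $\sla{\star,J} = \slalomt^\perp(h,J) = \dsla{h,J} = \kappa_1$ for every $J$-unbounded~$h$, which is exactly~\ref{aplcohen:a}.

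The crucial observation for~\ref{aplcohen:b} is that $\Cor_\lambda$ is forcing-equivalent to a FS iteration of Cohen forcings in \emph{two different ways}: once as an iteration of length $\pi_1 := \lambda \cdot \kappa_1$ (cofinality $\kappa_1$), and once as length $\pi_2 := \lambda \cdot \kappa_2$ (cofinality $\kappa_2$). Applying \Cref{FS:sn}\ref{seq:sn-c} to each of these two presentations---both referring, via different repackagings of the generic, to the same extension $V^{\Cor_\lambda}$---produces in this common model two maximal ideals $J_0,J_1$ on~$\omega$ satisfying $\sla{\star,J_e} = \slalomt^\perp(h,J_e) = \dsla{h,J_e} = \kappa_e$ for every $J_e$-unbounded $h \in V^{\Cor_\lambda}$, $e\in\{0,1\}$. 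Inspection of that proof also yields $\slalome^\perp(h,J_e) = \kappa_e$, since the cofinal Cohen-real family used there is $\pLc_{J_e}(h)$-unbounded via \Cref{clm:cohenev}. This trick of extracting two ideals from two iteration presentations of a single forcing is the main technical point, and it sidesteps the preservation question that would arise if one tried instead to construct $J_0$ in an intermediate model and then pass to a larger extension.

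With $J_0$ and $J_1$ in $V^{\Cor_\lambda}$, I would set $J := J_0 \oplus J_1$ under a fixed bijection $\omega \oplus \omega \cong \omega$. The disjoint-sum formulas of \Cref{L6.3}\ref{L4.15c}--\ref{L4.15d} and \Cref{L3.5}\ref{L3.5a}, together with the $\otimes/\boxtimes$-dualities of \Cref{products}, then deliver all the required equalities at once: any $J$-unbounded $h$ with $\lim^J h = \infty$ decomposes as $h_0 \oplus h_1$ with $\lim^{J_e} h_e = \infty$, each slalom invariant of~$J$ becomes the $\min$ or $\max$ of those of the~$J_e$, and one obtains
\begin{align*}
\sla{\star,J} = \sla{h,J} = \slalomt^\perp(h,J) &= \kappa_1, \\
\dsla{\star,J} = \dsla{h,J} = \slalome^\perp(h,J) &= \kappa_2.
\end{align*}

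The ``in particular'' assertion follows by picking regular $\aleph_1 < \kappa_1 < \kappa_2 \leq \kappa < \lambda$ in~\ref{aplcohen:b}: the sandwiches $\sla{\star,J} \leq \bb_J \leq \sla{h,J}$ and $\dsla{h,J} \leq \dd_J \leq \dsla{\star,J}$ from \Cref{BasicDia} collapse to $\bb_J = \kappa_1$ and $\dd_J = \kappa_2$, giving $\bb = \aleph_1 < \kappa_1 = \bb_J < \dd_J = \kappa_2 < \lambda = \dd$. For the final sentence, the weaker hypothesis $\kappa \leq \lambda$ still permits the same two-presentation construction but through \Cref{FS:sn}\ref{seq:sn-a2} in place of~\ref{seq:sn-c}; the $\sla{\star,\cdot}$ and $\dsla{\star,\cdot}$ equalities must be dropped since they rely on the stronger arithmetic hypothesis $\lambda^{<\cf(\pi)} = \lambda$.
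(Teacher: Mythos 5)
Your proof takes essentially the paper's route (standard Cohen-model facts; realize $\Cor_\lambda \cong \Cor_{\lambda\kappa_1} \cong \Cor_{\lambda\kappa_2}$ as FS iterations of Cohen forcing of the two lengths, apply \Cref{FS:sn} to each to get maximal ideals $J_0,J_1$; take $J := J_0 \oplus J_1$ and invoke the disjoint-sum formulas). The ``two-presentation'' observation you highlight is indeed what the paper does implicitly when it applies~\ref{aplcohen:a} twice.

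However, there is a gap in how you bridge from what \Cref{FS:sn} (via \Cref{seq:sn}\ref{seq:sn-c}) actually gives to what the disjoint-sum formulas need. \Cref{seq:sn}\ref{seq:sn-c} yields $\slalome(\star,J_e)=\slalomt^\perp(h,J_e)=\slalomt(h,J_e)=\kappa_e$, but the computations via \Cref{L3.5}\ref{L3.5a} and \Cref{L6.3}\ref{L4.15c}--\ref{L4.15d} also require the values of $\slalome(h_e,J_e)$, $\slalome^\perp(h_e,J_e)$, and $\slalomt(\star,J_e)$, none of which are supplied directly. Your appeal to \Cref{clm:cohenev} does not close this: the cofinal Cohen family $\{c_\zeta\}$ consists of slaloms (members of $\Scal(\omega,h)$, i.e., elements of the ``$Y$-side'' of $\pLc_{J_e}(h)$), so what one extracts from the proof of \Cref{seq:sn} is that $\{c_\zeta\}$ is $\pLc_{J_e}(h)$-\emph{dominating} and that every $F\subseteq\Baire$ of size ${<}\kappa_e$ is $\in^{J_e^\dual}$-bounded by some $c_\zeta$; this gives $\slalome(h,J_e)\leq\kappa_e$ and $\slalome^\perp(h,J_e)\geq\kappa_e$, which is not the equality $\slalome^\perp(h,J_e)=\kappa_e$. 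The fix is a one-line observation (which the paper uses tacitly when it writes $\slalomt(\star,\J_2)$ rather than $\slalome(\star,\J_2)$): since $J_e$ is \emph{maximal}, $J_e^\dual = J_e^+$, hence $\pLc_{J_e}(E) = \Lc_{J_e}(E)$ as relational systems, and therefore $\slalome(h_e,J_e)=\slalomt(h_e,J_e)$, $\slalome^\perp(h_e,J_e)=\slalomt^\perp(h_e,J_e)$, and $\slalomt(\star,J_e)=\slalome(\star,J_e)$, all equal to $\kappa_e$. With that in hand, the six equalities of~\ref{aplcohen:b} drop out of the disjoint-sum lemmas exactly as you describe. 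Your treatment of the ``in particular'' claim and of the weakened hypothesis via \Cref{seq:sn}\ref{seq:sn-a2} is fine.
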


\begin{proof}
It is well-known that $\Cor_\lambda$  forces $\bb=\nonm=\aleph_1$, and $\dd=\covm=\cfrak=\lambda$ (see e.g.~\cite{CM22}).

\ref{aplcohen:a}: 
For any regular $\aleph_1\leq \kappa_1\leq\kappa$, since $\Cor_\lambda \cong \Cor_{\lambda\kappa_1}$ and $\Cor_{\lambda\kappa_1}$ can be obtained by a~FS iteration of~$\Cor$ of length $\lambda\kappa_1$, by \Cref{FS:sn} we obtain that $\Cor_\lambda$ forces that there is a~maximal ideal $\J$ such that $\sla{\star,\J}=\slalomt^\perp(h,J)=\dsla{h,\J}=\kappa_1$ for all $J$-unbounded $h\in\Baire$.

\ref{aplcohen:b}:
By using~\ref{aplcohen:a}, in $V^{\Cor_\lambda}$, there are maximal ideals $\J_1, \J_2$ such that $\sla{\star,\J_1}=\slalomt^\perp(h_1,J_1)=\dsla{h_1,\J_1}=\kappa_1$ and $\dsla{\star,\J_2}=\slalomt^\perp(h_2,J_2)=\dsla{h_2,\J_2}=\kappa_2$ for any $J_e$-unbounded $h_e\in\Baire$ and $e\in\{1,2\}$. By letting $\J:=\J_1\oplus\J_2$ (with suitable modifications of $\J_1$ and $\J_2$), 
any $h\in\Baire$ can be written as $h=h_1\oplus h_2$, and $\lim^J h=\infty$ iff $\lim^{J_1} h_1 = \lim^{J_2} h_2 =\infty$. We obtain, 
by \Cref{L3.5} and \Cref{L6.3}, that 
\begin{align*}
    \slalome(\star,J) = &\min\{\slalome(\star,J_1),\slalome(\star,J_2)\} = \kappa_1, & \slalomt(\star,J) = &\max\{\slalomt(\star,J_1),\slalomt(\star,J_2)\}= \kappa_2,\\
    \slalome(h,J)  = & \min\{\slalome(h_1,J_1),\slalome(h_2,J_2)\} = \kappa_1, & \slalome^\perp(h,J)  = & \max\{\slalome^\perp(h_1,J_1),\slalome^\perp(h_2,J_2)\} = \kappa_2,\\
    \slalomt^\perp(h,J)  = &\min\{\slalomt^\perp(h_1,J_1),\slalomt^\perp(h_2,J_2)\} = \kappa_1, & \slalomt(h,J)  = & \max\{\slalomt(h_1,J_1),\slalomt(h_2,J_2)\} = \kappa_2.\qedhere
\end{align*} 
\end{proof}

\subsection{Applications of ccc models}\label{sec:ccc}

We present several ccc forcing constructions to force constellations of~\Cref{BasicDia} by application of~\Cref{seq:sn} and~\Cref{FS:sn}. We skip details in the proofs when they can be found in the cited references. 

\begin{theorem}[cf.~{\cite[Sec.~6]{GKMSsplit}}]\label{gkms}
Let $\lambda_0\leq \lambda_1\leq \lambda_2\leq\lambda_3$ be uncountable regular cardinals and let $\lambda_4$~be a~cardinal such that $\lambda_3\leq\lambda_4=\lambda_4^{{<}\lambda_3}$. Also assume that either $\lambda_1 = \lambda_2$, or $\lambda_2$ is $\aleph_1$-inaccessible\footnote{This means that $\mu^{\aleph_0}<\lambda_2$ for any cardinal $\mu<\lambda_2$.} and\/ $2^{<\lambda_1}<\lambda_2$.
Then there is some poset forcing that there is a~maximal ideal $\J$ satisfying the constellation in~\Cref{gkms:spl} and\/ $\slalomt^\perp(h,J)=\lambda_3$ for any $J$-unbounded $h\in\Baire$.
\begin{figure}[ht]
\begin{center}
\begin{tikzpicture}[scale=0.8,every node/.style={scale=0.8}]
\node (ale) at (-7, -3.5) {$\aleph_1$};
\node (a) at (-5, -3.5) {$\pp$};
\node (as) at (-5, -1) {$\sla{\I,\fin}$};
\node (b) at (-5, 1.5) {$\bb$};
\node (ba) at (-5, 4) {$\nonm$};
\node (aa) at (-2, -3.5) {$\slamg{\J}$};
\node (aas) at (-2, -1) {$\sla{\I,\J}$};
\node (bb) at (-2, 1.5) {$\bb_\J$};
\node (bba) at (-2, 4) {$\sla{h,\J}$};
\node (c) at (1, -3.5) {$\dslaml{\J}$};
\node (cs) at (1, -1) {$\dsla{\I,\J}$};
\node (f) at (1, 1.5) {$\dd_\J$};
\node (csa) at (1, 4) {$\dsla{h,\J}$};
\node (xpf) at (4, 1.5) {$\dd$};
\node (xpc) at (4, -3.5) {$\covm$};
\node (xpcs) at (4, -1) {$\dslago{\I}$};
\node (xpfa) at (4, 4) {$\cofn$};
\node (cont) at (6, 4) {$\cc$};
\foreach \from/\to in {aas/bb,aas/cs,cs/f,aas/cs} \draw [line width=.15cm,
white] (\from) -- (\to);
\foreach \from/\to in {ale/a,a/as, aa/aas, c/cs, b/bb, a/aa, aa/c, bb/f, as/b, aas/bb, cs/f, as/aas, aas/cs,f/xpf,cs/xpcs,c/xpc,xpcs/xpf,xpc/xpcs, b/ba,bb/bba,f/csa,xpf/xpfa,ba/bba,bba/csa,csa/xpfa,xpfa/cont} \draw [->] (\from) -- (\to);

\draw[color=blue,line width=.05cm] (-6,-4)--(-6,5);
\draw[color=blue,line width=.05cm] (-6,2.7)--(-3.7,2.7);
\draw[color=blue,dashed,line width=.05cm] (-6,0.25)--(-3.7,0.25);
\draw[color=blue,dashed,line width=.05cm] (-6,-1.75)--(-3.7,-1.75);
\draw[color=blue,line width=.05cm] (-3.7,-4)--(-3.7,5);
\draw[color=blue,line width=.05cm] (2.7,-4)--(2.7,5);

\draw[circle, fill=cadmiumorange,color=cadmiumorange] (-0.4,0.2) circle (0.4);
\draw[circle, fill=cadmiumorange,color=cadmiumorange] (-4.3,3.3) circle (0.4);
\draw[circle, fill=cadmiumorange,color=cadmiumorange] (-4.3,0.9) circle (0.4);
\draw[circle, fill=cadmiumorange,color=cadmiumorange] (5,2.7) circle (0.4);
\draw[circle,fill=cadmiumorange,color=cadmiumorange] (-4.3,-2.4) circle (0.4);
\draw[circle,fill=cadmiumorange,color=yellow] (-4.3,-0.3) circle (0.4);
\node at (-4.3,-2.4) {$\lambda_0$};
\node at (-0.4,0.2) {$\lambda_3$};
\node at (-4.3,3.3) {$\lambda_2$};
\node at (-4.3,0.9) {$\lambda_1$};
\node at (-4.3,-0.3) {$\textbf{?}$};
\node at (5,2.7) {$\lambda_4$};
\end{tikzpicture}
\end{center}
    \caption{Constellation forced in~\Cref{gkms} for any $J$-unbounded $h$. 
    As indicated in \autoref{BasicDia}, an arrow represents that $\thzfc$ proves ``$\leq$"; the thicker lines divide the diagram in regions where the cardinal characteristics take a value $\lambda_i$ (for example, the cardinal characteristics in the central region take the value $\lambda_3$); 
    The dotted lines indicate regions where the forced value is unclear, in this case that $\lambda_0\leq\slalome(I,\Fin)\leq\lambda_1$ and that the exact value is unclear, although $\slalome(I,\Fin) = \lambda_0$ when $I$ is a maximal ideal.}
\label{gkms:spl}
\end{figure}
\end{theorem}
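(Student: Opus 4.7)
The plan is to combine the ccc forcing construction of~\cite{GKMSsplit} with our Corollary~\ref{FS:sn} in order to simultaneously force the displayed constellation of classical invariants and the existence of a maximal ideal $J$ on $\omega$ with the prescribed slalom behavior. The underlying idea is that the construction producing the values $\lambda_0,\lambda_1,\lambda_2,\lambda_4$ is already a finite support iteration of ccc posets, and any such iteration of uncountable cofinality cofinally adds Cohen reals, so by \Cref{FS:sn} we automatically obtain an ideal $J$ whose slalom invariants realize the cofinality of the iteration, which we arrange to be $\lambda_3$.

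First I would take, following~\cite[Section~6]{GKMSsplit}, a finite support iteration $\Por_\pi=\seqn{\Por_\xi,\Qnm_\xi}{\xi<\pi}$ of non-trivial ccc posets, with $|\Qnm_\xi|\le\lambda_4$ forced at every step, whose length $\pi$ satisfies $\cf(\pi)=\lambda_3$, $|\pi|=\lambda_4$ and $\lambda_4\mid\pi$. The hypothesis $\lambda_4=\lambda_4^{<\lambda_3}$ and the dichotomy on $(\lambda_1,\lambda_2)$ (either $\lambda_1=\lambda_2$, or $\lambda_2$ is $\aleph_1$-inaccessible with $2^{<\lambda_1}<\lambda_2$) are the standard requirements in~\cite{GKMSsplit} ensuring that the construction can be carried out and that $\Por_\pi$ forces $\pfrak=\lambda_0$, $\bfrak=\lambda_1$, $\nonm=\lambda_2$, $\covm=\lambda_4$, $\cfrak=\lambda_4$, and the remaining values displayed in \Cref{gkms:spl}.

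Next I would apply \Cref{FS:sn}\ref{seq:sn-a2}: since $\cf(\pi)=\lambda_3>\omega$, $\Por_\pi$ is ccc hence $\lambda_3$-cc, $|\pi|=\lambda_4=\cfrak$ in the extension, $\lambda_4\mid\pi$ and $\Vdash_\xi|\Qnm_\xi|\le\lambda_4$, the hypotheses are verified. Consequently $\Por_\pi$ forces the existence of a maximal ideal $J$ on~$\omega$ such that $\slalomt^\perp(h,J)=\slalomt(h,J)=\lambda_3$ for every $J$-unbounded $h\in\Baire$. Taking $h$ to be a constant function (which is $J$-unbounded by maximality) gives $\bfrak_J=\slalomt^\perp(\Fin,J)=\lambda_3=\slalomt(\Fin,J)=\dfrak_J$, and by the monotonicity captured in \Cref{BasicDia} all intermediate slalom numbers in the central region of the diagram (namely $\sla{I,J}$, $\dsla{I,J}$, $\bfrak_J$, $\dfrak_J$) are pinned down to the common value $\lambda_3$.

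The principal obstacle is bookkeeping coordination: the construction in~\cite{GKMSsplit} has its own book-keeping dictating $\Qnm_\xi$, and on top of it we need the book-keeping of \Cref{FS:sn}\ref{seq:sn-a2} that enumerates nice names for $J$-unbounded functions in order to place a Cohen slalom at cofinally many stages. The decisive point is that \Cref{FS:sn} only requires that Cohen reals appear cofinally in the iteration, and since every FS iteration of non-trivial ccc posets adds a Cohen real at every limit of cofinality $\omega$, the ideal $J$ can be assembled after the fact from the generic Cohen reals sitting at limit stages of the existing iteration; hence the base construction of~\cite{GKMSsplit} is not disturbed, and the argument reduces to verifying that the Cohen reals added at those limit stages interact with $\Por_\pi$ exactly as in the proof of \Cref{seq:sn}, which is immediate from \Cref{cohen-slt_h} and \Cref{clm:cohenev}.
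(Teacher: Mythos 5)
Your overall plan is the right one and follows the paper's route closely: run the finite support iteration of~\cite[\S6B]{GKMSsplit} of length $\pi=\lambda_4\lambda_3$, so that $\cf(\pi)=\lambda_3$, and then invoke \Cref{FS:sn} on top of it. However, there are two genuine gaps in the argument as written.

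First, you only invoke \Cref{FS:sn}/\Cref{seq:sn}~\ref{seq:sn-a2}. That gives $\slalomt^\perp(h,J)=\slalomt(h,J)=\lambda_3$ for every $J$-unbounded $h$, and via the Tukey connections of \Cref{TukeyDia}/\Cref{DiaDual} this does pin $\bfrak_J=\dfrak_J=\sla{h,J}=\dsla{h,J}=\lambda_3$. But the constellation in \Cref{gkms:spl} also asserts $\slalome(\star,J)=\slalome(I,J)=\slalomt(I,J)=\slalomt(\star,J)=\lambda_3$. Monotonicity (\Cref{BasicDia}) only gives these as \emph{upper} bounds by $\lambda_3$; their best lower bound in ZFC is $\pfrak$ (or $\aleph_1$), and nothing in $\slalomt^\perp(h,J)=\lambda_3$ forces $\slalome(\star,J)$ up to $\lambda_3$. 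This is exactly where the hypothesis $\lambda_4^{<\lambda_3}=\lambda_4$ enters: you must apply \Cref{FS:sn} together with \Cref{seq:sn}~\ref{seq:sn-c}, which uses $\lambda^{<\cf(\pi)}=\lambda$ (here $\lambda=\lambda_4$ and $\cf(\pi)=\lambda_3$) to push $\slalome(\star,J)$ up to $\cf(\pi)$. You never use that hypothesis in your proof, which is a signal that something is missing.

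Second, a small but real slip: a constant function $h\in\Baire$ is \emph{not} $J$-unbounded in the paper's sense. If $h\equiv c$ then $\|h\geq c+1\|=\emptyset\in J$, so the unboundedness clause fails. Taking $h$ constant is not a legitimate way to read off $\bfrak_J$ and $\dfrak_J$ from \Cref{seq:sn}~\ref{seq:sn-a2}; instead one uses any genuinely $J$-unbounded $h$ and the inequalities $\slalomt^\perp(h,J)\leq\bfrak_J\leq\dfrak_J\leq\slalomt(h,J)$.

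Finally, the paper's proof also records the behavior of $\slalome(I,\Fin)$ (between $\lambda_0$ and $\lambda_1$ in general, and exactly $\lambda_0$ for maximal $I$ via $\covst(I)\le\sfrak=\lambda_0$ and \Cref{S4_eq}~\ref{S4_eq:01}), which your proposal does not address but is part of the stated constellation.
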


\begin{proof}
Construct a~finite support iteration $\Por_\pi=\la\Por_\alpha,\Qnm_\alpha\colon\alpha<\lambda_4\ra$ of length $\pi:=\lambda_4\lambda_3$ as in~\cite[Subsec.~6B]{GKMSsplit}, with book-keeping arguments, of the following ccc posets:
\begin{itemize}
    \item  restrictions of $\Eor$ (the standard $\sigma$-centered poset adding an eventually different real) of size ${<}\lambda_2$;
    \item all $\sigma$-centered posets of size~${<}\lambda_0$; and 
    \item all $\sigma$-centered subposets of Hechler forcing of size~${<}\lambda_1$.
\end{itemize}
Then, $\Por_\pi$ forces $\pfrak=\sfrak=\lambda_0$, $\bfrak=\lambda_1$, $\nonm=\lambda_2$ and 
$\covm=\cfrak=\lambda_4$. On the other hand, since the cofinality of $\pi$ is $\lambda_3$, by using~\Cref{FS:sn},  $\Por_\pi$ forces that there is a~maximal ideal $\J$ such that $\sla{\star,\J}=\slalomt^\perp(h,J) =\dsla{h_1,\J}=\lambda_3$ for any $J$-unbounded $h\in\Baire$. On the other hand, by \Cref{S4_eq}~\ref{S4_eq:01}, we know that $\slalome(I,\Fin) = \min\{\covst(I),\bfrak\}$ for any ideal $I$ on $\omega$. In particular, when $I$ is maximal, $\covst(I)\leq \sfrak$ by~\cite{BreShe99}, so $\Por_\pi$ forces that $\slalome(I,\Fin)=\lambda_0$ for any maximal ideal $I$ on $\omega$.
 \end{proof}

\begin{figure}[ht]
\begin{center}
\begin{tikzpicture}[scale=0.8,every node/.style={scale=0.8}]
\node (ale) at (-7, -3.5) {$\aleph_1$};
\node (a) at (-5, -3.5) {$\pp$};
\node (as) at (-5, -1) {$\sla{\I,\fin}$};
\node (b) at (-5, 1.5) {$\bb$};
\node (ba) at (-5, 4) {$\nonm$};
\node (aa) at (-2, -3.5) {$\slamg{\J}$};
\node (aas) at (-2, -1) {$\sla{\I,\J}$};
\node (bb) at (-2, 1.5) {$\bb_\J$};
\node (bba) at (-2, 4) {$\sla{h,\J}$};
\node (c) at (1, -3.5) {$\dslaml{\J}$};
\node (cs) at (1, -1) {$\dsla{\I,\J}$};
\node (f) at (1, 1.5) {$\dd_\J$};
\node (csa) at (1, 4) {$\dsla{h,\J}$};
\node (xpf) at (4, 1.5) {$\dd$};
\node (xpc) at (4, -3.5) {$\covm$};
\node (xpcs) at (4, -1) {$\dslago{\I}$};
\node (xpfa) at (4, 4) {$\cofn$};
\node (cont) at (6, 4) {$\cc$};
\foreach \from/\to in {aas/bb,aas/cs,cs/f,aas/cs} \draw [line width=.15cm,
white] (\from) -- (\to);
\foreach \from/\to in {ale/a,a/as, aa/aas, c/cs, b/bb, a/aa, aa/c, bb/f, as/b, aas/bb, cs/f, as/aas, aas/cs,f/xpf,cs/xpcs,c/xpc,xpcs/xpf,xpc/xpcs, b/ba,bb/bba,f/csa,xpf/xpfa,ba/bba,bba/csa,csa/xpfa,xpfa/cont} \draw [->] (\from) -- (\to);

\draw[color=blue,line width=.05cm] (-7.4,2.7)--(6,2.7);
\draw[color=blue,dashed,line width=.05cm] (-7.4,0.25)--(6,0.25);
\draw[color=blue,dashed,line width=.05cm] (-7.4,-2.25)--(6,-2.25);

\draw[circle, fill=cadmiumorange,color=cadmiumorange] (-0.5,2.1) circle (0.45);
\draw[circle, fill=cadmiumorange,color=cadmiumorange] (-0.5,3.4) circle (0.4);
\draw[circle, fill=cadmiumorange,color=cadmiumorange] (-0.5,-2.9) circle (0.4);
\draw[circle, fill=cadmiumorange,color=yellow] (-0.5,-0.4) circle (0.4);
\node at (-0.5,2.1) {$\cf(\pi)$};
\node at (-0.5,3.4) {$|\pi|$};
\node at (-0.5,-2.9) {$\aleph_1$};
\node at (-0.5,-0.4) {$\textbf{?}$};
\end{tikzpicture}
\end{center}
\caption{Constellation forced in~\Cref{Thm:mixBD}. 
The cardinals between the dotted lines lie between $\aleph_1$ and $\cf(\pi)$, but their exact values are unclear. However, $\slalome(I,\Fin) = \aleph_1$ for any maximal ideal $I$ on $\omega$.
}
\label{HefollowsRa}
\end{figure}

\begin{theorem}\label{Thm:mixBD}
Let $\pi$ be an ordinal of uncountable cofinality, $\lambda:=|\pi|$ and assume that\/ $\lambda^{\aleph_0}=\lambda$. Then, the FS iteration of Hechler forcing of length $\pi$ followed by the random algebra adding\/ $\lambda$-many random reals forces the constellation of~\Cref{HefollowsRa} and\/ $\slalome^\perp(h,J)=\aleph_1$ for any $h\geq^*1$ and any ideal $J$ on $\omega$.
\end{theorem}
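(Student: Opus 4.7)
The plan is to first pin down the values of the classical Cicho\'n cardinal characteristics in the final model $V^{\Por_{\rm H}\ast\Bor}$, where $\Por_{\rm H}$ denotes the FS iteration of Hechler forcing of length $\pi$ and $\Bor$ the random algebra adding $\lambda$ random reals; once these are in hand, the entire constellation of slalom invariants in \Cref{HefollowsRa} will drop out of the ZFC inequalities already recorded in \Cref{BasicDia} and \Cref{DiaDual}.

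For the Cicho\'n computation I would argue as follows. The cofinal Hechler scale of length $\cf(\pi)=\kappa$ built by $\Por_{\rm H}$ is preserved by the $\omega^\omega$-bounding random step, giving $\bfrak=\dfrak=\kappa$. Both $\Por_{\rm H}$ (which is $\sigma$-centered) and $\Bor$ preserve a ground-model $\omega_1$-tower, so $\pfrak=\aleph_1$. The $\lambda$ many random reals in the second step force $\covn=\lambda$; combined with the Cicho\'n inequality $\covn\leq\nonm$ and the trivial $\cofn\leq\cfrak=\lambda$, this pins $\nonm=\cofm=\cofn=\lambda$. On the dual side, $\addn=\aleph_1$ is preserved throughout, and a standard argument using $\aleph_1$ many of the new random reals (whose union is non-null because $\addn=\aleph_1$) witnesses $\nonn=\aleph_1$; then the Cicho\'n inequality $\covm\leq\nonn$ collapses $\covm$ to $\aleph_1$.

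Next I would apply the ZFC inequalities of \Cref{BasicDia} row by row. The top row $\nonm\leq\sla{h,J}\leq\dsla{h,J}\leq\cofn$ (the last inequality requiring $h\to\infty$) forces both slalom numbers to equal $\lambda$ for every ideal $J$ on $\omega$ and every $h\geq^*1$. The middle row $\bfrak\leq\bfrak_J\leq\dfrak_J\leq\dfrak$ pins the $J$-parametrized (un)bounding numbers to $\kappa$. The bottom row $\pfrak\leq\slamg{J}\leq\dslaml{J}\leq\covm$ pins those numbers to $\aleph_1$. The third row $\sla{I,\Fin},\sla{I,J},\dsla{I,J},\dslago{I}$ is then only bounded in $[\aleph_1,\kappa]$ by the columns of the same diagram, in agreement with the caption. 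For a maximal ideal $I$ whose dual is a P-point (available in the CH ground model and preserved by both $\Por_{\rm H}$ and $\Bor$), $\covst(I)=\pfrak=\aleph_1$ survives into the final model, so \Cref{S4_eq}~\ref{S4_eq:01} gives $\slalome(I,\Fin)=\min\{\covst(I),\bfrak\}=\aleph_1$. The auxiliary claim $\slalome^\perp(h,J)=\aleph_1$ is immediate from \Cref{DiaDual}: $\slalome^\perp(h,J)\leq\covm=\aleph_1$ and the reverse bound is automatic.

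The main obstacle will be verifying $\covm=\aleph_1$ in the final model, since naively the $\lambda$ many Cohen reals added at limit stages of cofinality $\omega$ inside $\Por_{\rm H}$ push $\covm$ up to $\lambda$ in $V^{\Por_{\rm H}}$. The rescue has to come from the random step, which must bring $\nonn$ down to $\aleph_1$ so that the Cicho\'n inequality $\covm\leq\nonn$ does the collapsing; establishing $\nonn=\aleph_1$ after the two-step construction requires a careful preservation argument showing that the witness to $\addn=\aleph_1$ survives both forcings.
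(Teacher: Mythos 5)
Your high-level strategy matches the paper's: pin down the Cicho\'n values in the two-step extension and then read the slalom constellation off \Cref{BasicDia} and \Cref{DiaDual}. The paper simply cites \cite[Thm.~5]{mejiamatrix} for $V^{\Por_{\rm H}}$ and \cite[Sec.~5]{GKMScont} for the post-random values, so the two proofs converge once those values are in hand. However, there are two genuine problems in how you fill in the details.

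First, your justification for $\nonn=\aleph_1$ after the random step is incorrect as stated. The parenthetical ``whose union is non-null because $\addn=\aleph_1$'' does not follow: knowing $\addn=\aleph_1$ tells you there is a union of $\aleph_1$ null sets that is not null, not that a set of $\aleph_1$ random reals is non-null. The actual argument (implicit in \cite{GKMScont}) uses the structure of the random algebra: any null Borel set in $V^{\Por_{\rm H}\ast\Bor}$ is covered by a null set with countable support over the random coordinates, so any fixed $\aleph_1$ of the random reals cannot all lie in it. Your conclusion is right; your stated reason is not. (A smaller slip: you say $\covm=\lambda$ in $V^{\Por_{\rm H}}$, but there it is $\cf(\pi)$ — the Cohen reals added cofinally in the iteration of cofinality $\cf(\pi)$ already suffice to cover, and $\covm\leq\dfrak=\cf(\pi)$.)

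Second, and more seriously, your treatment of the caption claim $\slalome(I,\Fin)=\aleph_1$ for all maximal $I$ does not work. You restrict to a ground-model P-point ideal and assert it is preserved by $\Por_{\rm H}$ and $\Bor$. But adding even a single Cohen real (which every non-trivial finite-support iteration of length $\geq\omega$ does cofinally) destroys every ground-model ultrafilter: the new real produces a set that neither it nor its complement refines the old filter. So the ground-model $I$ is not a maximal ideal in the extension, and one must pass to some maximal extension of it, for which $\covst$ is no longer obviously $\aleph_1$. Moreover, even granting preservation, your argument only applies to one specially chosen $I$, whereas the caption claims the equality for \emph{every} maximal ideal in the final model. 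The paper instead uses the ZFC inequality $\covst(I)\leq\sfrak$ for maximal $I$ from Brendle--Shelah \cite{BreShe99} together with $\sfrak\leq\nonn=\aleph_1$ in this model, plus $\slalome(I,\Fin)=\min\{\covst(I),\bfrak\}$ from \Cref{S4_eq}; this settles all maximal ideals at once and needs no preservation of a particular ultrafilter.
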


\begin{proof}
It is well-known that the first iteration of the Hechler poset forces $\pfrak=\addn=\covn=\aleph_1$, $\addm=\cofm=\cf(\pi)$ and $\nonn=\cfrak=|\pi|$ (see e.g.~\cite[Thm.~5]{mejiamatrix}). 
After further adding $\lambda$-many random reals by using the random algebra, the generic extension satisfies $\non\Nwf=\aleph_1$, $\bfrak=\dfrak=\cf(\pi)$, and $\cov\Nwf=\cfrak=|\pi|$ (details can be found in~\cite[Sec.~5]{GKMScont}). As a consequence, $\slalome^\perp(h,J)=\slalomt(\star, J)=\aleph_1$ and $\slalome(h, J)=\cfrak$ for any ideal $J$ on $\omega$ and
$h\in\Baire$ with $h\geq^* 1$. On the other hand, when $I$ is a maximal ideal, $\covst{I}\leq\sfrak\leq\non{\Nwf}=\aleph_1$ by~\cite{BreShe99}, so $\slalome(I,\Fin) = \min\{\covst{I},\bfrak\} =\aleph_1$.
\end{proof}

\subsection{Several values}\label{subsec:seval}

Using the method of coherent systems from~\cite{mejvert}, we force constellations of \Cref{BasicDia} with many different values of cardinal invariants parametrized with ideals. The method is reviewed as follows.

\begin{definition}[cf.~{\cite[Def.~3.2]{FFMM}}]\label{Defcohsys}
   A \emph{simple coherent system (of FS iterations)} $\sbf$ is composed of the following objects:
   \begin{enumerate}[label=\rm(\Roman*)]
     \item a~partially ordered set $I^\sbf$ with a~maximum $i^*$, an ordinal $\pi^\sbf$,
     \item a~function $\Delta^\sbf\colon\pi^\sbf \menos\{0\}\to I^\sbf$,
     \item\label{csIII} for each $i\in I^\sbf$, a~FS iteration $\Por^\sbf_{i,\pi^\sbf}=\la\Por^\sbf_{i,\xi},\Qnm^\sbf_{i,\xi}:\xi<\pi^\sbf\ra$ such that,
     \begin{enumerate}[label=\rm(\roman*)]
         \item $\Por^\sbf_{i,1}\subsetdot \Por^\sbf_{j,1}$ whenever $i\leq j$ in $I^\sbf$, and
         \item for any $0<\xi<\pi^\sbf$, there is some $\Por^\sbf_{\Delta^\sbf(\xi),\xi}$-name of a~poset $\Qnm^\sbf_\xi$, with a~maximum element $1_\xi$ living in $V$ (not just a name) such that, for any $i\in I^\sbf$,
     \[
     \Qnm^\sbf_{i,\xi}=
     \begin{cases}
     \Qnm^\sbf_\xi,&\text{if $i\geq\Delta^\sbf(\xi)$,}\\
     \{1_\xi\},&\text{otherwise.}
     \end{cases}
     \]     
     \end{enumerate}
   \end{enumerate}
   According to this notation, $\Por^\sbf_{i,0}$ is the trivial poset and $\Por^\sbf_{i,1}=\Qnm^\sbf_{i,0}$. We often refer to $\la\Por^\sbf_{i,1}:i\in I^\sbf\ra$ as the \emph{base of the coherent system $\sbf$.} 
   Note that~\ref{csIII} implies that $\Por^\sbf_{i,\xi}\subsetdot\Por^\sbf_{j,\xi}$ whenever $i\leq j$ in $I^\sbf$ and $\xi\leq\pi^\sbf$ (see details in~\cite{FFMM}).

   For $j\in I^\sbf$ and $\eta\leq\pi^\sbf$ we write $V^\sbf_{j,\eta}$ for the $\Por^\sbf_{j,\eta}$-generic extensions. Concretely, when $G$ is $\Por^\sbf_{j,\eta}$-generic over~$V$, $V^\sbf_{j,\eta}:=V[G]$ and $V^\sbf_{i,\xi}:=V[\Por^\sbf_{i,\xi}\cap G]$ for all $i\leq j$ in $I^\sbf$ and $\xi\leq\eta$. Note that $V^\sbf_{i,\xi}\subseteq V^\sbf_{j,\eta}$ and $V^\sbf_{i,0}=V$ (see \Cref{Figcohsys}).

   \begin{figure}[ht]
   \begin{center}
     \includegraphics[scale=0.7]{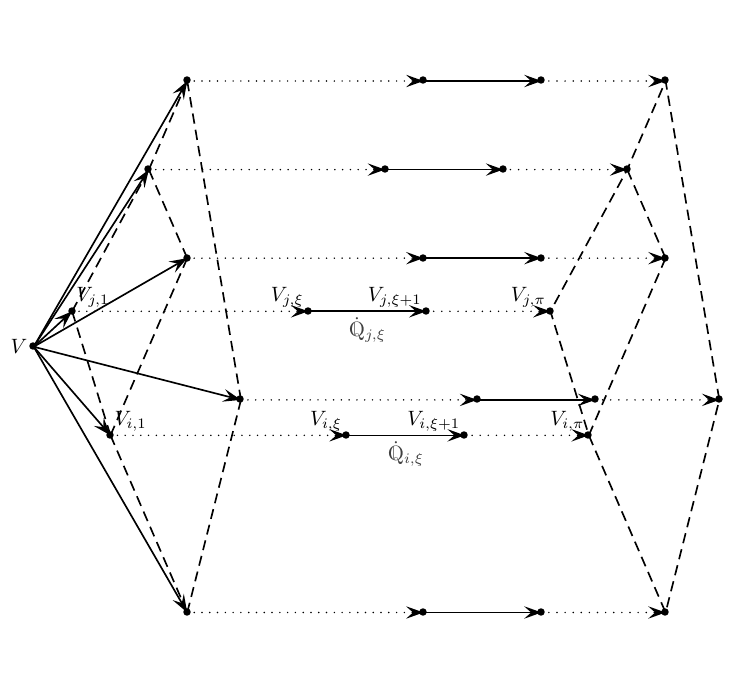}
     \caption{Coherent system of FS iterations. FS iterations of length $\pi$ for each $i\in I$ produce a sequence of generic extensions $\seqn{V_{i,\xi}}{i\in I,\ \xi\leq \pi}$, increasing on $i$ and $\xi$, starting with $V_{i,0}=V$ (the ground model). Each arrow denotes ``$\subseteq$" and the figures in dashed lines represent the `shape' of the partial order $\la I,{\leq}\ra$, i.e.\ $i\leq j$ in $I$ implies $V_{i,\xi}\subseteq V_{j,\xi}$ for all $\xi\leq \pi$.}
     \label{Figcohsys}
   \end{center}
   \end{figure}

   We say that the coherent system $\sbf$ has the \emph{$\theta$-cc} if, additionally, $\Por^\sbf_{i,\xi}$ forces that $\Qnm^\sbf_{i,\xi}$ has the $\theta$-cc for each $i\in I^\sbf$ and $\xi<\pi^\sbf$. This implies that $\Por^\sbf_{i,\xi}$ has the $\theta$-cc for all $i\in I^\sbf$ and $\xi\leq\pi^\sbf$.

   For a~coherent system $\sbf$ and a~set $J\subseteq I^\sbf$, $\sbf|_J$ denotes the coherent system with $I^{\sbf|_J}=J$, $\pi^{\sbf|_J}=\pi^\sbf$ and the FS iterations corresponding to~\ref{csIII} defined as for $\sbf$;\footnote{It could happen that $\Delta(\xi)\notin J$ for some $\xi$, but this is not a problem because, in this case, all iterands at $\xi$ are $\{1_\xi\}$.} if $\eta\leq\pi^\sbf$, $\sbf\frestr\eta$ denotes the coherent system with $I^{\sbf\upharpoonright\eta}=I^\sbf$, $\pi^{\sbf\upharpoonright\eta}=\eta$ and the iterations for~\ref{csIII} defined up to $\eta$ as for $\sbf$. Also, for $i_0\in I$, denote $J_{<i_0}:=\set{i\in J}{i<i_0}$. The set $J_{\leq i_0}$ is defined similarly.

   In particular, the upper indices $\sbf$ are omitted when there is
   no risk of ambiguity.
\end{definition}

Recall that, whenever $\la I,\leq\ra$ is a directed partial order (i.e.,\ for all $i,j\in I$ there is some $j'\in I$ above them) and $\seqn{\Por_i}{i\in I}$ is a sequence of posets such that $\Por_i\subsetdot \Por_j$ whenever $i\leq j$ in $I$, \emph{the direct limit of $\seqn{\Por_i}{i\in I}$} is $\Por:=\bigcup_{i\in I}\Por_i$.

\begin{lemma}[cf.~{\cite[Lem.~2.7]{mejvert}}]\label{smallsupprestr}
   Let $\theta$ be an uncountable regular cardinal 
   and let\/ $\sbf$ be a\/ $\theta$-cc simple coherent system. 
   Assume: 
    \begin{enumerate}[label=\rm(\roman*)] 
      \item $i^*\notin \ran\Delta$, $\bfrak(I^\sbf_{<i^*})\geq \theta$ and
      \item whenever $\pi>0$, $\Por_{i^*,1}$ is the direct limit of\/ $\seqn{\Por_{i,1}}{i\in  I^\sbf_{<i^*}}$.
    \end{enumerate}
   Then, for every $\xi\leq\pi$:
   \begin{enumerate}[label=\rm(\alph*)]
      \item\label{suppr1} $\Por_{i^*,\xi}$ is the direct limit of\/ $\seqn{\Por_{i,\xi}}{i\in I^\sbf_{<i^*}}$ and
      \item\label{suppr2} for any\/ $\Por_{i^*,\xi}$-name of a~function $\dot{x}$ with domain $\gamma<\theta$ into\/ $\bigcup_{i\in I^\sbf_{<i^*}}V_{i,\xi}$, there is some $i\in I^\sbf_{<i^*}$ such that $\dot{x}$ is (forced to be equal to) a\/ $\Por_{i,\xi}$-name.
   \end{enumerate}
\end{lemma}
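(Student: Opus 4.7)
I would prove (a) and (b) simultaneously by induction on $\xi \leq \pi$, with the two conclusions feeding each other at each stage.

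The base case $\xi=0$ is trivial since $\Por_{i,0}$ is the trivial poset. For $\xi=1$ part (a) is hypothesis~(ii), while (b) is where the interaction with $\bfrak(I_{<i^*})\geq\theta$ first appears: given a $\Por_{i^*,1}$-name $\dot x$ of a function with domain $\gamma<\theta$ into $\bigcup_{i<i^*}V_{i,1}$, for each $\alpha<\gamma$ the $\theta$-cc yields an antichain $A_\alpha\subseteq\Por_{i^*,1}$ of size ${<}\theta$ deciding $\dot x(\alpha)$; by the direct-limit description $\Por_{i^*,1}=\bigcup_{i<i^*}\Por_{i,1}$ each $p\in A_\alpha$ lies in some $\Por_{i(p),1}$ with $i(p)<i^*$, and each value forced is in some $V_{i',1}$ with $i'<i^*$. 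This produces ${<}\theta$ many indices in $I^\sbf_{<i^*}$; by the hypothesis $\bfrak(I^\sbf_{<i^*})\geq\theta$ they admit a common upper bound $i<i^*$, whence $\dot x$ may be taken as a $\Por_{i,1}$-name.

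For the successor step $\xi=\eta+1$, assuming (a) and (b) at $\eta$: for (a), given $p\in\Por_{i^*,\eta+1}$ written as $(p\frestr\eta,p(\eta))$, the induction hypothesis places $p\frestr\eta$ in some $\Por_{i,\eta}$ with $i<i^*$; the hypothesis $i^*\notin\ran\Delta$ forces $\Delta(\eta)<i^*$, so applying $\bfrak(I^\sbf_{<i^*})\geq\theta>2$ to the pair $\{i,\Delta(\eta)\}$ yields some $i'<i^*$ above both, and then $p(\eta)$ (a $\Por_{\Delta(\eta),\eta}$-name) is a $\Por_{i',\eta}$-name, so $p\in\Por_{i',\eta+1}$. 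For (b), the argument from the $\xi=1$ case repeats verbatim, using (a) at $\eta+1$ (just established) in place of the direct-limit hypothesis.

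The limit case $\xi$ uses that $\Por_{i^*,\xi}$ is a direct limit along $\eta<\xi$ by virtue of finite support: any $p\in\Por_{i^*,\xi}$ has $\dom(p)$ finite, so $p\in\Por_{i^*,\eta}$ for some $\eta<\xi$, and by the induction hypothesis $p\in\Por_{i,\eta}\subseteq\Por_{i,\xi}$ for some $i<i^*$, giving (a); and (b) is handled as before, now using (a) at $\xi$.

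\textbf{Main obstacle.} The delicate point is making sure, in (b), that the two uses of ``smallness'' compose correctly: the $\theta$-cc produces antichains of cardinality ${<}\theta$, the domain $\gamma<\theta$ gives ${<}\theta$ coordinates, and the forced values lie in various $V_{i,\xi}$ with $i<i^*$; altogether we collect ${<}\theta\cdot\theta=\theta$ many indices in $I^\sbf_{<i^*}$, and only the hypothesis $\bfrak(I^\sbf_{<i^*})\geq\theta$ (together with regularity of $\theta$) secures a common upper bound strictly below $i^*$. Everything else is essentially bookkeeping, but one must be careful not to invoke directedness of $I^\sbf_{<i^*}$ beyond what the $\bfrak$-bound provides, and not to leak into $i^*$ itself when choosing the upper bound.
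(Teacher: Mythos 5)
Your overall strategy — simultaneous induction on $\xi$, with the $\theta$-cc antichain argument plus $\bfrak(I^\sbf_{<i^*})\geq\theta$ giving~(b) from~(a), the base case from hypothesis~(ii), and the limit case via finite supports — matches the paper's proof. However, your successor step contains a genuine gap.

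You write that $p(\eta)$ is ``a $\Por_{\Delta(\eta),\eta}$-name'' and conclude directly that it is a $\Por_{i',\eta}$-name once $i'\geq\Delta(\eta)$. This is not correct. For $p\in\Por_{i^*,\eta+1}$, the coordinate $p(\eta)$ is a $\Por_{i^*,\eta}$-name; what lives in $V_{\Delta(\eta),\eta}$ is the \emph{poset} $\Qnm_\eta$ (and its elements), but the \emph{name} $p(\eta)$ for an element of $\Qnm_\eta$ is still a $\Por_{i^*,\eta}$-name and need not be a $\Por_{\Delta(\eta),\eta}$-name. To replace it with a $\Por_{i_1,\eta}$-name for some $i_1\in I^\sbf_{<i^*}$ you must invoke~(b) at stage $\eta$, applied to the one-element function sending $0$ to $p(\eta)$ (whose values are forced into $V_{\Delta(\eta),\eta}\subseteq\bigcup_{i<i^*}V_{i,\eta}$). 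You announce ``assuming (a) and (b) at $\eta$'' but then do not actually use~(b) in the successor step — yet this is precisely the place where~(b) earns its keep, and it is exactly what the paper does. Once you insert that application of~(b) and take an upper bound of $i$, $\Delta(\eta)$, and the resulting $i_1$ in $I^\sbf_{<i^*}$, the step goes through; the rest of your argument is sound.
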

\begin{proof}
    We first prove~\ref{suppr1}${}\Rightarrow{}$\ref{suppr2}. Let $\dot x$ be a $\Por_{i^*,\xi}$-name as in~\ref{suppr2}. For each $\alpha<\gamma$, there is some maximal antichain $A_\alpha\subseteq \Por_{i^*,\xi}$ such that, for each $p\in A_\alpha$, there are some $i_{\alpha,p}\in I^\sbf_{<i^*}$ and some $\Por_{i_{\alpha,p},\xi}$-name $\dot x_{\alpha,p}$ such that $p\Vdash \dot x(\alpha) = \dot x_{\alpha,p}$. Since $\Por_{i^*,\xi}$ is $\theta$-cc, $|A_\alpha|<\theta$, so $A:=\bigcup_{\alpha<\gamma}A_\alpha$ has size ${<}\theta$ because $\theta$ is regular. Since $\bfrak(I^\sbf_{<i^*})\geq\theta$, by~\ref{suppr1} there is some $i\in I^\sbf_{<i^*}$ such that $A\subseteq \Por_{i,\xi}$ and $i_{\alpha,p} \leq i$ for all $\alpha<\gamma$ and $p\in A_\alpha$. Then, each $A_\alpha$ is a maximal antichain in $\Por_{i,\xi}$ and each $\dot x_{\alpha,p}$ is a $\Por_{i,\xi}$-name. So we can define a~$\Por_{i,\xi}$-name $\dot y$ of a~function with domain $\gamma$ such that, for any $\alpha<\gamma$ and $p\in A_\alpha$, $p\Vdash_{\Por_{i,\xi}} \dot y(\alpha) = \dot x_{\alpha,p}$. Therefore, $\Vdash_{\Por_{i^{\ast},\xi}} \dot x = \dot y$. 

    We now prove~\ref{suppr1} by induction on $\xi\leq\pi$. The case $\xi\in\{0,1\}$ is trivial. For the successor step $\xi\to\xi+1$ with $\xi\geq 1$, assume that~\ref{suppr1} holds for $\xi$, so~\ref{suppr2} is implied for $\xi$. Let $p\in\Por_{i^*,\xi+1}$. Without loss of generality, assume that $\xi\in\dom p$, so $p\frestr\xi\in \Por_{i^*,\xi}$ and $\Vdash_{\Por_{i^*,\xi}} p(\xi)\in\Qnm_\xi =\Qnm_{\Delta(\xi),\xi} \subseteq V_{\Delta(\xi),\xi}$. By~\ref{suppr1} and~\ref{suppr2} for $\xi$, we can find $i_0,i_1\in I^\sbf_{<i^*}$ such that $p\frestr\xi \in \Por_{i_0,\xi}$ and $p(\xi)$ is a $\Por_{i_1,\xi}$-name (for the latter, consider a name for the function with domain $1$ sending $0$ to $p(\xi)$). Since $\bfrak(I^\sbf_{<i^*})$ is infinite, find some $i\in I^\sbf_{<i^*}$ above $\Delta(\xi)$, $i_0$ and~$i_1$. Then, $p\in\Por_{i,\xi}$.

    For the limit step, assume that $\xi$ is limit. If $p\in\Por_{i^*,\xi}$ then $p\in\Por_{i^*,\xi_0}$ for some $\xi_0<\xi$, hence, by induction hypothesis, $p\in\Por_{i,\xi_0}$ for some $i\in I^\sbf_{<i^*}$. Clearly, $p\in\Por_{i,\xi}$.
\end{proof}

\begin{theorem}\label{mainpresunb} 
   Let $\theta$ be an uncountable regular cardinal 
   and let\/ $\sbf$ be a\/ $\theta$-cc simple coherent system. Assume:
    \begin{enumerate}[label=\rm(\roman*)]
      \item $i_0<i_1$ in $I^\sbf$ and $0<\pi$, 
      \item\label{unbi} $I^0\subseteq I^\sbf$ and either $i_0\in I^0$, or\/ $\bfrak(I^0_{<i_0})\geq\theta$ and $i_0\notin\ran\Delta$,
      \item $\Por_{i_0,1}$ is the direct limit of\/ $\seqn{\Por_{i,1}}{i\in I^0_{\leq i_0}}$,\footnote{This is trivial when $i_0\in I^0$.} and
      \item\label{unbii} $\Por_{i_1,1}$ adds a~real $\dot{c}$ such that, for any $i\in I^0_{\leq i_0}$, $\Por_{i_1,1}$ forces that $\dot{c}$ is Cohen over $V_{i,1}$.
   \end{enumerate}
  Then, $\Por_{i_1,\pi}$ forces that $\dot{c}$ is Cohen over $V_{i_0,\pi}$.
\end{theorem}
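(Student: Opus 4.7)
The plan is to prove the theorem in two stages: first reduce the conclusion to an auxiliary claim about $\dot c$ being Cohen over each $V_{i,\pi}$ with $i\in I^0_{\leq i_0}$, and then establish this auxiliary claim via a single application of the product forcing lemma, bypassing any transfinite induction on $\eta\leq\pi$.

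\emph{Reduction step.} If $i_0\in I^0$, then taking $i=i_0$ in the auxiliary claim directly yields the desired conclusion. Otherwise, hypothesis~\ref{unbi} gives $i_0\notin\ran\Delta$ and $\bfrak(I^0_{<i_0})\geq\theta$, and the direct-limit hypothesis ensures that $\Por_{i_0,1}$ is the direct limit of $\seqn{\Por_{i,1}}{i\in I^0_{<i_0}}$. The subsystem $\sbf|_{I^0_{<i_0}\cup\{i_0\}}$ therefore meets the hypotheses of \Cref{smallsupprestr} with $i^*=i_0$. Applying part~\ref{suppr2} of that lemma at $\xi=\pi$ (with domain $\gamma=\omega<\theta$), every $\Por_{i_0,\pi}$-name of a real is forced to equal a $\Por_{i,\pi}$-name for some $i\in I^0_{<i_0}$. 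Hence every dense open subset of Cohen forcing coded in $V_{i_0,\pi}$ already lies in some $V_{i,\pi}$ with $i\in I^0_{<i_0}$, and if $\dot c$ is Cohen over each such $V_{i,\pi}$ then it is Cohen over $V_{i_0,\pi}$.

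\emph{Auxiliary claim.} Fix $i\in I^0_{\leq i_0}$ and let $G_{\dot c}$ denote the $\Cor$-generic filter over $V_{i,1}$ induced by $\dot c$ via hypothesis~\ref{unbii}. Factor the iterations as $\Por_{i,\pi}=\Por_{i,1}\ast\dot{\mathbb Q}^i$ and $\Por_{i_1,\pi}=\Por_{i_1,1}\ast\dot{\mathbb Q}^{i_1}$, and set $\mathbb Q^i:=\dot{\mathbb Q}^i[G_{i,1}]\in V_{i,1}$ and $\mathbb Q^{i_1}:=\dot{\mathbb Q}^{i_1}[G_{i_1,1}]\in V_{i_1,1}$. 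The coherent structure yields a complete embedding $\mathbb Q^i\subsetdot\mathbb Q^{i_1}$ in $V_{i_1,1}$: at every stage $\xi\geq 1$ the $i$-iterand either coincides with the $i_1$-iterand (when $\Delta(\xi)\leq i$) or is trivial (when $\Delta(\xi)\not\leq i$), so $\mathbb Q^i$ arises as a complete subiteration of $\mathbb Q^{i_1}$. Let $H$ be the $\mathbb Q^i$-generic obtained by restricting the $\mathbb Q^{i_1}$-generic read off from $G_{i_1,\pi}$; then $H$ is $\mathbb Q^i$-generic over $V_{i_1,1}$, hence over the intermediate model $V_{i,1}[\dot c]\subseteq V_{i_1,1}$. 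Since $\mathbb Q^i\in V_{i,1}$, the product forcing lemma applies to $\Cor\times\mathbb Q^i$ over $V_{i,1}$: the pair $(G_{\dot c},H)$ is $(\Cor\times\mathbb Q^i)$-generic, so by symmetry $G_{\dot c}$ remains $\Cor$-generic over $V_{i,1}[H]=V_{i,\pi}$. That is, $\dot c$ is Cohen over $V_{i,\pi}$.

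The principal technical point to verify is the complete embedding $\mathbb Q^i\subsetdot\mathbb Q^{i_1}$ of tail quotients; this reduces to a routine inductive comparison along the iteration using the coherent-system convention (\Cref{Defcohsys}) that shared iterands $\Qnm_\xi$ are interpreted identically in both $V_{i,\xi}$ and $V_{i_1,\xi}$. Once this is granted, the product lemma closes the argument in one step, so no induction on $\eta\leq\pi$ is required; the prior application of \Cref{smallsupprestr} handles the only other subtlety, namely the case $i_0\notin I^0$ where the target model $V_{i_0,\pi}$ is only reachable through the direct-limit reduction of names to a smaller coordinate.
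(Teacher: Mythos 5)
Your proposal is correct in structure and reaches the same conclusion, but it takes a genuinely different route from the paper's proof. The paper proves the theorem by induction on $\eta\leq\pi$, applying \Cref{CMsucc} at successor steps (preservation of $\Mbf$-unbounded reals through a poset in the smaller model) and \Cref{CMlim} at limit steps, after first reducing names to a coordinate $i\in I^0_{\leq i_0}$ via \Cref{smallsupprestr}. You perform the same initial reduction, but then replace the entire induction by a single application of the product-forcing lemma: show that the tail quotient $\mathbb Q^i\in V_{i,1}$ completely embeds into the tail quotient $\mathbb Q^{i_1}$ computed in $V_{i_1,1}$, extract a $\mathbb Q^i$-generic $H$ over $V_{i_1,1}\supseteq V_{i,1}[\dot c]$, and conclude by mutual genericity that $G_{\dot c}$ remains Cohen over $V_{i,1}[H]=V_{i,\pi}$.

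The one place you undersell the work is the complete embedding $\mathbb Q^i\subsetdot\mathbb Q^{i_1}$ in $V_{i_1,1}$, which you call a ``routine inductive comparison.'' That embedding is not an immediate consequence of the coordinatewise agreement of iterands; it is the rectangular commutation property of coherent systems of FS iterations (the restriction map $\Por_{i_1,\pi}\to\Por_{i,\pi}$ commuting with the restriction $\Por_{i_1,1}\to\Por_{i,1}$ so that the quotients embed). Proving it requires an induction on $\xi\leq\pi$ that is essentially parallel to the paper's induction on $\eta$; indeed, the successor step of that induction is precisely the content of \Cref{CMsucc}. So your argument does not avoid the inductive work so much as relocate it into the embedding claim. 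If the rectangular structure of coherent systems is taken as a known black box (as it is in the matrix-iteration literature cited, e.g.\ \cite{FFMM}), your one-shot product lemma argument is cleaner and avoids the Polish-relational-system machinery ($\Mbf$, the $\match$ relation) entirely; otherwise the two proofs have comparable length. Either way, the proposal is sound, and the clean separation between the reduction step (\Cref{smallsupprestr}) and the mutual-genericity argument is a nice reorganisation.
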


To proceed with the proof of the theorem, we need to review some facts about the preservation of $\Rbf$-unbounded reals (property defined below) when $\Rbf = \la X,Y,\sqsubset\ra$ is a~\emph{Polish relational system}. We use~\cite[\S4.3]{CM19} as a~reference. We do not define Polish relational systems since we do not require the details, but we just mention that $X$ is a~perfect Polish space, $Y$ is an analytic subset of some Polish space, and $\sqsubset$ is a~very well-defined relation (concretely, $F_\sigma$), which allow many absoluteness arguments. We are interested in using a~Polish relational system $\Mbf$ that is Tukey equivalent with $\Cbf_\Mwf$. There are many examples, one is $\Mbf = \la {}^\omega2, {}^\omega2\times\Ibb,\match\ra$ where $\Ibb$ denotes the collection of all interval partitions $I=\seqn{I_n}{n<\omega}$ of $\omega$ and
\[
x\match(y,I) \text{ iff }(\forall^\infty n<\omega) (\exists \ell\in I_n)\  x(\ell) \neq y(\ell).
\]
A pair $(y,I)$ is typically known as a~\emph{matching real}. The proof of $\Mbf\eqT \Cbf_\Mwf$ can be found in, e.g.\ \cite{blass}.

Given a~relational system $\Rbf = \la X,Y,\sqsubset\ra$ and a~transitive model $N$ of ZFC, a~real $c\in X$ is \emph{$\Rbf$-unbounded over $N$} if $c\not\sqsubset y$ for all $y\in Y^N$. Note that $c\in{}^\omega2$ is $\Mbf$-unbounded over $N$ iff $c$ is a~Cohen real over $N$. 

We say that $\sbf$ is a~\emph{simple coherent pair} if it is a~simple coherent system with $I^\sbf = \{i_0,i_1\}$ and $i_0<i_1$. We use the following results about the preservation of $\Rbf$-unbounded reals for coherent pairs, where $\Rbf=\la X,Y,\sqsubset\ra$ is a~Polish relational system (in particular, $\Mbf$).

\begin{lemma}[{\cite[Lem.~4.29]{CM19}}]\label{CMsucc}
    Let $M\subseteq N$ be transitive models of $\mathrm{ZFC}$ such that\/ $\Rbf$ can be defined in~$M$ (and hence, in $N$). 
    Assume that $c\in X^N$ is\/ $\Rbf$-unbounded over $M$. If\/ $\Por\in M$ is a~poset and $G$ is\/ $\Por$-generic over $N$, then $c$~is\/ $\Rbf$-unbounded over $M[G]$.\qed
\end{lemma}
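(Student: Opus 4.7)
The proof exploits the structural hypothesis that $\Rbf$ is a Polish relational system: $\sqsubset$ admits an $F_\sigma$ decomposition $\sqsubset=\bigcup_{n<\omega}\sqsubset_n$ into closed relations on $X\times Y$, and the Borel code for this decomposition is definable in $M$ (and hence absolute between $M$, $N$, and their generic extensions).

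First, I will set up the contradiction by pulling the problem back to the ground model $M$. Suppose $c$ fails to be $\Rbf$-unbounded over $M[G]$; then there is $y\in Y^{M[G]}$ with $c\sqsubset y$. Let $\dot y\in M$ be a $\Por$-name for $y$ and pick some $p_0\in G$ such that, working in $N$, $p_0\Vdash^N_\Por \check c\sqsubset\dot y$ (forcing over $N$ is legitimate since $\Por\in M\subseteq N$ and $c\in N$). Using the $F_\sigma$ structure, strengthen to a $p\leq p_0$ in $G$ and fix $n<\omega$ with $p\Vdash^N_\Por \check c\sqsubset_n\dot y$.

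Second, I will decode $\dot y$ as a uniform limit of approximations coming from $M$. Working in $M$, the Polish space $Y$ comes equipped with a fixed complete metric $d$. A standard density argument (using maximal antichains below $p$ that decide $\dot y$ up to a prescribed metric precision) produces a descending chain $p=q_0\geq q_1\geq\cdots$ in $\Por$ together with a sequence $(y^*_k)_{k<\omega}$ in $Y^M$ satisfying $q_k\Vdash^N_\Por d(\dot y,\check y^*_k)<2^{-k}$. The sequence $(y^*_k)$ is Cauchy in $Y^M$, and completeness yields its limit $y^*:=\lim_k y^*_k\in Y^M$.

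Third (the heart of the argument), I will derive a forcing-theoretic contradiction in $N$. Form the section $F:=\{y'\in Y:(c,y')\in\sqsubset_n\}$, a closed subset of $Y$ definable in $N$ from $c$ and the Borel code of $\sqsubset_n$. Since $c$ is $\Rbf$-unbounded over $M$, we have $F\cap Y^M=\varnothing$; in particular $y^*\notin F$, and the closedness of $F$ yields an $\varepsilon>0$ with the open ball $B(y^*,\varepsilon)$ satisfying $B(y^*,\varepsilon)\cap F=\varnothing$ in $N$. Choosing $k$ large enough that $d(y^*_k,y^*)<\varepsilon/2$ and $2^{-k}<\varepsilon/2$ gives $B(y^*_k,2^{-k})\cap F=\varnothing$ in $N$; this is a statement about Borel sets with codes in $N$, hence absolute to every generic extension of $N$. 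But by construction $q_k\Vdash^N_\Por \dot y\in B(\check y^*_k,2^{-k})$, while $q_k\leq p$ forces $\check c\sqsubset_n\dot y$, i.e.\ $q_k\Vdash^N_\Por \dot y\in \check F$. Thus $q_k$ forces $\dot y$ to lie in the empty set, which is impossible.

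The delicate point is the third step, where one must simultaneously track three ingredients—the closedness of $\sqsubset_n$, the completeness of the metric on $Y$, and the Borel absoluteness of the empty-intersection condition between $N$ and its generic extensions—to convert a ``forced'' membership in $F$ into an outright contradiction. This is precisely why the ``Polish'' hypothesis on $\Rbf$ is essential: without closedness of each $\sqsubset_n$ and completeness of $Y$, the approximations $y^*_k$ need not converge to a usable $y^*$, and the forcing-absoluteness step collapses.
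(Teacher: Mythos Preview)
The paper does not supply its own proof of this lemma: it is quoted from \cite[Lem.~4.29]{CM19} and closed immediately with \qed. Your argument is a correct and standard proof of the cited result.

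One small caveat: the paper stipulates that $Y$ is an \emph{analytic} subspace of a Polish space, not necessarily Polish itself, whereas your second step tacitly assumes a complete metric on $Y$ so that the limit $y^*=\lim_k y^*_k$ lands in $Y^M$. If $Y$ is merely analytic, this limit may fall in the ambient Polish space $Z\supseteq Y$ but outside $Y^M$, and then ``$c$ is $\Rbf$-unbounded over $M$'' no longer guarantees $y^*\notin F$, since unboundedness speaks only about $Y^M$. The routine fix is to lift through a continuous surjection $f\colon{}^\omega\omega\to Y$ coded in $M$: by $\Sigma^1_1$-absoluteness choose $r\in({}^\omega\omega)^{M[G]}$ with $f(r)=y$, run your descending-chain approximation on a name $\dot r$ inside the genuinely Polish space ${}^\omega\omega$ to obtain $r^*\in({}^\omega\omega)^M$, and set $y^*:=f(r^*)\in Y^M$; continuity of $f$ then transports a basic neighbourhood of $r^*$ to a neighbourhood of $y^*$ disjoint from $F$, and your third step finishes the contradiction unchanged. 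For the only application in this paper ($\Rbf=\Mbf$, where $Y={}^\omega 2\times\Ibb$ is already Polish) your proof applies verbatim.
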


\begin{lemma}[{\cite[Cor.~4.31]{CM19}}]\label{CMlim}
    Let\/ $\sbf$ be a~simple coherent pair of length a~limit ordinal $\pi$, wlog $I^\sbf = \{0,1\}$. Assume that $\dot c$ is a~$\Por_{1,1}$-name of a~member of $X$ such that, for any $\xi<\pi$, $\Por_{1,\xi}$ forces that $\dot c$ is\/ $\Rbf$-unbounded over $V_{0,\xi}$. Then\/ $\Por_{1,\pi}$ forces that $\dot c$ is\/ $\Rbf$-unbounded over $V_{0,\pi}$.\qed
\end{lemma}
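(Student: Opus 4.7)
Fix a~$\Por_{0,\pi}$-name $\dot y$ of an element of $Y$ (any $\Por_{1,\pi}$-name of a~member of $V_{0,\pi}$ is forced equal to such a~name) and a~condition $p\in\Por_{1,\pi}$; the goal is to exhibit $q\leq p$ forcing $\dot c\not\sqsubset\dot y$. The essential tool is that $\sqsubset$ is $F_\sigma$, so write $\sqsubset=\bigcup_{n<\omega}\sqsubset_n$ with each $\sqsubset_n$ closed; every $\sqsubset_n$ is then Borel-coded by a~tree $T_n$ on pairs of finite sequences, and the relation $x\sqsubset_n y$ is absolute between any two transitive models of ZFC containing its two arguments. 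Consequently, $\dot c\not\sqsubset\dot y$ is equivalent to the conjunction over $n<\omega$ of the $\Pi^0_2$-absolute statements $\dot c\not\sqsubset_n\dot y$, and it suffices to handle each $n$ separately.

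When $\cf(\pi)>\omega$: a~simple coherent pair is ccc (this is the standing hypothesis behind the applications of the result in the paper), so every real in $V_{0,\pi}$ already appears in $V_{0,\xi}$ for some $\xi<\pi$. Replacing $\dot y$ by a~$\Por_{0,\xi}$-name, the hypothesis gives $\Vdash_{\Por_{1,\xi}}\dot c\not\sqsubset\dot y$; absoluteness of each $\sqsubset_n$ between $V_{1,\xi}\subseteq V_{1,\pi}$ propagates this to $\Vdash_{\Por_{1,\pi}}\dot c\not\sqsubset\dot y$.

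When $\cf(\pi)=\omega$: write $\pi=\sup_k\xi_k$ with $\xi_k$ strictly increasing. The name $\dot y$ need not reduce to any $\Por_{0,\xi_k}$-name, so we argue by contradiction: assume some $q\leq p$ and $n<\omega$ satisfy $q\Vdash\dot c\sqsubset_n\dot y$; equivalently, $q\Vdash(\dot c\restriction m,\dot y\restriction m)\in T_n$ for every $m<\omega$. Using ccc, each $\dot y\restriction m$ is decided below $q$ by a~countable maximal antichain in $\Por_{0,\pi}$. A~fusion-style diagonal construction, exploiting the closedness of $\sqsubset_n$ (so that a~partial agreement with $T_n$ extends to a~full branch) and the finite-support structure of the iteration (so that finitely many decisions can be committed to at a~bounded level), produces a~level $\xi_{k^*}<\pi$, a~$\Por_{0,\xi_{k^*}}$-name $\dot y^*$ of a~member of $Y$, and a~$q^*\leq q$ with $q^*\in\Por_{1,\xi_{k^*}}$ such that $q^*\Vdash\dot c\sqsubset_n\dot y^*$. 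This contradicts the hypothesis at level $\xi_{k^*}$. The main obstacle is precisely this countable-cofinality fusion: constructing $\dot y^*$ at a~bounded level of the iteration whose initial segments cohere with $\dot c$ through $T_n$, while keeping $q^*$ below $q$. This is a~standard (though delicate) instance of the preservation machinery for Polish relational systems under FS iterations of ccc forcings, in the sense of Bartoszy\'nski--Judah (\emph{Set Theory: On the Structure of the Real Line}, Ch.~6), applied to the $F_\sigma$ relation $\sqsubset$; alternatively, it can be read off by iterating the absoluteness argument of \Cref{CMsucc} along the cofinal sequence $\seqn{\xi_k}{k<\omega}$ and extracting the fusion from the resulting cofinal reductions.
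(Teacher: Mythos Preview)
The paper does not prove this lemma at all: it is stated with the citation to \cite[Cor.~4.31]{CM19} and closed immediately with \qed. So there is no in-paper proof to compare against; the result is imported wholesale.

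Your sketch has two genuine gaps. First, you insert a ccc hypothesis (``a simple coherent pair is ccc'') that is not part of the lemma as stated. In \Cref{Defcohsys} the $\theta$-cc is an \emph{additional} property a coherent system may or may not have; a simple coherent pair is just two FS iterations with a complete embedding at the base. Your entire $\cf(\pi)>\omega$ case rests on this assumption to push $\dot y$ down to a bounded stage. It is true that the paper only \emph{applies} the lemma under a chain condition (inside the proof of \Cref{mainpresunb}), but the lemma itself, as cited from \cite{CM19}, does not carry that hypothesis, and the actual argument there works without it by exploiting the finite-support structure and the closedness of the $\sqsubset_n$ directly.

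Second, your $\cf(\pi)=\omega$ case is a promissory note, not a proof. You assert that a ``fusion-style diagonal construction'' yields a $\Por_{0,\xi_{k^*}}$-name $\dot y^*$ and a $q^*\in\Por_{1,\xi_{k^*}}$ with $q^*\Vdash\dot c\sqsubset_n\dot y^*$, but you never build it; you only say what it should achieve and point to Bartoszy\'nski--Judah. The alternative you offer---``iterating the absoluteness argument of \Cref{CMsucc} along $\seqn{\xi_k}{k<\omega}$''---is circular: \Cref{CMsucc} handles a single successor step, and stringing those together along a cofinal $\omega$-sequence is precisely the limit case you are trying to prove. (Minor point: each $\dot c\not\sqsubset_n\dot y$ is $\Sigma^0_1$, not $\Pi^0_2$; only the conjunction over $n$ is $\Pi^0_2$.)
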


\begin{proof}[Proof of \Cref{mainpresunb}]
    Let $\dot y$ and $\dot I$ be $\Por_{i_0,\pi}$-names of members of ${}^\omega2$ and $\Ibb$, respectively. It suffices to show that $\Por_{i_1,\pi}$ forces $\dot c \match (\dot y,\dot I)$. 
    By hypothesis, in the case $i_0\notin I^0$, we can apply \Cref{smallsupprestr} to $\sbf|_{I^0_{\leq i_0}}$ and get some $i\in I^0_{<i_0}$ such that $\dot y$ and $\dot I$ are $\Por_{i,\pi}$-names. When $i_0\in I^0$, set $i:=i_0$. In any case, $i\in I^0_{\leq i_0}$.
    
    It is enough to show, by induction on $1\leq \eta\leq \pi$, that $\Por_{i_1,\eta}$ forces that $\dot c$ is $\Mbf$-unbounded (i.e., Cohen) over~$V_{i,\eta}$. The case $\eta=1$ is clear by~\ref{unbii}, and the limit step is immediate from \Cref{CMlim}. So we deal with the successor step $\eta=\xi+1>1$. We consider two cases: if $\Delta(\xi)\leq i$ then $\Qnm_{i,\xi}= \Qnm_{i_1,\xi} = \Qnm_\xi$, so we can apply \Cref{CMsucc}; but if $\Delta(\xi)\nleq i$ then $\Qnm_{i,\xi} =\{0\}$ (the trivial poset), so $V_{i,\xi+1} = V_{i,\xi}$ and $\Por_{i_1,\xi}$ already forces that $\dot c$ is $\Mbf$-unbounded over $V_{i,\xi}$ (and so does $\Por_{i_1,\xi+1}$). 
\end{proof}

\begin{theorem}\label{Thm:mvert}
    Let $\theta$ be an uncountable regular cardinal and let\/ $\sbf$ be a\/ $\theta$-cc simple coherent system. Assume: 
    \begin{enumerate}[label=\rm(\roman*)]        
        \item $\ran\Delta\subseteq I^0\subseteq I^\sbf_{<i^*}$ and, for every $i\in I^\sbf\menos I^0$, $\bfrak(I^0_{<i})\geq\theta$, and

        \item For every $i_0\in I^\sbf\menos I^0$, $\Por_{i_0,1}$ is the direct limit of\/ $\seqn{\Por_{i,1}}{i\in I^0_{<i_0}}$.
    \end{enumerate}
     Then: 
     \begin{enumerate}[label = \normalfont (\alph*)]
         \item\label{cohlim:0} For every $i_0\in I^\sbf$, $\Por_{i_0,\pi}$ is the direct limit of\/ $\seqn{\Por_{i,\pi}}{i\in I^0_{\leq i_0}}$.
         
         \item\label{cohlim:a} If\/$\seqn{i_\zeta}{\zeta<\delta}$ is an increasing sequence in $I^\sbf_{<i^*}$ such that, for any $i\in I^0$, there is some $\zeta<\delta$ such that $i\leq i_\zeta$, then          
         $\Por_{i^*,\pi}$ is the direct limit of\/ $\seqn{\Por_{i_\zeta,\pi}}{\zeta<\delta}$.

         \item\label{cohlim:b} Further assume that $\lambda:=|I^\sbf|$, $\nu$ is a~regular cardinal, $\theta\leq\nu\leq \lambda$, $|\Por_{i^*,\pi}|\leq\lambda$, $\lambda^{<\nu}=\lambda$, and there is some strictly increasing sequence\/ $\seqn{i_{\zeta}}{\zeta<\lambda\nu}$ in $I^\sbf_{<i^*}$ as in~\ref{cohlim:a} and such that, for any $\zeta<\lambda\nu$, $\Por_{i_{\zeta+1},1}$~adds a~Cohen real over $V_{i_\zeta,1}$. Then, $\Por_{i^*,\pi}$ forces that there is some maximal ideal $J^*_\nu$ on $\omega$ such that\/ $\slalome(\star,J^*_\nu)=\slalomt^\perp(h,J^*_\nu)=\slalomt(h,J^*_\nu)=\nu$ for any $J^*_\nu$-unbounded $h\in\Baire$. If we remove the assumption $\lambda^{<\nu} = \lambda$, then we can remove\/ $\slalome(\star,J^*_\nu)$ in the result.
     \end{enumerate}
\end{theorem}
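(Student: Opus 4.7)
The plan is to establish (a), (b), (c) in order, each reducing to tools established earlier. Part~(a) will be an application of \Cref{smallsupprestr} to the subsystem $\sbf|_{I^0_{\leq i_0}}$ with $i_0$ playing the role of its maximum: the case $i_0\in I^0$ is trivial (because $\Por_{i_0,\pi}$ itself lies in $\set{\Por_{i,\pi}}{i\in I^0_{\leq i_0}}$), while for $i_0\in I^\sbf\smallsetminus I^0$ we have $i_0\notin\ran\Delta$ (from $\ran\Delta\subseteq I^0$), $\bfrak(I^0_{<i_0})\geq\theta$ (from the global hypothesis on $I^\sbf\smallsetminus I^0$), and the required direct-limit property on the base $\Por_{i_0,1}$ from hypothesis~(ii); these are exactly the premises of \Cref{smallsupprestr}, giving the conclusion for every $\xi\leq\pi$. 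Part~(b) then follows from~(a) applied with $i_0=i^*$: every $p\in\Por_{i^*,\pi}$ lies in some $\Por_{i,\pi}$ with $i\in I^0_{<i^*}=I^0$, and by cofinality of $\seqn{i_\zeta}{\zeta<\delta}$ we have $i\leq i_\zeta$ for some $\zeta<\delta$, whence $p\in\Por_{i_\zeta,\pi}$; the $\subsetdot$-monotonicity of the chain is automatic from the coherent-system structure.

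For part~(c) the chain $\seqn{\Por_{i_\zeta,\pi}}{\zeta\leq\lambda\nu}$ (with $\Por_{i_{\lambda\nu},\pi}:=\Por_{i^*,\pi}$) will be fed into \Cref{seq:sn}\ref{seq:sn-b}. The required verifications are: $\cf(\lambda\nu)=\nu$ is uncountable; part~(b) supplies the direct-limit condition at the top; the $\theta$-cc of $\Por_{i^*,\pi}$ upgrades to $\nu$-cc since $\theta\leq\nu$; and at each successor step \Cref{mainpresunb} applied to $(i_\zeta,i_{\zeta+1})$ lifts the base Cohen real added by $\Por_{i_{\zeta+1},1}$ over $V_{i_\zeta,1}$ to a Cohen real over $V_{i_\zeta,\pi}$, the dichotomy ``$i_\zeta\in I^0$ or $i_\zeta\notin\ran\Delta$ and $\bfrak(I^0_{<i_\zeta})\geq\theta$'' required by that lemma being exactly what the global hypotheses guarantee, and the base Cohen real being automatically Cohen over each $V_{i,1}$ with $i\in I^0_{\leq i_\zeta}$ because $V_{i,1}\subseteq V_{i_\zeta,1}$. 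The hypothesis $\lambda^{<\nu}=\lambda$ together with the $\theta$-cc and $|\Por_{i^*,\pi}|\leq\lambda$ bounds the number of nice names for reals by $\lambda^{<\theta}\leq\lambda^{<\nu}=\lambda$, giving $\Vdash\cfrak\leq\lambda$; the $\lambda$-many pairwise distinct Cohen reals $\dot c_\zeta$ supply the reverse inequality, and $\lambda$ clearly divides $\lambda\nu$. Invoking \Cref{seq:sn}\ref{seq:sn-b} with parameter $\theta:=\nu$ then produces a maximal ideal $J^*_\nu$ with
\[
\nu\leq\slalome(\star,J^*_\nu)\leq\slalomt^\perp(h,J^*_\nu)=\slalomt(h,J^*_\nu)=\cf(\lambda\nu)=\nu
\]
for every $J^*_\nu$-unbounded $h$, forcing all four cardinals to equal $\nu$.

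For the weakened statement (without $\lambda^{<\nu}=\lambda$) the same scheme will work but invoking \Cref{seq:sn}\ref{seq:sn-a2} in place of~\ref{seq:sn-b}, which omits the $\slalome(\star,\cdot)$ conclusion but still delivers $\slalomt^\perp(h,J^*_\nu)=\slalomt(h,J^*_\nu)=\nu$. The main technical point I expect is carrying out the verification of the hypotheses of \Cref{mainpresunb} uniformly for all successor steps $\zeta<\lambda\nu$; once the $I^0$ versus $I^\sbf\smallsetminus I^0$ dichotomy has been dispatched in the same fashion as in~(a), the remaining work reduces to the bookkeeping already encapsulated in \Cref{seq:sn}.
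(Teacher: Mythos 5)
Your proof is correct and follows the paper's route exactly: part~(a) via \Cref{smallsupprestr} applied to the subsystem with maximum $i_0$ (trivial when $i_0\in I^0$), part~(b) from~(a), and part~(c) by applying \Cref{mainpresunb} to each pair $(i_\zeta,i_{\zeta+1})$ and then feeding the resulting chain into \Cref{seq:sn}, with the dichotomy ``$i_\zeta\in I^0$ versus $i_\zeta\notin\ran\Delta$ and $\bfrak(I^0_{<i_\zeta})\geq\theta$'' discharging the hypotheses of \Cref{mainpresunb} exactly as you describe, and the downward transfer of Cohen-ness ($V_{i,1}\subseteq V_{i_\zeta,1}$) handled correctly. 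One caveat worth flagging on the weakened version: you invoke \Cref{seq:sn}~\ref{seq:sn-a2}, but that item still requires $\Vdash\cfrak=\lambda$, and your argument for it, namely $\cfrak\leq\lambda^{<\theta}\leq\lambda^{<\nu}=\lambda$, leans on precisely the hypothesis being dropped; so either a separate reason for $\Vdash\cfrak=\lambda$ must be supplied in that case or only the weaker ``for each $h$ there is some $J_h$'' form from \Cref{seq:sn}~\ref{seq:sn-a} is available (the paper's own proof does not address the weakened statement, so you are not missing anything present there).
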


\begin{proof}
     \ref{cohlim:0} is a direct consequence of \Cref{smallsupprestr} when $i_0\in I^\sbf\menos I^0$, otherwise it is trivial.~\ref{cohlim:a} follows directly by~\ref{cohlim:0}. 

     To see~\ref{cohlim:b}:      
     By \Cref{mainpresunb}, for any $\zeta<\lambda\nu$, $\Por_{i_{\zeta+1},\pi}$ adds a~Cohen real over $V_{i_\zeta,\pi}$. Then, by~\ref{cohlim:a}, $\Por_{i^*,\pi}$ is the direct limit of $\seqn{\Por_{i_\zeta,\pi}}{\zeta<\lambda\nu}$ and, thus, this sequence satisfies the hypothesis of \Cref{seq:sn}. Hence, $\Por_{i^*,\pi}$ forces that there is some maximal ideal $J^*_\nu$ satisfying $\slalome(\star,J^*_\nu)=\slalomt^\perp(h,J^*_\nu)=\slalomt(h,J^*_\nu) = \nu$ for any $J^*_\nu$-unbounded $h\in\Baire$.
\end{proof}

We now proceed with the applications. We use models established in~\cite{mejvert,BCM} and omit the details that can be found in the references.

\begin{theorem}[cf.~{\cite[Thm.~4.4]{mejvert}}]\label{mejver}
Let $\lambda_0$ be an uncountable regular cardinal, and let $\lambda_3\leq \lambda_4$ be cardinals such that\/ $\cof{[\lambda_3]^{<\lambda_0}}=\lambda_3$ and $\lambda_4=\lambda_4^{<\lambda_0}$. Then there is a~ccc poset forcing\/ $\pfrak=\nonm =\lambda_0$, $\covm = \dfrak =\lambda_3$, $\cofn =\cfrak = \lambda_4$, and:
\begin{enumerate}[label=\normalfont(\alph*)]
    \item\label{mejver:a} For any regular cardinal $\lambda$ such that\/ $\lambda_0\leq\lambda\leq\lambda_3$ and $\lambda_4^{<\lambda}=\lambda_4$, there is some maximal ideal $J$ such that\/ $\slalome(\star,J) = \slalomt^\perp(h,J) = \slalomt(h,J) = \lambda$ for any $J$-unbounded $h\in\Baire$ (see \Cref{figappl:a}). When the assumption $\lambda_4^{<\lambda} = \lambda_4$ is removed, we can remove $\slalome(\star,J)$.
    \item\label{mejver:b} For any regular cardinals $\lambda_1$ and $\lambda_2$ such that $\lambda_0\leq\lambda_1\leq\lambda_2\leq\lambda_3$ and $\lambda_4^{<\lambda_2}=\lambda_4$, there is some ideal~$J'$ satisfying\/ $\slalome(\star,J') = \slalomt^\perp(h,J') = \slalome(h,J') = \lambda_1$ and\/ $\slalomt(\star,J') = \slalome^\perp(h,J') = \slalomt(h,J') = \lambda_2$ for any $h\in\Baire$ such that $\lim^{J'}h =\infty$ (see \Cref{figappl:a}). When the assumption $\lambda_4^{<\lambda_2} = \lambda_4$ is removed, we can remove\/ $\slalome(\star,J')$ and $\slalomt(\star,J')$. 
\end{enumerate}
\begin{figure}[ht]
\centering
\begin{tikzpicture}[scale=0.6,every node/.style={scale=0.6}]
\node (ale) at (-7, -3.5) {$\aleph_1$};
\node (a) at (-5, -3.5) {$\pp$};
\node (as) at (-5, -1) {$\sla{\I,\fin}$};
\node (b) at (-5, 1.5) {$\bb$};
\node (ba) at (-5, 4) {$\nonm$};
\node (aa) at (-2, -3.5) {$\slamg{\J}$};
\node (aas) at (-2, -1) {$\sla{\I,\J}$};
\node (bb) at (-2, 1.5) {$\bb_\J$};
\node (bba) at (-2, 4) {$\sla{h,\J}$};
\node (c) at (1, -3.5) {$\dslaml{\J}$};
\node (cs) at (1, -1) {$\dsla{\I,\J}$};
\node (f) at (1, 1.5) {$\dd_\J$};
\node (csa) at (1, 4) {$\dsla{h,\J}$};
\node (xpf) at (4, 1.5) {$\dd$};
\node (xpc) at (4, -3.5) {$\covm$};
\node (xpcs) at (4, -1) {$\dslago{\I}$};
\node (xpfa) at (4, 4) {$\cofn$};
\node (cont) at (6, 4) {$\cc$};
\foreach \from/\to in {aas/bb,aas/cs,cs/f,aas/cs} \draw [line width=.15cm,
white] (\from) -- (\to);
\foreach \from/\to in {ale/a,a/as, aa/aas, c/cs, b/bb, a/aa, aa/c, bb/f, as/b, aas/bb, cs/f, as/aas, aas/cs,f/xpf,cs/xpcs,c/xpc,xpcs/xpf,xpc/xpcs, b/ba,bb/bba,f/csa,xpf/xpfa,ba/bba,bba/csa,csa/xpfa,xpfa/cont} \draw [->] (\from) -- (\to);

\draw[color=blue,line width=.05cm] (-6,-4)--(-6,5);
\draw[color=blue,line width=.05cm] (-3.5,-4)--(-3.5,5);
\draw[color=blue,line width=.05cm] (2.7,-4)--(2.7,5);
\draw[color=blue,line width=.05cm] (2.7,2.7)--(6,2.7);


\draw[circle, fill=cadmiumorange,color=cadmiumorange] (-4.3,0.2) circle (0.4);
\draw[circle, fill=cadmiumorange,color=cadmiumorange] (5,0.2) circle (0.4);
\draw[circle, fill=cadmiumorange,color=cadmiumorange] (5,3.3) circle (0.4);
\draw[circle,fill=cadmiumorange,color=cadmiumorange] (-0.4,0.2) circle (0.4);
\node at (-0.4,0.2) {$\lambda$};
\node at (-4.3,0.2) {$\lambda_0$};
\node at (5,0.2) {$\lambda_3$};
\node at (5,3.3) {$\lambda_4$};
\end{tikzpicture}
\begin{tikzpicture}[scale=0.6,every node/.style={scale=0.6}]
\node (ale) at (-7, -3.5) {$\aleph_1$};
\node (a) at (-5, -3.5) {$\pp$};
\node (as) at (-5, -1) {$\sla{\I,\fin}$};
\node (b) at (-5, 1.5) {$\bb$};
\node (ba) at (-5, 4) {$\nonm$};
\node (aa) at (-2, -3.5) {$\slamg{\J'}$};
\node (aas) at (-2, -1) {$\sla{\I,\J'}$};
\node (bb) at (-2, 1.5) {$\bb_{\J'}$};
\node (bba) at (-2, 4) {$\sla{h,\J'}$};
\node (c) at (1, -3.5) {$\dslaml{\J'}$};
\node (cs) at (1, -1) {$\dsla{\I,\J'}$};
\node (f) at (1, 1.5) {$\dd_{\J'}$};
\node (csa) at (1, 4) {$\dsla{h,\J'}$};
\node (xpf) at (4, 1.5) {$\dd$};
\node (xpc) at (4, -3.5) {$\covm$};
\node (xpcs) at (4, -1) {$\dslago{\I}$};
\node (xpfa) at (4, 4) {$\cofn$};
\node (cont) at (6, 4) {$\cc$};
\foreach \from/\to in {aas/bb,aas/cs,cs/f,aas/cs} \draw [line width=.15cm,
white] (\from) -- (\to);
\foreach \from/\to in {ale/a,a/as, aa/aas, c/cs, b/bb, a/aa, aa/c, bb/f, as/b, aas/bb, cs/f, as/aas, aas/cs,f/xpf,cs/xpcs,c/xpc,xpcs/xpf,xpc/xpcs, b/ba,bb/bba,f/csa,xpf/xpfa,ba/bba,bba/csa,csa/xpfa,xpfa/cont} \draw [->] (\from) -- (\to);

\draw[color=blue,line width=.05cm] (-6,-4)--(-6,5);
\draw[color=blue,line width=.05cm] (-3.5,-4)--(-3.5,5);
\draw[color=blue,line width=.05cm] (-0.5,-4)--(-0.5,5);
\draw[color=blue,line width=.05cm] (2.7,-4)--(2.7,5);
\draw[color=blue,line width=.05cm] (2.7,2.7)--(6,2.7);


\draw[circle, fill=cadmiumorange,color=cadmiumorange] (-1.2,0.2) circle (0.4);
\draw[circle, fill=cadmiumorange,color=cadmiumorange] (-4.3,0.2) circle (0.4);
\draw[circle, fill=cadmiumorange,color=cadmiumorange] (5,0.2) circle (0.4);
\draw[circle, fill=cadmiumorange,color=cadmiumorange] (5,3.3) circle (0.4);
\draw[circle,fill=cadmiumorange,color=cadmiumorange] (0.4,0.2) circle (0.4);
\node at (0.4,0.2) {$\lambda_2$};
\node at (-1.2,0.2) {$\lambda_1$};
\node at (-4.3,0.2) {$\lambda_0$};
\node at (5,0.2) {$\lambda_3$};
\node at (5,3.3) {$\lambda_4$};
\end{tikzpicture}
\caption{Constellations forced in~\Cref{mejver}.}
\label{figappl:a}
\end{figure}  
\end{theorem}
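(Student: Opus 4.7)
The plan is to build on the coherent-system construction of \cite[Thm.~4.4]{mejvert}, from which the Cicho\'n-diagram values are already known, and then read off the new slalom-number constellations by combining \Cref{Thm:mvert}\ref{cohlim:b} with the disjoint-sum identities of \Cref{L3.5} and \Cref{L6.3}. Concretely, I would take $\Por := \Por^\sbf_{i^*,\pi^\sbf}$ for a~$\lambda_0$-cc simple coherent system $\sbf$ built as in \cite[Thm.~4.4]{mejvert}: the iteration length is $\pi^\sbf = \lambda_4$ and the partial order $I^\sbf$, of size $\lambda_4$, is shaped so that a book-keeping using the arithmetic $\cof{[\lambda_3]^{<\lambda_0}} = \lambda_3$ and $\lambda_4^{<\lambda_0} = \lambda_4$ forces $\pfrak = \nonm = \lambda_0$, $\covm = \dfrak = \lambda_3$, and $\cofn = \cfrak = \lambda_4$. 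The iterands come from the usual menu: $\sigma$-centered subposets of Hechler forcing (to control $\dfrak = \lambda_3$), $\sigma$-centered posets of size $<\lambda_0$ (for $\pfrak = \lambda_0$), and restrictions of standard eventually different forcing of size $<\lambda_3$ (to force $\nonm$ small and $\covm$ large).

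For part~\ref{mejver:a}, fix a~regular $\lambda$ with $\lambda_0 \le \lambda \le \lambda_3$ (and $\lambda_4^{<\lambda} = \lambda_4$ when available). The vertical direct-limit shape of $\sbf$ supplies a~cofinal strictly increasing sequence $\seqn{i_\zeta}{\zeta < \lambda_4 \cdot \lambda}$ in $I^\sbf_{<i^*}$ at whose successors $\Por_{i_{\zeta+1},1}$ adds a~Cohen real over $V_{i_\zeta,1}$; \Cref{mainpresunb} lifts this to the full iteration, so that $\Por_{i_{\zeta+1},\pi^\sbf}$ adds a~Cohen real over $V_{i_\zeta,\pi^\sbf}$ for every $\zeta$. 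The hypotheses of \Cref{Thm:mvert}\ref{cohlim:b} are then satisfied with $\lambda := \lambda_4$ and $\nu := \lambda$, yielding a~maximal ideal $J$ on $\omega$ with $\slalome(\star, J) = \slalomt^\perp(h, J) = \slalomt(h, J) = \lambda$ for every $J$-unbounded $h \in \Baire$. Dropping the assumption $\lambda_4^{<\lambda} = \lambda_4$ costs exactly the $\slalome(\star, J)$ clause, in accordance with the second clause of \Cref{Thm:mvert}\ref{cohlim:b}.

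For part~\ref{mejver:b}, apply part~\ref{mejver:a} twice inside the same generic extension (with $\lambda = \lambda_1$ and then $\lambda = \lambda_2$, using $\lambda_4^{<\lambda_2} = \lambda_4 \Rightarrow \lambda_4^{<\lambda_1} = \lambda_4$) to obtain maximal ideals $J_1$ and $J_2$ on two disjoint copies of $\omega$ witnessing the values $\lambda_1$ and $\lambda_2$, respectively, and set $J' := J_1 \oplus J_2$. For any $h \in \Baire$ with $\lim^{J'} h = \infty$, write $h = h_0 \oplus h_1$; then $\lim^{J_e} h_e = \infty$ for $e \in \{0,1\}$, and \Cref{L3.5}\ref{L3.5a} delivers
\[
\slalomt(h, J') = \max\{\slalomt(h_0, J_1), \slalomt(h_1, J_2)\} = \lambda_2,
\]
and symmetrically $\slalomt^\perp(h, J') = \slalome(h, J') = \lambda_1$ and $\slalome^\perp(h, J') = \lambda_2$, while \Cref{L6.3}\ref{L4.15d} supplies $\slalomt(\star, J') = \lambda_2$ and $\slalome(\star, J') = \lambda_1$, completing the constellation.

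The main obstacle is the first step: confirming that the coherent system of \cite[Thm.~4.4]{mejvert} really admits, for every admissible regular $\lambda \in [\lambda_0, \lambda_3]$, a~cofinal sequence of length $\lambda_4 \cdot \lambda$ in $I^\sbf_{<i^*}$ along which the base iteration genuinely adds a~Cohen real at each successor. The vertical direct-limit structure of $\sbf$ is designed precisely for this, but formally tracing the Cohen reals through the coherent system and reconciling them with the book-keeping that pins down the Cicho\'n values requires reproducing the technical core of the construction in \cite{mejvert}, suitably augmented with \Cref{mainpresunb} to lift Cohen reals from the base iteration to intermediate stages.
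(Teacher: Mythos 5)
Your plan matches the paper's proof at the structural level: build the coherent system from \cite[Thm.~4.4]{mejvert}, use \Cref{mainpresunb} to lift the Cohen reals added along increasing paths in the base into the full iteration, feed the resulting cofinal sequence into \Cref{Thm:mvert}~\ref{cohlim:b} to obtain a maximal ideal with the prescribed values, and then combine two such ideals via $J' := J_1 \oplus J_2$ and the sum identities of \Cref{L3.5} and \Cref{L6.3} for part~\ref{mejver:b}. This is exactly what the paper does.

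Two of your recalled details about the underlying coherent system are off, though. First, the iteration length is $\pi = \lambda_4\lambda_3$ (ordinal product), not $\lambda_4$; it has to run for $\lambda_3$ blocks of length $\lambda_4$, and the cofinality being tied to $\lambda_3$ is what yields $\covm = \dfrak = \lambda_3$. Second, the iterand menu for this particular construction is (full) Hechler forcing $\Dor$ at the block boundaries $\lambda_4\alpha$ together with all $\sigma$-centered posets of size ${<}\lambda_0$ via book-keeping; restrictions of eventually different forcing (and $\sigma$-centered subposets of Hechler of bounded size) are the menu for the \Cref{gkms} construction, not this one. Neither slip affects the logical skeleton of your argument, since you explicitly defer to the construction of \cite{mejvert}, but with $\pi = \lambda_4$ you could not realize the cofinal sequences of length $\lambda_4\cdot\lambda$ in $I^\sbf_{<i^*}$ that your appeal to \Cref{Thm:mvert}~\ref{cohlim:b} requires, so it is worth flagging. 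The paper also spends some effort constructing the interpolating chains $\seqn{B^\delta_\zeta}{\zeta<\delta}$ in $\pts(\lambda_4)$ explicitly (choosing $S^*_{w_\gamma}$ as anchor points) so that every $A\in I^0$ is eventually captured; your ``vertical direct-limit shape supplies the cofinal sequence'' compresses that into a single sentence, which is the technical gap you yourself acknowledge at the end.
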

\begin{proof}

Construct a~simple coherent system on $I^\sbf:=\pts(\lambda_4)$ (ordered by $\subseteq$) of FS iterations of length
$\pi:=\lambda_4\lambda_3$, where $\Por_{A,1}:=\Cor_A$ for all $A\subseteq\lambda_4$. 
To proceed with the construction, we fix a cofinal family $C\subseteq [\lambda_3]^{<\lambda_0}$ of size $\lambda_3$ and a function $t\colon \lambda_3\to C$ such that $|t^{-1}[\{w\}]|=\lambda_3$ for all $w\in C$. Partition $\lambda_4$ into sets $\seqn{S_\zeta}{\zeta<\lambda_3}$ of size $\lambda_4$ and, for $w\subseteq \lambda_3$, set $S^*_w:= \bigcup_{\zeta\in w}S_\zeta$. 
For $0<\alpha<\lambda_3$ and $\rho<\lambda_4$, define $\Delta(\lambda_4\alpha+\rho):=S^*_{t(\alpha)}$ and $\Delta(\rho):=S^*_{t(0)}$, the latter when $\rho>0$.

The iteration is constructed at each interval $[\lambda_4\alpha,\lambda_4(\alpha+1))$ as follows. Using the $\Delta$ defined above, define $\Qnm_{\lambda_4\alpha}:=\Dor^{V_{\Delta(\lambda_4\alpha),\lambda_4\alpha}}$ when $\alpha>0$, where $\Dor$ denotes Hechler forcing. Also allowing $\alpha=0$, enumerate all the nice $\Por_{S^*_{t(\alpha)},\lambda_4\alpha}$-names $\seqn{\Qnm_\xi}{\lambda_4\alpha<\xi<\lambda_4(\alpha+1)}$ of all the $\sigma$-centered posets with domain contained in~$\lambda_4$ of size ${<}\lambda_0$. This is possible by the assumption $\lambda_4^{<\lambda_0}=\lambda_4$, as it is always forced that $\cfrak\leq\lambda_4$. At each $\lambda_4\alpha<\xi<\lambda_4(\alpha+1)$ we use $\Qnm_\xi$ for the successor step (considering the value of $\Delta(\xi)$ as well). This finishes the forcing construction.

Define $I^0:=\set{A\subseteq\lambda_4}{(\exists w\in C)\ A\subseteq S^*_w}$. Notice that $\ran\Delta = \set{S^*_w}{w\in C}$ is cofinal in $I^0$. Also, for any $B\in\pts(\lambda_4)\menos I^0$, $\bfrak(I^0_{\subseteq B}) = \lambda_0$ and $\Por_{B,1} = \Cor_B$ is the direct limit of $\seqn{\Por_{A,1}}{A\in I^0_{\subseteq B}}$. Therefore, \Cref{mainpresunb} can be applied to conclude that, for any $B\subsetneq B'\subseteq\lambda_4$, $\Por_{B',\pi}$ adds a Cohen real over $V_{B,\pi}$. 
On the other hand, for any limit ordinal $\delta<\lambda^+_4$ of cofinality between $\lambda_0$ and $\lambda_3$, we can construct an strictly increasing sequence $\seqn{B^\delta_\zeta}{\zeta<\delta}$ in $\pts(\lambda_4)$ such that any $A\in I^0$ is contained in some $B^\delta_\zeta$. To see this, pick an increasing cofinal sequence $\seqn{\delta_\gamma}{\gamma<\cf(\delta)}$ in $\delta$ and a $\subsetneq$-increasing sequence $\seqn{w_\gamma}{\gamma<\cf(\delta)}$ such that $\bigcup_{\gamma<\cf(\delta)}w_\gamma = \lambda_3$ (the latter is possible because $\cf(\delta)\leq\lambda_3$). For each $\gamma<\delta$, since $|\delta_{\gamma+1} \menos \delta_\gamma| \leq \lambda_4$ (because $\delta<\lambda_4^+$), we can find a $\subsetneq$-increasing sequence $\seqn{B^\delta_\zeta}{\delta_\gamma\leq \zeta<\delta_{\gamma+1}}$ such that $B^\delta_{\delta_\gamma} = S^*_{w_\gamma}$ and $B^\delta_\zeta \subseteq S^*_{w_{\gamma+1}}$. Now, for any $w\in C$, since $\cf(\delta)\geq\lambda_0$, there is some $\gamma<\cf(\delta)$ such that $w\subseteq w_\gamma$. Therefore, $S^*_w \subseteq B^\delta_{\gamma_\delta}$.

Thus, by \Cref{Thm:mvert}, $\Por_{\lambda_4,\pi}$ is the direct limit of $\seqn{\Por_{B^\delta_\zeta,\pi}}{\zeta<\delta}$. 
As a consequence, $\Por_{\lambda_4,\pi}$ forces $\non{\Mwf}\leq\allowbreak\lambda_0$ by using the sequence $\seqn{B^{\lambda_0}_\zeta}{\zeta<\lambda_0}$, and it forces $\lambda_3\leq\cov{\Mwf}$ by using the sequence $\seqn{B^{\lambda_3}_\zeta}{\zeta<\lambda_3}$.

The small $\sigma$-centered iterands ensure that $\Por_{\lambda_4,\pi}$ forces $\lambda_0\leq \pfrak$, while the Hechler posets ensure $\dfrak\leq \lambda_3$. See the cited reference for $\cofn =\cfrak = \lambda_4$.

\ref{mejver:a}: Assume $\lambda_0\leq\lambda\leq\lambda_3$ regular and $\lambda_4^{<\lambda}=\lambda_4$. By considering the sequence $\seqn{B^{\lambda_4\lambda}_\zeta}{\zeta<\lambda_4\lambda}$, we can use~\Cref{seq:sn} to get that $\Por_{\lambda_4,\pi}$ forces that there is a~maximal ideal $J$ such that $\slalome(\star,J) =\slalomt^\perp(h,J)= \slalomt(h,J) = \lambda$.

\ref{mejver:b}: By~\ref{mejver:a} applied to $\lambda_1$ and $\lambda_2$, in $V_{\lambda_4,\pi}$ there are maximal ideals $J_1$ and $J_2$ such that $\slalome(\star,J_1) = \slalomt^\perp(h_1,J_1)= \slalomt(h_1,J_1) = \lambda_1$ and $\slalome(\star,J_2) = \slalomt^\perp(h_2,J_2) =\slalomt(h_2,J_2) = \lambda_2$ for any $J_i$-unbounded $h_i$ and $i\in\{1,2\}$. 
Let $J':=J_1\oplus J_2$. As any $h\colon \omega\oplus \omega\to \omega$ with $\lim^{J'} h=\infty$ has the form $h=h_1\oplus h_2$ with $\lim^{J_1}h_1 = \lim^{J_2}h_2 = \infty$, by \Cref{L3.5} we conclude 
that $\slalome(\star,J') = \slalomt^\perp(h,J') = \slalome(h,J') = \lambda_1$ and $\slalomt(\star,J') = \slalome^\perp(h,J')= \slalomt(h,J') = \lambda_2$.
\end{proof}

\begin{theorem}[cf.~{\cite[Thm.~4.6~(e)]{mejvert}}]\label{mejvert:amob}
Let $\lambda_0\geq \aleph_1$ be a~regular cardinal and $\lambda_3\leq\lambda_4$ cardinals such that\/ $\cof{[\lambda_3]^{<\lambda_0}}=\lambda_3$ and $\lambda_4=\lambda_4^{<\lambda_0}$. Then there is a~ccc poset forcing that\/ $\pfrak=\nonm=\lambda_0$, $\covm=\cofn=\lambda_3$, $\lambda_4 = \cfrak$, and:
\begin{enumerate}[label=\normalfont(\alph*)]
    \item For any regular cardinal $\lambda$ such that $\lambda_0\leq\lambda\leq\lambda_3$ and $\lambda_4^{<\lambda}=\lambda_4$, there is some maximal ideal $J$ such that\/ $\slalome(\star,J) =\slalomt^\perp(h,J) = \slalomt(h,J) = \lambda$ for any $J$-unbounded $h\in\Baire$ (see~\Cref{figappl:b}). We can remove\/ $\slalome(\star,J)$ when the assumption $\lambda_4^{<\lambda} = \lambda_4$ is removed. 

    \item For any regular cardinals $\lambda_1$ and $\lambda_2$ such that $\lambda_0\leq\lambda_1\leq\lambda_2\leq\lambda_3$ and $\lambda_4^{<\lambda_2}=\lambda_4$, there is some ideal~$J'$ satisfying\/ $\slalome(\star,J') = \slalomt^\perp(h,J') = \slalome(h,J') = \lambda_1$ and\/ $\slalomt(\star,J') = \slalome^\perp(h,J') = \slalomt(h,J') = \lambda_2$ for any $h\in\Baire$ such that $\lim^{J'}h = \infty$ (see~\Cref{figappl:b}). 
    We can remove\/ $\slalome(\star,J')$ and\/ $\slalomt(\star,J')$ when the assumption $\lambda_4^{<\lambda_2} = \lambda_4$ is removed. 
\end{enumerate}
\begin{figure}[ht]
\centering
\begin{tikzpicture}[scale=0.6,every node/.style={scale=0.6}]
\node (ale) at (-7, -3.5) {$\aleph_1$};
\node (a) at (-5, -3.5) {$\pp$};
\node (as) at (-5, -1) {$\sla{\I,\fin}$};
\node (b) at (-5, 1.5) {$\bb$};
\node (ba) at (-5, 4) {$\nonm$};
\node (aa) at (-2, -3.5) {$\slamg{\J}$};
\node (aas) at (-2, -1) {$\sla{\I,\J}$};
\node (bb) at (-2, 1.5) {$\bb_\J$};
\node (bba) at (-2, 4) {$\sla{h,\J}$};
\node (c) at (1, -3.5) {$\dslaml{\J}$};
\node (cs) at (1, -1) {$\dsla{\I,\J}$};
\node (f) at (1, 1.5) {$\dd_\J$};
\node (csa) at (1, 4) {$\dsla{h,\J}$};
\node (xpf) at (4, 1.5) {$\dd$};
\node (xpc) at (4, -3.5) {$\covm$};
\node (xpcs) at (4, -1) {$\dslago{\I}$};
\node (xpfa) at (4, 4) {$\cofn$};
\node (cont) at (6, 4) {$\cc$};
\foreach \from/\to in {aas/bb,aas/cs,cs/f,aas/cs} \draw [line width=.15cm,
white] (\from) -- (\to);
\foreach \from/\to in {ale/a,a/as, aa/aas, c/cs, b/bb, a/aa, aa/c, bb/f, as/b, aas/bb, cs/f, as/aas, aas/cs,f/xpf,cs/xpcs,c/xpc,xpcs/xpf,xpc/xpcs, b/ba,bb/bba,f/csa,xpf/xpfa,ba/bba,bba/csa,csa/xpfa,xpfa/cont} \draw [->] (\from) -- (\to);

\draw[color=blue,line width=.05cm] (-6,-4)--(-6,5);
\draw[color=blue,line width=.05cm] (-3.5,-4)--(-3.5,5);
\draw[color=blue,line width=.05cm] (2.7,-4)--(2.7,5);
\draw[color=blue,line width=.05cm] (5.3,-4)--(5.3,5);

\draw[circle, fill=cadmiumorange,color=cadmiumorange] (-4.3,0.2) circle (0.4);
\draw[circle, fill=cadmiumorange,color=cadmiumorange] (3.4,0.2) circle (0.4);
\draw[circle, fill=cadmiumorange,color=cadmiumorange] (6,0.2) circle (0.4);
\draw[circle,fill=cadmiumorange,color=cadmiumorange] (-0.4,0.2) circle (0.4);
\node at (-0.4,0.2) {$\lambda$};
\node at (-4.3,0.2) {$\lambda_0$};
\node at (3.4,0.2) {$\lambda_3$};
\node at (6,0.2) {$\lambda_4$};
\end{tikzpicture}
\begin{tikzpicture}[scale=0.6,every node/.style={scale=0.6}]
\node (ale) at (-7, -3.5) {$\aleph_1$};
\node (a) at (-5, -3.5) {$\pp$};
\node (as) at (-5, -1) {$\sla{\I,\fin}$};
\node (b) at (-5, 1.5) {$\bb$};
\node (ba) at (-5, 4) {$\nonm$};
\node (aa) at (-2, -3.5) {$\slamg{\J'}$};
\node (aas) at (-2, -1) {$\sla{\I,\J'}$};
\node (bb) at (-2, 1.5) {$\bb_{\J'}$};
\node (bba) at (-2, 4) {$\sla{h,\J'}$};
\node (c) at (1, -3.5) {$\dslaml{\J'}$};
\node (cs) at (1, -1) {$\dsla{\I,\J'}$};
\node (f) at (1, 1.5) {$\dd_{\J'}$};
\node (csa) at (1, 4) {$\dsla{h,\J'}$};
\node (xpf) at (4, 1.5) {$\dd$};
\node (xpc) at (4, -3.5) {$\covm$};
\node (xpcs) at (4, -1) {$\dslago{\I}$};
\node (xpfa) at (4, 4) {$\cofn$};
\node (cont) at (6, 4) {$\cc$};
\foreach \from/\to in {aas/bb,aas/cs,cs/f,aas/cs} \draw [line width=.15cm,
white] (\from) -- (\to);
\foreach \from/\to in {ale/a,a/as, aa/aas, c/cs, b/bb, a/aa, aa/c, bb/f, as/b, aas/bb, cs/f, as/aas, aas/cs,f/xpf,cs/xpcs,c/xpc,xpcs/xpf,xpc/xpcs, b/ba,bb/bba,f/csa,xpf/xpfa,ba/bba,bba/csa,csa/xpfa,xpfa/cont} \draw [->] (\from) -- (\to);

\draw[color=blue,line width=.05cm] (-6,-4)--(-6,5);
\draw[color=blue,line width=.05cm] (-3.5,-4)--(-3.5,5);
\draw[color=blue,line width=.05cm] (-0.5,-4)--(-0.5,5);
\draw[color=blue,line width=.05cm] (2.7,-4)--(2.7,5);
\draw[color=blue,line width=.05cm] (5,-4)--(5,5);

\draw[circle, fill=cadmiumorange,color=cadmiumorange] (-1.2,0.2) circle (0.4);
\draw[circle, fill=cadmiumorange,color=cadmiumorange] (3.4,0.2) circle (0.4);
\draw[circle,fill=cadmiumorange,color=cadmiumorange] (0.4,0.2) circle (0.4);
\draw[circle,fill=cadmiumorange,color=cadmiumorange] (-4.3,0.2) circle (0.4);
\draw[circle,fill=cadmiumorange,color=cadmiumorange] (5.65,0.2) circle (0.4);
\node at (0.4,0.2) {$\lambda_2$};
\node at (-1.2,0.2) {$\lambda_1$};
\node at (-4.3,0.2) {$\lambda_0$};
\node at (3.4,0.2) {$\lambda_3$};
\node at (5.65,0.2) {$\lambda_4$};
\end{tikzpicture}
\caption{Constellation forced in~\Cref{mejvert:amob}.}
\label{figappl:b}
\end{figure} 
\end{theorem}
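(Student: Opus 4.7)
The plan is to adapt the construction of \Cref{mejver} with the extra ingredient needed to push $\cofn$ down to $\lambda_3$ (rather than letting it equal $\cfrak=\lambda_4$). The basic skeleton will again be a simple coherent system on $I^\sbf:=\pts(\lambda_4)$ ordered by $\subseteq$, with $\Por_{A,1}:=\Cor_A$ at the base and a FS iteration of length $\pi:=\lambda_4\lambda_3$ in each coordinate. As before, I will fix a cofinal $C\subseteq[\lambda_3]^{<\lambda_0}$ of size~$\lambda_3$, a surjection $t\colon\lambda_3\to C$ with each fibre of size~$\lambda_3$, a partition $\seqn{S_\zeta}{\zeta<\lambda_3}$ of $\lambda_4$ into pieces of size $\lambda_4$, and set $S^*_w:=\bigcup_{\zeta\in w}S_\zeta$ for $w\subseteq\lambda_3$, with $\Delta(\lambda_4\alpha+\rho):=S^*_{t(\alpha)}$.

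At each block $[\lambda_4\alpha,\lambda_4(\alpha+1))$ I will first place, at its first coordinate, an \emph{amoeba-for-measure} forcing $\Abf$ restricted to the intermediate model $V_{\Delta(\lambda_4\alpha),\lambda_4\alpha}$; this is the classical device that makes the generic a random-covering real and, by standard book-keeping arguments, ensures $\cofn\leq\lambda_3$ in the final model. (In the cited reference this is the role of the ``amoeba with Hechler'' component.) The remaining coordinates of the block are used, as in the proof of \Cref{mejver}, to enumerate and force with all $\sigma$-centered posets of size ${<}\lambda_0$ with domain in $\lambda_4$ coming from the appropriate intermediate model; these keep $\pfrak\geq\lambda_0$ while making $\cfrak=\lambda_4$ via the book-keeping argument on $\lambda_4^{<\lambda_0}=\lambda_4$. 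The choice $I^0:=\{A\subseteq\lambda_4:\exists w\in C\ A\subseteq S^*_w\}$ works exactly as in \Cref{mejver}: it contains $\ran\Delta$, satisfies $\bfrak(I^0_{\subseteq B})\geq\lambda_0$ for $B\notin I^0$, and the base posets at such $B$ are direct limits of the base posets on~$I^0_{\subseteq B}$, so \Cref{Thm:mvert}\ref{cohlim:0} applies at every stage.

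With that skeleton in place, the cardinal characteristic computations split in the obvious way. The bounds $\pfrak\geq\lambda_0$, $\cfrak\leq\lambda_4$ (hence $=\lambda_4$ by the book-keeping), $\nonm\leq\lambda_0$ and $\covm\geq\lambda_3$ are exactly as in \Cref{mejver}, using the cofinal sequences $\seqn{B^{\lambda_0}_\zeta}{\zeta<\lambda_0}$ and $\seqn{B^{\lambda_3}_\zeta}{\zeta<\lambda_3}$ of subsets of~$\lambda_4$ constructed in that proof. The new ingredient is $\cofn\leq\lambda_3$: the amoeba-for-measure reals added at the first coordinate of each block form a $\Lc_\Fin(\Nwf)$-dominating family of size $\lambda_3$ (by the Bartoszy\'nski characterization $\cofn=\slalomt(g,\Fin)$ of \Cref{F3.3}\ref{F3.3b}) in the intermediate models, and the direct-limit argument of \Cref{Thm:mvert}\ref{cohlim:a} transports this to the final model. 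Combined with $\covm\geq\lambda_3$, this gives $\covm=\cofn=\lambda_3$. For the slalom numbers, part~(a) follows from~\Cref{Thm:mvert}\ref{cohlim:b} applied to $\nu:=\lambda$ via the strictly increasing sequence $\seqn{B^{\lambda_4\lambda}_\zeta}{\zeta<\lambda_4\lambda}$; part~(b) is then obtained by taking $J':=J_1\oplus J_2$ for the maximal ideals $J_1,J_2$ produced by~(a) for $\lambda_1$ and $\lambda_2$, and invoking \Cref{L3.5}.

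The main obstacle is verifying that inserting amoeba-for-measure posets does not disturb either the preservation argument underlying \Cref{mainpresunb} (so the Cohen reals at intermediate stages remain Cohen over the ``lower'' models, which is what powers \Cref{Thm:mvert} and hence \Cref{seq:sn}) or the bound $\pfrak\geq\lambda_0$. Both are standard: amoeba-for-measure is $\sigma$-linked and preserves the Polish relational system $\Mbf$ associated to Cohen forcing (so \Cref{CMsucc}--\ref{CMlim} still apply), and its presence is compatible with the $\pfrak$-preservation argument because the small $\sigma$-centered posets are still dense enough in the book-keeping. Once those two points are verified, \Cref{seq:sn} yields the desired ideal $J$ with $\slalome(\star,J)=\slalomt^\perp(h,J)=\slalomt(h,J)=\lambda$ for every $J$-unbounded~$h$, and the sum-of-ideals machinery from \Cref{sec:sumI} finishes~(b) exactly as in~\Cref{mejver}.
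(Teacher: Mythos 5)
Your proposal is correct and follows exactly the same route as the paper, whose own proof is a one-liner: proceed as in \Cref{mejver} but replace the Hechler poset at the start of each block with amoeba-for-measure forcing to pull $\cofn$ down to $\lambda_3$. Your write-up spells out what that means and correctly identifies all the moving parts (the coherent-system skeleton, the $I^0$-directed-limit structure, the book-keeping giving $\cofn\leq\lambda_3$ via cofinal amoeba generics, and the application of \Cref{Thm:mvert} and the $\oplus$-machinery for parts~(a) and~(b)); the only nitpick is that your concern about amoeba disturbing the preservation of Cohen reals is unnecessary — \Cref{CMsucc} requires nothing of the iterand beyond being a poset in the lower model, so the coherent-system structure handles this automatically.
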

\begin{proof}
Proceed exactly as in the proof of~\Cref{mejver}, but use amoeba forcing instead of $\Dor$ to guarantee that $\cof\Nwf\leq\lambda_3$.
\end{proof}

\begin{theorem}[cf.~{\cite[Thm.~5.3]{BCM}}]\label{bcm}
Let $\lambda_0\leq \lambda_1\leq \lambda_2$ be uncountable regular cardinals and let $\lambda_3$ be a~cardinal such that $\lambda_2\leq\lambda_3=\lambda_3^{{<}\lambda_2}$. 
Then there is some ccc poset forcing that\/ $\pfrak=\bfrak=\lambda_0$, $\nonm=\lambda_1$, $\covm=\lambda_2$, $\dfrak=\cofn=\cfrak=\lambda_3$, and:
\begin{enumerate}[label=\rm(\alph*)]
    \item\label{bcm:a} there is some maximal ideal $\J_1$ such that\/ $\slalome(\star,\J_1) = \slalomt^\perp(h,\J_1)= \slalomt(h,\J_1) = \lambda_1$ for any $J_1$-unbounded $h$ (see~\Cref{bcm:fig}),

    \item\label{bcm:b} there is some maximal ideal $\J_2$ such that\/ $\slalome(\star,\J_2) = \slalomt^\perp(h,\J_2) =\slalomt(h,\J_2) = \lambda_2$ for any $J_2$-unbounded $h$ (see~\Cref{bcm:fig}), and

    \item\label{bcm:c} there is some ideal $J$ such that\/ $\slalome(\star,J) = \slalomt^\perp(h,J)= \slalome(h,J) = \lambda_1$ and\/ $\slalomt(\star,J) = \slalome^\perp(h,J) = \slalomt(h,J) = \lambda_2$ for any $h\in\Baire$ such that\/ $\lim^J h = \infty$ (see~\Cref{bcm:fig2}).
\end{enumerate}
\begin{figure}[ht]
\centering
\begin{tikzpicture}[scale=0.6,every node/.style={scale=0.6}]
\node (ale) at (-7, -3.5) {$\aleph_1$};
\node (a) at (-5, -3.5) {$\pp$};
\node (as) at (-5, -1) {$\sla{\I,\fin}$};
\node (b) at (-5, 1.5) {$\bb$};
\node (ba) at (-5, 4) {$\nonm$};
\node (aa) at (-2, -3.5) {$\slamg{\J_1}$};
\node (aas) at (-2, -1) {$\sla{\I,\J_1}$};
\node (bb) at (-2, 1.5) {$\bb_{\J_1}$};
\node (bba) at (-2, 4) {$\sla{h,\J_1}$};
\node (c) at (1, -3.5) {$\dslaml{\J_1}$};
\node (cs) at (1, -1) {$\dsla{\I,\J_1}$};
\node (f) at (1, 1.5) {$\dd_{\J_1}$};
\node (csa) at (1, 4) {$\dsla{h,\J_1}$};
\node (xpf) at (4, 1.5) {$\dd$};
\node (xpc) at (4, -3.5) {$\covm$};
\node (xpcs) at (4, -1) {$\dslago{\I}$};
\node (xpfa) at (4, 4) {$\cofn$};
\node (cont) at (6, 4) {$\cc$};
\foreach \from/\to in {aas/bb,aas/cs,cs/f,aas/cs} \draw [line width=.15cm,
white] (\from) -- (\to);
\foreach \from/\to in {ale/a,a/as, aa/aas, c/cs, b/bb, a/aa, aa/c, bb/f, as/b, aas/bb, cs/f, as/aas, aas/cs,f/xpf,cs/xpcs,c/xpc,xpcs/xpf,xpc/xpcs, b/ba,bb/bba,f/csa,xpf/xpfa,ba/bba,bba/csa,csa/xpfa,xpfa/cont} \draw [->] (\from) -- (\to);

\draw[color=blue,line width=.05cm] (-6,-4)--(-6,5);
\draw[color=blue,line width=.05cm] (-6,2.7)--(-3.7,2.7);
\draw[color=blue,line width=.05cm] (-3.7,-4)--(-3.7,2.7);
\draw[color=blue,line width=.05cm] (2.7,0.25)--(2.7,5);
\draw[color=blue,dashed,line width=.05cm] (2.7,0.25)--(6,0.25);
\draw[color=blue,dashed,line width=.05cm] (2.7,-2.25)--(6,-2.25);
\draw[color=blue,line width=.05cm] (2.7,-4)--(2.7,0.25);

\draw[circle, fill=cadmiumorange,color=cadmiumorange] (5,-2.85) circle (0.4);
\draw[circle, fill=cadmiumorange,color=cadmiumorange] (5,2.7) circle (0.4);
\draw[circle,fill=cadmiumorange,color=cadmiumorange] (-0.4,0.2) circle (0.4);
\draw[circle,fill=cadmiumorange,color=yellow] (5.5,-0.9) circle (0.4);
\draw[circle,fill=cadmiumorange,color=cadmiumorange] (-4.3,0.9) circle (0.4);
\node at (-0.4,0.2) {$\lambda_1$};
\node at (-4.3,0.9) {$\lambda_0$};
\node at (5,-2.85) {$\lambda_2$};
\node at (5,2.7) {$\lambda_3$};
\node at (5.5,-0.9) {$\textbf{?}$};
\end{tikzpicture}
\begin{tikzpicture}[scale=0.6,every node/.style={scale=0.6}]
\node (ale) at (-7, -3.5) {$\aleph_1$};
\node (a) at (-5, -3.5) {$\pp$};
\node (as) at (-5, -1) {$\sla{\I,\fin}$};
\node (b) at (-5, 1.5) {$\bb$};
\node (ba) at (-5, 4) {$\nonm$};
\node (aa) at (-2, -3.5) {$\slamg{\J_2}$};
\node (aas) at (-2, -1) {$\sla{\I,\J_2}$};
\node (bb) at (-2, 1.5) {$\bb_{\J_2}$};
\node (bba) at (-2, 4) {$\sla{h,\J_2}$};
\node (c) at (1, -3.5) {$\dslaml{\J_2}$};
\node (cs) at (1, -1) {$\dsla{\I,\J_2}$};
\node (f) at (1, 1.5) {$\dd_{\J_2}$};
\node (csa) at (1, 4) {$\dsla{h,\J_2}$};
\node (xpf) at (4, 1.5) {$\dd$};
\node (xpc) at (4, -3.5) {$\covm$};
\node (xpcs) at (4, -1) {$\dslago{\I}$};
\node (xpfa) at (4, 4) {$\cofn$};
\node (cont) at (6, 4) {$\cc$};
\foreach \from/\to in {aas/bb,aas/cs,cs/f,aas/cs} \draw [line width=.15cm,
white] (\from) -- (\to);
\foreach \from/\to in {ale/a,a/as, aa/aas, c/cs, b/bb, a/aa, aa/c, bb/f, as/b, aas/bb, cs/f, as/aas, aas/cs,f/xpf,cs/xpcs,c/xpc,xpcs/xpf,xpc/xpcs, b/ba,bb/bba,f/csa,xpf/xpfa,ba/bba,bba/csa,csa/xpfa,xpfa/cont} \draw [->] (\from) -- (\to);

\draw[color=blue,line width=.05cm] (-6,-4)--(-6,5);
\draw[color=blue,line width=.05cm] (-6,2.7)--(-3.7,2.7);
\draw[color=blue,line width=.05cm] (-3.7,-4)--(-3.7,5);

\draw[color=blue,line width=.05cm] (2.7,0.25)--(2.7,5);
\draw[color=blue,dashed,line width=.05cm] (2.7,0.25)--(6,0.25);
\draw[color=blue,dashed,line width=.05cm] (2.7,-2.25)--(6,-2.25);
\draw[color=blue,line width=.05cm] (2.7,-2.25)--(2.7,0.25);


\draw[circle, fill=cadmiumorange,color=cadmiumorange] (-4.3,3.3) circle (0.4);
\draw[circle, fill=cadmiumorange,color=cadmiumorange] (-4.3,0.9) circle (0.4);
\draw[circle, fill=cadmiumorange,color=cadmiumorange] (5,2.7) circle (0.4);
\draw[circle,fill=cadmiumorange,color=cadmiumorange] (-0.4,0.2) circle (0.4);
\draw[circle,fill=cadmiumorange,color=yellow] (5.5,-0.9) circle (0.4);
\node at (-0.4,0.2) {$\lambda_2$};
\node at (-4.3,3.3) {$\lambda_1$};
\node at (-4.3,0.9) {$\lambda_0$};
\node at (5.5,-0.9) {$\textbf{?}$};
\node at (5,2.7) {$\lambda_3$};
\end{tikzpicture}
\caption{Two separations of the cardinals for the two idelals $\J_1$ and $\J_2$ in \Cref{bcm}. On the left, we have the constellation of~\ref{bcm:a}, and on the right the constellation of~\ref{bcm:b}. The dotted lines indicate that $\lambda_2\leq\slalomt(I,\Fin)\leq\lambda_3$, whose exact value is unclear.} 
\label{bcm:fig}
\end{figure} 

\begin{figure}[h!]
\centering
\begin{tikzpicture}[scale=0.8,every node/.style={scale=0.8}]
\node (ale) at (-7, -3.5) {$\aleph_1$};
\node (a) at (-5, -3.5) {$\pp$};
\node (as) at (-5, -1) {$\sla{\I,\fin}$};
\node (b) at (-5, 1.5) {$\bb$};
\node (ba) at (-5, 4) {$\nonm$};
\node (aa) at (-2, -3.5) {$\slamg{\J}$};
\node (aas) at (-2, -1) {$\sla{\I,\J}$};
\node (bb) at (-2, 1.5) {$\bb_\J$};
\node (bba) at (-2, 4) {$\sla{h,\J}$};
\node (c) at (1, -3.5) {$\dslaml{\J}$};
\node (cs) at (1, -1) {$\dsla{\I,\J}$};
\node (f) at (1, 1.5) {$\dd_\J$};
\node (csa) at (1, 4) {$\dsla{h,\J}$};
\node (xpf) at (4, 1.5) {$\dd$};
\node (xpc) at (4, -3.5) {$\covm$};
\node (xpcs) at (4, -1) {$\dslago{\I}$};
\node (xpfa) at (4, 4) {$\cofn$};
\node (cont) at (6, 4) {$\cc$};
\foreach \from/\to in {aas/bb,aas/cs,cs/f,aas/cs} \draw [line width=.15cm,
white] (\from) -- (\to);
\foreach \from/\to in {ale/a,a/as, aa/aas, c/cs, b/bb, a/aa, aa/c, bb/f, as/b, aas/bb, cs/f, as/aas, aas/cs,f/xpf,cs/xpcs,c/xpc,xpcs/xpf,xpc/xpcs, b/ba,bb/bba,f/csa,xpf/xpfa,ba/bba,bba/csa,csa/xpfa,xpfa/cont} \draw [->] (\from) -- (\to);

\draw[color=blue,line width=.05cm] (-6,-4)--(-6,5);
\draw[color=blue,line width=.05cm] (-6,2.7)--(-3.7,2.7);
\draw[color=blue,line width=.05cm] (-3.7,-4)--(-3.7,2.7);
\draw[color=blue,line width=.05cm] (2.7,0.25)--(2.7,5);
\draw[color=blue,line width=.05cm] (-0.5,-4)--(-0.5,5);
\draw[color=blue,dashed,line width=.05cm] (2.7,0.25)--(6,0.25);
\draw[color=blue,dashed,line width=.05cm] (2.7,-2.25)--(6,-2.25);
\draw[color=blue,line width=.05cm] (2.7,-2.25)--(2.7,0.25);


\draw[circle, fill=cadmiumorange,color=cadmiumorange] (-1.2,0.2) circle (0.4);
\draw[circle, fill=cadmiumorange,color=cadmiumorange] (-4.3,0.9) circle (0.4);
\draw[circle, fill=cadmiumorange,color=cadmiumorange] (5,2.7) circle (0.4);
\draw[circle,fill=cadmiumorange,color=cadmiumorange] (0.4,0.2) circle (0.4);
\draw[circle,fill=cadmiumorange,color=yellow] (5.5,-0.9) circle (0.4);
\node at (0.4,0.2) {$\lambda_2$};
\node at (-1.2,0.2) {$\lambda_1$};
\node at (-4.3,0.9) {$\lambda_0$};
\node at (5,2.7) {$\lambda_3$};
\node at (5.5,-0.9) {$\textbf{?}$};
\end{tikzpicture}
    \caption{Constellation forced in~\Cref{bcm}~\ref{bcm:c}. The dotted lines indicate that $\lambda_2\leq\slalomt(I,\Fin)\leq\lambda_3$, whose exact value is unclear.}
\label{bcm:fig2}
\end{figure}
\end{theorem}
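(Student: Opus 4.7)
The plan is to adapt the matrix/coherent-system construction of \cite[Thm.~5.3]{BCM} and combine it with the slalom-number machinery developed in \Cref{sec:cohen} and \Cref{subsec:seval}. I~would construct a~ccc simple coherent system $\sbf$ of FS iterations of length $\pi := \lambda_3\lambda_2$ over an index set $I^\sbf$ of cardinality~$\lambda_3$, designed so that, for each $e\in\{1,2\}$, there is a~strictly $\subsetneq$-increasing cofinal sequence $\seqn{i^e_\zeta}{\zeta<\lambda_3\lambda_e}$ in $I^\sbf_{<i^*}$ along which consecutive stages add Cohen reals. The base forcings are taken to be of the form $\Cor_A$, and the iterands at successor steps are, via standard book-keeping, alternately: (i) restricted $\sigma$-centered posets of size ${<}\lambda_0$ to push $\pfrak$ up to $\lambda_0$ by Bell's theorem (and, through the size control of the whole iteration, to keep $\bfrak\leq\lambda_0$); and (ii) amoeba forcing to secure $\cofn\leq\lambda_3$. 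The direct-limit hypothesis of \Cref{Thm:mvert} along both cofinal sequences is guaranteed by a~choice of $I^0\subseteq I^\sbf$ (containing $\ran\Delta$) analogous to the one used in \Cref{mejver}.

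The cardinal-characteristic equalities $\pfrak=\bfrak=\lambda_0$, $\nonm=\lambda_1$, $\covm=\lambda_2$ and $\dfrak=\cofn=\cfrak=\lambda_3$ are then verified as in \cite[Thm.~5.3]{BCM}: the $\sigma$-centered iterands yield $\lambda_0\leq\pfrak$ and bound $\bfrak$ from above; amoeba forcing gives $\cofn\leq\lambda_3$; and the Tukey equivalence $\Cbf_\Mwf\eqT\Mbf$ applied to each of the two Cohen-real towers, together with the direct-limit-of-embeddings argument of \Cref{smallsupprestr}, pins down $\nonm=\lambda_1$ and $\covm=\lambda_2$.

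For parts~\ref{bcm:a} and~\ref{bcm:b}, the cofinal sequence $\seqn{i^e_\zeta}{\zeta<\lambda_3\lambda_e}$ satisfies the hypotheses of \Cref{Thm:mvert}~\ref{cohlim:b} with $\nu:=\lambda_e$ and $\lambda:=\lambda_3$ (the arithmetic hypothesis $\lambda_3^{<\lambda_e}=\lambda_3$ being a~consequence of $\lambda_3=\lambda_3^{<\lambda_2}$). That result yields, in $V^{\Por_{i^*,\pi}}$, a~maximal ideal $J_e$ satisfying $\slalome(\star,J_e)=\slalomt^\perp(h,J_e)=\slalomt(h,J_e)=\lambda_e$ for every $J_e$-unbounded $h\in\Baire$. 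For part~\ref{bcm:c}, I~would set $J:=J_1\oplus J_2$; any $h\colon\omega\oplus\omega\to\omega$ with $\lim^J h=\infty$ then splits as $h=h_1\oplus h_2$ with $\lim^{J_e} h_e=\infty$, and from \Cref{L3.5}~\ref{L3.5a} together with \Cref{L6.3}~\ref{L4.15c}--\ref{L4.15d} one reads off
\[
\slalome(\star,J)=\min\{\slalome(\star,J_1),\slalome(\star,J_2)\}=\lambda_1\quad\text{and}\quad\slalomt(\star,J)=\max\{\slalomt(\star,J_1),\slalomt(\star,J_2)\}=\lambda_2,
\]
together with the analogous identities for $\slalome(h,J)$, $\slalome^\perp(h,J)$, $\slalomt(h,J)$ and $\slalomt^\perp(h,J)$.

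The main obstacle is engineering the index set~$I^\sbf$ so that it simultaneously carries Cohen-adding cofinal towers of the \emph{distinct} cofinalities $\lambda_1$ and $\lambda_2$, while the direct-limit hypothesis of \Cref{Thm:mvert} (and hence the preservation result \Cref{mainpresunb}) remains valid for both towers and the $\sigma$-centered and amoeba iterands at the internal nodes do not interfere with the preservation of Cohen genericity along either tower. This is precisely the technical heart of the construction in \cite[Sec.~5]{BCM}.
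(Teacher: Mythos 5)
Your outline — coherent system, two Cohen-adding towers of distinct cofinalities, then $J := J_1\oplus J_2$ via \Cref{L3.5} and \Cref{L6.3} — captures the shape of the argument, but there are two concrete problems.

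First, the paper does \emph{not} engineer $I^\sbf$ to carry two cofinal chains of different cofinalities. Instead it takes $I^\sbf = \lambda_3\lambda_2 + 1$ as an \emph{ordinal} (so $I^\sbf_{<i^*}$ has the single cofinality $\lambda_2$) and $\pi := \lambda_3\lambda_2\lambda_1$ (so $\cf(\pi)=\lambda_1$). The ideal $J_1$ with slalom numbers $\lambda_1$ is then obtained by applying \Cref{FS:sn} along the \emph{iteration length} $\pi$, not along $I^\sbf$ at all; only $J_2$ is extracted via \Cref{Thm:mvert}~\ref{cohlim:b} from the cofinality-$\lambda_2$ tower $\seqn{\zeta}{\zeta<\lambda_3\lambda_2}$ in $I^\sbf$. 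Your plan of running \Cref{Thm:mvert} twice over $I^\sbf$, with $I^\sbf$ a non-linear poset carrying towers of both cofinalities $\lambda_1$ and $\lambda_2$, is precisely the step you flag as "the main obstacle" and leave unresolved: a linearly ordered $I^\sbf$ cannot do it, and you have not exhibited a poset $I^\sbf$ (together with the $I^0$ required for \Cref{smallsupprestr}) for which the direct-limit and $\bfrak(I^0_{<i})\geq\theta$ hypotheses of \Cref{Thm:mvert} hold along \emph{both} towers. The paper's two-orthogonal-directions trick dissolves this difficulty, and your attempt to solve the theorem with $\pi=\lambda_3\lambda_2$ cannot recover $\lambda_1$ from the iteration length, since $\cf(\lambda_3\lambda_2)=\lambda_2$.

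Second, your choice of iterands is off: you propose $\sigma$-centered posets of size ${<}\lambda_0$ together with \emph{amoeba} forcing, which is the mix used in \Cref{mejvert:amob} (and forces $\covm=\cofn$, not $\covm=\lambda_2<\cofn=\lambda_3$). The construction here uses, as in \cite[Thm.~5.3]{BCM}, small $\sigma$-centered posets (for $\pfrak\geq\lambda_0$), $\sigma$-centered subposets of Hechler of size ${<}\lambda_1$, and restricted eventually different forcing $\Eor$ (for separating $\nonm=\lambda_1$ from $\covm=\lambda_2$); $\cofn=\cfrak=\lambda_3$ then comes from the size of the iteration, with no amoeba needed. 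Iterating amoeba would in fact drive $\addn$ up, conflicting with $\addn\leq\bfrak=\lambda_0$.
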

\begin{proof}
Construct a~simple coherent system on $I^\sbf=\lambda_3\lambda_2+1$, ordered by $\leq$, of FS iterations of length $\pi:=\lambda_3\lambda_2\lambda_1$ (ordinal product), where $\Por_{\eta,1}:=\Cor_{\eta}$ for all $\eta\leq\lambda_3\lambda_2$, 
whose iterands for $0<\xi<\pi$ are determined by:
\begin{itemize}
\item all $\sigma$-centered posets of size~${<}\lambda_0$;

\item all $\sigma$-centered subposets of Hechler forcing of size~${<}\lambda_1$; and 

\item $\Qnm_{\xi}:= \Eor^{V_{\Delta(\xi),\xi}}$.
\end{itemize}
The iteration is constructed via book-keeping as in~\cite[Thm.~5.3]{BCM} and the previous proofs. 
Then $\Por_{\lambda_3\lambda_2,\pi}$ forces $\pfrak=\bfrak=\lambda_0$, $\nonm=\lambda_1$, $\covm=\lambda_2$, and $\dfrak=\cofn=\cfrak=\lambda_3$ (details can be found in the cited reference). Since $\Por_{\lambda_3\lambda_2,\pi}$ is obtained by the FS iteration $\la\Por_{\lambda_3\lambda_2,\xi},\Qnm_{\lambda_3\lambda_2,\xi}:\, \xi<\pi\ra$ and $\cf(\pi)=\lambda_1$, by applying~\Cref{FS:sn} we obtain that $\Por_{\lambda_3\lambda_2,\pi}$ forces that there is a~maximal ideal $J_1$ such that $\slalome(\star,\J_1) = \slalomt^\perp(h,\J_1) = \slalomt(h,\J_1) = \lambda_1$ for any $J_1$-unbounded $h\in \Baire$.

\ref{bcm:b}:
For each $\zeta<\lambda_3\lambda_2$, $\Por_{\lambda_3\lambda_2,\zeta+1}$ adds a~Cohen real over $V_{\lambda_3\lambda_2,\zeta+1}$. Then, by~\Cref{Thm:mvert} applied to $\seqn{\zeta}{\zeta<\lambda_3\lambda_2}$, we have that $\Por_{\lambda_3\lambda_2,\pi}$ forces that there is a~maximal ideal $J_2$ on~$\omega$ such that $\slalome(\star,J_2)= \slalomt^\perp(h,\J_2) =\slalomt(h,J_2)=\lambda_2$ for any $J_2$-unbounded $h\in \Baire$.

\ref{bcm:c}:
Use $J=J_1\oplus J_2$ exactly as in the previous results.
\end{proof}

\section{Discussions and open problems}\label{Sdisc_probl}

By \Cref{C2.7} and \Cref{L3.4}, if $J$ has the Baire property then $J$ does not affect the values of many slalom numbers, i.e., a~slalom number with $J$ is equal to the one with $\fin$. For instance, if $h$ is reasonable, then $\slalome(h,J)=\slalome(h,\Fin)$ and $\slalomt(h,J)=\slalomt(h,\Fin)$. However, for two instances of slalom numbers, we were not able to settle such an equality.

\begin{question}
Do we have that $\slalome(I,J) = \slalome(I,\Fin)$ and $\slalome(\star,J) = \slalome(\star,\Fin)$ when $J$ has the Baire property?
\end{question}

From \Cref{aboutbh}, we have a good understanding on when $\slalome(b,h,J)$ is finite or not, and that it can never be $\aleph_0$. However, the situation for $\slalomt(b,h,J)$ is unclear.

\begin{question}
    Do we have examples of $b$, $h$ and $J$ such that $\slalomt(b,h,J) = \aleph_0$?
\end{question}

\begin{question}
    Is there a suitable characterization of $\slalomt(b,h,J) = k$ for any natural number $k\geq 3$?
\end{question}

Concerning \Cref{ftc:oplus}, we wonder about the following problem. It has a positive answer when either $\slalomt(b_0,h_0,J_0)$ or  $\slalomt(b_1,h_1,J_1)$ is infinite, but it is unclear when both are finite (and larger than $1$).
\begin{question}
    Do we have 
    $\slalomt(b_0\oplus b_1,h_0\oplus h_1,J_0\oplus J_1) = \slalomt(b_0,h_0,J_0)\cdot \slalomt(b_1,h_1,J_1)$?
\end{question} 

More open problems about the results of \Cref{sec:sumI} are:

\begin{question}
    Let $J_0$ and $J_1$ be ideals on $\omega$.
    \begin{enumerate}[label = \normalfont (\arabic*)]
        \item For which $h_0\in\Baire$ do we have $\slalome(h_0\oplus h_0,J_0\oplus J_1) = \slalome(h_0,J_0\cap J_1)$? Likewise, we ask when inequalities in \Cref{L3.5}~\ref{L3.5c} are equalities.

        \item Are $\slalome(I_0\oplus I_1,J_0) = \slalome(I_0\cap I_1,J_0) = \max\{\slalome(I_0,J_0),\slalome(I_1,J_0)\}$ and $\pfrak_{\Kat}(I_0\oplus I_1,J_0) = \pfrak_{\Kat}(I_0\cap I_1,J_0) = \max\{\pfrak_{\Kat}(I_0,J_0),\pfrak_{\Kat}(I_1,J_0)\}$ for any ideals $I_0$ and $I_1$ on $\omega$?

        \item Are $\slalome(I,J_0\cap J_1) = \slalome(I,J_0\oplus J_1)$, $\pfrak_{\Kat}(I,J_0\cap J_1) = \pfrak_{\Kat}(I,J_0\oplus J_1)$, $\slalome(\star,J_0\cap J_1) = \slalome(\star,J_0\oplus J_1)$, and $\pfrak_{\Kat}(\star,J_0\cap J_1) = \pfrak_{\Kat}(\star,J_0\oplus J_1)$ for any ideal $I$ on $\omega$?
    \end{enumerate}
\end{question}

By \Cref{critical}, slalom numbers are uniformity numbers of certain selection principles. It is not known whether, in many cases, selection principles with equal critical cardinalities must be equivalent, for instance:

\begin{question}
Is $\mathrm{S}_1(\Gamma_h,\GammaB{{J}})$ equivalent to $\mathrm{S}_1(\Gamma_h,\Gamma)$  when $J$ has the Baire property? The same applies to $\Srm_1(I\rr\Gamma,\LambdaB{J})$, $\Srm_1(\Omega,\LambdaB{J})$, $\Srm_1(\Gamma_h,\LambdaB{J})$, and $\Srm_1(\Gamma,J\rr\Gamma)$.
\end{question}

In \Cref{LOO}, we show that many selection principles with $\OO$ in the second argument are equivalent to those with $\Fin\text{-}\Lambda$ in the second argument. The same applies to their uniformity numbers. However, the following is still not clear.

\begin{question}\label{QGammagO}
Are $\mathrm{S}_1(\Gamma_g,\OO)$ and $\mathrm{S}_1(\Gamma_g,\Fin\text{-}\Lambda)$ equivalent? 
\end{question}

A positive answer to the following question solves \Cref{QGammagO}.

\begin{question}
    Are $\mathrm{S}_1(\Gamma_g,\OO)$ and $\mathrm{S}_1(\Gamma_{g'},\OO)$ equivalent principles for two functions $g$ and $g'$ diverging to infinity? Are $\Srm_1(\Gamma_h,\Gamma)$ and $\Srm_1(\Gamma_1,\Gamma)$ equivalent principles when $h\geq^*1$?
\end{question}

As shown in \Cref{sec:forcing}, many instances of slalom numbers can be distinguished. By \Cref{critical}, the same applies to the corresponding selection principles assuming inequalities between cardinal invariants. On the other hand, we do not know what happens under assumptions compatible with {\rm\bf CH}.

\begin{question}
If {\rm\bf CH} holds, is there an $\schema{\Gammah{h}}{\mathcal{O}}$-space which is not an $\schema{\Gammah{h}}{\Gamma}$-space? The same applies to many pairs of selection principles in \Cref{SjednaABcard} and~\ref{critical_function}. 
\end{question}

Any $\schema{\Gammah{1}}{\mathcal{O}}$-space is totally imperfect by \Cref{CantorShfin}. We may ask about its further topological properties, i.e., properties of an $\schema{\Gammah{1}}{\mathcal{O}}$-space and even an $\schema{\Gammah{1}}{\Gamma}$-space. For instance, by \cite{Comb2}, any $\schema{\Gamma}{\Gamma}$-space $X$ of reals is perfectly meager,\footnote{The notion was discovered in \cite{luzin14} and is called always of the first category as well.} i.e.,  for any perfect set $P$ of reals, the intersection $X\cap P$ is meager in the subspace $P$.

\begin{question}
Is an $\schema{\Gammah{1}}{\Gamma}$-space of reals perfectly meager?   
\end{question}

We also wonder whether we can express other classical cardinal characteristics, like $\addn$, as the critical cardinality of some selection principle or other similar topological property.

Regarding~\Cref{Thm:mixBD}, we ask:

\begin{question}
Is there any model where all four rows of \Cref{BasicDia} are different for some pair $I$,~$J$?
\end{question}

It is possible to force a similar model as in \Cref{Thm:mixBD} but with $\aleph_1<\bfrak<\dfrak<\covn$ (see~\cite[Sec.~5]{GKMScont}). However, we do not know what is the effect on $\bfrak_J$ and $\dfrak_J$ after forcing with a random algebra. On the other hand, Canjar~\cite{Canjar2} has studied the effect on co-initialities of ultrapowers of $\omega$ after forcing with a random algebra.

\begin{question}
    Can we force a constellation like in \Cref{HefollowsRa} but with $\aleph_1 < \bfrak < \bfrak_J < \dfrak_J < \dfrak <\covn$?
\end{question}

We still need to explore the behavior of slalom numbers in generic extensions not adding (too many) Cohen reals. Very few forcing techniques for large continuum work for this, for instance, large products of creature forcing. However, such constructions are $\Baire$-bounding in practice, which force $\dfrak=\aleph_1$ (over a~model of~{\rm\bf CH}). For this reason, this technique could only be used to separate cardinals on the top row of \Cref{BasicDia}. In~\cite{CKM}, continuum many different values were forced for cardinals of the form $\slalomt(b,h,\Fin)$, $\slalome(b,h,\Fin)$, $\slalomt^\perp(b,h,\Fin)$, and $\slalome^\perp(b,h,\Fin)$. We wonder if similar results can be forced for several ideals on $\omega$ instead of $\Fin$.

As a consequence of \Cref{aplcohen}, we can force continuum many cardinals of the form $\slalomt(h,J)$, $\slalome(h,J)$, $\slalomt^\perp(h,J)$ and $\slalome^\perp(h,J)$, even for any fixed $h$ diverging to $\infty$. However, we do not know whether the same is possible for fixed $J$ and varying $h$.


\bibliographystyle{alpha}
\bibliography{liter}

\end{document}